\documentclass{amsart}
\usepackage{xr}
\usepackage{hyperref}
\usepackage{amssymb}
\usepackage{amsmath}
\usepackage{amsfonts}
\usepackage{url}

\setcounter{MaxMatrixCols}{10}

\newtheorem{theorem}{Theorem}[section]
\theoremstyle{plain}

\newtheorem{corollary}[theorem]{Corollary}

\newtheorem{definition}[theorem]{Definition}

\newtheorem{lemma}[theorem]{Lemma}
\newtheorem{notation}{Notation}

\newtheorem{proposition}[theorem]{Proposition}
\newtheorem{remark}[theorem]{Remark}

\numberwithin{equation}{section}

\begin{document}
\title[On Kato-Sobolev type spaces]{On Kato-Sobolev type spaces}
\author{Gruia Arsu}
\address{Institute of Mathematics of The Romanian Academy\\
P.O. Box 1-174\\
RO-70700\\
Bucharest \\
Romania}
\email{Gruia.Arsu@imar.ro, agmilro@yahoo.com, agmilro@gmail.com}
\subjclass[2010]{Primary 35S05, 46E35, 47-XX; Secondary 42B15, 42B35, 46H30.}
\keywords{Calder\'{o}n, H\"{o}rmander, Kato, Schatten-von Neumann, Sobolev,
Wiener-L\'{e}vy theorem, pseudo-differential operators.}

\begin{abstract}
We study an increasing family of spaces $\left\{ \mathcal{B}_{k}^{p}\right\}
_{1\leq p\leq \infty }$ by adapting the techniques used in the study of
Beurling algebras by Coifman and Meyer \cite{Meyer}. A Wiener-L\'{e}vy
theorem for $\mathcal{B}_{k}^{\infty }$ algebras is proved based on an
integral representation formula belonging A. P. Calder\'{o}n. Also we study
the Schatten-von Neumann properties of pseudo-differential operators with
symbols in the spaces $\mathcal{B}_{k}^{p}$ spaces.
\end{abstract}

\maketitle
\tableofcontents

\section{Introduction}

Kato-Sobolev spaces $\mathcal{H}_{\mathtt{ul}}^{s}$ were introduced in \cite%
{Kato} by Tosio Kato and are known as uniformly local Sobolev spaces. The
uniformly local Sobolev spaces can be seen as a convenient class of
functions with the local Sobolev property and certain boundedness at
infinity. The same philosophy can explain the notion of Wiener amalgam
spaces: that is the local behavior is given by the local component, while
the global component (determines) sets out how the local pieces behave
together. In this paper we study a class of spaces which generalizes the
Kato-Sobolev spaces. As we noted in \cite{Arsu 1}, Kato-Sobolev spaces are
particular cases of Wiener amalgam spaces with local component $\mathcal{H}^{%
\mathbf{s}}$ and global component $L^{p}$. Wiener amalgam spaces were
introduced by Hans Georg Feichtinger in 1980. Allowing more general weight
functions, in this paper we consider as local component the spaces $\mathcal{%
B}_{k}=B_{2,k}$ similar to those introduced by Lars H\"{o}rmander (see \cite%
{Hormander} vol. 2) and we preserve the global component $L^{p}$. Concerning
the weight function $k$ we shall make a hypothesis which ensures that $%
\mathcal{B}_{k}$ is a module over $\mathcal{BC}^{\infty }$ (see the
notations). Most of the results proved in the case of Kato-Soblev spaces are
preserved. In this paper we study some multiplication properties of
Kato-Sobolev type spaces and we prove Schatten-von Neumann class properties
for pseudo-differential operators with symbols in the spaces $\mathcal{B}%
_{k}^{p}$. Also we develop an analytic functional calculus for Kato-Sobolev
algebras based on an integral representation formula of A. P. Calder\'{o}n.
This part corresponds to the section of \cite{Kato} where the invertible
elements of the algebra $\mathcal{H}_{\mathtt{ul}}^{s}$ are determined and
which has as main result a Wiener type lemma for $\mathcal{H}_{\mathtt{ul}%
}^{s}$. In our case, the main result is the Wiener-L\'{e}vy theorem for $%
\mathcal{B}_{k}^{\infty }$ algebras. This theorem allows a spectral analysis
of these algebras. Besides the properties of the spaces $B_{p,k}$, the main
techniques we use in the study of these spaces are inspired by those used in
the study of Beurling algebras by Coifman and Meyer \cite{Meyer}. In Section
2 we recall some properties of the spaces $B_{p,k}$ and we establish the
main technical result used in paper by adapting the techniques of Coifman
and Meyer. In Section 3 we introduce and we study an increasing family of
spaces $\left\{ \mathcal{B}_{k}^{p}\right\} _{1\leq p\leq \infty }$. Here
the Kato-Sobolev type spaces $\mathcal{B}_{k}^{p}$ are defined as Wiener
amalgam spaces with local component $\mathcal{B}_{k}$ and global component $%
L^{p}$, i.e. $\mathcal{B}_{k}^{p}=W\left( \mathcal{B}_{k},L^{p}\right) $.
The weak form of Wiener-L\'{e}vy theorem for $\mathcal{B}_{k}^{\infty }$
algebras is established in Section 4, then it is introduced a class of
weight functions for which it can be proved Wiener-L\'{e}vy theorem for $%
\mathcal{B}_{k}^{\infty }$. The Schatten-von Neumann class properties for
pseudo-differential operators with symbols in the spaces $\mathcal{B}%
_{k}^{p} $ are presented in the last section.

\textit{Some Notations}. Throughout the paper we are going to use the same
notations as in \cite{Hormander} for the usual spaces of functions and
distributions.

$\widehat{u}=\mathcal{F}(u)$ is the Fourier transform of $u$.

\textquotedblleft $Cst$\textquotedblright\ will always stand for some
positive constant which may change from one inequality to the other.

$u\approx v$ means that $u/v$ and $v/u$ are bounded.

If $m$ is an integer $\geq 0$ or $m=\infty $, then $\mathcal{BC}^{m}\left( 
\mathbb{R}^{n}\right) $ is the space of bounded functions in $\mathbb{R}^{n}$
with bounded derivatives up to the order $m$ with the (semi)norms $%
\left\Vert f\right\Vert _{\mathcal{BC}^{l}}=\max_{\left\vert \alpha
\right\vert \leq l}\sup_{x\in \mathbb{R}^{n}}\left\Vert \partial ^{\alpha
}f\left( x\right) \right\Vert <\infty $, $l<m+1$.

$\mathcal{C}_{\infty }\left( \mathbb{R}^{n}\right) $ is the space of
continuous functions vanishing at infinity and $\mathcal{C}_{pol}^{\infty
}\left( \mathbb{R}^{n}\right) $ is the space of smooth functions with
derivatives\textit{\ }of polynomial growth.

$[x]$ denotes the integral part of the real number $x$.

$\left\langle \cdot \right\rangle $ is the function $\left\langle \cdot
\right\rangle :\mathbb{R}^{n}\rightarrow \mathbb{R}$, $\left\langle \xi
\right\rangle =\left( 1+\left\vert \xi \right\vert ^{2}\right) ^{1/2}$, $\xi
\in \mathbb{R}^{n}$.

\section{The spaces $\mathcal{B}_{k}\equiv B_{2,k}$}

The spaces $B_{p,k}\left( \mathbb{R}^{n}\right) $ and are defined
essentially as inverse Fourier transforms of $L^{p}$ spaces with respect to
appropriate densities.

\begin{definition}
$\left( \mathtt{a}\right) $ A positive measurable function $k$ defined in $%
\mathbb{R}^{n}$ will be called a weight function of polynomial growth if
there are positive constants $C$ and $N$ such that 
\begin{equation}
k\left( \xi +\eta \right) \leq C\left\langle \xi \right\rangle ^{N}k\left(
\eta \right) ,\quad \xi ,\eta \in \mathbb{R}^{n}.  \label{kh2}
\end{equation}%
The set of all such functions $k$ will be denoted by $\mathcal{K}%
_{pol}\left( \mathbb{R}^{n}\right) $.

$\left( \mathtt{b}\right) $ For a weight function of polynomial growth $k$,
we shall write $M_{k}\left( \xi \right) =C\left\langle \xi \right\rangle
^{N} $, where $C$, $N$ are the positive constants that define $k$.
\end{definition}

\begin{remark}
$\left( \mathtt{a}\right) $ An immediate consequence of Peetre's inequality
is that $M_{k}$ is weakly submultiplicative, i.e.%
\begin{equation*}
M_{k}\left( \xi +\eta \right) \leq C_{k}M_{k}\left( \xi \right) M_{k}\left(
\eta \right) ,\quad \xi ,\eta \in \mathbb{R}^{n},
\end{equation*}%
where $C_{k}=2^{N/2}C^{-1}$ and that $k$ is moderate with respect to the
function $M_{k}$ or simply $M_{k}$-moderate, i.e.%
\begin{equation*}
k\left( \xi +\eta \right) \leq M_{k}\left( \xi \right) k\left( \eta \right)
,\quad \xi ,\eta \in \mathbb{R}^{n}.
\end{equation*}

$\left( \mathtt{b}\right) $ Let $k\in \mathcal{K}_{pol}\left( \mathbb{R}%
^{n}\right) $. From definition we deduce that 
\begin{equation*}
\frac{1}{M_{k}\left( \xi \right) }=C^{-1}\left\langle \xi \right\rangle
^{-N}\leq \frac{k\left( \xi +\eta \right) }{k\left( \eta \right) }\leq
C\left\langle \xi \right\rangle ^{N}=M_{k}\left( \xi \right) ,\quad \xi
,\eta \in \mathbb{R}^{n}.
\end{equation*}%
In fact, the left-hand inequality is obtained if $\xi $ is replaced by $-\xi 
$ and $\eta $ is replaced by $\xi +\eta $ in (\ref{kh2}). If we take $\eta
=0 $ we obtain the useful estimates 
\begin{equation*}
C^{-1}k\left( 0\right) \left\langle \xi \right\rangle ^{-N}\leq k\left( \xi
\right) \leq Ck\left( 0\right) \left\langle \xi \right\rangle ^{N},\quad \xi
\in \mathbb{R}^{n}.
\end{equation*}
\end{remark}

The following lemma is an easy consequence of the of the definition and the
above estimates.

\begin{lemma}
\label{kh5} Let $k$ ,$k_{1}$, $k_{2}\in \mathcal{K}_{pol}\left( \mathbb{R}%
^{n}\right) $. Then:

$\left( \mathtt{a}\right) $ $k_{1}+k_{2}$, $k_{1}\cdot k_{2}$, $\sup \left(
k_{1},k_{2}\right) $, $\inf \left( k_{1},k_{2}\right) \in \mathcal{K}%
_{pol}\left( \mathbb{R}^{n}\right) $.

$\left( \mathtt{b}\right) $ $k^{s}\in \mathcal{K}_{pol}\left( \mathbb{R}%
^{n}\right) $ for every real $s$.

$\left( \mathtt{c}\right) $ If $\check{k}\left( \xi \right) =k\left( -\xi
\right) $, $\xi \in \mathbb{R}^{n}$, then $\check{k}$ is $M_{k}$-moderate
hence $\check{k}\in \mathcal{K}_{pol}\left( \mathbb{R}^{n}\right) .$

$\left( \mathtt{d}\right) $ $0<\inf_{\xi \in K}k\left( \xi \right) \leq
\sup_{\xi \in K}k\left( \xi \right) <\infty $ for every compact subset $%
K\subset \mathbb{R}^{n}$.
\end{lemma}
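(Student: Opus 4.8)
The lemma collects elementary closure properties of the class $\mathcal{K}_{pol}\left( \mathbb{R}^{n}\right)$, and the strategy in each case is simply to produce, for the candidate function, positive constants $C'$ and $N'$ witnessing the defining inequality (\ref{kh2}), using the constants $C_{i},N_{i}$ attached to the given $k,k_{1},k_{2}$ together with the elementary submultiplicativity/monotonicity of $\xi\mapsto\left\langle\xi\right\rangle$, namely $\left\langle\xi+\eta\right\rangle\leq\sqrt2\left\langle\xi\right\rangle\left\langle\eta\right\rangle$ (Peetre) and $\left\langle\xi\right\rangle\geq1$. I would dispatch the four parts in the order $\left(\mathtt{a}\right),\left(\mathtt{b}\right),\left(\mathtt{c}\right),\left(\mathtt{d}\right)$.

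For $\left(\mathtt{a}\right)$: writing the hypotheses $k_i(\xi+\eta)\leq C_i\left\langle\xi\right\rangle^{N_i}k_i(\eta)$, I would add them to get $(k_1+k_2)(\xi+\eta)\leq \max(C_1,C_2)\left\langle\xi\right\rangle^{\max(N_1,N_2)}(k_1+k_2)(\eta)$; multiply them to get the product bound with $C=C_1C_2$, $N=N_1+N_2$; for $\sup$ use $k_i(\xi+\eta)\leq C_i\left\langle\xi\right\rangle^{N_i}\sup(k_1,k_2)(\eta)$ and take the max over $i$; for $\inf$ argue symmetrically, bounding $\inf(k_1,k_2)(\xi+\eta)\leq k_i(\xi+\eta)$ for the index $i$ achieving the infimum at $\eta$. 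For $\left(\mathtt{b}\right)$, if $s\geq0$ raise (\ref{kh2}) to the power $s$ directly; if $s<0$, use the Remark's two-sided estimate $k(\xi+\eta)/k(\eta)\geq C^{-1}\left\langle\xi\right\rangle^{-N}$ (equivalently apply (\ref{kh2}) with $\xi,\eta$ replaced by $-\xi,\xi+\eta$) and raise to $-|s|$, obtaining constants $C^{|s|}$ and $N|s|$. For $\left(\mathtt{c}\right)$, the Remark already records that $k$ is $M_k$-moderate, i.e. $k(\xi+\eta)\leq M_k(\xi)k(\eta)$; substituting $\xi\mapsto-\xi$ gives $\check k(\xi+\eta)=k(-\xi-\eta)\leq M_k(-\xi)k(-\eta)=M_k(\xi)\check k(\eta)$ since $M_k$ is radial, so $\check k$ is $M_k$-moderate, and $M_k(\xi)=C\left\langle\xi\right\rangle^N$ exhibits $\check k\in\mathcal{K}_{pol}$. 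For $\left(\mathtt{d}\right)$, from the Remark's estimate $C^{-1}k(0)\left\langle\xi\right\rangle^{-N}\leq k(\xi)\leq Ck(0)\left\langle\xi\right\rangle^{N}$ and the fact that $\left\langle\cdot\right\rangle$ is continuous and bounded above and below by positive constants on the compact set $K$, the claimed two-sided bound on $\inf_K k$ and $\sup_K k$ follows; positivity of $k(0)$ is part of the definition ($k$ positive).

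The only step requiring any care is the $\inf$ case of $\left(\mathtt{a}\right)$ and the negative-power case of $\left(\mathtt{b}\right)$, since both need one to invoke the reversed inequality rather than (\ref{kh2}) as stated — but this reversal is exactly what the Remark spells out, so there is no real obstacle; the proof is a short verification. I would present it compactly, treating $\left(\mathtt{a}\right)$ in one displayed chain of inequalities per operation and citing the Remark for $\left(\mathtt{b}\right)$–$\left(\mathtt{d}\right)$.
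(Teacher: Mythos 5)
Your proof is correct and is essentially the argument the paper intends: the paper gives no proof at all, simply noting that the lemma is "an easy consequence of the definition and the above estimates," and your verifications (choosing the index achieving the infimum at $\eta$ for the $\inf$ case, using the reversed inequality for negative powers, and invoking the two-sided estimate from the Remark for $\left( \mathtt{d}\right) $) are exactly the routine checks being alluded to. The only quibble is your closing remark: the $\inf$ case as you actually argue it does not need the reversed inequality, only the choice of index at $\eta$.
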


\begin{definition}
If $k\in \mathcal{K}_{pol}\left( \mathbb{R}^{n}\right) $ and $1\leq p\leq
\infty $, we denote by $B_{p,k}\left( \mathbb{R}^{n}\right) $ the set of all
distributions $u\in \mathcal{S}^{\prime }$ such that $\widehat{u}$ is a
function and $k\widehat{u}\in L^{p}$. For $u\in B_{p,k}\left( \mathbb{R}%
^{n}\right) $ we define 
\begin{equation*}
\left\Vert u\right\Vert _{p,k}=\left\Vert k\widehat{u}\right\Vert
_{p}<\infty .
\end{equation*}
\end{definition}

In the following lemma we collect some properties of the the spaces $%
B_{p,k}\left( \mathbb{R}^{n}\right) $.

\begin{lemma}
\label{ks18}$\left( \mathtt{a}\right) $ $B_{p,k}\left( \mathbb{R}^{n}\right) 
$ is a Banach space with the norm $\left\Vert \cdot \right\Vert _{p,k}$. We
have 
\begin{equation*}
\mathcal{S}\left( \mathbb{R}^{n}\right) \subset B_{p,k}\left( \mathbb{R}%
^{n}\right) \subset \mathcal{S}^{\prime }\left( \mathbb{R}^{n}\right)
\end{equation*}%
continuously and densely.

$\left( \mathtt{b}\right) $ If $k_{1},k_{2}\in \mathcal{K}_{pol}\left( 
\mathbb{R}^{n}\right) $ and $k_{2}\left( \xi \right) \leq Ck_{1}\left( \xi
\right) ,\xi \in \mathbb{R}^{n}$, it follows that 
\begin{equation*}
B_{p,k_{1}}\left( \mathbb{R}^{n}\right) \subset B_{p,k_{2}}\left( \mathbb{R}%
^{n}\right) .
\end{equation*}

$\left( \mathtt{c}\right) $ The restriction of the isomorphism $\mathcal{S}%
^{\prime }\left( \mathbb{R}^{n}\right) \ni u\rightarrow \check{u}\in 
\mathcal{S}^{\prime }\left( \mathbb{R}^{n}\right) $ to the space $%
B_{p,k}\left( \mathbb{R}^{n}\right) $ induce an isometric isomorphism $%
B_{p,k}\left( \mathbb{R}^{n}\right) \ni u\rightarrow \check{u}\in B_{p,%
\check{k}}\left( \mathbb{R}^{n}\right) $. Here $\check{u}$ is of course the
composition of $u$ and $x\rightarrow -x$.

$\left( \mathtt{d}\right) $ If $L$ is a continuous linear form on $%
B_{p,k}\left( \mathbb{R}^{n}\right) $, $p<\infty $, we have for some $v\in
B_{p^{\prime },1/\check{k}}\left( \mathbb{R}^{n}\right) $, $1/p\prime +1/p=1$%
, $L\left( u\right) =\left( 2\pi \right) ^{n}\left\langle v,u\right\rangle $%
, $u\in \mathcal{S}\left( \mathbb{R}^{n}\right) $. The norm of this linear
form is $\left\Vert v\right\Vert _{p^{\prime },1/\check{k}}$. Hence $%
B_{p^{\prime },1/\check{k}}\left( \mathbb{R}^{n}\right) $ is the dual space
of $B_{p,k}\left( \mathbb{R}^{n}\right) $ and the canonical bilinear form in 
$B_{p,k}\left( \mathbb{R}^{n}\right) \times B_{p^{\prime },1/\check{k}%
}\left( \mathbb{R}^{n}\right) $ is the continuous extension of the bilinear
form $\left( 2\pi \right) ^{n}\left\langle v,u\right\rangle $, $v\in
B_{p^{\prime },1/\check{k}}\left( \mathbb{R}^{n}\right) $, $u\in \mathcal{S}%
\left( \mathbb{R}^{n}\right) $.

$\left( \mathtt{e}\right) $ If $u\in B_{p,k}\left( \mathbb{R}^{n}\right) $
and $\phi \in \mathcal{S}\left( \mathbb{R}^{n}\right) $, it follows that $%
\phi u\in B_{p,k}\left( \mathbb{R}^{n}\right) $ and that 
\begin{equation*}
\left\Vert \phi u\right\Vert _{p,k}\leq \left( 2\pi \right) ^{-n}\left\Vert
M_{k}\widehat{\phi }\right\Vert _{1}\left\Vert u\right\Vert _{p,k}.
\end{equation*}

$\left( \mathtt{f}\right) $ If $1/k\in L^{p^{\prime }}\left( \mathbb{R}%
^{n}\right) $, $1/p+1/p^{\prime }=1$, then $B_{p,k}\left( \mathbb{R}%
^{n}\right) \subset \mathcal{F}^{-1}L^{1}\left( \mathbb{R}^{n}\right)
\subset \mathcal{C}_{\infty }\left( \mathbb{R}^{n}\right) $.
\end{lemma}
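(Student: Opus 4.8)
The plan is to reduce part $(\mathtt{f})$ to a single application of Hölder's inequality followed by the Riemann--Lebesgue lemma. The key observation is that the defining condition $u\in B_{p,k}$ says precisely that $\widehat{u}$ is a function and $k\widehat{u}\in L^{p}$, so we may factor $\widehat{u}=\tfrac{1}{k}\cdot\bigl(k\widehat{u}\bigr)$ as a product of a function in $L^{p'}$ (by hypothesis $1/k\in L^{p'}$) and a function in $L^{p}$.

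First I would fix $u\in B_{p,k}\left(\mathbb{R}^{n}\right)$ and apply Hölder's inequality with exponents $p'$ and $p$ to the factorization above, obtaining
\begin{equation*}
\left\Vert \widehat{u}\right\Vert _{1}=\left\Vert \tfrac{1}{k}\cdot k\widehat{u}\right\Vert _{1}\leq \left\Vert \tfrac{1}{k}\right\Vert _{p'}\left\Vert k\widehat{u}\right\Vert _{p}=\left\Vert \tfrac{1}{k}\right\Vert _{p'}\left\Vert u\right\Vert _{p,k}<\infty .
\end{equation*}
Hence $\widehat{u}\in L^{1}\left(\mathbb{R}^{n}\right)$, which by definition means $u\in \mathcal{F}^{-1}L^{1}\left(\mathbb{R}^{n}\right)$; this also shows the inclusion $B_{p,k}\hookrightarrow \mathcal{F}^{-1}L^{1}$ is continuous with norm at most $\left\Vert 1/k\right\Vert _{p'}$.

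Second, for the inclusion $\mathcal{F}^{-1}L^{1}\left(\mathbb{R}^{n}\right)\subset \mathcal{C}_{\infty}\left(\mathbb{R}^{n}\right)$, I would invoke the Riemann--Lebesgue lemma: if $f\in L^{1}$ then $\mathcal{F}^{-1}f$ (being, up to the usual constant and a reflection, the Fourier transform of an $L^{1}$ function) is continuous and tends to $0$ at infinity, with $\left\Vert \mathcal{F}^{-1}f\right\Vert _{\infty}\leq \left(2\pi\right)^{-n}\left\Vert f\right\Vert _{1}$. Combining the two inclusions finishes the proof.

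I do not anticipate a genuine obstacle here: the statement is essentially a bookkeeping consequence of Hölder's inequality and a classical theorem. The only point requiring a little care is keeping track of the normalization constant $\left(2\pi\right)^{-n}$ coming from the convention for $\mathcal{F}$ used in \cite{Hormander}, and noting (via Lemma \ref{ks18}$(\mathtt{c})$ if desired) that passing between $\widehat{u}$ and $\check{u}$ does not affect $L^{1}$-membership, so the reflection hidden in $\mathcal{F}^{-1}$ versus $\mathcal{F}$ is harmless.
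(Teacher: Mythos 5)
Your argument for part $(\mathtt{f})$ is correct and is essentially identical to the paper's: factor $\widehat{u}=\frac{1}{k}\cdot k\widehat{u}$, apply H\"older's inequality with exponents $p'$ and $p$ to conclude $\widehat{u}\in L^{1}$, then invoke the Riemann--Lebesgue lemma together with the pointwise bound $\left\vert u\left( x\right) \right\vert \leq \left( 2\pi \right) ^{-n}\left\Vert \widehat{u}\right\Vert _{L^{1}}\leq \left( 2\pi \right) ^{-n}\left\Vert k^{-1}\right\Vert _{L^{p^{\prime }}}\left\Vert u\right\Vert _{p,k}$. There is nothing to add or correct on that point.

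The genuine gap is one of coverage: the statement you were asked to prove is the whole of Lemma \ref{ks18}, with parts $(\mathtt{a})$ through $(\mathtt{f})$, and your proposal addresses only $(\mathtt{f})$. The remaining parts are not all trivial. Part $(\mathtt{a})$ (completeness, the continuous and dense inclusions $\mathcal{S}\subset B_{p,k}\subset \mathcal{S}^{\prime }$) and part $(\mathtt{e})$ (the multiplier estimate $\left\Vert \phi u\right\Vert _{p,k}\leq \left( 2\pi \right) ^{-n}\left\Vert M_{k}\widehat{\phi }\right\Vert _{1}\left\Vert u\right\Vert _{p,k}$) are handled in the paper by appealing to Theorems 10.1.7 and 10.1.15 of H\"ormander, whose proofs use the moderateness inequality $k\left( \xi \right) \leq M_{k}\left( \xi -\eta \right) k\left( \eta \right) $ and Minkowski's integral inequality applied to the convolution $\widehat{\phi u}=\left( 2\pi \right) ^{-n}\widehat{\phi }\ast \widehat{u}$. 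Part $(\mathtt{d})$ requires an actual argument: one conjugates by the Fourier transform, writing $L\left( u\right) =\left\langle \mathcal{F}\check{v},\mathcal{F}u\right\rangle $, so that the claim reduces to the identification of the dual of a weighted $L^{p}$ space ($p<\infty $) with the corresponding weighted $L^{p^{\prime }}$ space; the reflection $\check{k}$ appears precisely because of this conjugation. Parts $(\mathtt{b})$ and $(\mathtt{c})$ are immediate from the definition, but should at least be stated as such. As it stands, your proposal proves one sixth of the lemma.
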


\begin{proof}
$\left( \mathtt{a}\right) $ The proof is identical to the proof of the
Theorem 10.1.7 in H\"{o}rmander \cite{Hormander} vol. 2.

$\left( \mathtt{b}\right) $ and $\left( \mathtt{c}\right) $ are are easy
consequences of the definition.

$\left( \mathtt{d}\right) $ If $\mathcal{F}$ is the Fourier transformation
in $\mathcal{S}^{\prime }\left( \mathbb{R}^{n}\right) $ then%
\begin{eqnarray*}
L\left( u\right) &=&\left( 2\pi \right) ^{n}\left\langle v,u\right\rangle
=\left( 2\pi \right) ^{n}\left\langle \mathcal{FF}^{-1}v,u\right\rangle
=\left( 2\pi \right) ^{n}\left\langle \mathcal{F}^{-1}v,\mathcal{F}%
u\right\rangle \\
&=&\left\langle \mathcal{F}\check{v},\mathcal{F}u\right\rangle .
\end{eqnarray*}%
Hence the Fourier transformation reduces the theorem to the fact that a
continuous linear form on $\mathcal{S}\left( \mathbb{R}^{n}\right) $ with
respect to the norm $\left\Vert kU\right\Vert _{p}$, $p<\infty $, is a
scalar product with a function $V$ such that $V/k\in L^{p^{\prime }}$ and
that the norm of the linear form is $\left\Vert V/k\right\Vert _{p^{\prime
}} $.

$\left( \mathtt{e}\right) $ The proof is identical to the proof of the
Theorem 10.1.15 in H\"{o}rmander \cite{Hormander} vol. 2.

$\left( \mathtt{f}\right) $ Let $u\in B_{p,k}\left( \mathbb{R}^{n}\right) $.
If $1/k\in L^{p^{\prime }}\left( \mathbb{R}^{n}\right) $, then $\widehat{u}%
\in L^{1}\left( \mathbb{R}^{n}\right) $ since $k\widehat{u}\in L^{p}\left( 
\mathbb{R}^{n}\right) $ and $1/p+1/p^{\prime }=1$. Now the Riemann-Lebesgue
lemma implies the result. For $x\in \mathbb{R}^{n}$ we have%
\begin{equation*}
\left\vert u\left( x\right) \right\vert \leq \left( 2\pi \right)
^{-n}\left\Vert \widehat{u}\right\Vert _{L^{1}}\leq \left( 2\pi \right)
^{-n}\left\Vert k^{-1}\right\Vert _{L^{p^{\prime }}}\left\Vert k\widehat{u}%
\right\Vert _{L^{p}}=\left( 2\pi \right) ^{-n}\left\Vert k^{-1}\right\Vert
_{L^{p^{\prime }}}\left\Vert u\right\Vert _{p,k}.
\end{equation*}
\end{proof}

\begin{lemma}
\label{ks3}Let $k\in \mathcal{K}_{pol}\left( \mathbb{R}^{n}\right) $ and $C$%
, $N$ the positive constants that define $k$ and $1\leq p\leq \infty $. Let $%
m_{k}=\left[ N+\frac{n+1}{2}\right] +1$ and $l_{k}=\left[ N\right] +n+2$.

$\left( \mathtt{a}\right) $ If $\chi \in H^{N+\frac{n+1}{2}}\left( \mathbb{R}%
^{n}\right) $, then for every $u\in B_{p,k}\left( \mathbb{R}^{n}\right) $ we
have $\chi u\in B_{p,k}\left( \mathbb{R}^{n}\right) $ and%
\begin{eqnarray*}
\left\Vert \chi u\right\Vert _{p,k} &\leq &\left( 2\pi \right)
^{-n}\left\Vert M_{k}\widehat{\chi }\right\Vert _{1}\left\Vert u\right\Vert
_{p,k}=C\left( k,n,\chi \right) \left\Vert u\right\Vert _{p,k} \\
&\leq &C\left( C,n\right) \left\Vert \chi \right\Vert _{H^{N+\frac{n+1}{2}%
}}\left\Vert u\right\Vert _{p,k},
\end{eqnarray*}%
where 
\begin{eqnarray*}
C\left( k,n,\chi \right) &=&\left( 2\pi \right) ^{-n}\left\Vert M_{k}%
\widehat{\chi }\right\Vert _{1} \\
&\leq &C\left( C,n\right) \left\Vert \chi \right\Vert _{H^{N+\frac{n+1}{2}}},
\end{eqnarray*}%
Here $H^{m}\left( \mathbb{R}^{n}\right) $ is the usual Sobolev space, $m\in 
\mathbb{R}$. If $\chi \in H^{m_{k}}\left( \mathbb{R}^{n}\right) $, then 
\begin{equation*}
\left\Vert \chi u\right\Vert _{p,k}\leq C\left( C,n\right) \left(
\sum_{\left\vert \alpha \right\vert \leq m_{k}}\left\Vert \partial ^{\alpha
}\chi \right\Vert _{L^{2}}\right) \left\Vert u\right\Vert _{p,k}.
\end{equation*}

$\left( \mathtt{b}\right) $ If $\chi \in \mathcal{C}^{l_{k}}\left( \mathbb{R}%
^{n}\right) $ is $\mathbb{Z}^{n}$-periodic, then for every $u\in
B_{p,k}\left( \mathbb{R}^{n}\right) $ we have $\chi u\in B_{p,k}\left( 
\mathbb{R}^{n}\right) $ and%
\begin{equation*}
\left\Vert \chi u\right\Vert _{p,k}\leq Cst\left( C,N,n\right) \left\Vert
\chi \right\Vert _{\mathcal{BC}^{l_{k}}\left( \mathbb{R}^{n}\right)
}\left\Vert u\right\Vert _{p,k}.
\end{equation*}
\end{lemma}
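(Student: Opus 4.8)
The proof of both parts rests on the $M_{k}$-moderateness of $k$ recorded in the Remark, together with the density of $\mathcal{S}$ in $B_{p,k}$ and the completeness of $B_{p,k}$ from Lemma \ref{ks18}$\left( \mathtt{a}\right) $.

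\emph{Part $\left( \mathtt{a}\right) $.} I would start from the identity $\widehat{\chi u}=\left( 2\pi \right) ^{-n}\,\widehat{\chi }\ast \widehat{u}$, valid first for $u\in \mathcal{S}$. Writing $\xi =\left( \xi -\eta \right) +\eta $ and using $k\left( \xi \right) \leq M_{k}\left( \xi -\eta \right) k\left( \eta \right) $, one obtains the pointwise bound $k\left( \xi \right) \left\vert \widehat{\chi u}\left( \xi \right) \right\vert \leq \left( 2\pi \right) ^{-n}\bigl( \left( M_{k}\left\vert \widehat{\chi }\right\vert \right) \ast \left( k\left\vert \widehat{u}\right\vert \right) \bigr) \left( \xi \right) $, and Young's inequality $L^{1}\ast L^{p}\hookrightarrow L^{p}$ then gives $\left\Vert \chi u\right\Vert _{p,k}\leq \left( 2\pi \right) ^{-n}\left\Vert M_{k}\widehat{\chi }\right\Vert _{1}\left\Vert u\right\Vert _{p,k}$. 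It remains to estimate $\left\Vert M_{k}\widehat{\chi }\right\Vert _{1}=C\int \left\langle \xi \right\rangle ^{N}\left\vert \widehat{\chi }\left( \xi \right) \right\vert \mathrm{d}\xi $; splitting $\left\langle \xi \right\rangle ^{N}=\left\langle \xi \right\rangle ^{-(n+1)/2}\cdot \left\langle \xi \right\rangle ^{N+(n+1)/2}$ and applying the Cauchy--Schwarz inequality, the first factor lies in $L^{2}$ because $n+1>n$, while the second factor contributes $\left\Vert \chi \right\Vert _{H^{N+(n+1)/2}}$ up to a power of $2\pi $. This yields the bound $C\left( C,n\right) \left\Vert \chi \right\Vert _{H^{N+(n+1)/2}}$, and the case of general $u\in B_{p,k}$ follows by density. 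For the last assertion of $\left( \mathtt{a}\right) $ I would only note that $m_{k}=\left[ N+\frac{n+1}{2}\right] +1>N+\frac{n+1}{2}$, so $H^{m_{k}}\hookrightarrow H^{N+(n+1)/2}$ with $\left\Vert \chi \right\Vert _{H^{N+(n+1)/2}}\leq \left\Vert \chi \right\Vert _{H^{m_{k}}}$, and since $m_{k}$ is an integer the equivalence $\left\langle \xi \right\rangle ^{2m_{k}}\approx \sum_{\left\vert \alpha \right\vert \leq m_{k}}\left\vert \xi ^{\alpha }\right\vert ^{2}$ gives $\left\Vert \chi \right\Vert _{H^{m_{k}}}\leq Cst\sum_{\left\vert \alpha \right\vert \leq m_{k}}\left\Vert \partial ^{\alpha }\chi \right\Vert _{L^{2}}$.

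\emph{Part $\left( \mathtt{b}\right) $.} Since $\chi $ is $\mathbb{Z}^{n}$-periodic of class $\mathcal{C}^{l_{k}}$ with $l_{k}>n$, it has an absolutely convergent Fourier expansion $\chi \left( x\right) =\sum_{\gamma \in \mathbb{Z}^{n}}c_{\gamma }e^{2\pi i\gamma \cdot x}$; repeated integration by parts over a period gives $\left\vert \left( 2\pi \gamma \right) ^{\alpha }c_{\gamma }\right\vert \leq \left\Vert \partial ^{\alpha }\chi \right\Vert _{L^{\infty }}$ for $\left\vert \alpha \right\vert \leq l_{k}$, whence $\left\vert c_{\gamma }\right\vert \leq Cst\,\left\langle \gamma \right\rangle ^{-l_{k}}\left\Vert \chi \right\Vert _{\mathcal{BC}^{l_{k}}}$. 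Multiplication by $e^{2\pi i\gamma \cdot x}$ is translation by $2\pi \gamma $ on the Fourier side, so using $k\left( \zeta +2\pi \gamma \right) \leq M_{k}\left( 2\pi \gamma \right) k\left( \zeta \right) $ one gets $\left\Vert e^{2\pi i\gamma \cdot x}u\right\Vert _{p,k}\leq M_{k}\left( 2\pi \gamma \right) \left\Vert u\right\Vert _{p,k}\leq Cst\left( C,N\right) \left\langle \gamma \right\rangle ^{N}\left\Vert u\right\Vert _{p,k}$. Summing, $\sum_{\gamma }\left\vert c_{\gamma }\right\vert \left\Vert e^{2\pi i\gamma \cdot x}u\right\Vert _{p,k}\leq Cst\,\left\Vert \chi \right\Vert _{\mathcal{BC}^{l_{k}}}\left\Vert u\right\Vert _{p,k}\sum_{\gamma }\left\langle \gamma \right\rangle ^{N-l_{k}}$, and the last series converges since $l_{k}=\left[ N\right] +n+2>N+n$. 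Hence $\sum_{\gamma }c_{\gamma }e^{2\pi i\gamma \cdot x}u$ converges absolutely in the Banach space $B_{p,k}$; identifying its sum with $\chi u$ (first for $u\in \mathcal{S}$ via the convolution formula $\widehat{\chi u}=\left( 2\pi \right) ^{-n}\widehat{\chi }\ast \widehat{u}$ with $\widehat{\chi }=\left( 2\pi \right) ^{n}\sum_{\gamma }c_{\gamma }\delta _{2\pi \gamma }$, then for general $u$ by density and the continuous embedding $B_{p,k}\hookrightarrow \mathcal{S}^{\prime }$) yields $\chi u\in B_{p,k}$ together with the stated estimate.

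\emph{Main obstacle.} The estimates are routine once $M_{k}$-moderateness is invoked; the only genuine point requiring care is in $\left( \mathtt{b}\right) $, namely that the product $\chi u$ is a priori meaningful for an arbitrary distribution $u\in B_{p,k}$ (a bounded $\mathcal{C}^{l_{k}}$ function need not multiply a general tempered distribution) and that the term-by-term passage of the norm through the series is legitimate. I would resolve this by working first with $u\in \mathcal{S}$, where everything is classical, and then transferring to $B_{p,k}$ by density and completeness --- exactly the mechanism already used for Lemma \ref{ks18}$\left( \mathtt{e}\right) $.
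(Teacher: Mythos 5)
Your proof follows essentially the same route as the paper's: in part (\texttt{a}) the basic bound $\left\Vert \chi u\right\Vert _{p,k}\leq \left( 2\pi \right) ^{-n}\left\Vert M_{k}\widehat{\chi }\right\Vert _{1}\left\Vert u\right\Vert _{p,k}$ (the paper quotes it from Lemma \ref{ks18} (\texttt{e}), i.e.\ H\"ormander's Theorem 10.1.15, which is exactly your Young-inequality argument) followed by the Cauchy--Schwarz splitting $\left\langle \xi \right\rangle ^{N}=\left\langle \xi \right\rangle ^{-(n+1)/2}\left\langle \xi \right\rangle ^{N+(n+1)/2}$, and in part (\texttt{b}) the absolutely convergent Fourier expansion with $\left\vert c_{\gamma }\right\vert \leq Cst\left\Vert \chi \right\Vert _{\mathcal{BC}^{l_{k}}}\left\langle \gamma \right\rangle ^{-l_{k}}$ combined with the modulation estimate $\left\Vert \mathtt{e}^{\mathtt{i}\left\langle \cdot ,\eta \right\rangle }u\right\Vert _{p,k}\leq C\left\langle \eta \right\rangle ^{N}\left\Vert u\right\Vert _{p,k}$ and the convergence of $\sum_{\gamma }\left\langle \gamma \right\rangle ^{N-l_{k}}$.

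One caveat: your limiting step is density in $u$ (``first $u\in \mathcal{S}$, then general $u\in B_{p,k}$ by density''), but $\mathcal{S}$ is dense in $B_{p,k}$ only for $p<\infty $, and the lemma includes $p=\infty $. This is easily repaired --- for $u\in B_{p,k}$ the transform $\widehat{u}$ is a measurable function of polynomial growth, so the convolution $\widehat{\chi }\ast \widehat{u}$ (resp.\ the series $\sum_{\gamma }c_{\gamma }\widehat{u}\left( \cdot -2\pi \gamma \right) $) converges absolutely pointwise and the estimate holds directly, no approximation of $u$ being needed --- and indeed the paper applies density in the other variable, approximating $\chi \in H^{N+\frac{n+1}{2}}$ by Schwartz functions, which sidesteps the $p=\infty $ issue. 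You should restate the limiting argument accordingly.
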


\begin{proof}
$\left( \mathtt{a}\right) $ Since $\mathcal{S}\left( \mathbb{R}^{n}\right) $
is dense in $H^{N+\frac{n+1}{2}}\left( \mathbb{R}^{n}\right) $, we can
assume that $\chi \in \mathcal{S}\left( \mathbb{R}^{n}\right) $. We know
that 
\begin{equation*}
\left\Vert \chi u\right\Vert _{p,k}\leq \left( 2\pi \right) ^{-n}\left\Vert
M_{k}\widehat{\chi }\right\Vert _{1}\left\Vert u\right\Vert _{p,k}.
\end{equation*}%
Since $M_{k}\left( \xi \right) =C\left\langle \xi \right\rangle ^{N}$,
Schwarz inequality gives the estimate of $C\left( k,n,\chi \right) $%
\begin{eqnarray*}
C\left( k,n,\chi \right) &=&\left( 2\pi \right) ^{-n}\left\Vert M_{k}%
\widehat{\chi }\right\Vert _{1}=\left( 2\pi \right) ^{-n}C\left( \int
\left\langle \eta \right\rangle ^{N}\left\vert \widehat{\chi }\left( \eta
\right) \right\vert \mathtt{d}\eta \right) \\
&\leq &\left( 2\pi \right) ^{-n}C\left\Vert \left\langle \cdot \right\rangle
^{-n-1}\right\Vert _{L^{1}}\left\Vert \chi \right\Vert _{H^{N+\frac{n+1}{2}}}
\\
&=&C\left( C,n\right) \left\Vert \chi \right\Vert _{H^{N+\frac{n+1}{2}}},
\end{eqnarray*}%
with $\left\Vert \chi \right\Vert _{H^{N+\frac{n+1}{2}}}\leq Cst\left(
\sum_{\left\vert \alpha \right\vert \leq m_{k}}\left\Vert \partial ^{\alpha
}\chi \right\Vert _{L^{2}}\right) $ when $\chi \in H^{m_{k}}\left( \mathbb{R}%
^{n}\right) $.

$\left( \mathtt{b}\right) $ We shall use some results from \cite{Hormander}
vol. 1, pp 177-179, concerning periodic distributions. If $\chi \in \mathcal{%
C}^{l_{k}}\left( \mathbb{R}^{n}\right) $ is $\mathbb{Z}^{n}$-periodic, then 
\begin{equation*}
\chi =\sum_{\gamma \in \mathbb{Z}^{n}}\mathtt{e}^{2\pi \mathtt{i}%
\left\langle \cdot ,\gamma \right\rangle }c_{\gamma },
\end{equation*}%
with Fourier coefficients 
\begin{equation*}
c_{\gamma }=\int_{\mathtt{I}}\chi \left( x\right) \mathtt{e}^{-2\pi \mathtt{i%
}\left\langle x,\gamma \right\rangle }\mathtt{d}x,\quad \mathtt{I}=\left[
0,1\right) ^{n},\quad \gamma \in \mathbb{Z}^{n},
\end{equation*}%
satisfying%
\begin{equation*}
\left\vert c_{\gamma }\right\vert \leq Cst\left\Vert \chi \right\Vert _{%
\mathcal{BC}^{l_{k}}\left( \mathbb{R}^{n}\right) }\left\langle 2\pi \gamma
\right\rangle ^{-l_{k}},\quad \gamma \in \mathbb{Z}^{n}.
\end{equation*}%
Since $\widehat{\mathtt{e}^{\mathtt{i}\left\langle \cdot ,\eta \right\rangle
}u}=\widehat{u}\left( \cdot -\eta \right) $, multiplying by $k\left( \xi
\right) $ and noting the inequality $k\left( \xi \right) \leq C\left\langle
\eta \right\rangle ^{N}k\left( \xi -\eta \right) $, we obtain 
\begin{equation*}
\left\vert k\left( \xi \right) \widehat{\mathtt{e}^{\mathtt{i}\left\langle
\cdot ,\eta \right\rangle }u}\left( \xi \right) \right\vert \leq
C\left\langle \eta \right\rangle ^{N}\left\vert k\left( \xi -\eta \right) 
\widehat{u}\left( \xi -\eta \right) \right\vert ,
\end{equation*}%
and%
\begin{equation*}
\left\Vert \mathtt{e}^{\mathtt{i}\left\langle \cdot ,\eta \right\rangle
}u\right\Vert _{p,k}\leq C\left\langle \eta \right\rangle ^{N}\left\Vert
u\right\Vert _{p,k}.
\end{equation*}%
It follows that%
\begin{eqnarray*}
\left\Vert \chi u\right\Vert _{p,k} &\leq &Cst\cdot C\left\Vert \chi
\right\Vert _{\mathcal{BC}^{l_{k}}\left( \mathbb{R}^{n}\right) }\left(
\sum_{\gamma \in \mathbb{Z}^{n}}\left\langle 2\pi \gamma \right\rangle
^{-l_{k}}\left\langle 2\pi \gamma \right\rangle ^{N}\right) \left\Vert
u\right\Vert _{p,k} \\
&\leq &Cst\cdot C\left( \sum_{\gamma \in \mathbb{Z}^{n}}\left\langle 2\pi
\gamma \right\rangle ^{-n-1}\right) \left\Vert \chi \right\Vert _{\mathcal{BC%
}^{l_{k}}\left( \mathbb{R}^{n}\right) }\left\Vert u\right\Vert _{p,k}.
\end{eqnarray*}%
since $l_{k}=\left[ N\right] +n+2$.
\end{proof}

For any $x\in \mathbb{R}^{n}$ and for any distribution $u$ on $\mathbb{R}%
^{n} $, by $\tau _{x}u$ we shall denote the translation by $x$ of $u$, i.e. $%
\tau _{x}u=u\left( \cdot -x\right) =\delta _{x}\ast u$.

\begin{lemma}
Let $\varphi \in \mathcal{S}\left( \mathbb{R}^{n}\right) $ and $\theta \in %
\left[ 0,2\pi \right] ^{n}$. If 
\begin{equation*}
\varphi _{\theta }=\sum_{\gamma \in \mathbb{Z}^{n}}\mathtt{e}^{\mathtt{i}%
\left\langle \gamma ,\theta \right\rangle }\tau _{\gamma }\varphi
=\sum_{\gamma \in \mathbb{Z}^{n}}\mathtt{e}^{\mathtt{i}\left\langle \gamma
,\theta \right\rangle }\varphi \left( \cdot -\gamma \right) =\sum_{\gamma
\in \mathbb{Z}^{n}}\mathtt{e}^{\mathtt{i}\left\langle \gamma ,\theta
\right\rangle }\delta _{\gamma }\ast \varphi ,
\end{equation*}%
then 
\begin{equation*}
\widehat{\varphi }_{\theta }=\nu _{\theta }=\left( 2\pi \right)
^{n}\sum_{\gamma \in \mathbb{Z}^{n}}\widehat{\varphi }\left( 2\pi \gamma
+\theta \right) \delta _{2\pi \gamma +\theta }.
\end{equation*}
\end{lemma}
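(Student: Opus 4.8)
The plan is to recognize $\varphi_{\theta}$ as a convolution and then to appeal to the Poisson summation formula. Put
\[
\mu_{\theta}=\sum_{\gamma\in\mathbb{Z}^{n}}\mathtt{e}^{\mathtt{i}\left\langle\gamma,\theta\right\rangle}\delta_{\gamma}=\mathtt{e}^{\mathtt{i}\left\langle\cdot,\theta\right\rangle}\sum_{\gamma\in\mathbb{Z}^{n}}\delta_{\gamma}.
\]
Since the coefficients $\mathtt{e}^{\mathtt{i}\left\langle\gamma,\theta\right\rangle}$ are bounded, $\mu_{\theta}\in\mathcal{S}^{\prime}\left(\mathbb{R}^{n}\right)$; and since $\varphi\in\mathcal{S}\left(\mathbb{R}^{n}\right)$, the convolution $\mu_{\theta}\ast\varphi$ makes sense and equals
\[
\mu_{\theta}\ast\varphi=\sum_{\gamma\in\mathbb{Z}^{n}}\mathtt{e}^{\mathtt{i}\left\langle\gamma,\theta\right\rangle}\delta_{\gamma}\ast\varphi=\sum_{\gamma\in\mathbb{Z}^{n}}\mathtt{e}^{\mathtt{i}\left\langle\gamma,\theta\right\rangle}\tau_{\gamma}\varphi=\varphi_{\theta},
\]
the series on the right converging in $\mathcal{BC}^{\infty}\left(\mathbb{R}^{n}\right)$ (because $\varphi$ and all its derivatives are rapidly decreasing), hence in $\mathcal{S}^{\prime}\left(\mathbb{R}^{n}\right)$. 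By the convolution theorem, $\widehat{\varphi}_{\theta}=\widehat{\mu}_{\theta}\,\widehat{\varphi}$.

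Next I would compute $\widehat{\mu}_{\theta}$. Writing $\mu_{0}=\sum_{\gamma\in\mathbb{Z}^{n}}\delta_{\gamma}$, the relation $\mu_{\theta}=\mathtt{e}^{\mathtt{i}\left\langle\cdot,\theta\right\rangle}\mu_{0}$ together with the modulation--translation rule $\mathcal{F}\!\left(\mathtt{e}^{\mathtt{i}\left\langle\cdot,\theta\right\rangle}v\right)=\tau_{\theta}\widehat{v}$ (valid with H\"{o}rmander's normalization $\widehat{u}\left(\xi\right)=\int\mathtt{e}^{-\mathtt{i}\left\langle x,\xi\right\rangle}u\left(x\right)\mathtt{d}x$) gives $\widehat{\mu}_{\theta}=\tau_{\theta}\widehat{\mu}_{0}$. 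The Poisson summation formula for periodic distributions (\cite{Hormander} vol. 1, pp. 177--179) asserts that $\widehat{\mu}_{0}=\left(2\pi\right)^{n}\sum_{\gamma\in\mathbb{Z}^{n}}\delta_{2\pi\gamma}$, and translating by $\theta$ yields
\[
\widehat{\mu}_{\theta}=\tau_{\theta}\!\left(\left(2\pi\right)^{n}\sum_{\gamma\in\mathbb{Z}^{n}}\delta_{2\pi\gamma}\right)=\left(2\pi\right)^{n}\sum_{\gamma\in\mathbb{Z}^{n}}\delta_{2\pi\gamma+\theta}.
\]

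It then remains to multiply by the Schwartz function $\widehat{\varphi}$. Since multiplication by a function in $\mathcal{S}\left(\mathbb{R}^{n}\right)$ is continuous on $\mathcal{S}^{\prime}\left(\mathbb{R}^{n}\right)$ and $\psi\,\delta_{a}=\psi\left(a\right)\delta_{a}$ for every continuous $\psi$, one obtains term by term
\[
\widehat{\varphi}_{\theta}=\widehat{\mu}_{\theta}\,\widehat{\varphi}=\left(2\pi\right)^{n}\sum_{\gamma\in\mathbb{Z}^{n}}\widehat{\varphi}\left(2\pi\gamma+\theta\right)\delta_{2\pi\gamma+\theta}=\nu_{\theta},
\]
the last series converging in $\mathcal{S}^{\prime}\left(\mathbb{R}^{n}\right)$ because $\widehat{\varphi}$ is rapidly decreasing. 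The manipulations that need to be checked are the convergence of the defining series for $\varphi_{\theta}$ in $\mathcal{S}^{\prime}$ and its identification with $\mu_{\theta}\ast\varphi$, and the applicability of the convolution theorem to the pair $\left(\mu_{\theta},\varphi\right)$; the step I expect to be the genuine point is obtaining the Poisson summation formula with the correct constant $\left(2\pi\right)^{n}$ for H\"{o}rmander's convention. Alternatively, one can bypass the Dirac comb and prove the identity directly: pairing both sides with a test function $\psi\in\mathcal{S}\left(\mathbb{R}^{n}\right)$, the function $y\mapsto\mathtt{e}^{\mathtt{i}\left\langle y,\theta\right\rangle}\sum_{\gamma}\mathtt{e}^{\mathtt{i}\left\langle\gamma,\theta\right\rangle}\widehat{\psi}\left(y+\gamma\right)$ is $\mathbb{Z}^{n}$-periodic, and expanding it in a Fourier series (again \cite{Hormander} vol. 1, pp. 177--179) reduces $\left\langle\widehat{\varphi}_{\theta},\psi\right\rangle$ to $\left(2\pi\right)^{n}\sum_{\gamma}\widehat{\varphi}\left(2\pi\gamma+\theta\right)\psi\left(2\pi\gamma+\theta\right)=\left\langle\nu_{\theta},\psi\right\rangle$.
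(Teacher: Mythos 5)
Your proof is correct and follows essentially the same route as the paper: both write $\varphi_{\theta}$ as the convolution of $\varphi$ with the modulated Dirac comb $\mathtt{e}^{\mathtt{i}\left\langle \cdot ,\theta \right\rangle }\sum_{\gamma }\delta _{\gamma }$, apply Poisson's summation formula together with the modulation--translation rule, and then multiply the resulting translated comb by $\widehat{\varphi }$. You merely supply a bit more justification (convergence of the series, applicability of the convolution theorem) and sketch an alternative direct pairing argument, but the core argument is identical.
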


\begin{proof}
We have $\varphi _{\theta }=\varphi \ast \left( \mathtt{e}^{\mathtt{i}%
\left\langle \cdot ,\theta \right\rangle }S\right) $, where $S=\sum_{\gamma
\in \mathbb{Z}^{n}}\delta _{\gamma }$. We apply Poisson's summation formula, 
$\mathcal{F}\left( \sum_{\gamma \in \mathbb{Z}^{n}}\delta _{\gamma }\right)
=\left( 2\pi \right) ^{n}\sum_{\gamma \in \mathbb{Z}^{n}}\delta _{2\pi
\gamma }$, to obtain 
\begin{eqnarray*}
\widehat{\varphi }_{\theta } &=&\widehat{\varphi }\cdot \widehat{\left( 
\mathtt{e}^{\mathtt{i}\left\langle \cdot ,\theta \right\rangle }S\right) }=%
\widehat{\varphi }\cdot \tau _{\theta }\widehat{S}=\left( 2\pi \right) ^{n}%
\widehat{\varphi }\sum_{\gamma \in \mathbb{Z}^{n}}\delta _{2\pi \gamma
+\theta } \\
&=&\left( 2\pi \right) ^{n}\sum_{\gamma \in \mathbb{Z}^{n}}\widehat{\varphi }%
\left( 2\pi \gamma +\theta \right) \delta _{2\pi \gamma +\theta }.
\end{eqnarray*}
\end{proof}

\begin{notation}
For $k$ in $\mathcal{K}_{pol}\left( \mathbb{R}^{n}\right) $ we denote by $%
\mathcal{B}_{k}\left( \mathbb{R}^{n}\right) $ the Hilbert space $%
B_{2,k}\left( \mathbb{R}^{n}\right) $. We shall use $\left\Vert \cdot
\right\Vert _{\mathcal{B}_{k}}$ for the norm $\left\Vert \cdot \right\Vert
_{2,k}$.
\end{notation}

As we already said the techniques of Coifman and Meyer, used in the study of
Beurling algebras $A_{\omega }$ and $B_{\omega }$ (see \cite{Meyer} pp
7-10), can be adapted to the case spaces $\mathcal{B}_{k}\left( \mathbb{R}%
^{n}\right) =B_{2,k}\left( \mathbb{R}^{n}\right) $. An example is the
following result.

\begin{lemma}
\label{ks2}Let $k\in \mathcal{K}_{pol}\left( \mathbb{R}^{n}\right) $. Let $%
\left\{ u_{\gamma }\right\} _{\gamma \in \mathbb{Z}^{n}}$ be a a family of
elements from $\mathcal{B}_{k}\left( \mathbb{R}^{n}\right) \cap \mathcal{D}%
_{K}^{\prime }\left( \mathbb{R}^{n}\right) $, where $K\subset \mathbb{R}^{n}$
is a compact subset such that $\left( K-K\right) \cap \mathbb{Z}^{n}=\left\{
0\right\} $. Put%
\begin{equation*}
u=\sum_{\gamma \in \mathbb{Z}^{n}}\tau _{\gamma }u_{\gamma }=\sum_{\gamma
\in \mathbb{Z}^{n}}u_{\gamma }\left( \cdot -\gamma \right) =\sum_{\gamma \in 
\mathbb{Z}^{n}}\delta _{\gamma }\ast u_{\gamma }\in \mathcal{D}^{\prime
}\left( \mathbb{R}^{n}\right) .
\end{equation*}%
Then the following statements are equivalent:

$\left( \mathtt{a}\right) $ $u\in \mathcal{B}_{k}\left( \mathbb{R}%
^{n}\right) $.

$\left( \mathtt{b}\right) $ $\sum_{\gamma \in \mathbb{Z}^{n}}\left\Vert
u_{\gamma }\right\Vert _{\mathcal{B}_{k}}^{2}<\infty .$

Moreover, there is $C\geq 1$, which does not depend on the family $\left\{
u_{\gamma }\right\} _{\gamma \in \mathbb{Z}^{n}}$, such that%
\begin{equation}
C^{-1}\left\Vert u\right\Vert _{\mathcal{B}_{k}}\leq \left( \sum_{\gamma \in 
\mathbb{Z}^{n}}\left\Vert u_{\gamma }\right\Vert _{\mathcal{B}%
_{k}}^{2}\right) ^{1/2}\leq C\left\Vert u\right\Vert _{\mathcal{B}_{k}}.
\label{ks1}
\end{equation}
\end{lemma}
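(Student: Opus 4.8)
The plan is to reduce everything to a statement about $L^2$ functions via the Fourier transform, and to exploit the separation condition $(K-K)\cap\mathbb Z^n=\{0\}$ through a "partition of the torus" argument. First I would observe that, since each $u_\gamma$ is supported in the fixed compact $K$, the translate $\tau_\gamma u_\gamma$ is supported in $K+\gamma$, and the separation hypothesis guarantees that the sets $\{K+\gamma\}_{\gamma\in\mathbb Z^n}$ are \emph{pairwise disjoint}; hence the sum $u=\sum_\gamma\tau_\gamma u_\gamma$ is locally finite and well defined in $\mathcal D'$, and one can recover each $u_\gamma$ from $u$ by restriction. Next, fix a function $\varphi\in\mathcal D(\mathbb R^n)$ with $\varphi\equiv 1$ on a neighbourhood of $K-K$ and supported so small that $\mathrm{supp}\,\varphi$ meets $\mathbb Z^n$ only at $0$; then $\varphi_\theta=\sum_{\gamma}e^{\mathtt i\langle\gamma,\theta\rangle}\tau_\gamma\varphi$ acts as a "Fourier multiplier in the $\gamma$-variable," and the previous lemma computes $\widehat{\varphi_\theta}$ as a sum of Dirac masses on the lattice $2\pi\gamma+\theta$. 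The key identity I would aim for is that, for suitable test functions, $\varphi_\theta\cdot u=\sum_{\gamma}e^{\mathtt i\langle\gamma,\theta\rangle}\tau_\gamma u_\gamma$ (using $\varphi\equiv1$ near the support of each $u_\gamma$ and the disjointness of supports to kill cross terms), so that integrating $\theta$ over $[0,2\pi]^n$ against $e^{-\mathtt i\langle\gamma_0,\theta\rangle}$ isolates $\tau_{\gamma_0}u_{\gamma_0}$.

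With that machinery in place, the two inequalities in (\ref{ks1}) come out as follows. For the right-hand inequality ($\sum_\gamma\|u_\gamma\|_{\mathcal B_k}^2\le C^2\|u\|_{\mathcal B_k}^2$): from $\tau_{\gamma}u_{\gamma}=(2\pi)^{-n}\int_{[0,2\pi]^n}e^{-\mathtt i\langle\gamma,\theta\rangle}(\varphi_\theta u)\,\mathtt d\theta$ and the fact that translation $\tau_\gamma$ is an isometry on $\mathcal B_k$ while multiplication by the smooth compactly supported $\varphi$ (equivalently by $e^{\mathtt i\langle\cdot,\theta\rangle}\varphi$, uniformly in $\theta$) is bounded on $\mathcal B_k$ by Lemma~\ref{ks3}(a) (or Lemma~\ref{ks18}(e)), I would get $\|u_\gamma\|_{\mathcal B_k}\lesssim\sup_\theta\|\varphi_\theta u\|_{\mathcal B_k}$, but to capture the $\ell^2$ summation one must instead compute on the Fourier side: $\widehat{\varphi_\theta u}$ is a "sampling at $2\pi\gamma+\theta$" of $\widehat\varphi\ast\widehat u$, and integrating $k(\xi)^2|\widehat{\varphi_\theta u}(\xi)|^2$ over $\xi$ and then over $\theta\in[0,2\pi]^n$ recombines the lattice samples into a single integral over $\mathbb R^n$, namely $\int k^2|\widehat\varphi\ast\widehat u|^2$, which is $\lesssim\|u\|_{\mathcal B_k}^2$ because convolution with $\widehat\varphi$ is bounded on the weighted $L^2$ space (the $M_k$-moderateness of $k$, exactly as in Lemma~\ref{ks18}(e)). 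Simultaneously, $\sum_\gamma\|\tau_\gamma u_\gamma\|_{\mathcal B_k}^2=\sum_\gamma\|u_\gamma\|_{\mathcal B_k}^2$ appears as the Parseval/periodization identity for the same $\theta$-integral, giving the bound.

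For the left-hand inequality ($\|u\|_{\mathcal B_k}\le C(\sum_\gamma\|u_\gamma\|_{\mathcal B_k}^2)^{1/2}$): write $\widehat u=\sum_\gamma e^{-\mathtt i\langle\gamma,\cdot\rangle}\widehat{u_\gamma}$ and estimate $\|k\widehat u\|_{L^2}$ by a duality/Plancherel argument — pair $k\widehat u$ against an arbitrary $g\in L^2$, expand the sum, and use that the "almost orthogonality" of the pieces $e^{-\mathtt i\langle\gamma,\cdot\rangle}k\widehat{u_\gamma}$ (which follows once more from the support condition on the $u_\gamma$ via the same torus-integration trick, now read backwards) collapses the double sum to a diagonal one, yielding $\|u\|_{\mathcal B_k}^2\lesssim\sum_\gamma\|u_\gamma\|_{\mathcal B_k}^2$. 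Equivalently and perhaps more cleanly, one integrates the inequality $\|\varphi_\theta u\|_{\mathcal B_k}\le C\,\|u\|_{\mathcal B_k}$ (boundedness of the periodization-multiplier uniformly in $\theta$) together with its near-converse. I expect the main obstacle to be the bookkeeping that makes the passage between "integrate over $\theta\in[0,2\pi]^n$" and "integrate over $\xi\in\mathbb R^n$" rigorous at the level of the weighted norm — that is, justifying the periodization identity $\int_{[0,2\pi]^n}\!\sum_\gamma F(2\pi\gamma+\theta)\,\mathtt d\theta=\int_{\mathbb R^n}F$ when $F=k^2|\widehat\varphi\ast\widehat u|^2$ is merely integrable, and keeping the constant $C$ independent of the family $\{u_\gamma\}$; the boundedness of multiplication by $\varphi$ on $\mathcal B_k$ is already handled by Lemma~\ref{ks3}, so that part is routine.
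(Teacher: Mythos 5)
Your overall strategy --- forming $\varphi _{\theta }=\sum_{\gamma }\mathtt{e}^{\mathtt{i}\left\langle \gamma ,\theta \right\rangle }\tau _{\gamma }\varphi $ and $u_{\theta }=\sum_{\gamma }\mathtt{e}^{\mathtt{i}\left\langle \gamma ,\theta \right\rangle }\tau _{\gamma }u_{\gamma }$ and averaging over $\theta \in \left[ 0,2\pi \right] ^{n}$ --- is exactly the paper's (Coifman--Meyer), but the central computation you propose for the inequality $\sum_{\gamma }\left\Vert u_{\gamma }\right\Vert _{\mathcal{B}_{k}}^{2}\leq C^{2}\left\Vert u\right\Vert _{\mathcal{B}_{k}}^{2}$ fails as stated. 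Writing $F_{\xi }\left( \eta \right) =\widehat{\varphi }\left( \eta \right) \widehat{u}\left( \xi -\eta \right) $, one has $\widehat{\varphi _{\theta }u}\left( \xi \right) =\sum_{\gamma }F_{\xi }\left( 2\pi \gamma +\theta \right) $, and
\begin{equation*}
\int_{\left[ 0,2\pi \right] ^{n}}\Bigl\vert \sum_{\gamma }F_{\xi }\left( 2\pi \gamma +\theta \right) \Bigr\vert ^{2}\mathtt{d}\theta
\end{equation*}
is \emph{not} $\int_{\mathbb{R}^{n}}\left\vert F_{\xi }\right\vert ^{2}\mathtt{d}\eta $, and neither quantity is $\left\vert \left( \widehat{\varphi }\ast \widehat{u}\right) \left( \xi \right) \right\vert ^{2}=\bigl\vert \int F_{\xi }\bigr\vert ^{2}$: the periodization identity you invoke holds for $\sum_{\gamma }G\left( 2\pi \gamma +\theta \right) $ with $G\geq 0$, i.e.\ for the \emph{first} power, whereas here you need the square of the periodized sum and the cross terms $\gamma \neq \gamma ^{\prime }$ do not vanish. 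So the ``recombination into $\int k^{2}\left\vert \widehat{\varphi }\ast \widehat{u}\right\vert ^{2}$'' is not an identity, and the difficulty is not measure-theoretic bookkeeping. The paper uses exact orthogonality in $\theta $ only where the $\gamma $-dependence sits entirely in the characters, namely $\widehat{u}_{\pm \theta }\left( \xi \right) =\sum_{\gamma }\mathtt{e}^{\mathtt{i}\left\langle \gamma ,\theta -\xi \right\rangle }\widehat{u_{\gamma }}\left( \xi \right) $, giving $\left( 2\pi \right) ^{-n}\int \left\vert \widehat{u}_{\pm \theta }\left( \xi \right) \right\vert ^{2}\mathtt{d}\theta =\sum_{\gamma }\left\vert \widehat{u_{\gamma }}\left( \xi \right) \right\vert ^{2}$; the passage from $u$ to $u_{\theta }=\varphi _{\theta }u$ is handled by the crude but $\theta $-uniform bound $\left\Vert \varphi _{\theta }u\right\Vert _{\mathcal{B}_{k}}\leq C\bigl( \sum_{\gamma }\left\langle 2\pi \gamma +\theta \right\rangle ^{N}\left\vert \widehat{\varphi }\left( 2\pi \gamma +\theta \right) \right\vert \bigr) \left\Vert u\right\Vert _{\mathcal{B}_{k}}$, obtained from the triangle inequality on the Dirac-mass convolution and the $M_{k}$-moderateness of $k$. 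Your parenthetical ``equivalently and perhaps more cleanly, one integrates $\left\Vert \varphi _{\theta }u\right\Vert _{\mathcal{B}_{k}}\leq C\left\Vert u\right\Vert _{\mathcal{B}_{k}}$'' is in fact the proof; the Fourier-side recombination is not.

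Two further gaps. For $\left\Vert u\right\Vert _{\mathcal{B}_{k}}\leq C\left( \sum \left\Vert u_{\gamma }\right\Vert ^{2}\right) ^{1/2}$, the assertion that ``almost orthogonality collapses the double sum to the diagonal'' is precisely what has to be proved and your duality sketch does not supply it; the working mechanism is the reverse multiplier identity $u=\varphi _{\theta }u_{-\theta }$, valid because $\varphi =1$ on $K$ and $\left( \mathtt{supp}\varphi -\mathtt{supp}\varphi \right) \cap \mathbb{Z}^{n}=\left\{ 0\right\} $ (note this is the correct support condition, not ``$\varphi =1$ near $K-K$'' with $\mathtt{supp}\varphi \cap \mathbb{Z}^{n}=\left\{ 0\right\} $ as you wrote), which yields $\left\Vert u\right\Vert _{\mathcal{B}_{k}}\leq C\left\Vert u_{-\theta }\right\Vert _{\mathcal{B}_{k}}$ for each $\theta $, to be squared and integrated against the Parseval identity above. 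Finally, all the pointwise Fourier identities require $\widehat{u}$ to be a function of polynomial growth, which is clear when only finitely many $u_{\gamma }$ are nonzero (then $u\in \mathcal{E}^{\prime }$) but not a priori for the general locally finite sum; the paper's entire second step --- cutoffs $\psi ^{\varepsilon }$, monotone exhaustion for (a)$\Rightarrow $(b), and Cauchy partial sums for (b)$\Rightarrow $(a) --- is needed to pass from finite families to the general statement, and your proposal omits this reduction.
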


\begin{proof}
Let us choose $\varphi \in \mathcal{C}_{0}^{\infty }\left( \mathbb{R}%
^{n}\right) $ such that $\varphi =1$ on $K$ and \texttt{supp}$\varphi
=K^{\prime }$ satisfies the condition $\left( K^{\prime }-K^{\prime }\right)
\cap \mathbb{Z}^{n}=\left\{ 0\right\} $. For $\theta \in \left[ 0,2\pi %
\right] ^{n}$ we set 
\begin{eqnarray*}
\varphi _{\theta } &=&\sum_{\gamma \in \mathbb{Z}^{n}}\mathtt{e}^{\mathtt{i}%
\left\langle \gamma ,\theta \right\rangle }\tau _{\gamma }\varphi
=\sum_{\gamma \in \mathbb{Z}^{n}}\mathtt{e}^{\mathtt{i}\left\langle \gamma
,\theta \right\rangle }\delta _{\gamma }\ast \varphi , \\
u_{\theta } &=&\sum_{\gamma \in \mathbb{Z}^{n}}\mathtt{e}^{\mathtt{i}%
\left\langle \gamma ,\theta \right\rangle }\tau _{\gamma }u_{\gamma
}=\sum_{\gamma \in \mathbb{Z}^{n}}\mathtt{e}^{\mathtt{i}\left\langle \gamma
,\theta \right\rangle }\delta _{\gamma }\ast u_{\gamma }.
\end{eqnarray*}%
Since $\left( K^{\prime }-K^{\prime }\right) \cap \mathbb{Z}^{n}=\left\{
0\right\} $ we have 
\begin{equation*}
u_{\theta }=\varphi _{\theta }u,\quad \quad u=\varphi _{\theta }u_{-\theta }.
\end{equation*}

\textit{Step 1.} Suppose first that the family $\left\{ u_{\gamma }\right\}
_{\gamma \in \mathbb{Z}^{n}}$ has only a finite number of non-zero terms and
we shall prove in this case the estimate(\ref{ks1}). Since $u_{\theta }$, $%
u\in \mathcal{E}^{\prime }\left( \mathbb{R}^{n}\right) \subset \mathcal{S}%
^{\prime }\left( \mathbb{R}^{n}\right) $ it follows that%
\begin{equation*}
\widehat{u}_{\theta }=\left( 2\pi \right) ^{-n}\nu _{\theta }\ast \widehat{u}%
,\quad \quad \widehat{u}=\left( 2\pi \right) ^{-n}\nu _{\theta }\ast 
\widehat{u}_{-\theta },
\end{equation*}%
where $\nu _{\theta }=\widehat{\varphi }_{\theta }=\left( 2\pi \right)
^{n}\sum_{\gamma \in \mathbb{Z}^{n}}\widehat{\varphi }\left( 2\pi \gamma
+\theta \right) \delta _{2\pi \gamma +\theta }$ is a measure of rapid decay
at $\infty $. Since $\widehat{u}_{\theta }$, $\widehat{u}\in \mathcal{C}%
_{pol}^{\infty }\left( \mathbb{R}^{n}\right) $ we get the pointwise
equalities 
\begin{eqnarray*}
\widehat{u}_{\theta }\left( \xi \right) &=&\sum_{\gamma \in \mathbb{Z}^{n}}%
\widehat{\varphi }\left( 2\pi \gamma +\theta \right) \widehat{u}\left( \xi
-2\pi \gamma -\theta \right) , \\
\widehat{u}\left( \xi \right) &=&\sum_{\gamma \in \mathbb{Z}^{n}}\widehat{%
\varphi }\left( 2\pi \gamma +\theta \right) \widehat{u}_{-\theta }\left( \xi
-2\pi \gamma -\theta \right) .
\end{eqnarray*}%
Multiplying by $k\left( \xi \right) $ and using the inequality $k\left( \xi
\right) \leq C\left\langle 2\pi \gamma +\theta \right\rangle ^{N}k\left( \xi
-2\pi \gamma -\theta \right) $ we obtain%
\begin{equation*}
k\left( \xi \right) \left\vert \widehat{u}_{\theta }\left( \xi \right)
\right\vert \leq C\sum_{\gamma \in \mathbb{Z}^{n}}\left\langle 2\pi \gamma
+\theta \right\rangle ^{N}\left\vert \widehat{\varphi }\left( 2\pi \gamma
+\theta \right) \right\vert k\left( \xi -2\pi \gamma -\theta \right)
\left\vert \widehat{u}\left( \xi -2\pi \gamma -\theta \right) \right\vert ,
\end{equation*}%
and%
\begin{equation*}
k\left( \xi \right) \widehat{u}\left( \xi \right) \leq C\sum_{\gamma \in 
\mathbb{Z}^{n}}\left\langle 2\pi \gamma +\theta \right\rangle ^{N}\left\vert 
\widehat{\varphi }\left( 2\pi \gamma +\theta \right) \right\vert k\left( \xi
-2\pi \gamma -\theta \right) \left\vert \widehat{u}_{-\theta }\left( \xi
-2\pi \gamma -\theta \right) \right\vert .
\end{equation*}%
It follows that 
\begin{eqnarray*}
\left\Vert u_{\theta }\right\Vert _{\mathcal{B}_{k}} &=&\left\Vert k\widehat{%
u}_{\theta }\right\Vert _{L^{2}}\leq C\left( \sum_{\gamma \in \mathbb{Z}%
^{n}}\left\langle 2\pi \gamma +\theta \right\rangle ^{N}\left\vert \widehat{%
\varphi }\left( 2\pi \gamma +\theta \right) \right\vert \right) \left\Vert k%
\widehat{u}\right\Vert _{L^{2}} \\
&=&C\left( \sum_{\gamma \in \mathbb{Z}^{n}}\left\langle 2\pi \gamma +\theta
\right\rangle ^{N}\left\vert \widehat{\varphi }\left( 2\pi \gamma +\theta
\right) \right\vert \right) \left\Vert u\right\Vert _{\mathcal{B}%
_{k}}=C_{k,\varphi }\left\Vert u\right\Vert _{\mathcal{B}_{k}}
\end{eqnarray*}%
and%
\begin{equation*}
\left\Vert u\right\Vert _{\mathcal{B}_{k}}\leq C\left( \sum_{\gamma \in 
\mathbb{Z}^{n}}\left\langle 2\pi \gamma +\theta \right\rangle ^{N}\left\vert 
\widehat{\varphi }\left( 2\pi \gamma +\theta \right) \right\vert \right)
\left\Vert u_{-\theta }\right\Vert _{\mathcal{B}_{k}}=C_{k,\varphi
}\left\Vert u_{-\theta }\right\Vert _{\mathcal{B}_{k}},
\end{equation*}%
where $C_{k,\varphi }=C\sum_{\gamma \in \mathbb{Z}^{n}}\left\langle 2\pi
\gamma +\theta \right\rangle ^{N}\left\vert \widehat{\varphi }\left( 2\pi
\gamma +\theta \right) \right\vert <\infty $.

The above estimates can be rewritten as 
\begin{eqnarray*}
\int \left\vert k\left( \xi \right) \widehat{u}_{\theta }\left( \xi \right)
\right\vert ^{2}\mathtt{d}\xi &\leq &C_{k,\varphi }^{2}\left\Vert
u\right\Vert _{\mathcal{B}_{k}}^{2}, \\
\left\Vert u\right\Vert _{\mathcal{B}_{k}}^{2} &\leq &C_{k,\varphi }^{2}\int
\left\vert k\left( \xi \right) \widehat{u}_{-\theta }\left( \xi \right)
\right\vert ^{2}\mathtt{d}\xi .
\end{eqnarray*}%
On the other hand, the equality $u_{\theta }=\sum_{\gamma \in \mathbb{Z}^{n}}%
\mathtt{e}^{\mathtt{i}\left\langle \gamma ,\theta \right\rangle }\tau
_{\gamma }u_{\gamma }$ implies 
\begin{equation*}
\widehat{u}_{\theta }\left( \xi \right) =\sum_{\gamma \in \mathbb{Z}^{n}}%
\mathtt{e}^{\mathtt{i}\left\langle \gamma ,\theta -\xi \right\rangle }%
\widehat{u}_{\gamma }\left( \xi \right)
\end{equation*}%
with finite sum. The functions $\theta \rightarrow \widehat{u}_{\pm \theta
}\left( \xi \right) $ are in $L^{2}\left( \left[ 0,2\pi \right] ^{n}\right) $
and 
\begin{equation*}
\left( 2\pi \right) ^{-n}\int_{\left[ 0,2\pi \right] ^{n}}\left\vert 
\widehat{u}_{\pm \theta }\left( \xi \right) \right\vert ^{2}\mathtt{d}\theta
=\sum_{\gamma \in \mathbb{Z}^{n}}\left\vert \widehat{u}_{\gamma }\left( \xi
\right) \right\vert ^{2}.
\end{equation*}%
Integrating with respect $\theta $ the above inequalities we get that%
\begin{eqnarray*}
\sum_{\gamma \in \mathbb{Z}^{n}}\left\Vert u_{\gamma }\right\Vert _{\mathcal{%
B}_{k}}^{2} &\leq &C_{k,\varphi }^{2}\left\Vert u\right\Vert _{\mathcal{B}%
_{k}}^{2}, \\
\left\Vert u\right\Vert _{\mathcal{B}_{k}}^{2} &\leq &C_{k,\varphi
}^{2}\sum_{\gamma \in \mathbb{Z}^{n}}\left\Vert u_{\gamma }\right\Vert _{%
\mathcal{B}_{k}}^{2}.
\end{eqnarray*}

\textit{Step 2. }The \textit{g}eneral case is obtained by approximation.

Suppose that $u\in \mathcal{B}_{k}\left( \mathbb{R}^{n}\right) $. Let $\psi
\in \mathcal{C}_{0}^{\infty }\left( \mathbb{R}^{n}\right) $ be such that $%
\psi =1$ on $B\left( 0,1\right) $. Then $\psi ^{\varepsilon }u\rightarrow u$
in $\mathcal{B}_{k}\left( \mathbb{R}^{n}\right) $ where $\psi ^{\varepsilon
}\left( x\right) =\psi \left( \varepsilon x\right) $, $0<\varepsilon \leq 1$%
, $x\in \mathbb{R}^{n}$. Also we have 
\begin{equation*}
\left\Vert \psi ^{\varepsilon }u\right\Vert _{\mathcal{B}_{k}}\leq C\left(
k,\psi \right) \left\Vert u\right\Vert _{\mathcal{B}_{k}},\quad
0<\varepsilon \leq 1,
\end{equation*}%
where 
\begin{eqnarray*}
C\left( k,\psi \right) &=&\left( 2\pi \right) ^{-n}C\sup_{0<\varepsilon \leq
1}\left( \int \left\langle \eta \right\rangle ^{N}\varepsilon
^{-n}\left\vert \widehat{\psi }\left( \eta /\varepsilon \right) \right\vert 
\mathtt{d}\eta \right) \\
&=&\left( 2\pi \right) ^{-n}C\sup_{0<\varepsilon \leq 1}\left( \int
\left\langle \varepsilon \eta \right\rangle ^{N}\left\vert \widehat{\psi }%
\left( \eta \right) \right\vert \mathtt{d}\eta \right) \\
&=&\left( 2\pi \right) ^{-n}C\left( \int \left\langle \eta \right\rangle
^{N}\left\vert \widehat{\psi }\left( \eta \right) \right\vert \mathtt{d}\eta
\right) .
\end{eqnarray*}%
Let $m\in \mathbb{N},m\geq 1$. Then there is $\varepsilon _{m}$ such that
for any $\varepsilon \in \left( 0,\varepsilon _{m}\right] $ we have 
\begin{equation*}
\psi ^{\varepsilon }u=\sum_{\left\vert \gamma \right\vert \leq m}\tau
_{\gamma }u_{\gamma }+\sum_{finite}\tau _{\gamma }\left( \left( \tau
_{-\gamma }\psi ^{\varepsilon }\right) u_{\gamma }\right) .
\end{equation*}%
By the first part we get that 
\begin{equation*}
\sum_{\left\vert \gamma \right\vert \leq m}\left\Vert u_{\gamma }\right\Vert
_{\mathcal{B}_{k}}^{2}\leq C_{k,\varphi }^{2}\left\Vert \psi ^{\varepsilon
}u\right\Vert _{\mathcal{B}_{k}}^{2}\leq C_{k,\varphi }^{2}C\left( k,\psi
\right) ^{2}\left\Vert u\right\Vert _{\mathcal{B}_{k}}^{2}.
\end{equation*}%
Since $m$ is arbitrary, it follows that $\sum_{\gamma \in \mathbb{Z}%
^{n}}\left\Vert u_{\gamma }\right\Vert _{\mathcal{B}_{k}}^{2}<\infty $.
Further from 
\begin{equation*}
\sum_{\left\vert \gamma \right\vert \leq m}\left\Vert u_{\gamma }\right\Vert
_{\mathcal{B}_{k}}^{2}\leq C_{k,\varphi }^{2}\left\Vert \psi ^{\varepsilon
}u\right\Vert _{\mathcal{B}_{k}}^{2},\quad 0<\varepsilon \leq \varepsilon
_{m},
\end{equation*}%
we obtain that 
\begin{equation*}
\sum_{\left\vert \gamma \right\vert \leq m}\left\Vert u_{\gamma }\right\Vert
_{\mathcal{B}_{k}}^{2}\leq C_{k,\varphi }^{2}\left\Vert u\right\Vert _{%
\mathcal{B}_{k}}^{2},\quad m\in \mathbb{N}.
\end{equation*}%
Hence 
\begin{equation*}
\sum_{\gamma \in \mathbb{Z}^{n}}\left\Vert u_{\gamma }\right\Vert _{\mathcal{%
B}_{k}}^{2}\leq C_{k,\varphi }^{2}\left\Vert u\right\Vert _{\mathcal{B}%
_{k}}^{2}.
\end{equation*}

Now suppose that $\sum_{\gamma \in \mathbb{Z}^{n}}\left\Vert u_{\gamma
}\right\Vert _{\mathcal{B}_{k}}^{2}<\infty $. For $m\in \mathbb{N}$, $m\geq
1 $ we put $u\left( m\right) =\sum_{\left\vert \gamma \right\vert \leq
m}\tau _{\gamma }u_{\gamma }$. Then 
\begin{equation*}
\left\Vert u\left( m+p\right) -u\left( m\right) \right\Vert _{\mathcal{B}%
_{k}}^{2}\leq C_{k,\varphi }^{2}\sum_{m\leq \left\vert \gamma \right\vert
\leq m+p}\left\Vert u_{\gamma }\right\Vert _{\mathcal{B}_{k}}^{2}
\end{equation*}%
It follows that $\left\{ u\left( m\right) \right\} _{m\geq 1}$ is a Cauchy
sequence in $\mathcal{B}_{k}\left( \mathbb{R}^{n}\right) $. Let $v\in 
\mathcal{B}_{k}\left( \mathbb{R}^{n}\right) $ be such that $u\left( m\right)
\rightarrow v$ in $\mathcal{B}_{k}\left( \mathbb{R}^{n}\right) $. Since $%
u\left( m\right) \rightarrow u$ in $\mathcal{D}^{\prime }\left( \mathbb{R}%
^{n}\right) $, it follows that $u=v$. Hence $u\left( m\right) \rightarrow u$
in $\mathcal{B}_{k}\left( \mathbb{R}^{n}\right) $. Since we have 
\begin{equation*}
\left\Vert u\left( m\right) \right\Vert _{\mathcal{B}_{k}}^{2}\leq
C_{k,\varphi }^{2}\sum_{\left\vert \gamma \right\vert \leq m}\left\Vert
u_{\gamma }\right\Vert _{\mathcal{B}_{k}}^{2}\leq C_{k,\varphi
}^{2}\sum_{\gamma \in \mathbb{Z}^{n}}\left\Vert u_{\gamma }\right\Vert _{%
\mathcal{B}_{k}}^{2},\quad m\in \mathbb{N},
\end{equation*}%
we obtain that 
\begin{equation*}
\left\Vert u\right\Vert _{\mathcal{B}_{k}}^{2}\leq C_{k,\varphi
}^{2}\sum_{\gamma \in \mathbb{Z}^{n}}\left\Vert u_{\gamma }\right\Vert _{%
\mathcal{B}_{k}}^{2}.
\end{equation*}
\end{proof}

To use the previous result we need a convenient partition of unity. Let $%
m\in \mathbb{N}$ and $\left\{ x_{1},...,x_{m}\right\} \subset \mathbb{R}^{n}$
be such that 
\begin{equation*}
\left[ 0,1\right] ^{n}\subset \left( x_{1}+\left[ \frac{1}{3},\frac{2}{3}%
\right] ^{n}\right) \cup ...\cup \left( x_{m}+\left[ \frac{1}{3},\frac{2}{3}%
\right] ^{n}\right) .
\end{equation*}%
Let $\widetilde{h}\in \mathcal{C}_{0}^{\infty }\left( \mathbb{R}^{n}\right) $%
, $\widetilde{h}\geq 0$, be such that $\widetilde{h}=1$ on $\left[ \frac{1}{3%
},\frac{2}{3}\right] ^{n}$ and \texttt{supp}$\widetilde{h}\subset \left[ 
\frac{1}{4},\frac{3}{4}\right] ^{n}$. Then

\begin{enumerate}
\item[$\left( \mathtt{a}\right) $] $\widetilde{H}=\sum_{i=1}^{m}\sum_{\gamma
\in \mathbb{Z}^{n}}\tau _{\gamma +x_{i}}\widetilde{h}\in \mathcal{BC}%
^{\infty }\left( \mathbb{R}^{n}\right) $ is $\mathbb{Z}^{n}$-periodic and $%
\widetilde{H}\geq 1$.

\item[$\left( \mathtt{b}\right) $] $h_{i}=\frac{\tau _{x_{i}}\widetilde{h}}{%
\widetilde{H}}\in \mathcal{C}_{0}^{\infty }\left( \mathbb{R}^{n}\right) $, $%
h_{i}\geq 0$, \texttt{supp}$h_{i}\subset x_{i}+\left[ \frac{1}{4},\frac{3}{4}%
\right] ^{n}=K_{i}$, $\left( K_{i}-K_{i}\right) \cap \mathbb{Z}^{n}=\left\{
0\right\} $, $i=1,...,m$.

\item[$\left( \mathtt{c}\right) $] $\chi _{i}=\sum_{\gamma \in \mathbb{Z}%
^{n}}\tau _{\gamma }h_{i}\in \mathcal{BC}^{\infty }\left( \mathbb{R}%
^{n}\right) $ is $\mathbb{Z}^{n}$-periodic, $i=1,...,m$ and $%
\sum_{i=1}^{m}\chi _{i}=1.$

\item[$\left( \mathtt{d}\right) $] $h=\sum_{i=1}^{m}h_{i}\in \mathcal{C}%
_{0}^{\infty }\left( \mathbb{R}^{n}\right) $, $h\geq 0$, $\sum_{\gamma \in 
\mathbb{Z}^{n}}\tau _{\gamma }h=$ $1$.
\end{enumerate}

\noindent A first consequence of previous results is the next proposition.

\begin{proposition}
\label{ks4}Let $k\in \mathcal{K}_{pol}\left( \mathbb{R}^{n}\right) $ and $C$%
, $N$ the positive constants that define $k$. Let $m_{k}=\left[ N+\frac{n+1}{%
2}\right] +1$. Then $\mathcal{BC}^{m_{k}}\left( \mathbb{R}^{n}\right) \cdot 
\mathcal{B}_{k}\left( \mathbb{R}^{n}\right) \subset \mathcal{B}_{k}\left( 
\mathbb{R}^{n}\right) $. In particular $\mathcal{BC}^{\infty }\left( \mathbb{%
R}^{n}\right) \cdot \mathcal{B}_{k}\left( \mathbb{R}^{n}\right) \subset 
\mathcal{B}_{k}\left( \mathbb{R}^{n}\right) $.
\end{proposition}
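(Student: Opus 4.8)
The idea is to localize the multiplication problem using the $\mathbb{Z}^{n}$-periodic partition of unity $\left\{ \chi _{i}\right\} _{i=1}^{m}$ constructed just before the statement, together with the gluing Lemma \ref{ks2}, and then reduce to the already-established estimate in Lemma \ref{ks3}. So let $f\in \mathcal{BC}^{m_{k}}\left( \mathbb{R}^{n}\right) $ and $u\in \mathcal{B}_{k}\left( \mathbb{R}^{n}\right) $. Since $\sum_{i=1}^{m}\chi _{i}=1$ we write $fu=\sum_{i=1}^{m}\chi _{i}fu$, so it suffices to treat each term $\chi _{i}fu$ separately (a finite sum of elements of $\mathcal{B}_{k}$ stays in $\mathcal{B}_{k}$, with norm controlled by the sum of norms). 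Fix $i$ and recall $\chi _{i}=\sum_{\gamma \in \mathbb{Z}^{n}}\tau _{\gamma }h_{i}$ with $h_{i}\in \mathcal{C}_{0}^{\infty}$, $\mathrm{supp}\,h_{i}\subset K_{i}=x_{i}+\left[\tfrac14,\tfrac34\right]^{n}$, and $\left(K_{i}-K_{i}\right)\cap\mathbb{Z}^{n}=\{0\}$.

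**Key steps.** First I would write $\chi _{i}fu=\sum_{\gamma \in \mathbb{Z}^{n}}\tau _{\gamma }\left( h_{i}\cdot \tau _{-\gamma }(fu)\right)$, which is legitimate since the supports of the $\tau_{\gamma}h_{i}$ are locally finite. Because $f$ is only $\mathbb{Z}^{n}$-periodic-free (merely bounded) I cannot pull it out; instead I set $u_{\gamma }=h_{i}\cdot \tau _{-\gamma }(fu)$, a distribution supported in $K_{i}$. The plan is to bound $\left\Vert u_{\gamma }\right\Vert_{\mathcal{B}_{k}}$ and sum the squares. Now $h_{i}\cdot\tau_{-\gamma}(fu) = h_{i}\cdot(\tau_{-\gamma}f)\cdot(\tau_{-\gamma}u)$; since $h_{i}$ is compactly supported, only the values of $\tau_{-\gamma}f$ on $K_{i}$ matter, and there $h_{i}\cdot\tau_{-\gamma}f$ is a $\mathcal{C}_{0}^{\infty}$ function (product of $h_{i}\in\mathcal{C}_0^\infty$ with the restriction of the $\mathcal{BC}^{m_{k}}$ function $\tau_{-\gamma}f$), with all $\mathcal{BC}^{m_{k}}$-seminorms bounded by $Cst\cdot\left\Vert h_{i}\right\Vert_{\mathcal{BC}^{m_k}}\left\Vert f\right\Vert_{\mathcal{BC}^{m_k}}$ \emph{uniformly in} $\gamma$, because translation does not change $\mathcal{BC}^{m_{k}}$-seminorms. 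Applying Lemma \ref{ks3}$(\mathtt{a})$ (with the $H^{m_{k}}$-version, legitimate because $h_{i}\cdot\tau_{-\gamma}f$ has compact support in a fixed cube so its $H^{m_{k}}$-norm is controlled by its $\mathcal{BC}^{m_{k}}$-norm times a fixed constant) gives
\begin{equation*}
\left\Vert u_{\gamma }\right\Vert _{\mathcal{B}_{k}}=\left\Vert (h_{i}\tau_{-\gamma}f)\cdot\tau_{-\gamma}u\right\Vert_{\mathcal{B}_k}\leq Cst\left( C,n\right) \left\Vert h_{i}\right\Vert_{\mathcal{BC}^{m_k}}\left\Vert f\right\Vert _{\mathcal{BC}^{m_{k}}}\left\Vert \tau _{-\gamma }u\right\Vert _{\mathcal{B}_{k}}.
\end{equation*}

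**Putting it together.** Since $\tau _{-\gamma }u = \tau_{-\gamma}\left(\sum_{\beta}\tau_{\beta}(h\cdot\tau_{-\beta}u)\right)$ can itself be decomposed via the partition $\sum_{\gamma}\tau_{\gamma}h=1$ (item $(\mathtt{d})$) into translated pieces supported in the fixed cube $\mathrm{supp}\,h$, Lemma \ref{ks2} applied to $u$ gives $\sum_{\beta\in\mathbb{Z}^{n}}\left\Vert h\cdot\tau_{-\beta}u\right\Vert_{\mathcal{B}_k}^{2}\leq C^{2}\left\Vert u\right\Vert_{\mathcal{B}_k}^{2}$; and $\left\Vert\tau_{-\gamma}u\right\Vert_{\mathcal{B}_k}^2$ is, up to the gluing constant, $\sum_{\beta}\left\Vert h\cdot\tau_{-\gamma-\beta}u\right\Vert_{\mathcal{B}_k}^2$. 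Summing $\left\Vert u_{\gamma}\right\Vert_{\mathcal{B}_k}^2$ over $\gamma$ and re-indexing therefore yields $\sum_{\gamma}\left\Vert u_{\gamma}\right\Vert_{\mathcal{B}_k}^{2}\leq Cst\left\Vert f\right\Vert_{\mathcal{BC}^{m_k}}^{2}\left\Vert u\right\Vert_{\mathcal{B}_k}^{2}<\infty$. By Lemma \ref{ks2} (applied with compact set $K_{i}$, whose translates-difference condition holds), $\chi _{i}fu=\sum_{\gamma}\tau_{\gamma}u_{\gamma}\in\mathcal{B}_{k}$ with $\left\Vert\chi_i fu\right\Vert_{\mathcal{B}_k}\leq Cst\left\Vert f\right\Vert_{\mathcal{BC}^{m_k}}\left\Vert u\right\Vert_{\mathcal{B}_k}$. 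Summing over $i=1,\dots,m$ finishes the proof of $\mathcal{BC}^{m_{k}}\cdot\mathcal{B}_{k}\subset\mathcal{B}_{k}$, and the $\mathcal{BC}^{\infty}$ statement follows since $\mathcal{BC}^{\infty}\subset\mathcal{BC}^{m_{k}}$.

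**Main obstacle.** The delicate point is the bookkeeping in the double sum: one must keep $f$ inside the local factor (since $f\notin\mathcal{S}$ and is not periodic, it cannot be extracted) while still getting a bound \emph{uniform in the translation index} $\gamma$ — this works precisely because $\mathcal{BC}^{m_{k}}$-seminorms are translation-invariant, and because the local cutoffs $h_{i},h$ are fixed. The other point requiring care is verifying that the formal rearrangements $fu=\sum_i\chi_i fu$ and $\chi_i fu=\sum_\gamma\tau_\gamma u_\gamma$ converge in $\mathcal{D}'$ (hence identify the right distribution), which follows from local finiteness of the supports, exactly as in the hypothesis of Lemma \ref{ks2}.
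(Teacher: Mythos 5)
Your localization framework is the right one (and is the paper's), but the estimate you feed into the summation step is fatally too weak. You bound
\begin{equation*}
\left\Vert u_{\gamma }\right\Vert _{\mathcal{B}_{k}}=\left\Vert \left(
h_{i}\tau _{-\gamma }f\right) \cdot \tau _{-\gamma }u\right\Vert _{\mathcal{B
}_{k}}\leq Cst\left\Vert f\right\Vert _{\mathcal{BC}^{m_{k}}}\left\Vert \tau
_{-\gamma }u\right\Vert _{\mathcal{B}_{k}},
\end{equation*}
but translation is an exact isometry of $\mathcal{B}_{k}$ (since $|\widehat{
\tau _{-\gamma }u}|=|\widehat{u}|$ pointwise), so the right-hand side equals
$Cst\left\Vert f\right\Vert _{\mathcal{BC}^{m_{k}}}\left\Vert u\right\Vert _{
\mathcal{B}_{k}}$ for \emph{every} $\gamma $, and $\sum_{\gamma \in \mathbb{Z
}^{n}}\left\Vert u_{\gamma }\right\Vert _{\mathcal{B}_{k}}^{2}$ cannot be
finite on the basis of this bound alone. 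The re-indexing you propose does
not rescue it: from $\left\Vert \tau _{-\gamma }u\right\Vert ^{2}\approx
\sum_{\beta }\left\Vert h\tau _{-\gamma -\beta }u\right\Vert ^{2}$ you get $
\sum_{\gamma }\left\Vert u_{\gamma }\right\Vert ^{2}\lesssim \sum_{\gamma
}\sum_{\beta }\left\Vert h\tau _{-\gamma -\beta }u\right\Vert ^{2}$, and
setting $\delta =\gamma +\beta $ each value of $\delta $ is hit by
infinitely many pairs $\left( \gamma ,\beta \right) $, so the double sum
diverges. In short: you have kept the \emph{global} distribution $\tau
_{-\gamma }u$ as the factor being multiplied, so nothing in your estimate
decays in $\gamma $.

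The repair is exactly the order of operations in the paper's proof: first
localize $u$, then multiply. That is, apply the decomposition $\chi _{i}u=
\sum_{\gamma }\tau _{\gamma }u_{\gamma }$ with $u_{\gamma }=h_{i}\tau
_{-\gamma }u$ to $u$ \emph{alone}, so that Lemma \ref{ks2} already gives $
\sum_{\gamma }\left\Vert u_{\gamma }\right\Vert _{\mathcal{B}
_{k}}^{2}\lesssim \left\Vert u\right\Vert _{\mathcal{B}_{k}}^{2}$; then
write $fu=\sum_{\gamma }\tau _{\gamma }\left( \left( \varphi \tau _{-\gamma
}f\right) u_{\gamma }\right) $ with $\varphi \in \mathcal{C}_{0}^{\infty }$
equal to $1$ on the fixed compact $K$ carrying all the $u_{\gamma }$, and
apply Lemma \ref{ks3} $\left( \mathtt{a}\right) $ to each term to get $
\left\Vert \left( \varphi \tau _{-\gamma }f\right) u_{\gamma }\right\Vert _{
\mathcal{B}_{k}}\leq Cst\left\Vert \varphi \right\Vert
_{H^{m_{k}}}\left\Vert f\right\Vert _{\mathcal{BC}^{m_{k}}}\left\Vert
u_{\gamma }\right\Vert _{\mathcal{B}_{k}}$, which \emph{is} square-summable.
(Equivalently, in your notation you must replace $\tau _{-\gamma }u$ by $
\widetilde{h}_{i}\tau _{-\gamma }u$ with $\widetilde{h}_{i}=1$ on $K_{i}$,
and then justify square-summability of $\left\Vert \widetilde{h}_{i}\tau
_{-\gamma }u\right\Vert _{\mathcal{B}_{k}}$ — which again forces you back to
the localize-first order of the argument.) The rest of your write-up
(finite sum over $i$, translation invariance of the $\mathcal{BC}^{m_{k}}$
seminorms, convergence of the rearrangements in $\mathcal{D}^{\prime }$) is
fine.
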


\begin{proof}
Let $u\in \mathcal{B}_{k}\left( \mathbb{R}^{n}\right) $. We use the
partition of unity constructed above to obtain a decomposition of $u$
satisfying the conditions of Lemma \ref{ks2}. We have $u=\sum_{i=1}^{m}\chi
_{i}u$ with $\chi _{i}u\in \mathcal{B}_{k}\left( \mathbb{R}^{n}\right) $ by
Lemma \ref{ks3} $\left( \mathtt{b}\right) $ and 
\begin{gather*}
\chi _{i}u=\sum_{\gamma \in \mathbb{Z}^{n}}\tau _{\gamma }\left( h_{i}\tau
_{-\gamma }u\right) ,\quad h_{i}\tau _{-\gamma }u\in \mathcal{B}_{k}\left( 
\mathbb{R}^{n}\right) \cap \mathcal{D}_{K_{i}}^{\prime }\left( \mathbb{R}%
^{n}\right) , \\
\left( K_{i}-K_{i}\right) \cap \mathbb{Z}^{n}=\left\{ 0\right\} ,\quad
i=1,...,m.
\end{gather*}%
So we can assume that $u\in \mathcal{B}_{k}\left( \mathbb{R}^{n}\right) $ is
of the form described in Lemma \ref{ks2}.

Let $\psi \in \mathcal{BC}^{m_{k}}\left( \mathbb{R}^{n}\right) $. Then $\psi
u=\sum_{\gamma \in \mathbb{Z}^{n}}\psi \tau _{\gamma }u_{\gamma
}=\sum_{\gamma \in \mathbb{Z}^{n}}\tau _{\gamma }\left( \psi _{\gamma
}u_{\gamma }\right) $ with $\psi _{\gamma }=\varphi \left( \tau _{-\gamma
}\psi \right) $, where $\varphi \in \mathcal{C}_{0}^{\infty }\left( \mathbb{R%
}^{n}\right) $ is the function considered in the proof of Lemma \ref{ks2}.
We apply Lemma \ref{ks2} and Lemma \ref{ks3} $\left( \mathtt{a}\right) $ to
obtain 
\begin{equation*}
\left\Vert \psi u\right\Vert _{\mathcal{B}_{k}}^{2}\leq C_{k,\varphi
}^{2}\sum_{\gamma \in \mathbb{Z}^{n}}\left\Vert \psi _{\gamma }u_{\gamma
}\right\Vert _{\mathcal{B}_{k}}^{2},
\end{equation*}%
\begin{eqnarray*}
\left\Vert \psi _{\gamma }u_{\gamma }\right\Vert _{\mathcal{B}_{k}} &\leq
&Cst\left( \sum_{\left\vert \alpha \right\vert \leq m_{k}}\left\Vert
\partial ^{\alpha }\left( \varphi \left( \tau _{-\gamma }\psi \right)
\right) \right\Vert _{L^{2}}\right) \left\Vert u_{\gamma }\right\Vert _{%
\mathcal{B}_{k}} \\
&\leq &Cst\left\Vert \varphi \right\Vert _{H^{m_{k}}}\left\Vert \psi
\right\Vert _{\mathcal{BC}^{m_{k}}}\left\Vert u_{\gamma }\right\Vert _{%
\mathcal{B}_{k}},\quad \gamma \in \mathbb{Z}^{n}.
\end{eqnarray*}%
Hence another application of Lemma \ref{ks2} gives 
\begin{eqnarray*}
\left\Vert \psi u\right\Vert _{\mathcal{B}_{k}}^{2} &\leq &Cst\left\Vert
\varphi \right\Vert _{H^{m_{k}}}^{2}\left\Vert \psi \right\Vert _{\mathcal{BC%
}^{m_{k}}}^{2}\sum_{\gamma \in \mathbb{Z}^{n}}\left\Vert u_{\gamma
}\right\Vert _{\mathcal{B}_{k}}^{2} \\
&\leq &Cst\left\Vert \varphi \right\Vert _{H^{m_{k}}}^{2}\left\Vert \psi
\right\Vert _{\mathcal{BC}^{m_{k}}}^{2}\left\Vert u\right\Vert _{\mathcal{B}%
_{k}}^{2}.
\end{eqnarray*}
\end{proof}

\section{The spaces $\mathcal{B}_{k}^{p}$}

We begin by proving some results that will be useful later. Let $\varphi
,\psi \in \mathcal{C}_{0}^{\infty }\left( \mathbb{R}^{n}\right) $ (or $%
\varphi ,\psi \in \mathcal{S}\left( \mathbb{R}^{n}\right) $). Then the maps 
\begin{eqnarray*}
\mathbb{R}^{n}\times \mathbb{R}^{n} &\ni &\left( x,y\right) \overset{f}{%
\longrightarrow }\varphi \left( x\right) \psi \left( x-y\right) =\left(
\varphi \tau _{y}\psi \right) \left( x\right) \in \mathbb{C}, \\
\mathbb{R}^{n}\times \mathbb{R}^{n} &\ni &\left( x,y\right) \overset{g}{%
\longrightarrow }\varphi \left( y\right) \psi \left( x-y\right) =\varphi
\left( y\right) \left( \tau _{y}\psi \right) \left( x\right) \in \mathbb{C},
\end{eqnarray*}%
are in $\mathcal{C}_{0}^{\infty }\left( \mathbb{R}^{n}\times \mathbb{R}%
^{n}\right) $ (respectively in $\mathcal{S}\left( \mathbb{R}^{n}\times 
\mathbb{R}^{n}\right) $). Let $u\in \mathcal{D}^{\prime }\left( \mathbb{R}%
^{n}\right) $ (or $u\in \mathcal{S}^{\prime }\left( \mathbb{R}^{n}\right) $%
). Then using Fubini theorem for distributions we get 
\begin{eqnarray*}
\left\langle u\otimes 1,f\right\rangle &=&\left\langle u\left( x\right)
,\left\langle 1\left( y\right) ,\varphi \left( x\right) \psi \left(
x-y\right) \right\rangle \right\rangle \\
&=&\left( \int \psi \right) \left\langle u,\varphi \right\rangle , \\
\left\langle u\otimes 1,f\right\rangle &=&\left\langle 1\left( y\right)
,\left\langle u\left( x\right) ,\varphi \left( x\right) \psi \left(
x-y\right) \right\rangle \right\rangle \\
&=&\int \left\langle u,\varphi \tau _{y}\psi \right\rangle \mathtt{d}y.
\end{eqnarray*}%
It follows that%
\begin{equation*}
\left( \int \psi \right) \left\langle u,\varphi \right\rangle =\int
\left\langle u,\varphi \tau _{y}\psi \right\rangle \mathtt{d}y
\end{equation*}%
valid for $\varphi ,\psi \in \mathcal{C}_{0}^{\infty }\left( \mathbb{R}%
^{n}\right) $ and $u\in \mathcal{D}^{\prime }\left( \mathbb{R}^{n}\right) $
(or $\varphi ,\psi \in \mathcal{S}\left( \mathbb{R}^{n}\right) $ and $u\in 
\mathcal{S}^{\prime }\left( \mathbb{R}^{n}\right) $).

We also have%
\begin{eqnarray*}
\left\langle u\otimes 1,g\right\rangle &=&\left\langle u\left( x\right)
,\left\langle 1\left( y\right) ,\varphi \left( y\right) \psi \left(
x-y\right) \right\rangle \right\rangle =\left\langle u\left( x\right)
,\left( \varphi \ast \psi \right) \left( x\right) \right\rangle \\
&=&\left\langle u,\varphi \ast \psi \right\rangle , \\
\left\langle u\otimes 1,g\right\rangle &=&\left\langle 1\left( y\right)
,\left\langle u\left( x\right) ,\varphi \left( y\right) \psi \left(
x-y\right) \right\rangle \right\rangle \\
&=&\int \varphi \left( y\right) \left\langle u,\tau _{y}\psi \right\rangle 
\mathtt{d}y.
\end{eqnarray*}%
Hence%
\begin{equation*}
\left\langle u,\varphi \ast \psi \right\rangle =\int \varphi \left( y\right)
\left\langle u,\tau _{y}\psi \right\rangle \mathtt{d}y
\end{equation*}%
true for $\varphi ,\psi \in \mathcal{C}_{0}^{\infty }\left( \mathbb{R}%
^{n}\right) $ and $u\in \mathcal{D}^{\prime }\left( \mathbb{R}^{n}\right) $
(or $\varphi ,\psi \in \mathcal{S}\left( \mathbb{R}^{n}\right) $ and $u\in 
\mathcal{S}^{\prime }\left( \mathbb{R}^{n}\right) $).

\begin{lemma}
Let $\varphi ,\psi \in \mathcal{C}_{0}^{\infty }\left( \mathbb{R}^{n}\right) 
$ and $u\in \mathcal{D}^{\prime }\left( \mathbb{R}^{n}\right) $ $($or $%
\varphi ,\psi \in \mathcal{S}\left( \mathbb{R}^{n}\right) $ and $u\in 
\mathcal{S}^{\prime }\left( \mathbb{R}^{n}\right) )$. Then 
\begin{equation}
\left( \int \psi \right) \left\langle u,\varphi \right\rangle =\int
\left\langle u,\varphi \tau _{y}\psi \right\rangle \mathtt{d}y  \label{ks5}
\end{equation}%
\begin{equation}
\left\langle u,\varphi \ast \psi \right\rangle =\int \varphi \left( y\right)
\left\langle u,\tau _{y}\psi \right\rangle \mathtt{d}y  \label{ks9}
\end{equation}
\end{lemma}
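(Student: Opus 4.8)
The plan is to derive both identities at once from the Fubini theorem for the tensor product $u\otimes 1$, which is precisely the computation carried out in the two displays immediately preceding the statement; the proof amounts to packaging those displays into the formulas (\ref{ks5}) and (\ref{ks9}). First I would record that, for $\varphi,\psi\in\mathcal{C}_0^\infty(\mathbb{R}^n)$, the functions $f(x,y)=\varphi(x)\psi(x-y)$ and $g(x,y)=\varphi(y)\psi(x-y)$ belong to $\mathcal{C}_0^\infty(\mathbb{R}^n\times\mathbb{R}^n)$ (and, when $\varphi,\psi\in\mathcal{S}(\mathbb{R}^n)$, to $\mathcal{S}(\mathbb{R}^n\times\mathbb{R}^n)$), since each is a composition and product of the given functions with the smooth linear maps $(x,y)\mapsto x$, $(x,y)\mapsto y$ and $(x,y)\mapsto x-y$. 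Hence the pairing $\langle u\otimes 1,f\rangle$ is legitimate: in the compactly supported case because $f$ has compact support in $\mathbb{R}^n\times\mathbb{R}^n$, and in the tempered case because $u\otimes 1\in\mathcal{S}'(\mathbb{R}^n\times\mathbb{R}^n)$.

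Next I would evaluate $\langle u\otimes 1,f\rangle$ in the two possible orders. Integrating first in $y$ gives $\langle 1(y),\varphi(x)\psi(x-y)\rangle=\varphi(x)\int\psi$, so $\langle u\otimes 1,f\rangle=\bigl(\int\psi\bigr)\langle u,\varphi\rangle$. Integrating first in $x$ gives $\langle u(x),\varphi(x)\psi(x-y)\rangle=\langle u,\varphi\,\tau_y\psi\rangle$, a compactly supported (resp.\ Schwartz) function of $y$, whose $y$-integral is $\int\langle u,\varphi\,\tau_y\psi\rangle\,\mathtt{d}y$. Equating the two values proves (\ref{ks5}). Identity (\ref{ks9}) is obtained the same way from $g$: one order yields $\langle u(x),(\varphi\ast\psi)(x)\rangle=\langle u,\varphi\ast\psi\rangle$ (recognizing $\int\varphi(y)\psi(x-y)\,\mathtt{d}y=(\varphi\ast\psi)(x)$), and the other yields $\int\varphi(y)\langle u,\tau_y\psi\rangle\,\mathtt{d}y$.

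The only genuine point to verify is that the hypotheses of the Fubini theorem for distributions are met, i.e.\ that interchanging $u$ with the $y$-integration is allowed. In the $\mathcal{C}_0^\infty$ case this is automatic, since $f$ and $g$ are compactly supported, so the $y$-integrals run over a fixed bounded set and the integrands $y\mapsto\langle u,\varphi\,\tau_y\psi\rangle$, $y\mapsto\varphi(y)\langle u,\tau_y\psi\rangle$ are continuous with compact support. In the Schwartz case one uses the standard fact that these integrands are rapidly decreasing smooth functions of $y$, so the integrals converge absolutely and the tensor-product Fubini theorem applies. No further obstacle arises, and the lemma follows.
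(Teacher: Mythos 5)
Your proof is correct and follows essentially the same route as the paper: both identities are obtained by pairing $u\otimes 1$ with $f(x,y)=\varphi(x)\psi(x-y)$ and $g(x,y)=\varphi(y)\psi(x-y)$ and applying the Fubini theorem for distributions in the two orders. No gaps.
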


If $\varepsilon _{1},...,\varepsilon _{n}$ is a basis in $\mathbb{R}^{n}$,
we say that $\Gamma =\oplus _{j=1}^{n}\mathbb{Z}\varepsilon _{j}$ is a
lattice.

Let $\Gamma \subset \mathbb{R}^{n}$ be a lattice. Let $\psi \in \mathcal{S}%
\left( \mathbb{R}^{n}\right) $. Then $\sum_{\gamma \in \Gamma }\tau _{\gamma
}\psi =\sum_{\gamma \in \Gamma }\psi \left( \cdot -\gamma \right) $ is
uniformly convergent on compact subsets of $\mathbb{R}^{n}$. Since $\partial
^{\alpha }\psi \in \mathcal{S}\left( \mathbb{R}^{n}\right) $, it follows
that there is $\Psi \in \mathcal{C}^{\infty }\left( \mathbb{R}^{n}\right) $
such that $\Psi =\sum_{\gamma \in \Gamma }\tau _{\gamma }\psi $ in $\mathcal{%
C}^{\infty }\left( \mathbb{R}^{n}\right) $. Moreover, we have $\tau _{\gamma
}\Psi =\Psi \left( \cdot -\gamma \right) =\Psi $ for any $\gamma \in \Gamma $%
. Consequently we have $\Psi \in \mathcal{BC}^{\infty }\left( \mathbb{R}%
^{n}\right) $ and if $\Psi \left( y\right) \neq 0$ for any $y\in \mathbb{R}%
^{n}$, then $\frac{1}{\Psi }\in \mathcal{BC}^{\infty }\left( \mathbb{R}%
^{n}\right) $.

Let $\varphi \in \mathcal{S}\left( \mathbb{R}^{n}\right) $. Then $\varphi
\Psi =\sum_{\gamma \in \Gamma }\varphi \left( \tau _{\gamma }\psi \right) $
with the series convergent in $\mathcal{S}\left( \mathbb{R}^{n}\right) $.
Indeed, by applying Peetre's inequality $\left\langle \gamma \right\rangle
^{n+1}\leq 2^{\frac{n+1}{2}}\left\langle x\right\rangle ^{n+1}\left\langle
x-\gamma \right\rangle ^{n+1}$ one obtains%
\begin{multline*}
\sum_{\gamma \in \Gamma }\left\langle x\right\rangle ^{k}\left\vert \partial
^{\alpha }\varphi \left( x\right) \partial ^{\beta }\psi \left( x-\gamma
\right) \right\vert \\
\leq 2^{\frac{n+1}{2}}\left( \sum_{\gamma \in \Gamma }\left\langle \gamma
\right\rangle ^{-n-1}\right) \cdot \sup \left\langle \cdot \right\rangle
^{k+n+1}\left\vert \partial ^{\alpha }\varphi \right\vert \cdot \sup
\left\langle \cdot \right\rangle ^{n+1}\left\vert \partial ^{\beta }\psi
\right\vert
\end{multline*}%
and this estimate proves the convergence of the series in $\mathcal{S}\left( 
\mathbb{R}^{n}\right) $. Let $\chi $ be the sum of the series $\sum_{\gamma
\in \Gamma }\varphi \left( \tau _{\gamma }\psi \right) $ in $\mathcal{S}%
\left( \mathbb{R}^{n}\right) $. Then for any $y\in \mathbb{R}^{n}$ we have%
\begin{eqnarray*}
\chi \left( y\right) &=&\left\langle \delta _{y},\chi \right\rangle
=\left\langle \delta _{y},\sum_{\gamma \in \Gamma }\varphi \left( \tau
_{\gamma }\psi \right) \right\rangle =\sum_{\gamma \in \Gamma }\left\langle
\delta _{y},\varphi \left( \tau _{\gamma }\psi \right) \right\rangle \\
&=&\sum_{\gamma \in \Gamma }\varphi \left( y\right) \psi \left( y-\gamma
\right) =\varphi \left( y\right) \Psi \left( y\right) .
\end{eqnarray*}%
So $\varphi \Psi =\sum_{\gamma \in \Gamma }\varphi \left( \tau _{\gamma
}\psi \right) $ in $\mathcal{S}\left( \mathbb{R}^{n}\right) $.

If $\psi ,\varphi \in \mathcal{C}_{0}^{\infty }\left( \mathbb{R}^{n}\right) $
and $\mathcal{S}\left( \mathbb{R}^{n}\right) $ is replaced by $\mathcal{C}%
_{0}^{\infty }\left( \mathbb{R}^{n}\right) $, then the previous remarks are
trivial.

\begin{lemma}
Let $\psi ,\varphi \in \mathcal{S}\left( \mathbb{R}^{n}\right) $ and $u\in 
\mathcal{S}^{\prime }\left( \mathbb{R}^{n}\right) $ $($or $\psi ,\varphi \in 
\mathcal{C}_{0}^{\infty }\left( \mathbb{R}^{n}\right) $ and $u\in \mathcal{D}%
^{\prime }\left( \mathbb{R}^{n}\right) )$. Then $\Psi =\sum_{\gamma \in
\Gamma }\tau _{\gamma }\psi \in \mathcal{BC}^{\infty }\left( \mathbb{R}%
^{n}\right) $ is $\Gamma $-periodic and%
\begin{equation}
\left\langle u,\Psi \varphi \right\rangle =\sum_{\gamma \in \Gamma
}\left\langle u,\left( \tau _{\gamma }\psi \right) \varphi \right\rangle .
\label{ks6}
\end{equation}
\end{lemma}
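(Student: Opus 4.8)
The plan is to reduce everything to facts already obtained in the discussion immediately preceding the statement. Recall that there it was shown that $\Psi=\sum_{\gamma\in\Gamma}\tau_{\gamma}\psi$ converges in $\mathcal{C}^{\infty}\left(\mathbb{R}^{n}\right)$, that the limit is invariant under all translations by elements of $\Gamma$ (hence $\Gamma$-periodic) and therefore lies in $\mathcal{BC}^{\infty}\left(\mathbb{R}^{n}\right)$, and — crucially — that
\[
\Psi\varphi=\varphi\Psi=\sum_{\gamma\in\Gamma}\varphi\left(\tau_{\gamma}\psi\right),
\]
with the series converging in $\mathcal{S}\left(\mathbb{R}^{n}\right)$ in the Schwartz case and in $\mathcal{C}_{0}^{\infty}\left(\mathbb{R}^{n}\right)$ in the compactly supported case. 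So the first part of the assertion is already in hand, and the identity \eqref{ks6} will follow by applying the linear form $u$ to both sides of this series and pulling it through the sum.

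First I would treat the case $\psi,\varphi\in\mathcal{S}\left(\mathbb{R}^{n}\right)$, $u\in\mathcal{S}^{\prime}\left(\mathbb{R}^{n}\right)$. Here one invokes the estimate established above via Peetre's inequality: for every multi-index $\beta$ and every $k\in\mathbb{N}$ the partial sums of $\sum_{\gamma\in\Gamma}\left\langle\cdot\right\rangle^{k}\left\vert\partial^{\beta}\left(\varphi\,\tau_{\gamma}\psi\right)\right\vert$ are dominated by a fixed constant times $\bigl(\sum_{\gamma\in\Gamma}\left\langle\gamma\right\rangle^{-n-1}\bigr)<\infty$, so $\sum_{\gamma\in\Gamma}\varphi\left(\tau_{\gamma}\psi\right)$ converges absolutely in each Schwartz seminorm, hence in $\mathcal{S}\left(\mathbb{R}^{n}\right)$, to $\varphi\Psi$ (the pointwise identification being the one carried out in the preceding paragraph). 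Since $u\in\mathcal{S}^{\prime}\left(\mathbb{R}^{n}\right)$ is by definition a continuous linear form on $\mathcal{S}\left(\mathbb{R}^{n}\right)$, it commutes with this convergent series:
\[
\left\langle u,\Psi\varphi\right\rangle=\Bigl\langle u,\sum_{\gamma\in\Gamma}\varphi\left(\tau_{\gamma}\psi\right)\Bigr\rangle=\sum_{\gamma\in\Gamma}\left\langle u,\varphi\left(\tau_{\gamma}\psi\right)\right\rangle=\sum_{\gamma\in\Gamma}\left\langle u,\left(\tau_{\gamma}\psi\right)\varphi\right\rangle,
\]
which is \eqref{ks6}.

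In the case $\psi,\varphi\in\mathcal{C}_{0}^{\infty}\left(\mathbb{R}^{n}\right)$, $u\in\mathcal{D}^{\prime}\left(\mathbb{R}^{n}\right)$ the argument is even shorter: since $\varphi$ has compact support and $\operatorname{supp}\tau_{\gamma}\psi=\gamma+\operatorname{supp}\psi$, we have $\varphi\left(\tau_{\gamma}\psi\right)=0$ for all but finitely many $\gamma\in\Gamma$, so $\sum_{\gamma\in\Gamma}\varphi\left(\tau_{\gamma}\psi\right)=\varphi\Psi$ is a \emph{finite} sum in $\mathcal{C}_{0}^{\infty}\left(\mathbb{R}^{n}\right)$, and \eqref{ks6} follows from the linearity of $u$ alone.

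I do not expect a genuine obstacle here: the substance of the lemma — namely the convergence of $\sum_{\gamma\in\Gamma}\varphi\left(\tau_{\gamma}\psi\right)$ to $\varphi\Psi$ in the appropriate function space — has already been verified before the statement, and the only additional input needed is the built-in sequential continuity of $u$ on that space. The one point to be careful about is to phrase the interchange of $u$ with the sum as a genuine limit of partial sums (justified by continuity of $u$ in the Schwartz case, and by finiteness of the sum in the $\mathcal{C}_{0}^{\infty}$ case), rather than as a formal manipulation.
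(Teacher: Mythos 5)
Your proposal is correct and follows exactly the route the paper intends: the lemma is stated as an immediate consequence of the preceding discussion, where the convergence of $\sum_{\gamma\in\Gamma}\varphi\left(\tau_{\gamma}\psi\right)$ to $\varphi\Psi$ in $\mathcal{S}\left(\mathbb{R}^{n}\right)$ (via the Peetre-inequality estimate) respectively the finiteness of the sum in the $\mathcal{C}_{0}^{\infty}$ case is established, and identity (\ref{ks6}) then follows by the continuity (respectively linearity) of $u$ applied to the partial sums. Nothing is missing.
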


\begin{lemma}
$(\mathtt{a})$ Let $\chi \in \mathcal{S}\left( \mathbb{R}^{n}\right) $ and $%
u\in \mathcal{S}^{\prime }\left( \mathbb{R}^{n}\right) $. Then $\widehat{%
\chi u}\in \mathcal{S}^{\prime }\left( \mathbb{R}^{n}\right) \cap \mathcal{C}%
_{pol}^{\infty }\left( \mathbb{R}^{n}\right) $. In fact we have 
\begin{equation*}
\widehat{\chi u}\left( \xi \right) =\left\langle \mathtt{e}^{-\mathtt{i}%
\left\langle \cdot ,\xi \right\rangle }u,\chi \right\rangle =\left\langle u,%
\mathtt{e}^{-\mathtt{i}\left\langle \cdot ,\xi \right\rangle }\chi
\right\rangle ,\quad \xi \in \mathbb{R}^{n}.
\end{equation*}

$(\mathtt{b})$ Let $u\in \mathcal{D}^{\prime }\left( \mathbb{R}^{n}\right) $ 
$($or $u\in \mathcal{S}^{\prime }\left( \mathbb{R}^{n}\right) )$ and $\chi
\in \mathcal{C}_{0}^{\infty }\left( \mathbb{R}^{n}\right) $ $($or $\chi \in 
\mathcal{S}\left( \mathbb{R}^{n}\right) )$. Then 
\begin{equation*}
\mathbb{R}^{n}\times \mathbb{R}^{n}\ni \left( y,\xi \right) \rightarrow 
\widehat{u\tau _{y}\chi }\left( \xi \right) =\left\langle u,\mathtt{e}^{-%
\mathtt{i}\left\langle \cdot ,\xi \right\rangle }\chi \left( \cdot -y\right)
\right\rangle \in \mathbb{C}
\end{equation*}%
is a $\mathcal{C}^{\infty }$-function.
\end{lemma}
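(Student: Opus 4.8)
For part $(\mathtt{a})$, the plan is to verify the formula pointwise and then extract the smoothness and polynomial-growth statements. Fix $\chi\in\mathcal S(\mathbb R^n)$ and $u\in\mathcal S'(\mathbb R^n)$. Since $\chi u\in\mathcal S'$, its Fourier transform is a tempered distribution; the point is to show it is represented by the function $\xi\mapsto\langle u,\mathtt e^{-\mathtt i\langle\cdot,\xi\rangle}\chi\rangle$. First I would note that for any test function $\phi\in\mathcal S$,
\begin{equation*}
\langle\widehat{\chi u},\phi\rangle=\langle\chi u,\widehat\phi\rangle=\langle u,\chi\widehat\phi\rangle,
\end{equation*}
and then write $\chi(x)\widehat\phi(x)=\int \mathtt e^{-\mathtt i\langle x,\xi\rangle}\chi(x)\phi(\xi)\,\mathtt d\xi$, so that after justifying the interchange of the $\mathcal S'$-pairing in $x$ with the $\xi$-integration (a vector-valued integral whose integrand $\xi\mapsto\mathtt e^{-\mathtt i\langle\cdot,\xi\rangle}\chi\,\phi(\xi)$ is a Bochner-integrable $\mathcal S$-valued map, because $\phi$ decays rapidly and the $\mathcal S$-seminorms of $\mathtt e^{-\mathtt i\langle\cdot,\xi\rangle}\chi$ grow only polynomially in $\xi$) one gets $\langle\widehat{\chi u},\phi\rangle=\int\langle u,\mathtt e^{-\mathtt i\langle\cdot,\xi\rangle}\chi\rangle\phi(\xi)\,\mathtt d\xi$. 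This is exactly the claimed identity, and the equality $\langle\mathtt e^{-\mathtt i\langle\cdot,\xi\rangle}u,\chi\rangle=\langle u,\mathtt e^{-\mathtt i\langle\cdot,\xi\rangle}\chi\rangle$ is just the definition of multiplication of a distribution by a smooth function. Smoothness in $\xi$ and the polynomial bounds on $\widehat{\chi u}$ and its derivatives then follow by differentiating under the pairing: $\partial_\xi^\alpha\mathtt e^{-\mathtt i\langle x,\xi\rangle}\chi(x)=(-\mathtt i x)^\alpha\mathtt e^{-\mathtt i\langle x,\xi\rangle}\chi(x)$ depends continuously (indeed smoothly) on $\xi$ as an $\mathcal S$-valued map, so $\partial_\xi^\alpha\langle u,\mathtt e^{-\mathtt i\langle\cdot,\xi\rangle}\chi\rangle=\langle u,(-\mathtt i\cdot)^\alpha\mathtt e^{-\mathtt i\langle\cdot,\xi\rangle}\chi\rangle$, and the continuity seminorm estimate for $u\in\mathcal S'$ bounds this by $Cst\cdot\langle\xi\rangle^M$ for some $M$; hence $\widehat{\chi u}\in\mathcal C^\infty_{pol}(\mathbb R^n)$.

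For part $(\mathtt b)$, I would apply part $(\mathtt a)$ with $\chi$ replaced by the translate $\tau_y\chi=\chi(\cdot-y)$ (which is again in $\mathcal S$, or in $\mathcal C_0^\infty$ if $\chi$ is), obtaining
\begin{equation*}
\widehat{u\,\tau_y\chi}(\xi)=\langle u,\mathtt e^{-\mathtt i\langle\cdot,\xi\rangle}\chi(\cdot-y)\rangle
\end{equation*}
for each fixed $y$. The remaining task is joint smoothness in $(y,\xi)$. The key observation is that the map
\begin{equation*}
\mathbb R^n\times\mathbb R^n\ni(y,\xi)\longmapsto \bigl(x\mapsto\mathtt e^{-\mathtt i\langle x,\xi\rangle}\chi(x-y)\bigr)\in\mathcal S(\mathbb R^n)
\end{equation*}
is a $\mathcal C^\infty$ map into $\mathcal S$: all its $(y,\xi)$-derivatives are finite linear combinations (with polynomial-in-$x$ coefficients from the $\xi$-derivatives and translated derivatives of $\chi$ from the $y$-derivatives) of maps of the same type, and each is jointly continuous in the $\mathcal S$-topology by the Peetre-type estimates already used repeatedly in the excerpt. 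Composing this $\mathcal C^\infty$ $\mathcal S$-valued map with the continuous linear functional $u\in\mathcal S'$ (continuity of linear maps preserves smoothness of vector-valued functions, and differentiation commutes with $u$) yields that $(y,\xi)\mapsto\widehat{u\,\tau_y\chi}(\xi)$ is $\mathcal C^\infty$. In the $\mathcal D'$ case one uses that $\tau_y\chi$ ranges over a bounded subset of $\mathcal C_0^\infty$ with supports in a fixed compact set as $y$ varies over a compact set, so the same argument applies with $\mathcal D'$-continuity.

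The one step that needs genuine care is the interchange of the distributional pairing with the parameter integral in $(\mathtt a)$ and, relatedly, the assertion that the $\mathcal S$-valued maps in question are not merely continuous but smooth with the stated derivative formulas. The cleanest way is to phrase everything in terms of $\mathcal S$-valued (or $\mathcal C_0^\infty$-valued) calculus: establish once that $\xi\mapsto\mathtt e^{-\mathtt i\langle\cdot,\xi\rangle}\chi$ is smooth into $\mathcal S$ with $\partial_\xi^\alpha$ given by insertion of $(-\mathtt i x)^\alpha$, deduce the integral-interchange from the fact that a continuous linear functional passes through a Bochner integral, and then everything else is formal. I expect this to be routine but it is where the proof actually has content; the rest is bookkeeping with Peetre's inequality exactly as in the preceding lemmas.
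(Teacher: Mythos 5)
Your proof is correct. The only real difference from the paper lies in how the interchange of the pairing $\langle u,\cdot\rangle$ with the $\xi$-integration is justified: the paper forms the tempered distribution $\mathtt{e}^{-\mathtt{i}q}\left( u\otimes 1\right) $ on $\mathbb{R}_{x}^{n}\times \mathbb{R}_{\xi }^{n}$ and invokes Fubini's theorem for distributions (the symmetry of the iterated tensor-product pairing), exactly as it does for the identities (\ref{ks5}) and (\ref{ks9}) earlier in Section 3, whereas you treat $\xi \mapsto \mathtt{e}^{-\mathtt{i}\left\langle \cdot ,\xi \right\rangle }\chi \,\phi \left( \xi \right) $ as a Bochner-integrable $\mathcal{S}$-valued map and pass the continuous functional $u$ through the vector-valued integral. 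Both are standard and equally rigorous; the tensor-product route is shorter given that the paper has already set up that machinery, while your vector-valued-calculus route has the advantage that the same lemma (smoothness of $\left( y,\xi \right) \mapsto \mathtt{e}^{-\mathtt{i}\left\langle \cdot ,\xi \right\rangle }\tau _{y}\chi $ into $\mathcal{S}$, with derivatives computed by insertion of polynomials and translated derivatives of $\chi $) immediately delivers the $\mathcal{C}_{pol}^{\infty }$ membership in part $(\mathtt{a})$ and the joint $\mathcal{C}^{\infty }$ regularity in part $(\mathtt{b})$ — claims which the paper's written proof does not address at all, since it stops after establishing the pointwise formula. Your remark that in the $\mathcal{D}^{\prime }$ case one restricts $y$ to a compact set so that the supports of $\tau _{y}\chi $ stay in a fixed compact and one can use $\mathcal{D}_{K}^{\prime }$-continuity is exactly the right supplement for that case.
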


\begin{proof}
Let $q:\mathbb{R}_{x}^{n}\times \mathbb{R}_{\xi }^{n}\rightarrow \mathbb{R}$%
, $q\left( x,\xi \right) =\left\langle x,\xi \right\rangle $. Then $\mathtt{e%
}^{-\mathtt{i}q}\left( u\otimes 1\right) \in \mathcal{S}^{\prime }\left( 
\mathbb{R}_{x}^{n}\times \mathbb{R}_{\xi }^{n}\right) $. If $\varphi \in 
\mathcal{S}\left( \mathbb{R}_{\xi }^{n}\right) $, then we have%
\begin{eqnarray*}
\left\langle \widehat{\chi u},\varphi \right\rangle &=&\left\langle u,\chi 
\widehat{\varphi }\right\rangle =\left\langle u\left( x\right) ,\left\langle
1\left( \xi \right) ,\mathtt{e}^{-\mathtt{i}q\left( x,\xi \right) }\chi
\left( x\right) \varphi \left( \xi \right) \right\rangle \right\rangle \\
&=&\left\langle u\otimes 1,\mathtt{e}^{-\mathtt{i}q}\left( \chi \otimes
\varphi \right) \right\rangle =\left\langle 1\left( \xi \right)
,\left\langle u\left( x\right) ,\mathtt{e}^{-\mathtt{i}\left\langle x,\xi
\right\rangle }\chi \left( x\right) \varphi \left( \xi \right) \right\rangle
\right\rangle \\
&=&\left\langle 1\left( \xi \right) ,\varphi \left( \xi \right) \left\langle
u,\mathtt{e}^{-\mathtt{i}\left\langle \cdot ,\xi \right\rangle }\chi
\right\rangle \right\rangle =\left\langle 1\left( \xi \right) ,\varphi
\left( \xi \right) \left\langle \mathtt{e}^{-\mathtt{i}\left\langle \cdot
,\xi \right\rangle }u,\chi \right\rangle \right\rangle \\
&=&\int \varphi \left( \xi \right) \left\langle \mathtt{e}^{-\mathtt{i}%
\left\langle \cdot ,\xi \right\rangle }u,\chi \right\rangle \mathtt{d}\xi
\end{eqnarray*}%
This proves that $\widehat{\chi u}\left( \xi \right) =\left\langle \mathtt{e}%
^{-\mathtt{i}\left\langle \cdot ,\xi \right\rangle }u,\chi \right\rangle $, $%
\xi \in \mathbb{R}^{n}$.
\end{proof}

\begin{corollary}
Let $k\in \mathcal{K}_{pol}\left( \mathbb{R}^{n}\right) $, $u\in \mathcal{D}%
^{\prime }\left( \mathbb{R}^{n}\right) $ $($or $u\in \mathcal{S}^{\prime
}\left( \mathbb{R}^{n}\right) )$ and $\chi \in \mathcal{C}_{0}^{\infty
}\left( \mathbb{R}^{n}\right) $ $($or $\chi \in \mathcal{S}\left( \mathbb{R}%
^{n}\right) )$. Then 
\begin{equation*}
\mathbb{R}^{n}\ni y\rightarrow f\left( y\right) =\left\Vert u\tau _{y}\chi
\right\Vert _{\mathcal{B}_{k}}\in \left[ 0,\infty \right]
\end{equation*}%
is a measurable function.
\end{corollary}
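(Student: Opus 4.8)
The plan is to reduce the claim to Tonelli's theorem applied to a jointly measurable nonnegative integrand, using the joint smoothness in $\left(y,\xi\right)$ established in the preceding lemma. Recall that by definition $\left\Vert v\right\Vert _{\mathcal{B}_{k}}=\left\Vert k\widehat{v}\right\Vert _{L^{2}}$, so that
\begin{equation*}
f\left( y\right) ^{2}=\int_{\mathbb{R}^{n}}k\left( \xi \right) ^{2}\left\vert \widehat{u\tau _{y}\chi }\left( \xi \right) \right\vert ^{2}\,\mathtt{d}\xi ,\quad y\in \mathbb{R}^{n}.
\end{equation*}

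First I would invoke part $(\mathtt{b})$ of the preceding lemma: the map $\mathbb{R}^{n}\times \mathbb{R}^{n}\ni \left( y,\xi \right) \mapsto \widehat{u\tau _{y}\chi }\left( \xi \right) $ is of class $\mathcal{C}^{\infty }$, in particular it is continuous, hence Borel measurable on $\mathbb{R}_{y}^{n}\times \mathbb{R}_{\xi }^{n}$. On the other hand, $\left( y,\xi \right) \mapsto k\left( \xi \right) $ is measurable on $\mathbb{R}_{y}^{n}\times \mathbb{R}_{\xi }^{n}$ since $k$ is a positive measurable function on $\mathbb{R}^{n}$ that does not depend on $y$. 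Therefore the integrand
\begin{equation*}
F\left( y,\xi \right) =k\left( \xi \right) ^{2}\left\vert \widehat{u\tau _{y}\chi }\left( \xi \right) \right\vert ^{2}
\end{equation*}
is a nonnegative measurable function on $\mathbb{R}_{y}^{n}\times \mathbb{R}_{\xi }^{n}$, being a product of two nonnegative measurable functions.

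Next, Tonelli's theorem for nonnegative measurable functions guarantees that $y\mapsto \int_{\mathbb{R}^{n}}F\left( y,\xi \right) \,\mathtt{d}\xi \in \left[ 0,\infty \right] $ is a measurable function of $y$; this is exactly $y\mapsto f\left( y\right) ^{2}$. Since $t\mapsto t^{1/2}$ is continuous, hence Borel, on $\left[ 0,\infty \right] $, the composition $y\mapsto f\left( y\right) =\left( f\left( y\right) ^{2}\right) ^{1/2}$ is measurable with values in $\left[ 0,\infty \right] $, which is the assertion.

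There is essentially no hard step here: the only thing to verify is the joint measurability of the integrand, and this is immediate from the joint smoothness in $\left( y,\xi \right) $ supplied by the previous lemma together with the measurability of the weight $k$. The case $u\in \mathcal{S}^{\prime }$, $\chi \in \mathcal{S}$ is handled identically, since the previous lemma covers it as well.
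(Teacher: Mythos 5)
Your proof is correct, but it takes a genuinely different route from the paper's. The paper exhausts $\mathbb{R}^{n}_{\xi }$ by an increasing sequence of compacts $K_{m}$ and observes that the truncated integrals
$f_{m}\left( y\right) =\bigl( \int_{K_{m}}\left\vert \widehat{u\tau _{y}\chi }\left( \xi \right) k\left( \xi \right) \right\vert ^{2}\mathtt{d}\xi \bigr) ^{1/2}$
are continuous in $y$ (this uses the joint smoothness of $\left( y,\xi \right) \mapsto \widehat{u\tau _{y}\chi }\left( \xi \right) $ from the preceding lemma together with the fact that a weight in $\mathcal{K}_{pol}$ is bounded on compact sets), and then concludes that $f=\sup_{m}f_{m}$ is measurable as an increasing pointwise limit of continuous functions. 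You instead apply Tonelli's theorem to the jointly measurable nonnegative integrand $k\left( \xi \right) ^{2}\left\vert \widehat{u\tau _{y}\chi }\left( \xi \right) \right\vert ^{2}$ and take a square root at the end. Both arguments rest on exactly the same input, namely the joint regularity of $\left( y,\xi \right) \mapsto \widehat{u\tau _{y}\chi }\left( \xi \right) $; yours is slightly more economical in that it never needs continuity of any auxiliary function, only measurability, while the paper's yields the marginally stronger structural information that $f$ is an increasing limit of continuous functions. The one point worth making explicit in your version is that $k$ is only assumed Lebesgue measurable, so the integrand is measurable for the product $\sigma $-algebra of the Lebesgue $\sigma $-algebras rather than Borel, and Tonelli should be invoked in that setting; this is routine and does not affect the conclusion.
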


\begin{proof}
Let $\left( K_{m}\right) _{m\in \mathbb{N}}$ be a secuence of compact
subsets of $\mathbb{R}^{n}$ such that $K_{m}\subset \mathring{K}_{m+1}$ and $%
\bigcup K_{m}=\mathbb{R}^{n}$. Then $f_{m}\nearrow f$, where $f_{m}$ is the
continuous function on $\mathbb{R}^{n}$ defined by 
\begin{equation*}
f_{m}\left( y\right) =\left( \int_{K_{m}}\left\vert \widehat{u\tau _{y}\chi }%
\left( \xi \right) k\left( \xi \right) \right\vert ^{p}\mathtt{d}\xi \right)
^{1/p}.
\end{equation*}
\end{proof}

Let $u\in \mathcal{D}^{\prime }\left( \mathbb{R}^{n}\right) $ $($or $u\in 
\mathcal{S}^{\prime }\left( \mathbb{R}^{n}\right) )$ and $\chi \in \mathcal{C%
}_{0}^{\infty }\left( \mathbb{R}^{n}\right) \smallsetminus 0$ $($or $\chi
\in \mathcal{S}\left( \mathbb{R}^{n}\right) \smallsetminus 0)$. Let $%
\widetilde{\chi }\in \mathcal{C}_{0}^{\infty }\left( \mathbb{R}^{n}\right) $ 
$($or $\widetilde{\chi }\in \mathcal{S}\left( \mathbb{R}^{n}\right) )$and $%
\varphi \in \mathcal{C}_{0}^{\infty }\left( \mathbb{R}^{n}\right) $. By
using (\ref{ks5}) and Lemma \ref{ks18} $\left( \mathtt{d}\right) $ we get 
\begin{eqnarray*}
\left\langle u\tau _{z}\widetilde{\chi },\varphi \right\rangle &=&\frac{1}{%
\left\Vert \chi \right\Vert _{L^{2}}^{2}}\int \left\langle u\tau _{z}%
\widetilde{\chi },\left( \tau _{y}\chi \right) \left( \tau _{y}\overline{%
\chi }\right) \varphi \right\rangle \mathtt{d}y \\
&=&\frac{1}{\left\Vert \chi \right\Vert _{L^{2}}^{2}}\int \left\langle u\tau
_{y}\chi ,\left( \tau _{z}\widetilde{\chi }\right) \left( \tau _{y}\overline{%
\chi }\right) \varphi \right\rangle \mathtt{d}y,
\end{eqnarray*}%
\begin{equation*}
\left\vert \left\langle u\tau _{z}\widetilde{\chi },\varphi \right\rangle
\right\vert \leq \frac{\left( 2\pi \right) ^{-n}}{\left\Vert \chi
\right\Vert _{L^{2}}^{2}}\int \left\Vert u\tau _{y}\chi \right\Vert _{%
\mathcal{B}_{k}}\left\Vert \left( \tau _{z}\widetilde{\chi }\right) \left(
\tau _{y}\overline{\chi }\right) \varphi \right\Vert _{\mathcal{B}_{1/\check{%
k}}}\mathtt{d}y.
\end{equation*}

Let $\Gamma \subset \mathbb{R}^{n}$ be a lattice. Let $u\in \mathcal{D}%
^{\prime }\left( \mathbb{R}^{n}\right) $ $($or $u\in \mathcal{S}^{\prime
}\left( \mathbb{R}^{n}\right) )$ and let $\chi \in \mathcal{C}_{0}^{\infty
}\left( \mathbb{R}^{n}\right) $ $($or $\chi \in \mathcal{S}\left( \mathbb{R}%
^{n}\right) )$ be such that $\Psi =\Psi _{\Gamma ,\chi }=\sum_{\gamma \in
\Gamma }\left\vert \tau _{\gamma }\chi \right\vert ^{2}>0$. Then $\Psi ,%
\frac{1}{\Psi }\in \mathcal{BC}^{\infty }\left( \mathbb{R}^{n}\right) $ and
both are $\Gamma $-periodic. Let $\widetilde{\chi }\in \mathcal{C}%
_{0}^{\infty }\left( \mathbb{R}^{n}\right) $ $($or $\widetilde{\chi }\in 
\mathcal{S}\left( \mathbb{R}^{n}\right) )$. Using (\ref{ks6}) and Lemma \ref%
{ks18} $\left( \mathtt{d}\right) $ we obtain that 
\begin{eqnarray*}
\left\langle u\tau _{z}\widetilde{\chi },\varphi \right\rangle
&=&\sum_{\gamma \in \Gamma }\left\langle u\tau _{\gamma }\chi ,\frac{1}{\Psi 
}\left( \tau _{\gamma }\overline{\chi }\right) \left( \tau _{z}\widetilde{%
\chi }\right) \varphi \right\rangle , \\
\left\vert \left\langle u\tau _{z}\widetilde{\chi },\varphi \right\rangle
\right\vert &\leq &\left( 2\pi \right) ^{-n}\sum_{\gamma \in \Gamma
}\left\Vert u\tau _{\gamma }\chi \right\Vert _{\mathcal{B}_{k}}\left\Vert 
\frac{1}{\Psi }\left( \tau _{\gamma }\overline{\chi }\right) \left( \tau _{z}%
\widetilde{\chi }\right) \varphi \right\Vert _{\mathcal{B}_{1/\check{k}}} \\
&\leq &C_{\Psi }\sum_{\gamma \in \Gamma }\left\Vert u\tau _{\gamma }\chi
\right\Vert _{\mathcal{B}_{k}}\left\Vert \left( \tau _{\gamma }\overline{%
\chi }\right) \left( \tau _{z}\widetilde{\chi }\right) \varphi \right\Vert _{%
\mathcal{B}_{1/\check{k}}}.
\end{eqnarray*}%
In the last inequality we used the Proposition \ref{ks4} and the fact that $%
\frac{1}{\Psi }\in \mathcal{BC}^{\infty }\left( \mathbb{R}^{n}\right) $.

If $\left( Y,\mathtt{\mu }\right) $ is either $\mathbb{R}^{n}$ with Lebesgue
measure or $\Gamma $ with the counting measure, then the previous estimates
can be written as:%
\begin{equation*}
\left\vert \left\langle u\tau _{z}\widetilde{\chi },\varphi \right\rangle
\right\vert \leq Cst\int_{Y}\left\Vert u\tau _{y}\chi \right\Vert _{\mathcal{%
B}_{k}}\left\Vert \left( \tau _{z}\widetilde{\chi }\right) \left( \tau _{y}%
\overline{\chi }\right) \varphi \right\Vert _{\mathcal{B}_{1/\check{k}}}%
\mathtt{d\mu }\left( y\right)
\end{equation*}

We shall use Proposition \ref{ks4} to estimate $\left\Vert \left( \tau _{z}%
\widetilde{\chi }\right) \left( \tau _{y}\overline{\chi }\right) \varphi
\right\Vert _{\mathcal{B}_{1/\check{k}}}$. Let us note that $m_{k}=m_{\check{%
k}}=m_{1/\check{k}}=\left[ N+\frac{n+1}{2}\right] +1$. Then we have%
\begin{equation*}
\left\Vert \left( \tau _{z}\widetilde{\chi }\right) \left( \tau _{y}%
\overline{\chi }\right) \varphi \right\Vert _{\mathcal{B}_{1/\check{k}}}\leq
Cst\sup_{\left\vert \alpha +\beta \right\vert \leq m_{k}}\left\vert \left(
\left( \tau _{z}\partial ^{\alpha }\widetilde{\chi }\right) \left( \tau
_{y}\partial ^{\beta }\overline{\chi }\right) \right) \right\vert \left\Vert
\varphi \right\Vert _{\mathcal{B}_{1/\check{k}}}.
\end{equation*}%
There is a continuous seminorm $p_{n,k}$ on $\mathcal{S}\left( \mathbb{R}%
^{n}\right) $ so that 
\begin{eqnarray*}
\left\vert \left( \tau _{z}\partial ^{\alpha }\widetilde{\chi }\right)
\left( \tau _{y}\partial ^{\beta }\overline{\chi }\right) \left( x\right)
\right\vert &\leq &p_{n,k}\left( \widetilde{\chi }\right) p_{n,k}\left( \chi
\right) \left\langle x-z\right\rangle ^{-2\left( n+1\right) }\left\langle
x-y\right\rangle ^{-2\left( n+1\right) } \\
&\leq &2^{n+1}p_{n,k}\left( \widetilde{\chi }\right) p_{n,k}\left( \chi
\right) \left\langle 2x-z-y\right\rangle ^{-n-1}\left\langle
z-y\right\rangle ^{-n-1} \\
&\leq &2^{n+1}p_{n,k}\left( \widetilde{\chi }\right) p_{n,k}\left( \chi
\right) \left\langle z-y\right\rangle ^{-n-1},\quad \left\vert \alpha +\beta
\right\vert \leq m_{k}.
\end{eqnarray*}%
Here we used the inequality 
\begin{equation*}
\left\langle X\right\rangle ^{-2\left( n+1\right) }\left\langle
Y\right\rangle ^{-2\left( n+1\right) }\leq 2^{n+1}\left\langle
X+Y\right\rangle ^{-n-1}\left\langle X-Y\right\rangle ^{-n-1},\quad X,Y\in 
\mathbb{R}^{m}
\end{equation*}%
which is a consequence of Peetre's inequality $\left\langle X\pm
Y\right\rangle ^{n+1}\leq 2^{\frac{n+1}{2}}\left\langle X\right\rangle
^{n+1}\left\langle Y\right\rangle ^{n+1}$. Hence 
\begin{gather*}
\sup_{\left\vert \alpha +\beta \right\vert \leq m_{k}}\left\vert \left(
\left( \tau _{z}\partial ^{\alpha }\widetilde{\chi }\right) \left( \tau
_{y}\partial ^{\beta }\overline{\chi }\right) \right) \right\vert \leq
2^{n+1}p_{n,k}\left( \widetilde{\chi }\right) p_{n,k}\left( \chi \right)
\left\langle z-y\right\rangle ^{-n-1}, \\
\left\Vert \left( \tau _{z}\widetilde{\chi }\right) \left( \tau _{y}%
\overline{\chi }\right) \varphi \right\Vert _{\mathcal{B}_{1/\check{k}}}\leq
C^{\prime }\left( n,k,\chi \mathbf{,}\widetilde{\chi }\right) \left\langle
z-y\right\rangle ^{-n-1}\left\Vert \varphi \right\Vert _{\mathcal{B}_{1/%
\check{k}}}, \\
\left\vert \left\langle u\tau _{z}\widetilde{\chi },\varphi \right\rangle
\right\vert \leq C^{\prime }\left( n,k,\chi \mathbf{,}\widetilde{\chi }%
\right) \left( \int_{Y}\left\Vert u\tau _{y}\chi \right\Vert _{\mathcal{B}%
_{k}}\left\langle z-y\right\rangle ^{-n-1}\mathtt{d\mu }\left( y\right)
\right) \left\Vert \varphi \right\Vert _{\mathcal{B}_{1/\check{k}}}.
\end{gather*}%
The last estimate implies that 
\begin{equation}
\left\Vert u\tau _{z}\widetilde{\chi }\right\Vert _{\mathcal{B}_{k}}\leq
C\left( n,k,\chi \mathbf{,}\widetilde{\chi }\right) \left(
\int_{Y}\left\Vert u\tau _{y}\chi \right\Vert _{\mathcal{B}_{k}}\left\langle
z-y\right\rangle ^{-n-1}\mathtt{d\mu }\left( y\right) \right) .  \label{kh13}
\end{equation}%
Let $1\leq p<\infty $. If $\left( Z,\mathtt{\upsilon }\right) $ is either $%
\mathbb{R}^{n}$ with Lebesgue measure or a lattice with the counting
measure, then Schur's lemma implies%
\begin{equation*}
\left( \int_{Z}\left\Vert u\tau _{z}\widetilde{\chi }\right\Vert _{\mathcal{B%
}_{k}}^{p}\mathtt{d\upsilon }\left( z\right) \right) ^{\frac{1}{p}}\leq
C\left( n,k,\chi \mathbf{,}\widetilde{\chi }\right) \left\Vert \left\langle
\cdot \right\rangle ^{-n-1}\right\Vert _{L^{1}}\left( \int_{Y}\left\Vert
u\tau _{y}\chi \right\Vert _{\mathcal{B}_{k}}^{p}\mathtt{d\mu }\left(
y\right) \right) ^{\frac{1}{p}}.
\end{equation*}%
For $p=\infty $ we have 
\begin{equation*}
\sup_{z}\left\Vert u\tau _{z}\widetilde{\chi }\right\Vert _{\mathcal{B}%
_{k}}\leq C\left( n,k,\chi \mathbf{,}\widetilde{\chi }\right) \left\Vert
\left\langle \cdot \right\rangle ^{-n-1}\right\Vert
_{L^{1}}\sup_{y}\left\Vert u\tau _{y}\chi \right\Vert _{\mathcal{B}_{k}}.
\end{equation*}%
By taking different combinations of $\left( Y,\mathtt{\mu }\right) $ and $%
\left( Z,\mathtt{\upsilon }\right) $ we obtain the following result.

\begin{proposition}
\label{ks8}Let $k\in \mathcal{K}_{pol}\left( \mathbb{R}^{n}\right) $ and $C$%
, $N$ the positive constants that define $k$ and $1\leq p<\infty $. Let $%
u\in \mathcal{D}^{\prime }\left( \mathbb{R}^{n}\right) $ $($or $u\in 
\mathcal{S}^{\prime }\left( \mathbb{R}^{n}\right) )$ and $\chi \in \mathcal{C%
}_{0}^{\infty }\left( \mathbb{R}^{n}\right) \smallsetminus 0$ $($or $\chi
\in \mathcal{S}\left( \mathbb{R}^{n}\right) \smallsetminus 0)$.

$\left( \mathtt{a}\right) $ If $\widetilde{\chi }\in \mathcal{C}_{0}^{\infty
}\left( \mathbb{R}^{n}\right) $ $($or $\widetilde{\chi }\in \mathcal{S}%
\left( \mathbb{R}^{n}\right) )$, then there is $C\left( n,k,\chi \mathbf{,}%
\widetilde{\chi }\right) >0$ such that%
\begin{eqnarray*}
\left( \int \left\Vert u\tau _{\widetilde{y}}\widetilde{\chi }\right\Vert _{%
\mathcal{B}_{k}}^{p}\mathtt{d}\widetilde{y}\right) ^{\frac{1}{p}} &\leq
&C\left( n,k,\chi \mathbf{,}\widetilde{\chi }\right) \left( \int \left\Vert
u\tau _{y}\chi \right\Vert _{\mathcal{B}_{k}}^{p}\mathtt{d}y\right) ^{\frac{1%
}{p}}, \\
\sup_{\widetilde{y}}\left\Vert u\tau _{\widetilde{y}}\widetilde{\chi }%
\right\Vert _{\mathcal{B}_{k}} &\leq &C\left( n,k,\chi \mathbf{,}\widetilde{%
\chi }\right) \sup_{y}\left\Vert u\tau _{y}\chi \right\Vert _{\mathcal{B}%
_{k}}.
\end{eqnarray*}

$\left( \mathtt{b}\right) $ If $\Gamma \subset \mathbb{R}^{n}$ is a lattice
such that $\Psi =\Psi _{\Gamma ,\chi }=\sum_{\gamma \in \Gamma }\left\vert
\tau _{\gamma }\chi \right\vert ^{2}>0$ and $\widetilde{\chi }\in \mathcal{C}%
_{0}^{\infty }\left( \mathbb{R}^{n}\right) $ $($or $\widetilde{\chi }\in 
\mathcal{S}\left( \mathbb{R}^{n}\right) )$, then there is $C\left(
n,k,\Gamma ,\chi \mathbf{,}\widetilde{\chi }\right) >0$ such that%
\begin{eqnarray*}
\left( \int \left\Vert u\tau _{\widetilde{y}}\widetilde{\chi }\right\Vert _{%
\mathcal{B}_{k}}^{p}\mathtt{d}\widetilde{y}\right) ^{\frac{1}{p}} &\leq
&C\left( n,k,\Gamma ,\chi \mathbf{,}\widetilde{\chi }\right) \left(
\sum_{\gamma \in \Gamma }\left\Vert u\tau _{\gamma }\chi \right\Vert _{%
\mathcal{B}_{k}}^{p}\right) ^{\frac{1}{p}}, \\
\sup_{\widetilde{y}}\left\Vert u\tau _{\widetilde{y}}\widetilde{\chi }%
\right\Vert _{\mathcal{B}_{k}} &\leq &C\left( n,k,\Gamma ,\chi \mathbf{,}%
\widetilde{\chi }\right) \sup_{\gamma }\left\Vert u\tau _{\gamma }\chi
\right\Vert _{\mathcal{B}_{k}}.
\end{eqnarray*}

$\left( \mathtt{c}\right) $ If $\widetilde{\Gamma }\subset \mathbb{R}^{n}$
is a lattice and $\widetilde{\chi }\in \mathcal{C}_{0}^{\infty }\left( 
\mathbb{R}^{n}\right) $ $($or $\widetilde{\chi }\in \mathcal{S}\left( 
\mathbb{R}^{n}\right) )$, then there is $C\left( n,k,\widetilde{\Gamma }%
,\chi \mathbf{,}\widetilde{\chi }\right) >0$ such that%
\begin{eqnarray*}
\left( \sum_{\widetilde{\gamma }\in \widetilde{\Gamma }}\left\Vert u\tau _{%
\widetilde{\gamma }}\widetilde{\chi }\right\Vert _{\mathcal{B}%
_{k}}^{p}\right) ^{\frac{1}{p}} &\leq &C\left( n,k,\widetilde{\Gamma },\chi 
\mathbf{,}\widetilde{\chi }\right) \left( \int \left\Vert u\tau _{y}\chi
\right\Vert _{\mathcal{B}_{k}}^{p}\mathtt{d}y\right) ^{\frac{1}{p}}, \\
\sup_{\widetilde{\gamma }}\left\Vert u\tau _{\widetilde{\gamma }}\widetilde{%
\chi }\right\Vert _{\mathcal{B}_{k}} &\leq &C\left( n,k,\widetilde{\Gamma }%
,\chi \mathbf{,}\widetilde{\chi }\right) \sup_{y}\left\Vert u\tau _{y}\chi
\right\Vert _{\mathcal{B}_{k}}.
\end{eqnarray*}

$\left( \mathtt{d}\right) $ If $\Gamma ,\widetilde{\Gamma }\subset \mathbb{R}%
^{n}$ are lattices such that $\Psi =\Psi _{\Gamma ,\chi }=\sum_{\gamma \in
\Gamma }\left\vert \tau _{\gamma }\chi \right\vert ^{2}>0$ and $\widetilde{%
\chi }\in \mathcal{C}_{0}^{\infty }\left( \mathbb{R}^{n}\right) $ $($or $%
\widetilde{\chi }\in \mathcal{S}\left( \mathbb{R}^{n}\right) )$, then there
is $C\left( n,k\mathbf{,}\Gamma ,\widetilde{\Gamma },\chi \mathbf{,}%
\widetilde{\chi }\right) >0$ such that 
\begin{eqnarray*}
\left( \sum_{\widetilde{\gamma }\in \widetilde{\Gamma }}\left\Vert u\tau _{%
\widetilde{\gamma }}\chi \right\Vert _{\mathcal{B}_{k}}^{p}\right) ^{\frac{1%
}{p}} &\leq &C\left( n,k\mathbf{,}\Gamma ,\widetilde{\Gamma },\chi \mathbf{,}%
\widetilde{\chi }\right) \left( \sum_{\gamma \in \Gamma }\left\Vert u\tau
_{\gamma }\chi \right\Vert _{\mathcal{B}_{k}}^{p}\right) ^{\frac{1}{p}}, \\
\sup_{\widetilde{\gamma }}\left\Vert u\tau _{\widetilde{\gamma }}\widetilde{%
\chi }\right\Vert _{\mathcal{B}_{k}} &\leq &C\left( n,k\mathbf{,}\Gamma ,%
\widetilde{\Gamma },\chi \mathbf{,}\widetilde{\chi }\right) \sup_{\gamma
}\left\Vert u\tau _{\gamma }\chi \right\Vert _{\mathcal{B}_{k}}.
\end{eqnarray*}
\end{proposition}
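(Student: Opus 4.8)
The plan is to derive the four equivalences of Proposition \ref{ks8} from the single key estimate (\ref{kh13}) together with Schur's lemma for mixed continuous/discrete kernels, exactly as was done above for the pairs already treated. Recall that (\ref{kh13}) states, for any lattice $\Gamma$ (or for Lebesgue measure on $\mathbb{R}^n$) playing the role of the ``source'' measure space $(Y,\mu)$, the pointwise bound
\begin{equation*}
\left\Vert u\tau _{z}\widetilde{\chi }\right\Vert _{\mathcal{B}_{k}}\leq
C\left( n,k,\chi ,\widetilde{\chi }\right) \int_{Y}\left\Vert u\tau _{y}\chi \right\Vert _{\mathcal{B}_{k}}\left\langle z-y\right\rangle ^{-n-1}\,\mathtt{d}\mu \left( y\right),
\end{equation*}
and that this bound holds uniformly for $z$ ranging over \emph{any} target set, in particular over $\mathbb{R}^n$ or over any lattice $\widetilde{\Gamma}$. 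So the whole proposition reduces to applying an $L^p(\upsilon)\to L^p(\mu)$ Schur-type bound with kernel $\left\langle z-y\right\rangle^{-n-1}$, for the four combinations $(Y,\mu),(Z,\upsilon)\in\{(\mathbb{R}^n,\text{Lebesgue}),(\Lambda,\text{counting})\}$.

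First I would record the uniform row/column sum bounds for the kernel $K(z,y)=\left\langle z-y\right\rangle^{-n-1}$. When the source space is $\mathbb{R}^n$ with Lebesgue measure, $\int_{\mathbb{R}^n}\left\langle z-y\right\rangle^{-n-1}\,\mathtt{d}y=\left\Vert\left\langle\cdot\right\rangle^{-n-1}\right\Vert_{L^1}$ independently of $z$, and the same holds with the roles of $z$ and $y$ interchanged; when the source space is a lattice $\Lambda$ with counting measure, $\sum_{\lambda\in\Lambda}\left\langle z-\lambda\right\rangle^{-n-1}\leq C_{\Lambda}<\infty$ uniformly in $z\in\mathbb{R}^n$ (standard comparison of the sum with the integral, using that the summand is monotone on shells and that $n+1>n$), and likewise $\sup_{\lambda}\sum_{z\in\Lambda}\left\langle\lambda-z\right\rangle^{-n-1}<\infty$ when both variables are lattice points. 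In every one of the four cases both the ``$\sup_z\int_Y$'' and the ``$\sup_y\int_Z$'' quantities are finite, so Schur's test applies and yields
\begin{equation*}
\left(\int_Z\left\Vert u\tau_z\widetilde{\chi}\right\Vert_{\mathcal{B}_k}^p\,\mathtt{d}\upsilon(z)\right)^{1/p}\leq C(n,k,\chi,\widetilde{\chi})\,\left\Vert\left\langle\cdot\right\rangle^{-n-1}\right\Vert_{L^1}\left(\int_Y\left\Vert u\tau_y\chi\right\Vert_{\mathcal{B}_k}^p\,\mathtt{d}\mu(y)\right)^{1/p},
\end{equation*}
with the constant absorbing the relevant kernel sum when a counting measure is involved; the case $p=\infty$ follows directly from (\ref{kh13}) by taking the supremum over $z$ and pulling $\sup_y\Vert u\tau_y\chi\Vert_{\mathcal{B}_k}$ out of the integral. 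Part $(\mathtt{a})$ is the choice $Y=Z=\mathbb{R}^n$; part $(\mathtt{c})$ is $Y=\mathbb{R}^n$, $Z=\widetilde{\Gamma}$.

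For parts $(\mathtt{b})$ and $(\mathtt{d})$ the source space must be the lattice $\Gamma$, so I would first invoke the variant of (\ref{kh13}) with $(Y,\mu)=(\Gamma,\text{counting})$ — this is precisely the second displayed estimate derived just before the proposition, which used (\ref{ks6}), the periodicity and positivity of $\Psi=\Psi_{\Gamma,\chi}$, the fact that $1/\Psi\in\mathcal{BC}^\infty$, and Proposition \ref{ks4}; this is where the hypothesis $\Psi_{\Gamma,\chi}>0$ enters and is the only genuinely non-formal ingredient. Granting that estimate, part $(\mathtt{b})$ is Schur's lemma with $Y=\Gamma$, $Z=\mathbb{R}^n$, and part $(\mathtt{d})$ is $Y=\Gamma$, $Z=\widetilde{\Gamma}$, both handled by the uniform kernel-sum bounds above; in $(\mathtt{d})$ one should also note $m_k=m_{\check k}=m_{1/\check k}$ so that the seminorm estimates on $(\tau_z\partial^\alpha\widetilde\chi)(\tau_y\partial^\beta\overline\chi)$ used to prove (\ref{kh13}) go through verbatim. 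The main obstacle is really just bookkeeping: making sure that in each of the four cases the constant depends only on the data listed in the statement (so the lattice-dependent kernel sums must be shown finite and folded into $C(\dots)$), and that switching the source measure space to a lattice is legitimate — but that last point is already settled by the discussion preceding the proposition, so no new idea is needed beyond assembling these pieces.
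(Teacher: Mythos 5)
Your proposal is correct and follows essentially the same route as the paper: the paper establishes the pointwise estimate (\ref{kh13}) for both choices of source measure (Lebesgue on $\mathbb{R}^{n}$, and counting measure on $\Gamma$ under the hypothesis $\Psi_{\Gamma,\chi}>0$, via (\ref{ks6}), $1/\Psi\in\mathcal{BC}^{\infty}$ and Proposition \ref{ks4}), and then obtains all four parts by applying Schur's lemma with the kernel $\left\langle z-y\right\rangle^{-n-1}$ to the various combinations of $(Y,\mu)$ and $(Z,\upsilon)$, with the $p=\infty$ case handled by taking suprema directly. Your identification of where the hypothesis $\Psi_{\Gamma,\chi}>0$ enters and of the uniform lattice kernel sums matches the paper's argument.
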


Let us introduce the space%
\begin{equation*}
\mathcal{B}_{k}^{loc}\left( \mathbb{R}^{n}\right) =\left\{ u;u\in \mathcal{D}%
^{\prime }\left( \mathbb{R}^{n}\right) ,\phi u\in \mathcal{B}_{k}\left( 
\mathbb{R}^{n}\right) \text{ \textit{for every }}\phi \in \mathcal{C}%
_{0}^{\infty }\left( \mathbb{R}^{n}\right) \right\} .
\end{equation*}%
The next result concerns the regularity of the map $z\rightarrow u\tau _{z}%
\widetilde{\chi }$.

\begin{lemma}
If $u\in \mathcal{B}_{k}^{loc}\left( \mathbb{R}^{n}\right) $ and $\widetilde{%
\chi }\in \mathcal{C}_{0}^{\infty }\left( \mathbb{R}^{n}\right) $, then the
function 
\begin{equation*}
\mathbb{R}^{n}\ni z\rightarrow u\tau _{z}\widetilde{\chi }\in \mathcal{B}%
_{k}\left( \mathbb{R}^{n}\right)
\end{equation*}%
is locally Lipschitz.
\end{lemma}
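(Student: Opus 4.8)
The plan is to reduce the estimate, on any fixed ball, to the multiplication estimate of Lemma~\ref{ks3}~$(\mathtt{a})$ applied to a single fixed element of $\mathcal{B}_{k}\left( \mathbb{R}^{n}\right) $. Fix $R>0$; it suffices to produce a constant $C=C\left( R,n,k,\widetilde{\chi },u\right) $ such that
\[
\left\Vert u\tau _{z}\widetilde{\chi }-u\tau _{z^{\prime }}\widetilde{\chi }\right\Vert _{\mathcal{B}_{k}}\leq C\left\vert z-z^{\prime }\right\vert ,\qquad z,z^{\prime }\in B\left( 0,R\right) .
\]
Put $L=\overline{B\left( 0,R\right) }+\operatorname{supp}\widetilde{\chi }$, which is compact, and choose $\phi \in \mathcal{C}_{0}^{\infty }\left( \mathbb{R}^{n}\right) $ with $\phi =1$ on a neighbourhood of $L$. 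Since $u\in \mathcal{B}_{k}^{loc}\left( \mathbb{R}^{n}\right) $, the distribution $v:=\phi u$ lies in $\mathcal{B}_{k}\left( \mathbb{R}^{n}\right) $. For every $z\in B\left( 0,R\right) $ we have $\operatorname{supp}\tau _{z}\widetilde{\chi }=z+\operatorname{supp}\widetilde{\chi }\subset L$, hence $\phi \,\tau _{z}\widetilde{\chi }=\tau _{z}\widetilde{\chi }$ and
\[
u\tau _{z}\widetilde{\chi }=u\left( \phi \,\tau _{z}\widetilde{\chi }\right) =\left( \phi u\right) \tau _{z}\widetilde{\chi }=v\,\tau _{z}\widetilde{\chi };
\]
the same holds for $z^{\prime }$, so that $u\tau _{z}\widetilde{\chi }-u\tau _{z^{\prime }}\widetilde{\chi }=v\left( \tau _{z}\widetilde{\chi }-\tau _{z^{\prime }}\widetilde{\chi }\right) $.

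Next I would apply the multiplication estimate. The function $\chi _{z,z^{\prime }}:=\tau _{z}\widetilde{\chi }-\tau _{z^{\prime }}\widetilde{\chi }$ belongs to $\mathcal{C}_{0}^{\infty }\left( \mathbb{R}^{n}\right) \subset H^{N+\frac{n+1}{2}}\left( \mathbb{R}^{n}\right) $, so Lemma~\ref{ks3}~$(\mathtt{a})$ gives
\[
\left\Vert v\,\chi _{z,z^{\prime }}\right\Vert _{\mathcal{B}_{k}}\leq C\left( C,n\right) \left\Vert \chi _{z,z^{\prime }}\right\Vert _{H^{N+\frac{n+1}{2}}}\left\Vert v\right\Vert _{\mathcal{B}_{k}}.
\]
It remains to bound $\left\Vert \chi _{z,z^{\prime }}\right\Vert _{H^{N+\frac{n+1}{2}}}$ by $Cst\left( \widetilde{\chi },n,k\right) \left\vert z-z^{\prime }\right\vert $. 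Writing $\chi _{z,z^{\prime }}=\tau _{z^{\prime }}\left( \tau _{z-z^{\prime }}\widetilde{\chi }-\widetilde{\chi }\right) $ and using that translations act isometrically on every Sobolev space $H^{s}$, one is reduced to estimating $\left\Vert \tau _{w}\widetilde{\chi }-\widetilde{\chi }\right\Vert _{H^{s}}$ with $w=z-z^{\prime }$ and $s=N+\frac{n+1}{2}$. From the identity
\[
\widetilde{\chi }\left( \cdot -w\right) -\widetilde{\chi }=-\int_{0}^{1}\sum_{j=1}^{n}w_{j}\,\tau _{tw}\partial _{j}\widetilde{\chi }\,\mathtt{d}t,
\]
the triangle inequality for the $H^{s}$-norm and the isometry property, we obtain $\left\Vert \tau _{w}\widetilde{\chi }-\widetilde{\chi }\right\Vert _{H^{s}}\leq \left\vert w\right\vert \left( \sum_{j=1}^{n}\left\Vert \partial _{j}\widetilde{\chi }\right\Vert _{H^{s}}^{2}\right) ^{1/2}$.

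Combining the three displays,
\[
\left\Vert u\tau _{z}\widetilde{\chi }-u\tau _{z^{\prime }}\widetilde{\chi }\right\Vert _{\mathcal{B}_{k}}\leq C\left( C,n\right) \left( \sum_{j=1}^{n}\left\Vert \partial _{j}\widetilde{\chi }\right\Vert _{H^{N+\frac{n+1}{2}}}^{2}\right) ^{1/2}\left\Vert \phi u\right\Vert _{\mathcal{B}_{k}}\left\vert z-z^{\prime }\right\vert
\]
for all $z,z^{\prime }\in B\left( 0,R\right) $, which is exactly the claimed local Lipschitz bound. I do not expect a genuine obstacle in this argument; the only point that needs care is the localization in the first step — that $u\tau _{z}\widetilde{\chi }$ depends on $u$ only through $\phi u$ once $\phi \equiv 1$ near the uniformly compact union $\bigcup_{z\in B\left( 0,R\right) }\operatorname{supp}\tau _{z}\widetilde{\chi }$ — and it is precisely this step (and not the difference-of-translates estimate, which is in fact global) that confines the conclusion to local rather than global Lipschitz continuity.
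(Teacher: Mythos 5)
Your proof is correct, and it takes a different route from the paper's in the key estimation step. The paper also begins by reducing to $u\in \mathcal{B}_{k}\left( \mathbb{R}^{n}\right) $ (saying only that ``local Lipschitz continuity is a local property''; your cutoff $\phi $ with $\phi =1$ near $\overline{B\left( 0,R\right) }+\operatorname{supp}\widetilde{\chi }$ is exactly the argument that justifies this), but it then invokes the Wiener--amalgam inequality (\ref{kh13}) established in the proof of Proposition \ref{ks8}, namely $\left\Vert u\tau _{z}\widetilde{\chi }\right\Vert _{\mathcal{B}_{k}}\leq Cp_{n,k}\left( \widetilde{\chi }\right) p_{n,k}\left( \chi \right) \int \left\Vert u\tau _{y}\chi \right\Vert _{\mathcal{B}_{k}}\left\langle z-y\right\rangle ^{-n-1}\mathtt{d}y$, applied with $\widetilde{\chi }$ replaced by $\tau _{h}\widetilde{\chi }-\widetilde{\chi }$ and the seminorm bound $p_{n,k}\left( \tau _{h}\widetilde{\chi }-\widetilde{\chi }\right) \leq \left\vert h\right\vert q_{n,k}\left( \widetilde{\chi }\right) $. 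You instead bypass that machinery entirely and use only the elementary multiplier bound of Lemma \ref{ks3} $\left( \mathtt{a}\right) $ together with the fundamental-theorem-of-calculus estimate $\left\Vert \tau _{w}\widetilde{\chi }-\widetilde{\chi }\right\Vert _{H^{s}}\leq \left\vert w\right\vert \bigl( \sum_{j}\left\Vert \partial _{j}\widetilde{\chi }\right\Vert _{H^{s}}^{2}\bigr) ^{1/2}$, $s=N+\frac{n+1}{2}$, which is correct (the integrand $t\rightarrow \sum_{j}w_{j}\tau _{tw}\partial _{j}\widetilde{\chi }$ is continuous with values in $H^{s}$, translations are $H^{s}$-isometries, and Cauchy--Schwarz finishes it). Both arguments reduce the Lipschitz property to a Lipschitz estimate for $h\rightarrow \tau _{h}\widetilde{\chi }$ in a norm controlling multipliers on $\mathcal{B}_{k}$; yours is more self-contained and elementary, while the paper's, by reusing (\ref{kh13}), keeps the estimate in the form $\int \left\Vert u\tau _{y}\chi \right\Vert _{\mathcal{B}_{k}}\left\langle z-y\right\rangle ^{-n-1}\mathtt{d}y$, which is the quantity naturally controlled by the $\mathcal{B}_{k}^{p}$ norms and is therefore better adapted to the later uses in Section 3. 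Your closing observation about why only the localization step restricts the result to local (rather than global) Lipschitz continuity is also accurate.
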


\begin{proof}
Since local Lipschitz continuity is a local property and $u\in \mathcal{B}%
_{k}^{loc}\left( \mathbb{R}^{n}\right) $, we can assume that $u\in \mathcal{B%
}_{k}\left( \mathbb{R}^{n}\right) $. Let $\chi \in \mathcal{C}_{0}^{\infty
}\left( \mathbb{R}^{n}\right) \smallsetminus 0$. During the proof of the
last proposition we proved that there is $C=C\left( n,k\right) $ such that 
\begin{equation*}
\left\Vert u\tau _{z}\widetilde{\chi }\right\Vert _{\mathcal{B}_{k}}\leq
Cp_{n,k}\left( \widetilde{\chi }\right) p_{n,k}\left( \chi \right) \left(
\int \left\Vert u\tau _{y}\chi \right\Vert _{\mathcal{B}_{k}}\left\langle
z-y\right\rangle ^{-n-1}\mathtt{d}y\right) .
\end{equation*}%
Now if we replace $\widetilde{\chi }$ with $\tau _{h}\widetilde{\chi }-%
\widetilde{\chi }$, $\left\vert h\right\vert \leq 1$, we can find a seminorm 
$q_{n,k}$ on $\mathcal{S}\left( \mathbb{R}^{n}\right) $ such that $%
p_{n,k}\left( \tau _{h}\widetilde{\chi }-\widetilde{\chi }\right) \leq
\left\vert h\right\vert q_{n,k}\left( \widetilde{\chi }\right) $ and%
\begin{eqnarray*}
\left\Vert u\tau _{z+h}\widetilde{\chi }-u\tau _{z}\widetilde{\chi }%
\right\Vert _{\mathcal{B}_{k}} &\leq &Cq_{n,k}\left( \widetilde{\chi }%
\right) p_{n,k}\left( \chi \right) \left( \int \left\Vert u\tau _{y}\chi
\right\Vert _{\mathcal{B}_{k}}\left\langle z-y\right\rangle ^{-n-1}\mathtt{d}%
y\right) \left\vert h\right\vert \\
&\leq &C^{\prime }q_{n,k}\left( \widetilde{\chi }\right) p_{n,k}\left( \chi
\right) \left\Vert u\right\Vert _{\mathcal{B}_{k}}\left\Vert \chi
\right\Vert _{\mathcal{BC}^{m_{k}}}\left\Vert \left\langle \cdot
\right\rangle ^{-n-1}\right\Vert _{L^{1}}\left\vert h\right\vert .
\end{eqnarray*}
\end{proof}

\begin{definition}
Let $1\leq p\leq \infty $, $k\in \mathcal{K}_{pol}\left( \mathbb{R}%
^{n}\right) $ and $u\in \mathcal{D}^{\prime }\left( \mathbb{R}^{n}\right) $.
We say that $u$ belongs to $\mathcal{B}_{k}^{p}\left( \mathbb{R}^{n}\right) $
if there is $\chi \in \mathcal{C}_{0}^{\infty }\left( \mathbb{R}^{n}\right)
\smallsetminus 0$ such that the measurable function $\mathbb{R}^{n}\ni
y\rightarrow \left\Vert u\tau _{y}\chi \right\Vert _{\mathcal{B}_{k}}\in 
\mathbb{R}$ belongs to $L^{p}$. We put%
\begin{eqnarray*}
\left\Vert u\right\Vert _{k,p,\chi } &=&\left( \int \left\Vert u\tau
_{y}\chi \right\Vert _{\mathcal{B}_{k}}^{p}\mathtt{d}y\right) ^{\frac{1}{p}%
},\quad 1\leq p<\infty , \\
\left\Vert u\right\Vert _{k,\infty ,\chi } &\equiv &\left\Vert u\right\Vert
_{k,\mathtt{ul},\chi }=\sup_{y}\left\Vert u\tau _{y}\chi \right\Vert _{%
\mathcal{B}_{k}}.
\end{eqnarray*}
\end{definition}

\begin{proposition}
\label{ks15}$\left( \mathtt{a}\right) $ The above definition does not depend
on the choice of the function $\chi \in \mathcal{C}_{0}^{\infty }\left( 
\mathbb{R}^{n}\right) \smallsetminus 0$.

$\left( \mathtt{b}\right) $ If $\chi \in \mathcal{C}_{0}^{\infty }\left( 
\mathbb{R}^{n}\right) \smallsetminus 0$, then $\left\Vert \cdot \right\Vert
_{k,p,\chi }$ is a norm on $\mathcal{B}_{k}^{p}\left( \mathbb{R}^{n}\right) $
and the topology that defines does not depend on the function $\chi $.

$\left( \mathtt{c}\right) $ Let $\Gamma \subset \mathbb{R}^{n}$ be a lattice
and $\chi \in \mathcal{C}_{0}^{\infty }\left( \mathbb{R}^{n}\right) $ be a
function with the property that $\Psi =\Psi _{\Gamma ,\chi }=\sum_{\gamma
\in \Gamma }\left\vert \tau _{\gamma }\chi \right\vert ^{2}>0$. Then 
\begin{equation*}
\mathcal{B}_{k}^{p}\left( \mathbb{R}^{n}\right) \ni u\rightarrow \left\{ 
\begin{array}{cc}
\left( \sum_{\gamma \in \Gamma }\left\Vert u\tau _{\gamma }\chi \right\Vert
_{\mathcal{B}_{k}}^{p}\right) ^{\frac{1}{p}} & 1\leq p<\infty \\ 
\sup_{\gamma }\left\Vert u\tau _{\gamma }\chi \right\Vert _{\mathcal{B}_{k}}
& p=\infty%
\end{array}%
\right.
\end{equation*}%
is a norm on $\mathcal{B}_{k}^{p}\left( \mathbb{R}^{n}\right) $ and the
topology that defines is the topology of $\mathcal{B}_{k}^{p}\left( \mathbb{R%
}^{n}\right) $. We shall use the notation 
\begin{equation*}
\left\Vert u\right\Vert _{k,p,\Gamma ,\chi }=\left\{ 
\begin{array}{cc}
\left( \sum_{\gamma \in \Gamma }\left\Vert u\tau _{\gamma }\chi \right\Vert
_{\mathcal{B}_{k}}^{p}\right) ^{\frac{1}{p}} & 1\leq p<\infty \\ 
\sup_{\gamma }\left\Vert u\tau _{\gamma }\chi \right\Vert _{\mathcal{B}_{k}}
& p=\infty%
\end{array}%
\right. .
\end{equation*}

$\left( \mathtt{d}\right) $ If $1\leq p\leq q\leq \infty $, Then%
\begin{equation*}
\mathcal{S}\left( \mathbb{R}^{n}\right) \subset \mathcal{B}_{k}^{1}\left( 
\mathbb{R}^{n}\right) \subset \mathcal{B}_{k}^{p}\left( \mathbb{R}%
^{n}\right) \subset \mathcal{B}_{k}^{q}\left( \mathbb{R}^{n}\right) \subset 
\mathcal{B}_{k}^{\infty }\left( \mathbb{R}^{n}\right) \subset \mathcal{S}%
^{\prime }\left( \mathbb{R}^{n}\right) .
\end{equation*}

$\left( \mathtt{e}\right) $ If $k^{\prime },k\in k\in \mathcal{K}%
_{pol}\left( \mathbb{R}^{n}\right) $ and $k^{\prime }\leq Cst\cdot k,$ then $%
\mathcal{B}_{k}^{p}\left( \mathbb{R}^{n}\right) \subset \mathcal{B}%
_{k^{\prime }}^{p}\left( \mathbb{R}^{n}\right) $.

$\left( \mathtt{f}\right) $ $\left( \mathcal{B}_{k}^{p}\left( \mathbb{R}%
^{n}\right) ,\left\Vert \cdot \right\Vert _{k,p,\chi }\right) $ is a Banach
space.

$\left( \mathtt{g}\right) $ If $1/k\in L^{2}\left( \mathbb{R}^{n}\right) $,
then $\mathcal{B}_{k}^{\infty }\left( \mathbb{R}^{n}\right) \subset \mathcal{%
B}\mathcal{C}\left( \mathbb{R}^{n}\right) $.
\end{proposition}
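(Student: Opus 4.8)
The plan is to reduce the entire proposition to Proposition \ref{ks8} (the change-of-window estimates and the lattice-versus-continuous comparisons), to Lemma \ref{ks18} (duality and basic structure of the spaces $B_{2,k}$), and to the elementary inclusions $\ell^{1}\subset\ell^{p}\subset\ell^{q}\subset\ell^{\infty}$; the only places needing a genuine estimate are the two endpoint inclusions in $\left(\mathtt{d}\right)$.

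For $\left(\mathtt{a}\right)$ and the norm-equivalence part of $\left(\mathtt{b}\right)$: if $y\mapsto\left\Vert u\tau_{y}\chi\right\Vert_{\mathcal{B}_{k}}$ is in $L^{p}$ for one $\chi\in\mathcal{C}_{0}^{\infty}\left(\mathbb{R}^{n}\right)\smallsetminus0$, then Proposition \ref{ks8}$\left(\mathtt{a}\right)$ gives the same for every $\widetilde{\chi}$ and bounds each of $\left\Vert u\right\Vert_{k,p,\chi}$, $\left\Vert u\right\Vert_{k,p,\widetilde{\chi}}$ by a constant multiple of the other. That $\left\Vert\cdot\right\Vert_{k,p,\chi}$ is a norm: sublinearity and homogeneity are inherited from $L^{p}$ via $\left\Vert\left(u+v\right)\tau_{y}\chi\right\Vert_{\mathcal{B}_{k}}\le\left\Vert u\tau_{y}\chi\right\Vert_{\mathcal{B}_{k}}+\left\Vert v\tau_{y}\chi\right\Vert_{\mathcal{B}_{k}}$; for definiteness, estimate $\left(\ref{kh13}\right)$ by H\"{o}lder (using $\left\langle\cdot\right\rangle^{-n-1}\in L^{p^{\prime}}$) to get $\sup_{z}\left\Vert u\tau_{z}\widetilde{\chi}\right\Vert_{\mathcal{B}_{k}}<\infty$ for every $\widetilde{\chi}$, so $u\in\mathcal{B}_{k}^{loc}\left(\mathbb{R}^{n}\right)$ and hence, by the locally Lipschitz lemma preceding the definition of $\mathcal{B}_{k}^{p}$, $z\mapsto u\tau_{z}\chi$ is continuous; if $\left\Vert u\right\Vert_{k,p,\chi}=0$ then $u\tau_{z}\chi=0$ for all $z$, which forces $u=0$ near every point since $\chi\not\equiv0$. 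Part $\left(\mathtt{c}\right)$ uses the same mechanism: Proposition \ref{ks8}$\left(\mathtt{b}\right)$ together with $\left(\mathtt{c}\right)$ applied with the roles of the two windows interchanged gives $\left\Vert u\right\Vert_{k,p,\widetilde{\chi}}\approx\left\Vert u\right\Vert_{k,p,\Gamma,\chi}$ whenever $\Psi_{\Gamma,\chi}>0$ (definiteness again because $\Psi_{\Gamma,\chi}>0$ means some $\tau_{\gamma}\chi$ is nonzero near each point), so the lattice expression is an equivalent norm.

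For $\left(\mathtt{d}\right)$, fix the lattice $\mathbb{Z}^{n}$ and the function $h\in\mathcal{C}_{0}^{\infty}\left(\mathbb{R}^{n}\right)$ from the partition of unity built before Proposition \ref{ks4}; then $h\ge0$ and $\sum_{\gamma}\tau_{\gamma}h=1$, so $\Psi_{\mathbb{Z}^{n},h}>0$ and $\left\Vert u\right\Vert_{k,p,\mathbb{Z}^{n},h}$ is an equivalent norm by $\left(\mathtt{c}\right)$. The inclusions $\mathcal{B}_{k}^{1}\subset\mathcal{B}_{k}^{p}\subset\mathcal{B}_{k}^{q}\subset\mathcal{B}_{k}^{\infty}$ then follow from $\ell^{1}\subset\ell^{p}\subset\ell^{q}\subset\ell^{\infty}$ applied to $\left(\left\Vert u\tau_{\gamma}h\right\Vert_{\mathcal{B}_{k}}\right)_{\gamma\in\mathbb{Z}^{n}}$. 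For $\mathcal{S}\subset\mathcal{B}_{k}^{1}$: since $k\left(\xi\right)\le Ck\left(0\right)\left\langle\xi\right\rangle^{N}$ one has $\left\Vert w\right\Vert_{\mathcal{B}_{k}}\le Cst\left\Vert w\right\Vert_{H^{N}}$, so $\left\Vert\cdot\right\Vert_{\mathcal{B}_{k}}$ is dominated on $\mathcal{S}\left(\mathbb{R}^{n}\right)$ by a Schwartz seminorm; expanding $\partial^{\alpha}\left(u\tau_{y}\chi\right)$ by Leibniz and using Peetre's inequality to trade $\left\langle x-y\right\rangle$-decay for $\left\langle y\right\rangle$-decay yields $\left\Vert u\tau_{y}\chi\right\Vert_{\mathcal{B}_{k}}\le C_{K}\left\langle y\right\rangle^{-K}$ for every $K$, integrable once $K>n$. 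For $\mathcal{B}_{k}^{\infty}\subset\mathcal{S}^{\prime}$: with $\widetilde{h}\in\mathcal{C}_{0}^{\infty}\left(\mathbb{R}^{n}\right)$ equal to $1$ on $\mathtt{supp}\,h$, set $\left\langle u,\varphi\right\rangle=\sum_{\gamma}\left\langle u\tau_{\gamma}h,\left(\tau_{\gamma}\widetilde{h}\right)\varphi\right\rangle$ for $\varphi\in\mathcal{S}\left(\mathbb{R}^{n}\right)$; each term is controlled by the $\mathcal{B}_{k}$--$\mathcal{B}_{1/\check{k}}$ duality of Lemma \ref{ks18}$\left(\mathtt{d}\right)$, by $\left\Vert u\tau_{\gamma}h\right\Vert_{\mathcal{B}_{k}}\le Cst\left\Vert u\right\Vert_{k,\infty,h}$, and by $\left\Vert\left(\tau_{\gamma}\widetilde{h}\right)\varphi\right\Vert_{\mathcal{B}_{1/\check{k}}}\le Cst\left\Vert\left(\tau_{\gamma}\widetilde{h}\right)\varphi\right\Vert_{H^{N}}\le C_{M}\left(\varphi\right)\left\langle\gamma\right\rangle^{-M}$, the last because $\varphi$ and its derivatives decay rapidly on $\gamma+\mathtt{supp}\,\widetilde{h}$; summing with $M=n+1$ gives $\left\vert\left\langle u,\varphi\right\rangle\right\vert\le Cst\left\Vert u\right\Vert_{k,\infty,h}\,p\left(\varphi\right)$ for a continuous seminorm $p$ on $\mathcal{S}\left(\mathbb{R}^{n}\right)$, and this functional visibly agrees with $u$ on $\mathcal{C}_{0}^{\infty}\left(\mathbb{R}^{n}\right)$.

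Part $\left(\mathtt{e}\right)$ is immediate from Lemma \ref{ks18}$\left(\mathtt{b}\right)$, which gives $\left\Vert u\tau_{y}\chi\right\Vert_{\mathcal{B}_{k^{\prime}}}\le Cst\left\Vert u\tau_{y}\chi\right\Vert_{\mathcal{B}_{k}}$ pointwise in $y$. For $\left(\mathtt{f}\right)$ I would argue completeness through the lattice norm of $\left(\mathtt{c}\right)$: if $\left(u_{j}\right)$ is Cauchy, it is Cauchy in $\mathcal{S}^{\prime}\left(\mathbb{R}^{n}\right)$ by the continuous embedding just established, say $u_{j}\to u$; for each $\gamma$, $\left(u_{j}\tau_{\gamma}h\right)_{j}$ is Cauchy in $\mathcal{B}_{k}\left(\mathbb{R}^{n}\right)$ and its limit, being also a limit in $\mathcal{S}^{\prime}$, equals $u\tau_{\gamma}h$, so $u\tau_{\gamma}h\in\mathcal{B}_{k}\left(\mathbb{R}^{n}\right)$; lower semicontinuity (Fatou) of the $\ell^{p}$-sum then gives $\left\Vert u\right\Vert_{k,p,\mathbb{Z}^{n},h}\le\liminf_{j}\left\Vert u_{j}\right\Vert_{k,p,\mathbb{Z}^{n},h}<\infty$ and $\left\Vert u_{j}-u\right\Vert_{k,p,\mathbb{Z}^{n},h}\le\liminf_{l}\left\Vert u_{j}-u_{l}\right\Vert_{k,p,\mathbb{Z}^{n},h}\to0$, with the obvious $\sup$-version when $p=\infty$. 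For $\left(\mathtt{g}\right)$, Lemma \ref{ks18}$\left(\mathtt{f}\right)$ with $p=2$ gives $\mathcal{B}_{k}\left(\mathbb{R}^{n}\right)\subset\mathcal{C}_{\infty}\left(\mathbb{R}^{n}\right)$ with $\left\Vert w\right\Vert_{L^{\infty}}\le\left(2\pi\right)^{-n}\left\Vert k^{-1}\right\Vert_{L^{2}}\left\Vert w\right\Vert_{\mathcal{B}_{k}}$; writing $u=\sum_{\gamma}u\tau_{\gamma}h$, a locally finite sum of continuous functions at most $M$ of which (with $M$ depending only on $\mathtt{supp}\,h$) are nonzero at any point, each summand has $L^{\infty}$-norm $\le\left(2\pi\right)^{-n}\left\Vert k^{-1}\right\Vert_{L^{2}}\left\Vert u\right\Vert_{k,\infty,h}$, so $u$ is continuous with $\left\Vert u\right\Vert_{L^{\infty}}\le M\left(2\pi\right)^{-n}\left\Vert k^{-1}\right\Vert_{L^{2}}\left\Vert u\right\Vert_{k,\infty,h}$, i.e.\ $u\in\mathcal{BC}\left(\mathbb{R}^{n}\right)$. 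The one place demanding real care is the pair of endpoint statements in $\left(\mathtt{d}\right)$, namely $\mathcal{S}\hookrightarrow\mathcal{B}_{k}^{1}$ and $\mathcal{B}_{k}^{\infty}\hookrightarrow\mathcal{S}^{\prime}$: these are the only spots where one must honestly estimate $\mathcal{B}_{k}$- and $\mathcal{B}_{1/\check{k}}$-norms of products of Schwartz or cutoff functions and convert them into $\left\langle\gamma\right\rangle$- or $\left\langle y\right\rangle$-decay; everything else is a transcription of Proposition \ref{ks8}, Lemma \ref{ks18}, and the sequence-space inclusions.
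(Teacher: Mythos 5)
Your proof is correct and follows essentially the same route as the paper: parts $(\mathtt{a})$--$(\mathtt{c})$ and $(\mathtt{e})$ are read off from Proposition \ref{ks8}, the middle inclusions of $(\mathtt{d})$ from the chain $l^{1}\subset l^{p}\subset l^{q}\subset l^{\infty}$, the two endpoint embeddings of $(\mathtt{d})$ from the $\mathcal{B}_{k}$--$\mathcal{B}_{1/\check{k}}$ duality of Lemma \ref{ks18} combined with Peetre-type $\left\langle y\right\rangle ^{-n-1}$ decay, and $(\mathtt{f})$ from Fatou's lemma after identifying the limit in $\mathcal{S}^{\prime }$. The only deviations are cosmetic: for $\mathcal{B}_{k}^{\infty }\subset \mathcal{S}^{\prime }$ you use the discrete resolution $\sum_{\gamma }\left\langle u\tau _{\gamma }h,(\tau _{\gamma }\widetilde{h})\varphi \right\rangle$ where the paper uses the continuous one $\left\Vert \chi \right\Vert _{L^{2}}^{-2}\int \left\langle u\tau _{y}\chi ,(\tau _{y}\overline{\chi })\varphi \right\rangle \mathtt{d}y$, for $(\mathtt{g})$ you sum the partition of unity where the paper simply evaluates $u(x)=(u\tau _{x}\chi )(x)$ with $\chi (0)=1$, and you additionally supply the definiteness-of-the-norm detail that the paper leaves implicit --- all of which is sound.
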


\begin{proof}
$\left( \mathtt{a}\right) $ $\left( \mathtt{b}\right) $ $\left( \mathtt{c}%
\right) $ are immediate consequences of the previous proposition.

$\left( \mathtt{d}\right) $ The inclusions $\mathcal{B}_{k}^{1}\left( 
\mathbb{R}^{n}\right) \subset \mathcal{B}_{k}^{p}\left( \mathbb{R}%
^{n}\right) \subset \mathcal{B}_{k}^{q}\left( \mathbb{R}^{n}\right) \subset 
\mathcal{B}_{k}^{\infty }\left( \mathbb{R}^{n}\right) $ are consequences of
the elementary inclusions $l^{1}\subset l^{p}\subset l^{q}\subset l^{\infty
} $. What remain to be shown are the inclusions $\mathcal{S}\left( \mathbb{R}%
^{n}\right) \subset \mathcal{B}_{k}^{1}\left( \mathbb{R}^{n}\right) $, $%
\mathcal{B}_{k}^{\infty }\left( \mathbb{R}^{n}\right) \subset \mathcal{S}%
^{\prime }\left( \mathbb{R}^{n}\right) $.

Let $u\in \mathcal{B}_{k}^{\infty }\left( \mathbb{R}^{n}\right) $, $\chi \in 
\mathcal{C}_{0}^{\infty }\left( \mathbb{R}^{n}\right) \smallsetminus 0$ and $%
\varphi \in \mathcal{C}_{0}^{\infty }\left( \mathbb{R}^{n}\right) $. We have%
\begin{equation*}
\left\langle u,\varphi \right\rangle =\frac{1}{\left\Vert \chi \right\Vert
_{L^{2}}^{2}}\int \left\langle u\tau _{y}\chi ,\left( \tau _{y}\overline{%
\chi }\right) \varphi \right\rangle \mathtt{d}y,
\end{equation*}%
\begin{eqnarray*}
\left\vert \left\langle u,\varphi \right\rangle \right\vert &\leq &\frac{1}{%
\left\Vert \chi \right\Vert _{L^{2}}^{2}}\int \left\vert \left\langle u\tau
_{y}\chi ,\left( \tau _{y}\overline{\chi }\right) \varphi \right\rangle
\right\vert \mathtt{d}y \\
&\leq &\frac{\left( 2\pi \right) ^{-n}}{\left\Vert \chi \right\Vert
_{L^{2}}^{2}}\int \left\Vert u\tau _{y}\chi \right\Vert _{\mathcal{B}%
_{k}}\left\Vert \left( \tau _{y}\overline{\chi }\right) \varphi \right\Vert
_{\mathcal{B}_{1/\check{k}}}\mathtt{d}y \\
&\leq &\frac{\left( 2\pi \right) ^{-n}}{\left\Vert \chi \right\Vert
_{L^{2}}^{2}}\left\Vert u\right\Vert _{k,\infty ,\chi }\int \left\Vert
\left( \tau _{y}\overline{\chi }\right) \varphi \right\Vert _{\mathcal{B}_{1/%
\check{k}}}\mathtt{d}y.
\end{eqnarray*}%
We shall use Proposition \ref{ks4} to estimate $\left\Vert \left( \tau _{y}%
\overline{\chi }\right) \varphi \right\Vert _{\mathcal{B}_{1/\check{k}}}$.
Let $\widetilde{\chi }\in \mathcal{C}_{0}^{\infty }\left( \mathbb{R}%
^{n}\right) $, $\widetilde{\chi }=1$ on $\mathtt{supp}\chi $. If $m_{k}=m_{1/%
\check{k}}=\left[ N+\frac{n+1}{2}\right] +1$, then we obtain that 
\begin{eqnarray*}
\left\Vert \left( \tau _{y}\overline{\chi }\right) \varphi \right\Vert _{%
\mathcal{B}_{1/\check{k}}} &\leq &C\sup_{\left\vert \alpha +\beta
\right\vert \leq m_{k}}\left\vert \left( \partial ^{\alpha }\varphi \right)
\left( \tau _{y}\partial ^{\beta }\overline{\chi }\right) \right\vert
\left\Vert \tau _{y}\widetilde{\chi }\right\Vert _{\mathcal{B}_{1/\check{k}}}
\\
&=&C\sup_{\left\vert \alpha +\beta \right\vert \leq m_{k}}\left\vert \left(
\partial ^{\alpha }\varphi \right) \left( \tau _{y}\partial ^{\beta }%
\overline{\chi }\right) \right\vert \left\Vert \widetilde{\chi }\right\Vert
_{\mathcal{B}_{1/\check{k}}}.
\end{eqnarray*}%
Since $\chi $, $\varphi \in \mathcal{S}\left( \mathbb{R}^{n}\right) $ it
follows that there is a continuous seminorm $p_{n,k}$ on $\mathcal{S}\left( 
\mathbb{R}^{n}\right) $ so that 
\begin{eqnarray*}
\left\vert \left( \partial ^{\alpha }\varphi \right) \left( \tau
_{y}\partial ^{\beta }\overline{\chi }\right) \left( x\right) \right\vert
&\leq &p_{n,k}\left( \varphi \right) p_{n,k}\left( \chi \right) \left\langle
x-y\right\rangle ^{-2\left( n+1\right) }\left\langle x\right\rangle
^{-2\left( n+1\right) } \\
&\leq &2^{n+1}p_{n,k}\left( \varphi \right) p_{n,k}\left( \chi \right)
\left\langle 2x-y\right\rangle ^{-\left( n+1\right) }\left\langle
y\right\rangle ^{-\left( n+1\right) } \\
&\leq &2^{n+1}p_{n,k}\left( \varphi \right) p_{n,k}\left( \chi \right)
\left\langle y\right\rangle ^{-\left( n+1\right) },\quad \left\vert \alpha
+\beta \right\vert \leq m_{k}.
\end{eqnarray*}%
Hence 
\begin{equation*}
\left\vert \left\langle u,\varphi \right\rangle \right\vert \leq 2^{n+1}C%
\frac{\left( 2\pi \right) ^{-n}}{\left\Vert \chi \right\Vert _{L^{2}}^{2}}%
\left\Vert u\right\Vert _{k,\infty ,\chi }\left\Vert \left\langle \cdot
\right\rangle ^{-\left( n+1\right) }\right\Vert _{L^{1}}\left\Vert 
\widetilde{\chi }\right\Vert _{\mathcal{B}_{1/\check{k}}}p_{n,k}\left( \chi
\right) p_{n,k}\left( \varphi \right) .
\end{equation*}

If $u\in \mathcal{S}\left( \mathbb{R}^{n}\right) $, $\chi \in \mathcal{C}%
_{0}^{\infty }\left( \mathbb{R}^{n}\right) \smallsetminus 0$ and $\widetilde{%
\chi }\in \mathcal{C}_{0}^{\infty }\left( \mathbb{R}^{n}\right) $, $%
\widetilde{\chi }=1$ on $\mathtt{supp}\chi $, then Proposition \ref{ks4} and
the above arguments imply that there is a continuous seminorm $p_{n,k}$ on $%
\mathcal{S}\left( \mathbb{R}^{n}\right) $ so that%
\begin{eqnarray*}
\left\Vert \left( \tau _{y}\chi \right) u\right\Vert _{\mathcal{B}_{k}}
&\leq &C\sup_{\left\vert \alpha +\beta \right\vert \leq m_{k}}\left\vert
\left( \partial ^{\alpha }u\right) \left( \tau _{y}\partial ^{\beta }\chi
\right) \right\vert \left\Vert \tau _{y}\widetilde{\chi }\right\Vert _{%
\mathcal{B}_{k}} \\
&=&C\sup_{\left\vert \alpha +\beta \right\vert \leq m_{k}}\left\vert \left(
\partial ^{\alpha }u\right) \left( \tau _{y}\partial ^{\beta }\chi \right)
\right\vert \left\Vert \widetilde{\chi }\right\Vert _{\mathcal{B}_{k}} \\
&\leq &2^{n+1}Cp_{n,k}\left( u\right) p_{n,k}\left( \chi \right)
\left\langle y\right\rangle ^{-\left( n+1\right) }\left\Vert \widetilde{\chi 
}\right\Vert _{\mathcal{B}_{k}}
\end{eqnarray*}%
Hence $u\in \mathcal{B}_{k}^{1}\left( \mathbb{R}^{n}\right) $ and%
\begin{equation*}
\left\Vert u\right\Vert _{k,1,\chi }\leq Cst\left\Vert \left\langle \cdot
\right\rangle ^{-\left( n+1\right) }\right\Vert _{L^{1}}\left\Vert 
\widetilde{\chi }\right\Vert _{\mathcal{B}_{k}}p_{n,k}\left( u\right)
p_{n,k}\left( \chi \right) .
\end{equation*}

$\left( \mathtt{e}\right) $ is trivial.

$\left( \mathtt{f}\right) $ Let $\chi \in \mathcal{C}_{0}^{\infty }\left( 
\mathbb{R}^{n}\right) \smallsetminus 0$. Then a consequence of Holder's
inequality and (\ref{kh13}) is that there is $C=C\left( n,k\right) $ and a
continuous seminorm $p=p_{n,k}$ on $\mathcal{S}\left( \mathbb{R}^{n}\right) $
such that 
\begin{equation*}
\left\Vert u\tau _{z}\chi \right\Vert _{\mathcal{B}_{k}}\leq Cp_{n,k}\left(
\chi \right) ^{2}\max \left\{ \left\Vert \left\langle \cdot \right\rangle
^{-\left( n+1\right) }\right\Vert _{L^{1}},1\right\} \left\Vert u\right\Vert
_{k,p,\chi },\quad z\in \mathbb{R}^{n}.
\end{equation*}%
Let $\left\{ u_{n}\right\} $ be a Cauchy sequence in $\mathcal{B}%
_{k}^{p}\left( \mathbb{R}^{n}\right) $. Then for any $z\in \mathbb{R}^{n}$
there is $u_{z}\in \mathcal{B}_{k}\left( \mathbb{R}^{n}\right) $ such that $%
u_{n}\tau _{z}\chi \rightarrow u_{z}$ in $\mathcal{B}_{k}\left( \mathbb{R}%
^{n}\right) $. Since $\mathcal{B}_{k}^{p}\left( \mathbb{R}^{n}\right)
\subset \mathcal{S}^{\prime }\left( \mathbb{R}^{n}\right) $ and $\mathcal{S}%
^{\prime }\left( \mathbb{R}^{n}\right) $ is sequentially complete, there is $%
u\in \mathcal{S}^{\prime }\left( \mathbb{R}^{n}\right) $ such that $%
u_{n}\rightarrow u$ in $\mathcal{S}^{\prime }\left( \mathbb{R}^{n}\right) $
and this implies that $u_{z}=u\tau _{z}\chi $ for any $z\in \mathbb{R}^{n}$.
Then $\left\Vert u_{n}-u\right\Vert _{k,p,\chi }\rightarrow 0$ by Fatou's
lemma.

$\left( \mathtt{g}\right) $ If $1/k\in L^{2}$, then $\mathcal{B}_{k}^{\infty
}\left( \mathbb{R}^{n}\right) \subset \mathcal{B}\mathcal{C}\left( \mathbb{R}%
^{n}\right) $. Let $\chi \in \mathcal{C}_{0}^{\infty }\left( \mathbb{R}%
^{n}\right) $ be such that $\chi \left( 0\right) =1$. Then for $x\in \mathbb{%
R}^{n}$ 
\begin{eqnarray*}
\left\vert u\left( x\right) \right\vert &=&\left\vert u\tau _{x}\chi \left(
x\right) \right\vert \leq \left( 2\pi \right) ^{-n}\left\Vert \widehat{u\tau
_{x}\chi }\right\Vert _{L^{1}} \\
&\leq &\left( 2\pi \right) ^{-n}\left\Vert 1/k\right\Vert _{L^{2}}\left\Vert
u\tau _{x}\chi u\right\Vert _{\mathcal{B}_{k}} \\
&=&\left( 2\pi \right) ^{-n}\left\Vert 1/k\right\Vert _{L^{2}}\left\Vert
u\right\Vert _{k,\infty ,\chi }.
\end{eqnarray*}
\end{proof}

\begin{remark}
The spaces $\mathcal{B}_{k}^{p}\left( \mathbb{R}^{n}\right) $ are particular
cases of Wiener amalgam spaces. More precisely we have%
\begin{equation*}
\mathcal{B}_{k}^{p}\left( \mathbb{R}^{n}\right) =W\left( \mathcal{B}%
_{k},L^{p}\right)
\end{equation*}%
with local component $\mathcal{B}_{k}\left( \mathbb{R}^{n}\right) $ and
global component $L^{p}\left( \mathbb{R}^{n}\right) $. Wiener amalgam spaces
were introduced by Hans Georg Feichtinger in 1980.
\end{remark}

Now using the techniques of Coifman and Meyer, developed for the study of
Beurling algebras $A_{\omega }$ and $B_{\omega }$ (see \cite{Meyer} pp
7-10), we shall prove an interesting result.

\begin{theorem}[localization principle]
$\mathcal{B}_{k}\left( \mathbb{R}^{n}\right) =\mathcal{B}_{k}^{2}\left( 
\mathbb{R}^{n}\right) =W\left( \mathcal{B}_{k},L^{2}\right) $.
\end{theorem}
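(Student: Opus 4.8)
The plan is to prove the two inclusions $\mathcal{B}_{k}\subset\mathcal{B}_{k}^{2}$ and $\mathcal{B}_{k}^{2}\subset\mathcal{B}_{k}$ separately, carrying norm estimates along, and then to read off equivalence of norms; the identification $\mathcal{B}_{k}^{2}=W\left( \mathcal{B}_{k},L^{2}\right) $ is merely the notation fixed in the Remark preceding the statement. By Proposition \ref{ks15}, neither the space $\mathcal{B}_{k}^{2}$ nor the (equivalence class of the) norm $\left\Vert \cdot \right\Vert _{k,2,\chi }$ depends on the window $\chi \in \mathcal{C}_{0}^{\infty }\left( \mathbb{R}^{n}\right) \smallsetminus 0$, so I fix one such $\chi $ once and for all.

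For $\mathcal{B}_{k}\subset \mathcal{B}_{k}^{2}$ I would compute $\left\Vert u\right\Vert _{k,2,\chi }^{2}$ directly on the Fourier side. Given $u\in \mathcal{B}_{k}$, so that $\widehat{u}$ is a function with $k\widehat{u}\in L^{2}$, write $\widehat{u\tau _{y}\chi }=\left( 2\pi \right) ^{-n}\widehat{u}\ast \bigl( \mathtt{e}^{-\mathtt{i}\left\langle y,\cdot \right\rangle }\widehat{\chi }\bigr) $; for each fixed $\xi $ the function $y\mapsto \widehat{u\tau _{y}\chi }\left( \xi \right) $ is, up to the factor $\left( 2\pi \right) ^{-n}$, the Fourier transform in $\eta $ of $\eta \mapsto \widehat{u}\left( \xi -\eta \right) \widehat{\chi }\left( \eta \right) $, which lies in $L^{2}$ since $\widehat{\chi }\in \mathcal{S}$ and $\widehat{u}$ is polynomially bounded ($k$ is bounded below by $Cst\left\langle \cdot \right\rangle ^{-N}$ and $k\widehat{u}\in L^{2}$). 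A routine application of Plancherel in $y$ followed by two applications of Tonelli then gives
\begin{equation*}
\int \left\Vert u\tau _{y}\chi \right\Vert _{\mathcal{B}_{k}}^{2}\,\mathtt{d}y=\left( 2\pi \right) ^{-n}\int \left\vert \widehat{\chi }\left( \eta \right) \right\vert ^{2}\left( \int k\left( \zeta +\eta \right) ^{2}\left\vert \widehat{u}\left( \zeta \right) \right\vert ^{2}\,\mathtt{d}\zeta \right) \mathtt{d}\eta .
\end{equation*}
Since $k$ is $M_{k}$-moderate with $M_{k}\left( \eta \right) =C\left\langle \eta \right\rangle ^{N}$ (Remark), the inner integral lies between $M_{k}\left( \eta \right) ^{-2}\left\Vert u\right\Vert _{\mathcal{B}_{k}}^{2}$ and $M_{k}\left( \eta \right) ^{2}\left\Vert u\right\Vert _{\mathcal{B}_{k}}^{2}$, whence
\begin{equation*}
c_{1}\left\Vert u\right\Vert _{\mathcal{B}_{k}}^{2}\leq \left\Vert u\right\Vert _{k,2,\chi }^{2}\leq c_{2}\left\Vert u\right\Vert _{\mathcal{B}_{k}}^{2},
\end{equation*}
with $c_{2}=\left( 2\pi \right) ^{-n}C^{2}\int \left\langle \eta \right\rangle ^{2N}\left\vert \widehat{\chi }\left( \eta \right) \right\vert ^{2}\,\mathtt{d}\eta <\infty $ (as $\widehat{\chi }\in \mathcal{S}$) and $c_{1}=\left( 2\pi \right) ^{-n}C^{-2}\int \left\langle \eta \right\rangle ^{-2N}\left\vert \widehat{\chi }\left( \eta \right) \right\vert ^{2}\,\mathtt{d}\eta >0$ (as $\widehat{\chi }$ is real-analytic and $\not\equiv 0$, so $\left\{ \widehat{\chi }=0\right\} $ is null). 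In particular $u\in \mathcal{B}_{k}^{2}$, and this two-sided estimate already contains the norm equivalence.

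For the reverse inclusion a Fourier-side argument is clumsy, since membership in $\mathcal{B}_{k}^{2}$ only controls the local pieces $u\tau _{y}\chi $; instead I would glue these pieces with the partition of unity $\left\{ \chi _{i}\right\} _{1\leq i\leq m}$ constructed before Proposition \ref{ks4}, together with its attendant $h_{i}\in \mathcal{C}_{0}^{\infty }\left( \mathbb{R}^{n}\right) $ satisfying $\mathtt{supp}\,h_{i}=K_{i}$, $\left( K_{i}-K_{i}\right) \cap \mathbb{Z}^{n}=\left\{ 0\right\} $ and $\sum _{i=1}^{m}\chi _{i}=1$. Writing $u=\sum _{i=1}^{m}\chi _{i}u$ with $\chi _{i}u=\sum _{\gamma \in \mathbb{Z}^{n}}\tau _{\gamma }\left( h_{i}\tau _{-\gamma }u\right) $, this is exactly the situation of Lemma \ref{ks2}; using that $\left\Vert \cdot \right\Vert _{\mathcal{B}_{k}}$ is translation invariant (since $\bigl\vert \widehat{\tau _{\gamma }v}\bigr\vert =\left\vert \widehat{v}\right\vert $, and $u\tau _{\gamma }h_{i}=\tau _{\gamma }\left( h_{i}\tau _{-\gamma }u\right) $, so $\left\Vert h_{i}\tau _{-\gamma }u\right\Vert _{\mathcal{B}_{k}}=\left\Vert u\tau _{\gamma }h_{i}\right\Vert _{\mathcal{B}_{k}}$), Proposition \ref{ks8}$\left( \mathtt{c}\right) $ applied with lattice $\mathbb{Z}^{n}$, window $h_{i}$, reference window $\chi $ and exponent $p=2$ yields
\begin{equation*}
\sum _{\gamma \in \mathbb{Z}^{n}}\left\Vert h_{i}\tau _{-\gamma }u\right\Vert _{\mathcal{B}_{k}}^{2}\leq C_{i}^{2}\int \left\Vert u\tau _{y}\chi \right\Vert _{\mathcal{B}_{k}}^{2}\,\mathtt{d}y=C_{i}^{2}\left\Vert u\right\Vert _{k,2,\chi }^{2}<\infty .
\end{equation*}
By Lemma \ref{ks2} this gives $\chi _{i}u\in \mathcal{B}_{k}$ with $\left\Vert \chi _{i}u\right\Vert _{\mathcal{B}_{k}}\leq Cst\cdot C_{i}\left\Vert u\right\Vert _{k,2,\chi }$, and summing over $i$ gives $u\in \mathcal{B}_{k}$ together with $\left\Vert u\right\Vert _{\mathcal{B}_{k}}\leq Cst\left\Vert u\right\Vert _{k,2,\chi }$. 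Combining the two steps, $\mathcal{B}_{k}\left( \mathbb{R}^{n}\right) =\mathcal{B}_{k}^{2}\left( \mathbb{R}^{n}\right) =W\left( \mathcal{B}_{k},L^{2}\right) $ as sets, and the estimate of the first step, now valid on the common space, gives the norm equivalence. The only genuine obstacle is in the second inclusion: one must use the small-support windows $h_{i}$ so that the hypothesis $\left( K_{i}-K_{i}\right) \cap \mathbb{Z}^{n}=\left\{ 0\right\} $ of Lemma \ref{ks2} is met (a single globally supported partition function would violate it), and it is Proposition \ref{ks8}$\left( \mathtt{c}\right) $ — whose $\ell ^{2}$-over-a-lattice estimate is precisely a sampling inequality — that converts the merely a.e.\ finiteness of $y\mapsto \left\Vert u\tau _{y}\chi \right\Vert _{\mathcal{B}_{k}}$ into control of the sum over $\mathbb{Z}^{n}$.
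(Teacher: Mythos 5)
Your proposal is correct. The inclusion $\mathcal{B}_{k}^{2}\left( \mathbb{R}^{n}\right) \subset \mathcal{B}_{k}\left( \mathbb{R}^{n}\right) $ is handled exactly as in the paper: decompose $u=\sum_{i}\chi _{i}u$ with $\chi _{i}u=\sum_{\gamma }\tau _{\gamma }\left( h_{i}\tau _{-\gamma }u\right) $, control $\sum_{\gamma }\left\Vert u\tau _{\gamma }h_{i}\right\Vert _{\mathcal{B}_{k}}^{2}$ by $\left\Vert u\right\Vert _{k,2,\chi }^{2}$ via Proposition \ref{ks8}$\left( \mathtt{c}\right) $, and invoke Lemma \ref{ks2}; your remarks about translation invariance and the role of the condition $\left( K_{i}-K_{i}\right) \cap \mathbb{Z}^{n}=\left\{ 0\right\} $ are exactly the points the paper relies on. For the inclusion $\mathcal{B}_{k}\left( \mathbb{R}^{n}\right) \subset \mathcal{B}_{k}^{2}\left( \mathbb{R}^{n}\right) $ you take a genuinely different route: the paper again argues discretely, chaining the equivalences $u\in \mathcal{B}_{k}\Leftrightarrow \chi _{j}u\in \mathcal{B}_{k}$ (Lemma \ref{ks3}$\left( \mathtt{b}\right) $) $\Leftrightarrow \left\{ \left\Vert \left( \tau _{\gamma }h_{j}\right) u\right\Vert _{\mathcal{B}_{k}}\right\} \in l^{2}$ (Lemma \ref{ks2}) and then using the lattice norm of Proposition \ref{ks15}$\left( \mathtt{c}\right) $ with window $h$, whereas you compute $\int \left\Vert u\tau _{y}\chi \right\Vert _{\mathcal{B}_{k}}^{2}\mathtt{d}y$ exactly by Plancherel in the translation variable — essentially the orthogonality relation for the short-time Fourier transform adapted to the weight $k$ — and obtain a two-sided estimate with explicit constants $c_{1},c_{2}$ from the $M_{k}$-moderateness of $k$. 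Your version is more self-contained for this direction (it bypasses the partition of unity and Lemma \ref{ks2} entirely and gives quantitative constants), though it is tied to $p=2$ where Plancherel is available, while the paper's discrete machinery is the same apparatus it reuses for all $p$. The only justifications worth spelling out in your computation are that $\eta \mapsto \widehat{u}\left( \xi -\eta \right) \widehat{\chi }\left( \eta \right) $ lies in $L^{1}\cap L^{2}$ (from $\left\langle \cdot \right\rangle ^{-N}\widehat{u}\in L^{2}$ and the rapid decay of $\widehat{\chi }$, so the Fourier transform in $y$ is a genuine integral and Plancherel applies) and that Tonelli is legitimate because $\left( y,\xi \right) \rightarrow \widehat{u\tau _{y}\chi }\left( \xi \right) $ is jointly continuous, which the paper has already established; neither point is a gap.
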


To prove the result, we shall use the partition of unity built in the
previous section.

\begin{lemma}
$\mathcal{B}_{k}^{2}\left( \mathbb{R}^{n}\right) \subset \mathcal{B}%
_{k}\left( \mathbb{R}^{n}\right) $.
\end{lemma}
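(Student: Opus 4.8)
The plan is to localize $u$ by the $\mathbb{Z}^n$-periodic partition of unity $\{\chi_i\}_{i=1}^m$ constructed in Section~2 (recall $\chi_i=\sum_{\gamma\in\mathbb{Z}^n}\tau_\gamma h_i$, $\sum_{i=1}^m\chi_i=1$, $\mathrm{supp}\,h_i\subset K_i$, $(K_i-K_i)\cap\mathbb{Z}^n=\{0\}$) and to reduce the claim to the localization Lemma~\ref{ks2}. First I would fix $u\in\mathcal{B}_k^2(\mathbb{R}^n)$ and a function $\chi\in\mathcal{C}_0^\infty(\mathbb{R}^n)\smallsetminus 0$ with $y\mapsto\|u\tau_y\chi\|_{\mathcal{B}_k}\in L^2$. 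Since $u=\sum_{i=1}^m\chi_i u$, it suffices to show $\chi_i u\in\mathcal{B}_k(\mathbb{R}^n)$ for each $i$. Writing
\begin{equation*}
\chi_i u=\sum_{\gamma\in\mathbb{Z}^n}(\tau_\gamma h_i)\,u=\sum_{\gamma\in\mathbb{Z}^n}\tau_\gamma\big(h_i\,\tau_{-\gamma}u\big),
\end{equation*}
a locally finite sum in $\mathcal{D}'(\mathbb{R}^n)$, exhibits $\chi_i u$ in exactly the form handled by Lemma~\ref{ks2}, with pieces $u_\gamma=h_i\,\tau_{-\gamma}u\in\mathcal{D}'_{K_i}(\mathbb{R}^n)$.

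Before invoking Lemma~\ref{ks2} I would check that each $u_\gamma$ lies in $\mathcal{B}_k(\mathbb{R}^n)$. Translation by $\gamma\in\mathbb{Z}^n$ is an isometry of $\mathcal{B}_k$ (clear from $|k\,\widehat{\tau_\gamma v}|=|k\,\widehat v|$) and $\tau_\gamma(h_i\tau_{-\gamma}u)=u\,\tau_\gamma h_i$, so $\|h_i\tau_{-\gamma}u\|_{\mathcal{B}_k}=\|u\,\tau_\gamma h_i\|_{\mathcal{B}_k}$; estimate (\ref{kh13}) with $Y=\mathbb{R}^n$, $\widetilde\chi=h_i$, $z=\gamma$, followed by the Cauchy--Schwarz inequality and $\langle\cdot\rangle^{-n-1}\in L^2$, then yields $\|u\,\tau_\gamma h_i\|_{\mathcal{B}_k}\leq Cst\,\|u\|_{k,2,\chi}\big\|\langle\cdot\rangle^{-n-1}\big\|_{L^2}<\infty$. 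Now Lemma~\ref{ks2} applies and gives: $\chi_i u\in\mathcal{B}_k(\mathbb{R}^n)$ if and only if $\sum_{\gamma\in\mathbb{Z}^n}\|h_i\tau_{-\gamma}u\|_{\mathcal{B}_k}^2<\infty$. By the isometry this equals $\sum_{\gamma\in\mathbb{Z}^n}\|u\,\tau_\gamma h_i\|_{\mathcal{B}_k}^2$, and Proposition~\ref{ks8}$(\mathtt{c})$, applied with $\widetilde\Gamma=\mathbb{Z}^n$, $\widetilde\chi=h_i$ and $p=2$, bounds this by $C(n,k,\mathbb{Z}^n,\chi,h_i)^2\int\|u\tau_y\chi\|_{\mathcal{B}_k}^2\,\mathtt{d}y=Cst\,\|u\|_{k,2,\chi}^2<\infty$. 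Hence $\chi_i u\in\mathcal{B}_k(\mathbb{R}^n)$ for $i=1,\dots,m$, and therefore $u=\sum_{i=1}^m\chi_i u\in\mathcal{B}_k(\mathbb{R}^n)$.

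There is no deep obstacle here: the whole argument is assembled from Lemma~\ref{ks2}, Proposition~\ref{ks8}$(\mathtt{c})$ and the estimate (\ref{kh13}), together with the partition of unity. The one point requiring attention is making sure that all the $\mathcal{B}_k$-norms above are genuinely finite rather than $+\infty$, so that Lemma~\ref{ks2} and Proposition~\ref{ks8} may legitimately be invoked; this is guaranteed by (\ref{kh13}) and the integrability properties of $\langle\cdot\rangle^{-n-1}$ (which lies in $L^1\cap L^2$). Equivalently, one may first observe that (\ref{kh13}) forces $u\in\mathcal{B}_k^{loc}(\mathbb{R}^n)$, which makes all the local pieces $h_i\tau_{-\gamma}u$ automatically admissible.
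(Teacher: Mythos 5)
Your proof is correct and follows essentially the same route as the paper: decompose $u=\sum_{i}\chi_i u$ via the periodic partition of unity, apply Proposition~\ref{ks8} to obtain $\sum_{\gamma}\|(\tau_\gamma h_i)u\|_{\mathcal{B}_k}^2<\infty$, and then invoke Lemma~\ref{ks2} to conclude $\chi_i u\in\mathcal{B}_k(\mathbb{R}^n)$. The only difference is that you make explicit the (correct, and tacitly assumed in the paper) verification that each local piece $h_i\tau_{-\gamma}u$ lies in $\mathcal{B}_k\cap\mathcal{D}'_{K_i}$.
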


\begin{proof}
Let $\left\Vert \cdot \right\Vert _{k,2}$ is a fixed norm on $\mathcal{B}%
_{k}^{2}\left( \mathbb{R}^{n}\right) $. Let $u\in \mathcal{B}_{k}^{2}\left( 
\mathbb{R}^{n}\right) $. We have $u=\sum_{j=1}^{m}\chi _{j}u$ with $\chi
_{j}u=\sum_{\gamma \in \mathbb{Z}^{n}}\left( \tau _{\gamma }h_{j}\right) u$.
Since $u\in \mathcal{B}_{k}^{2}\left( \mathbb{R}^{n}\right) $, Proposition %
\ref{ks8} implies that 
\begin{equation*}
\sum_{\gamma \in \mathbb{Z}^{n}}\left\Vert \left( \tau _{\gamma
}h_{j}\right) u\right\Vert _{\mathcal{B}_{k}}^{2}\leq C_{j}^{2}\left\Vert
u\right\Vert _{k,2}^{2}<\infty
\end{equation*}%
Using Lemma \ref{ks2} it follows that $\chi _{j}u\in \mathcal{B}_{k}\left( 
\mathbb{R}^{n}\right) $ and 
\begin{equation*}
\left\Vert \chi _{j}u\right\Vert _{\mathcal{B}_{k}}^{2}\approx \sum_{\gamma
\in \mathbb{Z}^{n}}\left\Vert \left( \tau _{\gamma }h_{j}\right)
u\right\Vert _{\mathcal{B}_{k}}^{2}\leq C_{j}^{2}\left\Vert u\right\Vert
_{k,2}^{2}.
\end{equation*}
This proves that $u=\sum_{j=1}^{m}\chi _{j}u\in \mathcal{B}_{k}\left( 
\mathbb{R}^{n}\right) $ and that%
\begin{equation*}
\left\Vert u\right\Vert _{\mathcal{B}_{k}}\leq \sum_{j=1}^{m}\left\Vert \chi
_{j}u\right\Vert _{\mathcal{B}_{k}}\leq \left( \sum_{j=1}^{m}C_{j}\right)
\left\Vert u\right\Vert _{k,2}.
\end{equation*}
\end{proof}

\begin{lemma}
$\mathcal{B}_{k}\left( \mathbb{R}^{n}\right) \subset \mathcal{B}%
_{k}^{2}\left( \mathbb{R}^{n}\right) $.
\end{lemma}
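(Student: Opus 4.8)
The plan is to run the partition of unity of Section~2 backwards, feed the localized pieces into the almost-orthogonality Lemma~\ref{ks2}, and then turn the resulting lattice estimate into the amalgam integral by Proposition~\ref{ks8}$(\mathtt{b})$. So let $u\in\mathcal{B}_{k}\left( \mathbb{R}^{n}\right)$. First I would use $\sum_{i=1}^{m}\chi _{i}=1$, with $\chi _{i}=\sum_{\gamma \in \mathbb{Z}^{n}}\tau _{\gamma }h_{i}\in \mathcal{BC}^{\infty }\left( \mathbb{R}^{n}\right)$ $\mathbb{Z}^{n}$-periodic, to write $u=\sum_{i=1}^{m}\chi _{i}u$; by Lemma~\ref{ks3}$(\mathtt{b})$ each $\chi _{i}u\in \mathcal{B}_{k}\left( \mathbb{R}^{n}\right)$, with $\left\Vert \chi _{i}u\right\Vert _{\mathcal{B}_{k}}\leq Cst\left\Vert u\right\Vert _{\mathcal{B}_{k}}$. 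For a fixed $i$ the locally finite decomposition
\begin{equation*}
\chi _{i}u=\sum_{\gamma \in \mathbb{Z}^{n}}\left( \tau _{\gamma }h_{i}\right) u=\sum_{\gamma \in \mathbb{Z}^{n}}\tau _{\gamma }\bigl( h_{i}\,\tau _{-\gamma }u\bigr)
\end{equation*}
has summands $h_{i}\,\tau _{-\gamma }u\in \mathcal{B}_{k}\left( \mathbb{R}^{n}\right) \cap \mathcal{D}_{K_{i}}^{\prime }\left( \mathbb{R}^{n}\right)$ (Lemma~\ref{ks18}$(\mathtt{e})$, using that $\tau _{-\gamma }u\in \mathcal{B}_{k}$), while $\left( K_{i}-K_{i}\right) \cap \mathbb{Z}^{n}=\left\{ 0\right\}$ by construction of $K_{i}=x_{i}+\left[ \tfrac14,\tfrac34\right] ^{n}$; hence Lemma~\ref{ks2} applies and gives $\sum_{\gamma \in \mathbb{Z}^{n}}\left\Vert h_{i}\,\tau _{-\gamma }u\right\Vert _{\mathcal{B}_{k}}^{2}\leq C^{2}\left\Vert \chi _{i}u\right\Vert _{\mathcal{B}_{k}}^{2}<\infty$ for $i=1,\dots ,m$.

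The step I expect to be the only real point is matching these pieces, which sit near the origin, with the translated pieces $u\,\tau _{\gamma }h_{i}$ that enter the amalgam norm of $\mathcal{B}_{k}^{2}$. This is handled by observing that physical translation is an isometry of $\mathcal{B}_{k}\left( \mathbb{R}^{n}\right)$: since $\bigl| \widehat{\tau _{a}v}\bigr| =\bigl| \widehat{v}\bigr|$ for every $a\in \mathbb{R}^{n}$, one has $\left\Vert \tau _{a}v\right\Vert _{\mathcal{B}_{k}}=\bigl\Vert k\widehat{\tau _{a}v}\bigr\Vert _{L^{2}}=\left\Vert v\right\Vert _{\mathcal{B}_{k}}$. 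Consequently
\begin{equation*}
\left\Vert u\,\tau _{\gamma }h_{i}\right\Vert _{\mathcal{B}_{k}}=\bigl\Vert \tau _{-\gamma }\bigl( \left( \tau _{\gamma }h_{i}\right) u\bigr) \bigr\Vert _{\mathcal{B}_{k}}=\left\Vert h_{i}\,\tau _{-\gamma }u\right\Vert _{\mathcal{B}_{k}},
\end{equation*}
so $\sum_{\gamma \in \mathbb{Z}^{n}}\left\Vert u\,\tau _{\gamma }h_{i}\right\Vert _{\mathcal{B}_{k}}^{2}<\infty$ for each $i$; and since $h=\sum_{i=1}^{m}h_{i}$ gives $\left\Vert u\,\tau _{\gamma }h\right\Vert _{\mathcal{B}_{k}}^{2}\leq m\sum_{i=1}^{m}\left\Vert u\,\tau _{\gamma }h_{i}\right\Vert _{\mathcal{B}_{k}}^{2}$, summing over $\gamma$ yields $\sum_{\gamma \in \mathbb{Z}^{n}}\left\Vert u\,\tau _{\gamma }h\right\Vert _{\mathcal{B}_{k}}^{2}\leq Cst\sum_{i=1}^{m}\left\Vert \chi _{i}u\right\Vert _{\mathcal{B}_{k}}^{2}\leq Cst\left\Vert u\right\Vert _{\mathcal{B}_{k}}^{2}<\infty$.

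To conclude, I would note that $\Psi _{\mathbb{Z}^{n},h}=\sum_{\gamma \in \mathbb{Z}^{n}}\left| \tau _{\gamma }h\right| ^{2}>0$: indeed $h\geq 0$, $\sum_{\gamma }\tau _{\gamma }h=1$, and only a bounded number of translates $\tau _{\gamma }h$ are nonzero at any given point ($h$ has compact support), so Cauchy--Schwarz gives $\Psi _{\mathbb{Z}^{n},h}\geq Cst>0$. Hence Proposition~\ref{ks8}$(\mathtt{b})$, applied with $\Gamma =\mathbb{Z}^{n}$, $\chi =\widetilde{\chi }=h$ and $p=2$, bounds $\int \left\Vert u\,\tau _{y}h\right\Vert _{\mathcal{B}_{k}}^{2}\,\mathtt{d}y$ by a constant times $\sum_{\gamma \in \mathbb{Z}^{n}}\left\Vert u\,\tau _{\gamma }h\right\Vert _{\mathcal{B}_{k}}^{2}<\infty$. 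By the definition of $\mathcal{B}_{k}^{2}\left( \mathbb{R}^{n}\right)$ (taking $\chi =h$) this shows $u\in \mathcal{B}_{k}^{2}\left( \mathbb{R}^{n}\right)$ and, collecting constants, $\left\Vert u\right\Vert _{k,2,h}\leq Cst\left\Vert u\right\Vert _{\mathcal{B}_{k}}$, which is the asserted inclusion.
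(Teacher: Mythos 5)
Your proof is correct and follows essentially the same route as the paper: decompose $u=\sum_{i}\chi_{i}u$ via the periodic partition of unity, apply Lemma~\ref{ks3}$(\mathtt{b})$ and Lemma~\ref{ks2} to get the $l^{2}$ bound on $\left\{ \left\Vert \left( \tau _{\gamma }h_{i}\right) u\right\Vert _{\mathcal{B}_{k}}\right\} _{\gamma}$, sum over $i$, and pass from the lattice sum for $h$ to the amalgam norm. You merely make explicit two steps the paper leaves implicit (the translation-isometry identification $\left\Vert u\,\tau _{\gamma }h_{i}\right\Vert _{\mathcal{B}_{k}}=\left\Vert h_{i}\,\tau _{-\gamma }u\right\Vert _{\mathcal{B}_{k}}$ needed to apply Lemma~\ref{ks2}, and the verification that $\Psi _{\mathbb{Z}^{n},h}>0$ so that Proposition~\ref{ks8}$(\mathtt{b})$ converts the lattice estimate into the integral one), which is a welcome clarification rather than a departure.
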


\begin{proof}
Then the following statements are equivalent:

\texttt{(i)} $u\in \mathcal{B}_{k}\left( \mathbb{R}^{n}\right) $.

\texttt{(ii)} $\chi _{j}u\in \mathcal{B}_{k}\left( \mathbb{R}^{n}\right) $, $%
j=1,...,m$. (Here we use Lemma \ref{ks3} $\left( \mathtt{b}\right) $)

\texttt{(iii)} $\left\{ \left\Vert \left( \tau _{\gamma }h_{j}\right)
u\right\Vert _{\mathcal{B}_{k}}\right\} _{\gamma \in \mathbb{Z}^{n}}\in
l^{2}\left( \mathbb{Z}^{n}\right) $, $j=1,...,m$. (Here we use Lemma \ref%
{ks2})

Now $h=\sum_{j=1}^{m}h_{j}$ and $\left\Vert \left( \tau _{\gamma }h\right)
u\right\Vert _{\mathcal{B}_{k}}\leq \sum_{j=1}^{m}\left\Vert \left( \tau
_{\gamma }h_{j}\right) u\right\Vert _{\mathcal{B}_{k}}$, $\gamma \in \mathbb{%
Z}^{n}$ imply that $\left\{ \left\Vert \left( \tau _{\gamma }h\right)
u\right\Vert _{\mathcal{B}_{k}}\right\} _{\gamma \in \mathbb{Z}^{n}}\in
l^{2}\left( \mathbb{Z}^{n}\right) $. Since $h\in \mathcal{C}_{0}^{\infty
}\left( \mathbb{R}^{n}\right) $, $h\geq 0$ and $\sum_{\gamma \in \mathbb{Z}%
^{n}}\tau _{\gamma }h=1$ it follows that $u\in \mathcal{B}_{k}^{2}\left( 
\mathbb{R}^{n}\right) $ and 
\begin{eqnarray*}
\left\Vert u\right\Vert _{k,2,h} &\approx &\left\Vert \left\{ \left\Vert
\left( \tau _{\gamma }h\right) u\right\Vert _{\mathcal{B}_{k}}\right\}
_{\gamma \in \mathbb{Z}^{n}}\right\Vert _{l^{2}\left( \mathbb{Z}^{n}\right)
}\leq \sum_{j=1}^{m}\left\Vert \left\{ \left\Vert \left( \tau _{\gamma
}h_{j}\right) u\right\Vert _{\mathcal{B}_{k}}\right\} _{\gamma \in \mathbb{Z}%
^{n}}\right\Vert _{l^{2}\left( \mathbb{Z}^{n}\right) } \\
&\approx &\sum_{j=1}^{m}\left\Vert \chi _{j}u\right\Vert _{\mathcal{B}%
_{k}}\leq Cst\left\Vert u\right\Vert _{\mathcal{B}_{k}}.
\end{eqnarray*}
\end{proof}

\begin{lemma}
If $1\leq p<\infty $, then $\mathcal{S}\left( \mathbb{R}^{n}\right) $ is
dense in $\mathcal{B}_{k}^{p}\left( \mathbb{R}^{n}\right) $.
\end{lemma}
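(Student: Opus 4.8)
The plan is to approximate an arbitrary $u\in\mathcal{B}_k^p(\mathbb{R}^n)$ first by a compactly supported element of $\mathcal{B}_k(\mathbb{R}^n)$, and then to mollify such an element, obtaining functions in $\mathcal{C}_0^\infty(\mathbb{R}^n)\subset\mathcal{S}(\mathbb{R}^n)$. The auxiliary fact I would isolate at the outset is: \emph{if $w\in\mathcal{B}_k(\mathbb{R}^n)$ has support in a fixed compact set $L$, then $w\in\mathcal{B}_k^p(\mathbb{R}^n)$ for every $p$ and $\|w\|_{k,p,\chi}\le Cst(\chi,L,p)\|w\|_{\mathcal{B}_k}$}. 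This is immediate: for $\chi\in\mathcal{C}_0^\infty(\mathbb{R}^n)\smallsetminus0$ one has $w\,\tau_y\chi=0$ unless $y$ lies in the compact set $L-\mathrm{supp}\,\chi$, while Lemma \ref{ks3}$(\mathtt{a})$ gives $\|w\,\tau_y\chi\|_{\mathcal{B}_k}\le Cst(\chi)\|w\|_{\mathcal{B}_k}$ uniformly in $y$ (the $H^{m_k}$-norm of $\tau_y\chi$ does not depend on $y$); integrating the $p$-th power over $y$ finishes it.

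\emph{Truncation.} I would use the partition of unity $h=\sum_{i=1}^m h_i\in\mathcal{C}_0^\infty(\mathbb{R}^n)$, $h\ge0$, with $\sum_{\gamma\in\mathbb{Z}^n}\tau_\gamma h=1$, constructed before Proposition \ref{ks4}. Since at each point some $\tau_\gamma h$ is strictly positive, $\Psi_{\mathbb{Z}^n,h}=\sum_{\gamma\in\mathbb{Z}^n}|\tau_\gamma h|^2>0$, so Proposition \ref{ks15}$(\mathtt{c})$ (with $\Gamma=\mathbb{Z}^n$, $\chi=h$) applies: $\|u\|_{k,p,\mathbb{Z}^n,h}^p=\sum_{\gamma\in\mathbb{Z}^n}\|(\tau_\gamma h)u\|_{\mathcal{B}_k}^p$ is an equivalent norm on $\mathcal{B}_k^p(\mathbb{R}^n)$; in particular each $(\tau_\gamma h)u\in\mathcal{B}_k(\mathbb{R}^n)$ and the series converges. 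Set $u_N=\sum_{|\gamma|\le N}(\tau_\gamma h)u$, a finite sum of compactly supported elements of $\mathcal{B}_k(\mathbb{R}^n)$; by the auxiliary fact $u_N\in\mathcal{B}_k^p(\mathbb{R}^n)$, hence so is $u-u_N=\sum_{|\gamma|>N}(\tau_\gamma h)u$. As $\mathrm{supp}\,h$ is compact there is $R$ with $(\tau_{\gamma'}h)(\tau_\gamma h)\equiv0$ for $|\gamma-\gamma'|>R$; combining this finite-overlap property with the multiplier bound $\|(\tau_{\gamma'}h)(\tau_\gamma h)u\|_{\mathcal{B}_k}\le Cst\,\|(\tau_\gamma h)u\|_{\mathcal{B}_k}$ from Lemma \ref{ks3}$(\mathtt{a})$, a discrete Hölder inequality over the boundedly many relevant $\gamma$, and an interchange of the two summations, I obtain
\[
\|u-u_N\|_{k,p,\mathbb{Z}^n,h}^p=\sum_{\gamma'\in\mathbb{Z}^n}\|(\tau_{\gamma'}h)(u-u_N)\|_{\mathcal{B}_k}^p\le Cst\sum_{|\gamma|>N}\|(\tau_\gamma h)u\|_{\mathcal{B}_k}^p,
\]
which tends to $0$ as $N\to\infty$ since the full series converges. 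Hence the compactly supported elements of $\mathcal{B}_k(\mathbb{R}^n)$ are dense in $\mathcal{B}_k^p(\mathbb{R}^n)$.

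\emph{Mollification.} Let $v\in\mathcal{B}_k(\mathbb{R}^n)$ with $\mathrm{supp}\,v\subset L$ compact, and fix $\rho\in\mathcal{C}_0^\infty(\mathbb{R}^n)$ with $\int\rho=1$, $\mathrm{supp}\,\rho\subset\overline{B(0,1)}$, $\rho_\varepsilon=\varepsilon^{-n}\rho(\cdot/\varepsilon)$. For $0<\varepsilon\le1$, $v_\varepsilon:=v\ast\rho_\varepsilon\in\mathcal{C}_0^\infty(\mathbb{R}^n)$ has support in the fixed compact set $L':=L+\overline{B(0,1)}$. Since $\widehat{v_\varepsilon}=\widehat v\cdot\widehat{\rho_\varepsilon}$ with $\widehat{\rho_\varepsilon}(\xi)=\widehat\rho(\varepsilon\xi)$ bounded by $\|\widehat\rho\|_{L^\infty}$ and converging pointwise to $\widehat\rho(0)=1$, dominated convergence yields $k\widehat{v_\varepsilon}\to k\widehat v$ in $L^2$, i.e. $v_\varepsilon\to v$ in $\mathcal{B}_k(\mathbb{R}^n)$. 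Applying the auxiliary fact to $w=v-v_\varepsilon$ (supported in $L'$) gives $\|v-v_\varepsilon\|_{k,p,\chi}\le Cst\,\|v-v_\varepsilon\|_{\mathcal{B}_k}\to0$, so $v_\varepsilon\to v$ in $\mathcal{B}_k^p(\mathbb{R}^n)$. Together with the truncation step this shows $\mathcal{C}_0^\infty(\mathbb{R}^n)$, and a fortiori $\mathcal{S}(\mathbb{R}^n)$, is dense in $\mathcal{B}_k^p(\mathbb{R}^n)$.

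The mollification and the auxiliary fact are routine; the one place that genuinely needs care is the truncation estimate, which relies on $\|\cdot\|_{k,p,\mathbb{Z}^n,h}$ being an \emph{equivalent} norm (Proposition \ref{ks15}$(\mathtt{c})$) together with the finite-multiplicity bookkeeping and the multiplier bound of Lemma \ref{ks3}$(\mathtt{a})$, which convert the tail of a convergent series into an upper bound for $\|u-u_N\|_{k,p,\mathbb{Z}^n,h}$.
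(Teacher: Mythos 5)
Your proof is correct and follows essentially the same route as the paper's: first reduce to compactly supported elements via a tail estimate in the equivalent discrete norm of Proposition \ref{ks15} $\left( \mathtt{c}\right) $, then mollify. The only differences are cosmetic --- you truncate with partial sums of the partition of unity $\sum_{\gamma }\tau _{\gamma }h=1$ where the paper uses a dilated cutoff $\psi \left( \varepsilon x\right) $, and you close the mollification step with your auxiliary embedding for compactly supported elements where the paper invokes the equivalence of $l^{p}$ and $l^{2}$ norms on the finite index set together with the localization principle.
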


\begin{proof}
Let $\chi \in \mathcal{C}_{0}^{\infty }\left( \mathbb{R}^{n}\right) $ be
such that $\Psi =\Psi _{\mathbb{Z}^{n},\chi }=\sum_{\gamma \in \mathbb{Z}%
^{n}}\left\vert \tau _{\gamma }\chi \right\vert ^{2}>0$ and let $\left\Vert
\cdot \right\Vert _{k,p,\mathbb{Z}^{n},\chi }$ be the associated norm in $%
\mathcal{B}_{k}^{p}\left( \mathbb{R}^{n}\right) $.

\textit{Step 1.} Let $\psi \in \mathcal{C}_{0}^{\infty }\left( \mathbb{R}%
^{n}\right) $ be such that $\psi =1$ on $B\left( 0,1\right) $, $\psi
^{\varepsilon }\left( x\right) =\psi \left( \varepsilon x\right) $, $%
0<\varepsilon \leq 1$, $x\in \mathbb{R}^{n}$. If $u\in \mathcal{B}_{k}\left( 
\mathbb{R}^{n}\right) $, then $\psi ^{\varepsilon }u\rightarrow u$ in $%
\mathcal{B}_{k}\left( \mathbb{R}^{n}\right) $. Moreover we have 
\begin{equation*}
\left\Vert \psi ^{\varepsilon }u\right\Vert _{\mathcal{B}_{k}}\leq C\left(
k,\psi \right) \left\Vert u\right\Vert _{\mathcal{B}_{k}},\quad
0<\varepsilon \leq 1,
\end{equation*}
where $C\left( k,\psi \right) =\left( 2\pi \right) ^{-n}C\left( \int
\left\langle \eta \right\rangle ^{N}\left\vert \widehat{\psi }\left( \eta
\right) \right\vert \mathtt{d}\eta \right) $.

\textit{Step 2.} Suppose that $u\in \mathcal{B}_{k}^{p}\left( \mathbb{R}%
^{n}\right) $. Let $F\subset \mathbb{Z}^{n}$ be an arbitrary finite subset.
Then the subadditivity property of the norm $\left\Vert \cdot \right\Vert
_{l^{p}}$ implies that:%
\begin{multline*}
\left\Vert \psi ^{\varepsilon }u-u\right\Vert _{k,p,\mathbb{Z}^{n},\chi
}\leq \left( \sum_{\gamma \in F}\left\Vert \psi ^{\varepsilon }u\tau
_{\gamma }\chi -u\tau _{\gamma }\chi \right\Vert _{\mathcal{B}%
_{k}}^{p}\right) ^{\frac{1}{p}}+\left( \sum_{\gamma \in \mathbb{Z}%
^{n}\smallsetminus F}\left\Vert \psi ^{\varepsilon }u\tau _{\gamma }\chi
\right\Vert _{\mathcal{B}_{k}}^{p}\right) ^{\frac{1}{p}} \\
+\left( \sum_{\gamma \in \mathbb{Z}^{n}\smallsetminus F}\left\Vert u\tau
_{\gamma }\chi \right\Vert _{\mathcal{B}_{k}}^{p}\right) ^{\frac{1}{p}} \\
\leq \left( \sum_{\gamma \in F}\left\Vert \psi ^{\varepsilon }u\tau _{\gamma
}\chi -u\tau _{\gamma }\chi \right\Vert _{\mathcal{B}_{k}}^{p}\right) ^{%
\frac{1}{p}}+\left( C\left( k,\psi \right) +1\right) \left( \sum_{\gamma \in 
\mathbb{Z}^{n}\smallsetminus F}\left\Vert u\tau _{\gamma }\chi \right\Vert _{%
\mathcal{B}_{k}}^{p}\right) ^{\frac{1}{p}}.
\end{multline*}%
By making $\varepsilon \rightarrow 0$ we deduce that 
\begin{equation*}
\limsup_{\varepsilon \rightarrow 0}\left\Vert \psi ^{\varepsilon
}u-u\right\Vert _{k,p,\mathbb{Z}^{n},\chi }\leq \left( C\left( k,\psi
\right) +1\right) \left( \sum_{\gamma \in \mathbb{Z}^{n}\smallsetminus
F}\left\Vert u\tau _{\gamma }\chi \right\Vert _{\mathcal{B}_{k}}^{p}\right)
^{\frac{1}{p}}
\end{equation*}%
for any $F\subset \mathbb{Z}^{n}$ finite subset. Hence $\lim_{\varepsilon
\rightarrow 0}\psi ^{\varepsilon }u=u$ in $\mathcal{B}_{k}^{p}\left( \mathbb{%
R}^{n}\right) $. The immediate consequence is that $\mathcal{E}^{\prime
}\left( \mathbb{R}^{n}\right) \cap \mathcal{B}_{k}^{p}\left( \mathbb{R}%
^{n}\right) $ is dense in $\mathcal{B}_{k}^{p}\left( \mathbb{R}^{n}\right) $.

\textit{Step 3.} Suppose now that $u\in \mathcal{E}^{\prime }\left( \mathbb{R%
}^{n}\right) \cap \mathcal{B}_{k}^{p}\left( \mathbb{R}^{n}\right) $. Let $%
\varphi \in \mathcal{C}_{0}^{\infty }\left( \mathbb{R}^{n}\right) $ be such
that \texttt{supp}$\varphi \subset B\left( 0;1\right) $, $\int \varphi
\left( x\right) \mathtt{d}x=1$. For $\varepsilon \in \left( 0,1\right] $, we
set $\varphi _{\varepsilon }=\varepsilon ^{-n}\varphi \left( \cdot
/\varepsilon \right) $. Let $K=$\texttt{supp}$u+\overline{B\left( 0;1\right) 
}$. Then there is a finite set $F=F_{K,\chi }\subset \mathbb{Z}^{n}$ such
that $\left( \tau _{\gamma }\chi \right) \left( \varphi _{\varepsilon }\ast
u-u\right) =0$ for any $\gamma \in \mathbb{Z}^{n}\smallsetminus F$. It
follows that 
\begin{eqnarray*}
\left\Vert \varphi _{\varepsilon }\ast u-u\right\Vert _{k,p,\mathbb{Z}%
^{n},\chi } &=&\left( \sum_{\gamma \in F}\left\Vert \left( \tau _{\gamma
}\chi \right) \left( \varphi _{\varepsilon }\ast u-u\right) \right\Vert _{%
\mathcal{B}_{k}}^{p}\right) ^{\frac{1}{p}} \\
&\approx &\left( \sum_{\gamma \in F}\left\Vert \left( \tau _{\gamma }\chi
\right) \left( \varphi _{\varepsilon }\ast u-u\right) \right\Vert _{\mathcal{%
B}_{k}}^{2}\right) ^{\frac{1}{2}} \\
&\approx &\left\Vert \varphi _{\varepsilon }\ast u-u\right\Vert _{\mathcal{B}%
_{k}}\rightarrow 0,\quad \text{\textit{as }}\varepsilon \rightarrow 0.
\end{eqnarray*}
\end{proof}

\begin{proposition}
Let $1\leq p\leq \infty $, $k\in \mathcal{K}_{pol}\left( \mathbb{R}%
^{n}\right) $ and $C$, $N$ the positive constants that define $k$. Let $%
m_{k}=\left[ N+\frac{n+1}{2}\right] +1$. Then

$\left( \mathtt{a}\right) $ $\mathcal{BC}^{m_{k}}\left( \mathbb{R}%
^{n}\right) \cdot \mathcal{B}_{k}^{p}\left( \mathbb{R}^{n}\right) \subset 
\mathcal{B}_{k}^{p}\left( \mathbb{R}^{n}\right) $. In particular $\mathcal{BC%
}^{\infty }\left( \mathbb{R}^{n}\right) \cdot \mathcal{B}_{k}^{p}\left( 
\mathbb{R}^{n}\right) \subset \mathcal{B}_{k}^{p}\left( \mathbb{R}%
^{n}\right) $.

$\left( \mathtt{b}\right) $ $\mathcal{BC}^{m_{k}}\left( \mathbb{R}%
^{n}\right) \subset \mathcal{B}_{k}^{\infty }\left( \mathbb{R}^{n}\right) $.
\end{proposition}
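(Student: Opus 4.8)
The plan is to deduce both inclusions directly from the amalgam-space description of $\mathcal{B}_{k}^{p}$, applying Lemma \ref{ks3}~$(\mathtt{a})$ to a suitably localized multiplier so that all constants stay uniform in the amalgam variable.

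For part $(\mathtt{a})$ I would take $\psi\in\mathcal{BC}^{m_{k}}(\mathbb{R}^{n})$ and $u\in\mathcal{B}_{k}^{p}(\mathbb{R}^{n})$, and fix one $\chi\in\mathcal{C}_{0}^{\infty}(\mathbb{R}^{n})\smallsetminus 0$ for which $y\mapsto\left\Vert u\tau_{y}\chi\right\Vert_{\mathcal{B}_{k}}$ belongs to $L^{p}$ (is bounded, when $p=\infty$); by Proposition \ref{ks15}~$(\mathtt{a})$ the choice is irrelevant, so it suffices to verify the same property for $\psi u$ with this $\chi$. Choosing $\widetilde{\chi}\in\mathcal{C}_{0}^{\infty}(\mathbb{R}^{n})$ with $\widetilde{\chi}=1$ on $\mathtt{supp}\,\chi$, one has $\tau_{y}\widetilde{\chi}=1$ on $\mathtt{supp}(\tau_{y}\chi)\supseteq\mathtt{supp}(u\tau_{y}\chi)$, hence
\[
(\psi u)\tau_{y}\chi=\psi\,(u\tau_{y}\chi)=\bigl(\psi\,\tau_{y}\widetilde{\chi}\bigr)\,(u\tau_{y}\chi),\qquad y\in\mathbb{R}^{n}.
\]
Now $\psi\,\tau_{y}\widetilde{\chi}$ is a compactly supported function of class $\mathcal{C}^{m_{k}}$, hence lies in $H^{m_{k}}(\mathbb{R}^{n})$, and by Leibniz' rule together with the translation invariance of $\left\Vert\cdot\right\Vert_{L^{2}}$ the quantity $\sum_{|\alpha|\le m_{k}}\bigl\Vert\partial^{\alpha}(\psi\,\tau_{y}\widetilde{\chi})\bigr\Vert_{L^{2}}$ is at most $Cst\,\left\Vert\psi\right\Vert_{\mathcal{BC}^{m_{k}}}\sum_{|\alpha|\le m_{k}}\left\Vert\partial^{\alpha}\widetilde{\chi}\right\Vert_{L^{2}}$, uniformly in $y$. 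Applying Lemma \ref{ks3}~$(\mathtt{a})$ with $p=2$ (that is, in $\mathcal{B}_{k}=B_{2,k}$) to the product $(\psi\,\tau_{y}\widetilde{\chi})(u\tau_{y}\chi)$ then yields
\[
\left\Vert(\psi u)\tau_{y}\chi\right\Vert_{\mathcal{B}_{k}}\le Cst\,\left\Vert\psi\right\Vert_{\mathcal{BC}^{m_{k}}}\left\Vert u\tau_{y}\chi\right\Vert_{\mathcal{B}_{k}},\qquad y\in\mathbb{R}^{n},
\]
with $Cst=Cst(n,k,\widetilde{\chi})$; integrating the $p$-th power in $y$ (or taking the supremum for $p=\infty$) shows $\psi u\in\mathcal{B}_{k}^{p}(\mathbb{R}^{n})$, with the multiplier estimate $\left\Vert\psi u\right\Vert_{k,p,\chi}\le Cst\,\left\Vert\psi\right\Vert_{\mathcal{BC}^{m_{k}}}\left\Vert u\right\Vert_{k,p,\chi}$. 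The ``in particular'' assertion follows from $\mathcal{BC}^{\infty}(\mathbb{R}^{n})\subset\mathcal{BC}^{m_{k}}(\mathbb{R}^{n})$.

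For part $(\mathtt{b})$ I would reduce to $(\mathtt{a})$. Since $\mathcal{BC}^{m_{k}}(\mathbb{R}^{n})=\mathcal{BC}^{m_{k}}(\mathbb{R}^{n})\cdot 1$, it is enough to check $1\in\mathcal{B}_{k}^{\infty}(\mathbb{R}^{n})$ and apply $(\mathtt{a})$ with $p=\infty$. For any $\chi\in\mathcal{C}_{0}^{\infty}(\mathbb{R}^{n})\smallsetminus 0$ one has $1\cdot\tau_{y}\chi=\tau_{y}\chi$ and $\widehat{\tau_{y}\chi}=\mathtt{e}^{-\mathtt{i}\left\langle y,\cdot\right\rangle}\widehat{\chi}$, so $\left\Vert\tau_{y}\chi\right\Vert_{\mathcal{B}_{k}}=\left\Vert k\,\widehat{\chi}\right\Vert_{L^{2}}=\left\Vert\chi\right\Vert_{\mathcal{B}_{k}}$, which is finite because $\chi\in\mathcal{S}(\mathbb{R}^{n})\subset\mathcal{B}_{k}(\mathbb{R}^{n})$ by Lemma \ref{ks18}~$(\mathtt{a})$; hence $\sup_{y}\left\Vert 1\cdot\tau_{y}\chi\right\Vert_{\mathcal{B}_{k}}=\left\Vert\chi\right\Vert_{\mathcal{B}_{k}}<\infty$ and $1\in\mathcal{B}_{k}^{\infty}(\mathbb{R}^{n})$, so $\mathcal{BC}^{m_{k}}(\mathbb{R}^{n})=\mathcal{BC}^{m_{k}}(\mathbb{R}^{n})\cdot 1\subset\mathcal{B}_{k}^{\infty}(\mathbb{R}^{n})$.

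The one point that needs a little care, rather than being a genuine obstacle, is the meaning of the product $\psi u$ when $\psi$ is only of class $\mathcal{C}^{m_{k}}$: this is harmless because $u\in\mathcal{B}_{k}^{p}(\mathbb{R}^{n})$ is locally an element of $\mathcal{B}_{k}(\mathbb{R}^{n})\subset H^{-N}(\mathbb{R}^{n})$ (from the lower bound $k(\xi)\ge C^{-1}k(0)\left\langle\xi\right\rangle^{-N}$), hence of finite local order, and in any case every product that actually enters the argument appears, after the localization above, in exactly the form $H^{m_{k}}(\mathbb{R}^{n})\cdot\mathcal{B}_{k}(\mathbb{R}^{n})$ covered by Lemma \ref{ks3}~$(\mathtt{a})$. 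Beyond this, the whole proof is the routine bookkeeping of a Leibniz estimate and of translation invariance; the only thing to watch is that the constant in the pointwise bound for $\left\Vert(\psi u)\tau_{y}\chi\right\Vert_{\mathcal{B}_{k}}$ does not depend on $y$, which is guaranteed precisely by cutting $\psi$ off with the translate $\tau_{y}\widetilde{\chi}$.
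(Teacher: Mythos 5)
Your argument is correct, and its overall shape matches the paper's: both proofs reduce the claim to a multiplier bound $\left\Vert (\psi u)\tau _{y}\chi \right\Vert _{\mathcal{B}_{k}}\leq Cst\left\Vert \psi \right\Vert _{\mathcal{BC}^{m_{k}}}\left\Vert u\tau _{y}\chi \right\Vert _{\mathcal{B}_{k}}$ that is uniform in $y$, and then take the $L^{p}$ (or $\sup$) norm in $y$; part $(\mathtt{b})$ is in both cases the observation that $1\in \mathcal{B}_{k}^{\infty }$ together with part $(\mathtt{a})$, and you supply the short verification of $1\in \mathcal{B}_{k}^{\infty }$ that the paper leaves implicit. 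The one genuine difference is how the uniform multiplier bound is obtained. The paper simply invokes Proposition \ref{ks4} ($\mathcal{BC}^{m_{k}}\cdot \mathcal{B}_{k}\subset \mathcal{B}_{k}$), whose own proof runs through the Coifman--Meyer lattice decomposition of Lemma \ref{ks2}. You instead exploit the fact that $u\tau _{y}\chi $ is already compactly supported, write $(\psi u)\tau _{y}\chi =\bigl(\psi \,\tau _{y}\widetilde{\chi }\bigr)\bigl(u\tau _{y}\chi \bigr)$ with a cutoff $\widetilde{\chi }$ equal to $1$ on $\mathtt{supp}\,\chi $, and apply the elementary $H^{m_{k}}$-multiplier estimate of Lemma \ref{ks3}~$(\mathtt{a})$, the Leibniz rule and translation invariance giving the uniformity in $y$. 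This buys you a more self-contained proof of the proposition (no appeal to Lemma \ref{ks2} or Proposition \ref{ks4} is needed here), at the modest cost of the extra cutoff and the definitional point you correctly flag about the meaning of $\psi u$ for $\psi $ merely of class $\mathcal{C}^{m_{k}}$ --- a point the paper's route has to address as well, since Proposition \ref{ks4} defines the product through exactly the same kind of compactly supported $\mathcal{C}^{m_{k}}$ localizations.
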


\begin{proof}
$\left( \mathtt{a}\right) $ Let $u\in \mathcal{B}_{k}^{p}\left( \mathbb{R}%
^{n}\right) $ and $\psi \in \mathcal{BC}^{m_{k}}\left( \mathbb{R}^{n}\right) 
$. Let $\chi \in \mathcal{C}_{0}^{\infty }\left( \mathbb{R}^{n}\right)
\smallsetminus 0$. By using Proposition \ref{ks4} we obtain that $\psi u\tau
_{y}\chi \in \mathcal{B}_{k}\left( \mathbb{R}^{n}\right) $ and%
\begin{equation*}
\left\Vert \psi u\tau _{y}\chi \right\Vert _{\mathcal{B}_{k}}\leq
C_{k}\left\Vert \psi \right\Vert _{\mathcal{BC}^{m_{k}}}\left\Vert u\tau
_{y}\chi \right\Vert _{\mathcal{\mathcal{\mathcal{B}}}_{k}}.
\end{equation*}%
This inequality implies that 
\begin{equation*}
\left\Vert \psi u\right\Vert _{k,p,\chi }\leq C_{k}\left\Vert \psi
\right\Vert _{\mathcal{BC}^{m_{k}}}\left\Vert u\right\Vert _{k,p,\chi }
\end{equation*}%
$\left( \mathtt{b}\right) $ Since $1\in \mathcal{B}_{k}^{\infty }\left( 
\mathbb{R}^{n}\right) $ it follows that 
\begin{equation*}
\mathcal{BC}^{m_{k}}\left( \mathbb{R}^{n}\right) =\mathcal{BC}^{m_{k}}\left( 
\mathbb{R}^{n}\right) \cdot 1\subset \mathcal{BC}^{m_{k}}\left( \mathbb{R}%
^{n}\right) \cdot \mathcal{B}_{k}^{\infty }\left( \mathbb{R}^{n}\right)
\subset \mathcal{B}_{k}^{\infty }\left( \mathbb{R}^{n}\right) .
\end{equation*}
\end{proof}

\section{Wiener-L\'{e}vy theorem for $\mathcal{B}_{k}^{\infty }$ algebras}

In order to have algebra structure on $\mathcal{B}_{k}^{\infty }$, the
weight function $k$ should verify an additional condition.

\begin{lemma}
\label{kh11}Let $k,k_{1},k_{2}\in \mathcal{K}_{pol}\left( \mathbb{R}%
^{n}\right) $. Suppose that there is $C>0$ such that 
\begin{equation*}
\frac{1}{k_{1}^{2}}\ast \frac{1}{k_{2}^{2}}\leq \frac{C^{2}}{k^{2}}.
\end{equation*}%
Then the bilinear map 
\begin{equation*}
\mathcal{S}\left( \mathbb{R}^{n}\right) \mathcal{\mathbf{\times }S}\left( 
\mathbb{R}^{n}\right) \mathcal{\ni }\left( u_{1},u_{2}\right) \rightarrow
u_{1}u_{2}\in \mathcal{S}\left( \mathbb{R}^{n}\right)
\end{equation*}%
has a bounded extension%
\begin{eqnarray*}
\mathcal{\mathcal{B}}_{k_{1}}\left( \mathbb{R}^{n}\right) \times \mathcal{%
\mathcal{\mathcal{B}}}_{k_{2}}\left( \mathbb{R}^{n}\right) &\mathcal{\ni }%
&\left( u_{1},u_{2}\right) \rightarrow u_{1}u_{2}\in \mathcal{B}_{k}\left( 
\mathbb{R}^{n}\right) , \\
\left\Vert u_{1}u_{2}\right\Vert _{\mathcal{B}_{k}} &\leq &\left( 2\pi
\right) ^{-n}C\left\Vert u_{1}\right\Vert _{\mathcal{\mathcal{B}}%
_{k_{1}}}\left\Vert u_{2}\right\Vert _{\mathcal{\mathcal{\mathcal{B}}}%
_{k_{2}}}.
\end{eqnarray*}
\end{lemma}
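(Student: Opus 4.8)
The plan is to compute directly with Fourier transforms, since everything in the $\mathcal{B}_k$-norm is phrased on the Fourier side. For $u_1,u_2\in\mathcal{S}$, the Fourier transform of the product is a convolution, $\widehat{u_1u_2}=(2\pi)^{-n}\widehat{u_1}\ast\widehat{u_2}$, so I would start from
\[
k(\xi)\bigl|\widehat{u_1u_2}(\xi)\bigr|\le (2\pi)^{-n}\int k(\xi)\,\bigl|\widehat{u_1}(\xi-\eta)\bigr|\,\bigl|\widehat{u_2}(\eta)\bigr|\,\mathtt{d}\eta .
\]
The first key step is the pointwise weight estimate: I want to bound $k(\xi)$ by a product of $k_1(\xi-\eta)$ and $k_2(\eta)$ up to a factor that I can control by the convolution hypothesis. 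Writing $k(\xi)=\frac{k(\xi)}{k_1(\xi-\eta)k_2(\eta)}\,k_1(\xi-\eta)\,k_2(\eta)$, the natural move is a Cauchy–Schwarz split of the integrand as
\[
\Bigl(k_1(\xi-\eta)\bigl|\widehat{u_1}(\xi-\eta)\bigr|\Bigr)\Bigl(k_2(\eta)\bigl|\widehat{u_2}(\eta)\bigr|\Bigr)\cdot\frac{k(\xi)}{k_1(\xi-\eta)\,k_2(\eta)} ,
\]
and then apply the Cauchy–Schwarz inequality in $\eta$ so as to land exactly the two $L^2$ factors $\|k_1\widehat{u_1}\|_2=\|u_1\|_{\mathcal{B}_{k_1}}$ and $\|k_2\widehat{u_2}\|_2=\|u_2\|_{\mathcal{B}_{k_2}}$, at the cost of the quantity $\sup_\xi k(\xi)^2\int \frac{\mathtt{d}\eta}{k_1(\xi-\eta)^2k_2(\eta)^2}$.

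The second step is to recognize that this last quantity is precisely what the hypothesis controls: the integral is $\bigl(\frac{1}{k_1^2}\ast\frac{1}{k_2^2}\bigr)(\xi)$, and by assumption this is $\le C^2/k(\xi)^2$, so the whole supremum is $\le C^2$. Putting this together gives, for each fixed $\xi$,
\[
k(\xi)^2\bigl|\widehat{u_1u_2}(\xi)\bigr|^2\le (2\pi)^{-2n}C^2\,\Bigl(\bigl(k_1^2|\widehat{u_1}|^2\bigr)\ast\bigl(k_2^2|\widehat{u_2}|^2\bigr)\Bigr)(\xi)\cdot\frac{k(\xi)^2}{C^2}\cdot\frac{1}{k(\xi)^2}\cdot C^2 ,
\]
but it is cleaner to integrate the convolution-of-$L^1$-functions form: $\int \bigl(k_1^2|\widehat{u_1}|^2\bigr)\ast\bigl(k_2^2|\widehat{u_2}|^2\bigr) = \|k_1\widehat{u_1}\|_2^2\,\|k_2\widehat{u_2}\|_2^2$ by Young's inequality (or Fubini). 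Integrating the pointwise bound in $\xi$ then yields $\|u_1u_2\|_{\mathcal{B}_k}^2\le (2\pi)^{-2n}C^2\|u_1\|_{\mathcal{B}_{k_1}}^2\|u_2\|_{\mathcal{B}_{k_2}}^2$, which is the claimed inequality on $\mathcal{S}\times\mathcal{S}$.

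The final step is the density/extension argument: $\mathcal{S}(\mathbb{R}^n)$ is dense in both $\mathcal{B}_{k_1}$ and $\mathcal{B}_{k_2}$ by Lemma \ref{ks18}(a), and the bilinear map is bounded for the stated norms, so it extends uniquely to a bounded bilinear map $\mathcal{B}_{k_1}\times\mathcal{B}_{k_2}\to\mathcal{B}_k$; one should remark that this extension agrees with the pointwise product, e.g. because convergence in $\mathcal{B}_{k_i}$ implies convergence in $\mathcal{S}'$ and the product is separately continuous on the relevant subspaces, so the extended bilinear map is indeed multiplication. I expect the main (and really only) subtlety to be the bookkeeping in the Cauchy–Schwarz step — choosing the split of the weight factor so that the leftover integral is exactly the convolution $\frac{1}{k_1^2}\ast\frac{1}{k_2^2}$ evaluated at $\xi$, rather than something slightly off; everything else is routine.
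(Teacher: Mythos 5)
Your proposal is correct and follows essentially the same route as the paper: write $\widehat{u_1u_2}$ as $(2\pi)^{-n}\widehat{u_1}\ast\widehat{u_2}$, split the weight as $k(\xi)=\frac{k(\xi)}{k_1\,k_2}\cdot k_1\cdot k_2$ inside the convolution, apply Cauchy--Schwarz so that the leftover factor is $k(\xi)^2\bigl(\tfrac{1}{k_1^2}\ast\tfrac{1}{k_2^2}\bigr)(\xi)\le C^2$, and integrate in $\xi$ by Fubini to obtain the product of the two $\mathcal{B}_{k_i}$-norms. The only cosmetic issue is the redundant cancelling factors in your displayed intermediate inequality, which you already note should be replaced by the clean Fubini computation; the density/extension remark at the end is fine.
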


\begin{proof}
Let $\left( u_{1},u_{2}\right) \in \mathcal{S}\left( \mathbb{R}^{n}\right)
\times \mathcal{S}\left( \mathbb{R}^{n}\right) $. Then 
\begin{eqnarray*}
\left\Vert u_{1}u_{2}\right\Vert _{\mathcal{B}_{k}}^{2} &=&\left\Vert k%
\widehat{u_{1}u_{2}}\right\Vert _{L^{2}}^{2}=\left( 2\pi \right)
^{-2n}\left\Vert k\left( \widehat{u_{1}}\ast \widehat{u_{2}}\right)
\right\Vert _{L^{2}}^{2} \\
&=&\left( 2\pi \right) ^{-2n}\int \left\vert k\left( \xi \right) \left( 
\widehat{u_{1}}\ast \widehat{u_{2}}\right) \left( \xi \right) \right\vert
^{2}\mathtt{d}\xi .
\end{eqnarray*}%
By using Schwarz's inequality, we can estimate the integrand as follows%
\begin{align*}
\left\vert k\left( \xi \right) \left( \widehat{u_{1}}\ast \widehat{u_{2}}%
\right) \left( \xi \right) \right\vert ^{2}& \leq \left( \int \left\vert
k_{1}\left( \eta \right) \widehat{u_{1}}\left( \eta \right) \right\vert
\left\vert k_{2}\left( \xi -\eta \right) \widehat{u_{2}}\left( \xi -\eta
\right) \right\vert \frac{k\left( \xi \right) }{k_{1}\left( \eta \right)
k_{2}\left( \xi -\eta \right) }\mathtt{d}\eta \right) ^{2} \\
& \leq \left( \int \left\vert k_{1}\left( \eta \right) \widehat{u_{1}}\left(
\eta \right) \right\vert ^{2}\left\vert k_{2}\left( \xi -\eta \right) 
\widehat{u_{2}}\left( \xi -\eta \right) \right\vert ^{2}\mathtt{d}\eta
\right) \\
& \qquad \qquad \qquad \qquad \cdot \left( \int \frac{k^{2}\left( \xi
\right) }{k_{1}^{2}\left( \eta \right) k_{2}^{2}\left( \xi -\eta \right) }%
\mathtt{d}\eta \right) \\
& \leq C^{2}\left( \int \left\vert k_{1}\left( \eta \right) \widehat{u_{1}}%
\left( \eta \right) \right\vert ^{2}\left\vert k_{2}\left( \xi -\eta \right) 
\widehat{u_{2}}\left( \xi -\eta \right) \right\vert ^{2}\mathtt{d}\eta
\right) .
\end{align*}%
Hence 
\begin{eqnarray*}
\left\Vert k\left( \widehat{u_{1}}\ast \widehat{u_{2}}\right) \right\Vert
_{L^{2}}^{2} &\leq &C^{2}\int \left( \int \left\vert k_{1}\left( \eta
\right) \widehat{u_{1}}\left( \eta \right) \right\vert ^{2}\left\vert
k_{2}\left( \xi -\eta \right) \widehat{u_{2}}\left( \xi -\eta \right)
\right\vert ^{2}\mathtt{d}\eta \right) \mathtt{d}\xi \\
&=&C^{2}\left\Vert u_{1}\right\Vert _{\mathcal{\mathcal{B}}%
_{k_{1}}}^{2}\left\Vert u_{2}\right\Vert _{\mathcal{\mathcal{\mathcal{B}}}%
_{k_{2}}}^{2}
\end{eqnarray*}%
and%
\begin{equation*}
\left\Vert u_{1}u_{2}\right\Vert _{\mathcal{B}_{k}}^{2}\leq \left( 2\pi
\right) ^{-2n}C^{2}\left\Vert u_{1}\right\Vert _{\mathcal{\mathcal{B}}%
_{k_{1}}}^{2}\left\Vert u_{2}\right\Vert _{\mathcal{\mathcal{\mathcal{B}}}%
_{k_{2}}}^{2}.
\end{equation*}
\end{proof}

\begin{corollary}
Let $k,k_{1},k_{2}\in \mathcal{K}_{pol}\left( \mathbb{R}^{n}\right) $.
Suppose that there is $C>0$ such that 
\begin{equation*}
\frac{1}{k_{1}^{2}}\ast \frac{1}{k_{2}^{2}}\leq \frac{C^{2}}{k^{2}}.
\end{equation*}%
If $\frac{1}{p_{1}}+\frac{1}{p_{2}}=\frac{1}{p}$, then%
\begin{equation*}
\mathcal{\mathcal{B}}_{k_{1}}^{p_{1}}\left( \mathbb{R}^{n}\right) \cdot 
\mathcal{\mathcal{B}}_{k_{2}}^{p_{2}}\left( \mathbb{R}^{n}\right) \subset 
\mathcal{B}_{k}^{p}\left( \mathbb{R}^{n}\right) .
\end{equation*}
\end{corollary}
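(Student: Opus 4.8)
The plan is to localize the problem with a compactly supported cut-off and reduce the stated global multiplication property to the local one already contained in Lemma \ref{kh11}; this is once more the Coifman--Meyer scheme used throughout the paper. Fix $\chi\in\mathcal{C}_0^{\infty}(\mathbb{R}^n)\smallsetminus 0$ and pick $\widetilde{\chi}\in\mathcal{C}_0^{\infty}(\mathbb{R}^n)$ with $\widetilde{\chi}=1$ near $\mathtt{supp}\,\chi$, so that $(\tau_y\widetilde{\chi})^{2}(\tau_y\chi)=\tau_y\chi$ for every $y$. Given $u_1\in\mathcal{B}_{k_1}^{p_1}(\mathbb{R}^n)$ and $u_2\in\mathcal{B}_{k_2}^{p_2}(\mathbb{R}^n)$, I would first note that \eqref{kh13}, applied with the pair $(\chi,\widetilde{\chi})$, shows $u_i\tau_y\widetilde{\chi}\in\mathcal{B}_{k_i}(\mathbb{R}^n)$ for every $y\in\mathbb{R}^n$ (the defining function of $\mathcal{B}_{k_i}^{p_i}$ lies in $L^{p_i}\subset L^1_{\mathrm{loc}}$ and $\langle\cdot\rangle^{-n-1}$ is integrable), so in particular $\phi u_i\in\mathcal{B}_{k_i}(\mathbb{R}^n)$ for all $\phi\in\mathcal{C}_0^{\infty}(\mathbb{R}^n)$; consequently $u_1u_2$ is a well-defined element of $\mathcal{D}'(\mathbb{R}^n)$, agreeing near every compact set with $(\phi u_1)(\phi u_2)\in\mathcal{B}_k(\mathbb{R}^n)$ (Lemma \ref{kh11}) for a suitable $\phi$.

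The key point is then the pointwise-in-$y$ identity
\[
(u_1u_2)\,\tau_y\chi=\bigl[(u_1\tau_y\widetilde{\chi})\,(u_2\tau_y\widetilde{\chi})\bigr]\,\tau_y\chi,
\]
valid because multiplication by $\tau_y\chi$ only uses the restriction of $u_1u_2$ to the set where $\tau_y\widetilde{\chi}=1$. By Lemma \ref{kh11} the bracket lies in $\mathcal{B}_k(\mathbb{R}^n)$ with $\mathcal{B}_k$-norm $\le(2\pi)^{-n}C\,\|u_1\tau_y\widetilde{\chi}\|_{\mathcal{B}_{k_1}}\|u_2\tau_y\widetilde{\chi}\|_{\mathcal{B}_{k_2}}$, and by Proposition \ref{ks4} (multiplication by $\tau_y\chi\in\mathcal{BC}^{m_k}$, $m_k=[N+\tfrac{n+1}{2}]+1$, is bounded on $\mathcal{B}_k$ with operator norm $\le C_k\,\|\chi\|_{\mathcal{BC}^{m_k}}$, the same for every $y$ since $\|\tau_y\chi\|_{\mathcal{BC}^{m_k}}=\|\chi\|_{\mathcal{BC}^{m_k}}$) one obtains
\[
\|(u_1u_2)\tau_y\chi\|_{\mathcal{B}_k}\le C_0\,\|u_1\tau_y\widetilde{\chi}\|_{\mathcal{B}_{k_1}}\,\|u_2\tau_y\widetilde{\chi}\|_{\mathcal{B}_{k_2}},\qquad C_0=(2\pi)^{-n}C\,C_k\,\|\chi\|_{\mathcal{BC}^{m_k}}.
\]
Recalling that $y\mapsto\|(u_1u_2)\tau_y\chi\|_{\mathcal{B}_k}$ is measurable (the Corollary on $y\mapsto\|u\tau_y\chi\|_{\mathcal{B}_k}$ proved earlier in Section 3), I would take $L^p$-norms in $y$ and apply H\"{o}lder's inequality with $\frac1{p_1}+\frac1{p_2}=\frac1p$ (an infinite exponent being the trivial case), obtaining $\|u_1u_2\|_{k,p,\chi}\le C_0\,\|u_1\|_{k_1,p_1,\widetilde{\chi}}\,\|u_2\|_{k_2,p_2,\widetilde{\chi}}$, which is finite by Proposition \ref{ks15}. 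This yields $u_1u_2\in\mathcal{B}_k^p(\mathbb{R}^n)$ together with boundedness of the multiplication map.

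I do not expect a genuine obstacle here: every ingredient has already been proved. The points needing some attention are (i) making sense of $u_1u_2\in\mathcal{D}'(\mathbb{R}^n)$ before writing $(u_1u_2)\tau_y\chi$, which is settled by the local membership of $u_i$ coming from \eqref{kh13} together with Lemma \ref{kh11}; (ii) the uniformity in $y$ of the constant, which is automatic by translation invariance of the $\mathcal{BC}^{m_k}$-norm; and (iii) the bookkeeping in the H\"{o}lder step, including the endpoint exponents. If one prefers to bypass the patching argument in (i), one may instead fix a lattice $\Gamma$ and a function $\chi$ with $\Psi_{\Gamma,\chi}>0$ and run the same estimate over $\gamma\in\Gamma$, using the discrete norm of Proposition \ref{ks15}(c); the sums then converge by H\"{o}lder's inequality for $l^{p}$-sequences.
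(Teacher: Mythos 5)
Your proof is correct and follows essentially the same route as the paper: localize with a compactly supported window, apply the bilinear estimate of Lemma \ref{kh11} pointwise in $y$, and conclude with H\"{o}lder's inequality in the global variable. The only cosmetic difference is that the paper tests $u_1u_2$ against the window $\chi^2$, so that $(u_1u_2)\tau_y\chi^2=(u_1\tau_y\chi)(u_2\tau_y\chi)$ directly and the auxiliary cut-off $\widetilde{\chi}$ together with the extra multiplier estimate from Proposition \ref{ks4} becomes unnecessary.
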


\begin{proof}
Let $\chi \in \mathcal{C}_{0}^{\infty }\left( \mathbb{R}^{n}\right)
\smallsetminus 0$, $u_{1}\in \mathcal{\mathcal{B}}_{k_{1}}^{p_{1}}\left( 
\mathbb{R}^{n}\right) $ and $u_{2}\in \mathcal{\mathcal{B}}%
_{k_{2}}^{p_{2}}\left( \mathbb{R}^{n}\right) $. By using the previous lemma
we obtain that $u_{1}u_{2}\tau _{y}\chi ^{2}\in \mathcal{B}_{k}\left( 
\mathbb{R}^{n}\right) $ and%
\begin{equation*}
\left\Vert u_{1}u_{2}\tau _{y}\chi ^{2}\right\Vert _{\mathcal{B}_{k}}\leq
\left( 2\pi \right) ^{-n}C\left\Vert u_{1}\tau _{y}\chi \right\Vert _{%
\mathcal{\mathcal{B}}_{k_{1}}}\left\Vert u_{2}\tau _{y}\chi \right\Vert _{%
\mathcal{\mathcal{\mathcal{B}}}_{k_{2}}}.
\end{equation*}%
Finally, H\"{o}lder's inequality implies that 
\begin{equation*}
\left\Vert u_{1}u_{2}\right\Vert _{k,p,\chi ^{2}}\leq \left( 2\pi \right)
^{-n}C\left\Vert u_{1}\right\Vert _{k_{1},p_{1},\chi }\left\Vert
u_{2}\right\Vert _{s,p_{2},\chi }.
\end{equation*}
\end{proof}

\begin{corollary}
Let $k\in \mathcal{K}_{pol}\left( \mathbb{R}^{n}\right) $ and $1\leq p\leq
\infty $. Suppose that there is $C>0$ such that 
\begin{equation*}
\frac{1}{k^{2}}\ast \frac{1}{k^{2}}\leq \frac{C^{2}}{k^{2}}.
\end{equation*}%
Then $\mathcal{B}_{k}^{\infty }\left( \mathbb{R}^{n}\right) $ is a Banach
algebra with respect to the usual product and $\mathcal{B}_{k}^{p}\left( 
\mathbb{R}^{n}\right) $ is an ideal in $\mathcal{B}_{k}^{\infty }\left( 
\mathbb{R}^{n}\right) $.
\end{corollary}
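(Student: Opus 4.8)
The plan is to deduce both assertions as direct specializations of the preceding corollary; essentially no new computation is needed beyond what that corollary's proof already supplies.

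First I would invoke that corollary with $k_{1}=k_{2}=k$: the standing assumption $\frac{1}{k^{2}}\ast \frac{1}{k^{2}}\leq \frac{C^{2}}{k^{2}}$ is exactly the convolution hypothesis required there, so for every pair of exponents $p_{1},p_{2}$ with $\frac{1}{p_{1}}+\frac{1}{p_{2}}=\frac{1}{p}$ we get $\mathcal{B}_{k}^{p_{1}}\left( \mathbb{R}^{n}\right) \cdot \mathcal{B}_{k}^{p_{2}}\left( \mathbb{R}^{n}\right) \subset \mathcal{B}_{k}^{p}\left( \mathbb{R}^{n}\right)$, and, fixing $\chi \in \mathcal{C}_{0}^{\infty }\left( \mathbb{R}^{n}\right) \smallsetminus 0$, the quantitative estimates extracted from that proof:
\[
\left\Vert u_{1}u_{2}\tau _{y}\chi ^{2}\right\Vert _{\mathcal{B}_{k}}\leq \left( 2\pi \right) ^{-n}C\left\Vert u_{1}\tau _{y}\chi \right\Vert _{\mathcal{B}_{k}}\left\Vert u_{2}\tau _{y}\chi \right\Vert _{\mathcal{B}_{k}},\qquad y\in \mathbb{R}^{n},
\]
and hence, by the generalized H\"{o}lder inequality, $\left\Vert u_{1}u_{2}\right\Vert _{k,p,\chi ^{2}}\leq \left( 2\pi \right) ^{-n}C\left\Vert u_{1}\right\Vert _{k,p_{1},\chi }\left\Vert u_{2}\right\Vert _{k,p_{2},\chi }$.

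For the algebra structure I would take $p_{1}=p_{2}=\infty$ (so $p=\infty$): this yields $\mathcal{B}_{k}^{\infty }\cdot \mathcal{B}_{k}^{\infty }\subset \mathcal{B}_{k}^{\infty }$ with $\left\Vert u_{1}u_{2}\right\Vert _{k,\infty ,\chi ^{2}}\leq \left( 2\pi \right) ^{-n}C\left\Vert u_{1}\right\Vert _{k,\infty ,\chi }\left\Vert u_{2}\right\Vert _{k,\infty ,\chi }$. By Proposition \ref{ks15}$\left( \mathtt{b}\right)$ the norms attached to $\chi ^{2}$ and to $\chi$ are equivalent, so there is $C^{\prime }>0$ with $\left\Vert u_{1}u_{2}\right\Vert _{k,\infty ,\chi }\leq C^{\prime }\left\Vert u_{1}\right\Vert _{k,\infty ,\chi }\left\Vert u_{2}\right\Vert _{k,\infty ,\chi }$; passing to the equivalent norm $C^{\prime }\left\Vert \cdot \right\Vert _{k,\infty ,\chi }$ makes the product submultiplicative, and completeness is Proposition \ref{ks15}$\left( \mathtt{f}\right)$. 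That the product is the ordinary pointwise product — hence commutative and associative, with unit the constant function $1$, which lies in $\mathcal{B}_{k}^{\infty }$ since $\left\Vert \tau _{y}\chi \right\Vert _{\mathcal{B}_{k}}=\left\Vert \chi \right\Vert _{\mathcal{B}_{k}}$ for every $y$ — follows by observing that the bilinear map of Lemma \ref{kh11} extends multiplication of Schwartz functions and preserves supports, so after localizing by $\tau _{y}\chi ^{2}=\left( \tau _{y}\chi \right) ^{2}$ it agrees with the classical product of the $\mathcal{B}_{k}$-pieces; commutativity and associativity then propagate from $\mathcal{S}\left( \mathbb{R}^{n}\right)$ by the density in Lemma \ref{ks18}$\left( \mathtt{a}\right)$.

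For the ideal property I would take $p_{1}=p$ and $p_{2}=\infty$: then $\mathcal{B}_{k}^{p}\cdot \mathcal{B}_{k}^{\infty }\subset \mathcal{B}_{k}^{p}$, and the estimate $\left\Vert uv\right\Vert _{k,p,\chi ^{2}}\leq \left( 2\pi \right) ^{-n}C\left\Vert u\right\Vert _{k,p,\chi }\left\Vert v\right\Vert _{k,\infty ,\chi }$ shows the module action is bounded; together with the inclusion $\mathcal{B}_{k}^{p}\left( \mathbb{R}^{n}\right) \subset \mathcal{B}_{k}^{\infty }\left( \mathbb{R}^{n}\right)$ of Proposition \ref{ks15}$\left( \mathtt{d}\right)$ and the commutativity just noted, $\mathcal{B}_{k}^{p}\left( \mathbb{R}^{n}\right)$ is a two-sided ideal. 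I do not expect a real obstacle here: the only step that is not purely mechanical is the identification of the extended bilinear map with the genuine pointwise product (so that associativity and the unit $1$ are legitimately available), and that is handled by the support-preserving and density arguments indicated above.
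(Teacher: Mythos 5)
Your proposal is correct and follows exactly the route the paper intends: the corollary is stated without proof as an immediate specialization of the preceding corollary (take $k_{1}=k_{2}=k$ with $p_{1}=p_{2}=\infty$ for the algebra structure and $p_{1}=p$, $p_{2}=\infty$ for the ideal property), combined with the norm equivalence of Proposition \ref{ks15} and completeness from Proposition \ref{ks15}$\left(\mathtt{f}\right)$. The extra details you supply (the unit $1\in\mathcal{B}_{k}^{\infty}$, and the identification of the extended bilinear map with the pointwise product via support preservation and density, which gives associativity) are sound and merely make explicit what the paper leaves tacit.
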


\begin{definition}
The set of all weights $k\in \mathcal{K}_{pol}\left( \mathbb{R}^{n}\right) $
satisfying $\frac{1}{k^{2}}\ast \frac{1}{k^{2}}\leq \frac{C_{k}^{2}}{k^{2}}$
will be denoted by $\mathcal{K}_{a}\left( \mathbb{R}^{n}\right) $. Then for
any $k\in \mathcal{K}_{a}\left( \mathbb{R}^{n}\right) $, $\mathcal{B}%
_{k}^{\infty }\left( \mathbb{R}^{n}\right) $ is a Banach algebra with
respect to the usual product.
\end{definition}

\begin{lemma}
\label{kh8}Let $k\in \mathcal{K}_{pol}\left( \mathbb{R}^{n}\right) $. The
map 
\begin{equation*}
\mathcal{C}_{0}^{\infty }\left( \mathbb{R}^{n}\right) \times \mathcal{B}%
_{k}^{\infty }\left( \mathbb{R}^{n}\right) \ni \left( \varphi ,u\right)
\rightarrow \varphi \ast u\in \mathcal{B}_{k}^{\infty }\left( \mathbb{R}%
^{n}\right)
\end{equation*}%
is well defined and for any $\chi \in \mathcal{S}\left( \mathbb{R}%
^{n}\right) \smallsetminus 0$ we have the estimate%
\begin{equation*}
\left\Vert \varphi \ast u\right\Vert _{k,\infty ,\chi }\leq \left\Vert
\varphi \right\Vert _{L^{1}}\left\Vert u\right\Vert _{k,\infty ,\chi },\quad
\left( \varphi ,u\right) \in \mathcal{C}_{0}^{\infty }\left( \mathbb{R}%
^{n}\right) \times \mathcal{B}_{k}^{\infty }\left( \mathbb{R}^{n}\right) .
\end{equation*}
\end{lemma}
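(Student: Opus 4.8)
The plan is to realize $\varphi\ast u$ as an integral of translates of $u$ with values in $\mathcal{B}_{k}(\mathbb{R}^{n})$, localize it against $\tau_{y}\chi$, and estimate it by the $L^{1}$-norm of $\varphi$, exploiting that translations act isometrically on $\mathcal{B}_{k}(\mathbb{R}^{n})$.

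First I would record the needed facts about $\mathcal{B}_{m}(\mathbb{R}^{n})=B_{2,m}(\mathbb{R}^{n})$, $m\in\mathcal{K}_{pol}(\mathbb{R}^{n})$: since $\widehat{\tau_{x}v}=\mathtt{e}^{-\mathtt{i}\left\langle x,\cdot\right\rangle}\widehat v$, each $\tau_{x}$ is an isometry of $\mathcal{B}_{m}(\mathbb{R}^{n})$, and dominated convergence applied to $\int|\mathtt{e}^{-\mathtt{i}\left\langle x,\xi\right\rangle}-1|^{2}m(\xi)^{2}|\widehat v(\xi)|^{2}\,\mathtt{d}\xi$ shows that $x\mapsto\tau_{x}v$ is continuous from $\mathbb{R}^{n}$ to $\mathcal{B}_{m}(\mathbb{R}^{n})$. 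Next I would check that $\mathcal{B}_{k}^{\infty}(\mathbb{R}^{n})\subset\mathcal{B}_{k}^{loc}(\mathbb{R}^{n})$: for $u\in\mathcal{B}_{k}^{\infty}(\mathbb{R}^{n})$ and $\phi\in\mathcal{C}_{0}^{\infty}(\mathbb{R}^{n})$, choosing $\widetilde\chi\in\mathcal{C}_{0}^{\infty}(\mathbb{R}^{n})$ with $\widetilde\chi=1$ near $\mathtt{supp}\,\phi$ gives $u\widetilde\chi\in\mathcal{B}_{k}(\mathbb{R}^{n})$ by Proposition \ref{ks8}~$(\mathtt{a})$, hence $\phi u=\phi\,(u\widetilde\chi)\in\mathcal{B}_{k}(\mathbb{R}^{n})$ by Proposition \ref{ks4}. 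Consequently, for such $u$ and any $\chi\in\mathcal{S}(\mathbb{R}^{n})\smallsetminus 0$ one has $\left\Vert u\right\Vert_{k,\infty,\chi}<\infty$ (Proposition \ref{ks8}~$(\mathtt{a})$) and the map $z\mapsto u\tau_{z}\chi\in\mathcal{B}_{k}(\mathbb{R}^{n})$ is continuous, this being the locally Lipschitz lemma, whose proof (based on (\ref{kh13})) applies verbatim to $\chi\in\mathcal{S}(\mathbb{R}^{n})$.

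Now fix $\varphi\in\mathcal{C}_{0}^{\infty}(\mathbb{R}^{n})$, $u\in\mathcal{B}_{k}^{\infty}(\mathbb{R}^{n})$, $\chi\in\mathcal{S}(\mathbb{R}^{n})\smallsetminus 0$ and $y\in\mathbb{R}^{n}$. Using $(\tau_{z}u)(\tau_{y}\chi)=\tau_{z}\bigl(u\,\tau_{y-z}\chi\bigr)$ and the facts above, the map $z\mapsto\varphi(z)\,(\tau_{z}u)(\tau_{y}\chi)$ is continuous from $\mathbb{R}^{n}$ to $\mathcal{B}_{k}(\mathbb{R}^{n})$ and compactly supported, so $v_{y}:=\int\varphi(z)\,(\tau_{z}u)(\tau_{y}\chi)\,\mathtt{d}z$ exists as a $\mathcal{B}_{k}(\mathbb{R}^{n})$-valued integral. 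I would identify $v_{y}$ with $(\varphi\ast u)\tau_{y}\chi$ by testing against $\psi\in\mathcal{S}(\mathbb{R}^{n})$: since $\mathcal{B}_{k}(\mathbb{R}^{n})\hookrightarrow\mathcal{S}'(\mathbb{R}^{n})$ continuously, $\left\langle v_{y},\psi\right\rangle=\int\varphi(z)\left\langle u,\tau_{-z}\bigl((\tau_{y}\chi)\psi\bigr)\right\rangle\mathtt{d}z$, whereas $\left\langle(\varphi\ast u)\tau_{y}\chi,\psi\right\rangle=\left\langle u,\widetilde\varphi\ast\bigl((\tau_{y}\chi)\psi\bigr)\right\rangle$ with $\widetilde\varphi(x)=\varphi(-x)$, and by (\ref{ks9}) the latter equals $\int\widetilde\varphi(z)\left\langle u,\tau_{z}\bigl((\tau_{y}\chi)\psi\bigr)\right\rangle\mathtt{d}z$, which is the same integral after the change of variable $z\mapsto-z$. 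Hence $(\varphi\ast u)\tau_{y}\chi=v_{y}\in\mathcal{B}_{k}(\mathbb{R}^{n})$.

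The estimate is then immediate:
\begin{align*}
\left\Vert(\varphi\ast u)\tau_{y}\chi\right\Vert_{\mathcal{B}_{k}}=\left\Vert v_{y}\right\Vert_{\mathcal{B}_{k}}&\leq\int|\varphi(z)|\,\left\Vert\tau_{z}\bigl(u\,\tau_{y-z}\chi\bigr)\right\Vert_{\mathcal{B}_{k}}\,\mathtt{d}z\\
&=\int|\varphi(z)|\,\left\Vert u\,\tau_{y-z}\chi\right\Vert_{\mathcal{B}_{k}}\,\mathtt{d}z\leq\left\Vert\varphi\right\Vert_{L^{1}}\left\Vert u\right\Vert_{k,\infty,\chi},
\end{align*}
the middle equality being the isometry of translations on $\mathcal{B}_{k}(\mathbb{R}^{n})$; taking the supremum over $y\in\mathbb{R}^{n}$ yields $\varphi\ast u\in\mathcal{B}_{k}^{\infty}(\mathbb{R}^{n})$ together with the claimed bound. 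The main obstacle is the preceding paragraph: one must ensure that the purely distributional identity $\varphi\ast u=\int\varphi(z)\tau_{z}u\,\mathtt{d}z$ lifts to an equality of $\mathcal{B}_{k}(\mathbb{R}^{n})$-valued integrals, and this is exactly where the continuity of $z\mapsto u\tau_{z}\chi$ in $\mathcal{B}_{k}(\mathbb{R}^{n})$ — hence the Wiener-amalgam machinery of Section 3 — is used; everything else is routine.
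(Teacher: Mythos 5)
Your proof is correct, and its computational core is the same as the paper's: both apply (\ref{ks9}) to $\left\langle \varphi \ast u,\left( \tau _{y}\chi \right) \psi \right\rangle $ to obtain $\int \varphi \left( z\right) \left\langle u\tau _{y-z}\chi ,\tau _{-z}\psi \right\rangle \mathtt{d}z$ and then exploit that translations act isometrically on the $B_{2,m}$ spaces, yielding the constant $\left\Vert \varphi \right\Vert _{L^{1}}$. The difference is in the packaging. The paper stays entirely at the level of scalar pairings: it bounds the integrand by $\left( 2\pi \right) ^{-n}\left\Vert u\tau _{y-z}\chi \right\Vert _{\mathcal{B}_{k}}\left\Vert \tau _{-z}\psi \right\Vert _{\mathcal{B}_{1/\check{k}}}=\left( 2\pi \right) ^{-n}\left\Vert u\tau _{y-z}\chi \right\Vert _{\mathcal{B}_{k}}\left\Vert \psi \right\Vert _{\mathcal{B}_{1/\check{k}}}$ and concludes via the duality of Lemma \ref{ks18} $\left( \mathtt{d}\right) $ that $\left( \tau _{y}\chi \right) \left( \varphi \ast u\right) \in \mathcal{B}_{k}$ with the stated norm. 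You instead promote the identity to an equality of $\mathcal{B}_{k}$-valued Bochner integrals and invoke the triangle inequality for such integrals; this is sound, but it costs you an extra ingredient the paper never needs, namely the strong continuity of $z\rightarrow u\tau _{z}\chi $ and of $z\rightarrow \tau _{z}v$ in $\mathcal{B}_{k}$, required to make the vector-valued integral meaningful. Your justification of that continuity is acceptable: the estimate (\ref{kh13}) is indeed derived for $\widetilde{\chi }\in \mathcal{S}$, and replacing $\widetilde{\chi }$ by $\tau _{h}\widetilde{\chi }-\widetilde{\chi }$ gives a Lipschitz constant controlled by $\left\Vert u\right\Vert _{k,\infty ,\chi _{0}}$, which is finite for $u\in \mathcal{B}_{k}^{\infty }$. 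In short, the two arguments prove the same inequality by the same identity; the paper's duality route is leaner, while yours makes the heuristic $\varphi \ast u=\int \varphi \left( z\right) \tau _{z}u\,\mathtt{d}z$ rigorous at the price of the continuity lemma.
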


\begin{proof}
Let $\left( \varphi ,u\right) \in \mathcal{C}_{0}^{\infty }\left( \mathbb{R}%
^{n}\right) \times \mathcal{B}_{k}^{\infty }\left( \mathbb{R}^{n}\right) $, $%
\chi \in \mathcal{S}\left( \mathbb{R}^{n}\right) \smallsetminus 0$ and $\psi
\in \mathcal{S}\left( \mathbb{R}^{n}\right) $. Then using (\ref{ks9}) we
obtain%
\begin{eqnarray*}
\left\langle \tau _{z}\chi \left( \varphi \ast u\right) ,\psi \right\rangle
&=&\left\langle u,\check{\varphi}\ast \left( \left( \tau _{z}\chi \right)
\psi \right) \right\rangle =\int \check{\varphi}\left( y\right) \left\langle
u,\tau _{y}\left( \left( \tau _{z}\chi \right) \psi \right) \right\rangle 
\mathtt{d}y \\
&=&\int \varphi \left( y\right) \left\langle u,\tau _{-y}\left( \left( \tau
_{z}\chi \right) \psi \right) \right\rangle \mathtt{d}y=\int \varphi \left(
y\right) \left\langle \left( \tau _{z-y}\chi \right) u,\tau _{-y}\psi
\right\rangle \mathtt{d}y,
\end{eqnarray*}%
where $\check{\varphi}\left( y\right) =\varphi \left( -y\right) $. Since 
\begin{equation*}
\left\vert \left\langle \left( \tau _{z-y}\chi \right) u,\tau _{-y}\psi
\right\rangle \right\vert \leq \left( 2\pi \right) ^{-n}\left\Vert \left(
\tau _{z-y}\chi \right) u\right\Vert _{\mathcal{B}_{k}}\left\Vert \tau
_{-y}\psi \right\Vert _{\mathcal{B}_{1/\check{k}}}\leq \left( 2\pi \right)
^{-n}\left\Vert u\right\Vert _{k,\infty ,\chi }\left\Vert \psi \right\Vert _{%
\mathcal{B}_{1/\check{k}}}
\end{equation*}%
it follows that 
\begin{equation*}
\left\vert \left\langle \tau _{z}\chi \left( \varphi \ast u\right) ,\psi
\right\rangle \right\vert \leq \left( 2\pi \right) ^{-n}\left\Vert \varphi
\right\Vert _{L^{1}}\left\Vert u\right\Vert _{k,\infty ,\chi }\left\Vert
\psi \right\Vert _{\mathcal{B}_{1/\check{k}}}.
\end{equation*}%
Hence $\tau _{z}\chi \left( \varphi \ast u\right) \in \mathcal{B}_{k}\left( 
\mathbb{R}^{n}\right) $ and $\left\Vert \tau _{z}\chi \left( \varphi \ast
u\right) \right\Vert _{\mathcal{B}_{k}}\leq \left\Vert \varphi \right\Vert
_{L^{1}}\left\Vert u\right\Vert _{k,\infty ,\chi }$ for every $z\in \mathbb{R%
}^{n}$, i.e. $\varphi \ast u\in \mathcal{B}_{k}^{\infty }\left( \mathbb{R}%
^{n}\right) $ and%
\begin{equation*}
\left\Vert \varphi \ast u\right\Vert _{k,\infty ,\chi }\leq \left\Vert
\varphi \right\Vert _{L^{1}}\left\Vert u\right\Vert _{k,\infty ,\chi }.
\end{equation*}
\end{proof}

Let $\varphi \in \mathcal{C}_{0}^{\infty }\left( \mathbb{R}^{n}\right) $, $%
\varphi \geq 0$ be such that \texttt{supp}$\varphi \subset B\left(
0;1\right) $, $\int \varphi \left( x\right) \mathtt{d}x=1$. For $\varepsilon
\in \left( 0,1\right] $, we set $\varphi _{\varepsilon }=\varepsilon
^{-n}\varphi \left( \cdot /\varepsilon \right) $.

\begin{lemma}
If $k,k^{\prime }\in \mathcal{K}_{pol}\left( \mathbb{R}^{n}\right) $ and 
\begin{equation*}
\frac{k^{\prime }\left( \xi \right) }{k\left( \xi \right) }\rightarrow
0,\quad \xi \rightarrow \infty ,
\end{equation*}%
it follows that $\mathcal{B}_{k}^{\infty }\left( \mathbb{R}^{n}\right)
\subset \mathcal{B}_{k^{\prime }}^{\infty }\left( \mathbb{R}^{n}\right) $
and for any $u\in \mathcal{B}_{k}^{\infty }\left( \mathbb{R}^{n}\right) $ 
\begin{equation*}
\varphi _{\varepsilon }\ast u\rightarrow u\text{\quad }in\text{ }\mathcal{B}%
_{k^{\prime }}^{\infty }\left( \mathbb{R}^{n}\right) ,\quad \varepsilon
\rightarrow 0.
\end{equation*}
\end{lemma}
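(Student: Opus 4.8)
The plan is to handle the two assertions in turn: the inclusion is essentially free, while the convergence statement rests on a low/high frequency decomposition that is precisely where the hypothesis $k'/k\to0$ is used.

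\emph{The inclusion.} By Lemma \ref{kh5}$(\mathtt{d})$ the ratio $k'/k$ is bounded on every compact set, and by hypothesis it tends to $0$ at infinity; hence $k'\le Cst\cdot k$ on $\mathbb{R}^{n}$, and Proposition \ref{ks15}$(\mathtt{e})$ gives $\mathcal{B}_{k}^{\infty}\left(\mathbb{R}^{n}\right)\subset\mathcal{B}_{k'}^{\infty}\left(\mathbb{R}^{n}\right)$. Combined with Lemma \ref{kh8} this also shows $\varphi_{\varepsilon}\ast u\in\mathcal{B}_{k}^{\infty}\subset\mathcal{B}_{k'}^{\infty}$ for $u\in\mathcal{B}_{k}^{\infty}$, so the statement $\varphi_{\varepsilon}\ast u\to u$ in $\mathcal{B}_{k'}^{\infty}$ is meaningful.

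\emph{Localization.} Fix $\chi\in\mathcal{C}_{0}^{\infty}\left(\mathbb{R}^{n}\right)\smallsetminus0$ and pick $\widetilde{\chi}\in\mathcal{C}_{0}^{\infty}\left(\mathbb{R}^{n}\right)$ equal to $1$ on an open neighbourhood of $\mathtt{supp}\,\chi+\overline{B\left(0,1\right)}$. Since $\mathtt{supp}\,\varphi_{\varepsilon}\subset\overline{B\left(0,1\right)}$ for $0<\varepsilon\le1$, the distribution $\varphi_{\varepsilon}\ast\bigl(u(1-\tau_{z}\widetilde{\chi})\bigr)$ is supported off $z+\mathtt{supp}\,\chi$, which gives the identity
\begin{equation*}
(\varphi_{\varepsilon}\ast u-u)\,\tau_{z}\chi=(\varphi_{\varepsilon}\ast w_{z}-w_{z})\,\tau_{z}\chi,\qquad w_{z}:=u\,\tau_{z}\widetilde{\chi},
\end{equation*}
for every $z\in\mathbb{R}^{n}$ and $0<\varepsilon\le1$. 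Here $w_{z}\in\mathcal{B}_{k}\left(\mathbb{R}^{n}\right)$ by the very definition of $\mathcal{B}_{k}^{\infty}$, and the $\sup$-estimate of Proposition \ref{ks8}$(\mathtt{a})$ yields $M:=\sup_{z}\left\Vert w_{z}\right\Vert _{\mathcal{B}_{k}}\le C(n,k,\chi,\widetilde{\chi})\,\left\Vert u\right\Vert _{k,\infty,\chi}<\infty$. Multiplication by $\tau_{z}\chi$ is bounded on $\mathcal{B}_{k'}$ with a norm independent of $z$ (Lemma \ref{ks3}$(\mathtt{a})$, since the Sobolev norms of $\tau_{z}\chi$ do not depend on $z$), so everything reduces to proving
\begin{equation*}
\sup_{z}\bigl\Vert\varphi_{\varepsilon}\ast w_{z}-w_{z}\bigr\Vert_{\mathcal{B}_{k'}}\longrightarrow0\quad(\varepsilon\to0)
\end{equation*}
for families $(w_{z})_{z}$ bounded in $\mathcal{B}_{k}$.

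\emph{Frequency splitting.} For $w\in\mathcal{B}_{k}$ one has $\varphi_{\varepsilon}\ast w\in\mathcal{B}_{k}$ with $\widehat{\varphi_{\varepsilon}\ast w}=\widehat{\varphi}(\varepsilon\,\cdot)\,\widehat{w}$, where $\widehat{\varphi}(0)=\int\varphi=1$ and $\left\vert\widehat{\varphi}\right\vert\le\left\Vert\varphi\right\Vert_{L^{1}}=1$, hence
\begin{equation*}
\left\Vert\varphi_{\varepsilon}\ast w-w\right\Vert_{\mathcal{B}_{k'}}^{2}=\int\frac{k'(\xi)^{2}}{k(\xi)^{2}}\,\bigl\vert\widehat{\varphi}(\varepsilon\xi)-1\bigr\vert^{2}\,k(\xi)^{2}\left\vert\widehat{w}(\xi)\right\vert^{2}\,\mathtt{d}\xi.
\end{equation*}
Splitting the integral at $\left\vert\xi\right\vert=R$: on $\left\vert\xi\right\vert>R$ one bounds $\bigl\vert\widehat{\varphi}(\varepsilon\xi)-1\bigr\vert\le2$ and $k'(\xi)^{2}/k(\xi)^{2}\le\eta(R)^{2}$ with $\eta(R):=\sup_{\left\vert\xi\right\vert>R}k'(\xi)/k(\xi)\to0$ as $R\to\infty$; on $\left\vert\xi\right\vert\le R$ one bounds $k'/k\le\Delta:=\sup k'/k<\infty$ and $\bigl\vert\widehat{\varphi}(\varepsilon\xi)-1\bigr\vert\le\sup_{\left\vert\zeta\right\vert\le\varepsilon R}\bigl\vert\widehat{\varphi}(\zeta)-\widehat{\varphi}(0)\bigr\vert\to0$ as $\varepsilon\to0$ by continuity of $\widehat{\varphi}$. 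This gives
\begin{equation*}
\left\Vert\varphi_{\varepsilon}\ast w-w\right\Vert_{\mathcal{B}_{k'}}^{2}\le\Bigl(\Delta^{2}\sup_{\left\vert\zeta\right\vert\le\varepsilon R}\bigl\vert\widehat{\varphi}(\zeta)-\widehat{\varphi}(0)\bigr\vert^{2}+4\eta(R)^{2}\Bigr)\left\Vert w\right\Vert_{\mathcal{B}_{k}}^{2},
\end{equation*}
and choosing $R$ large first, then $\varepsilon$ small, makes the bracket arbitrarily small, uniformly in $w$. Inserting $M$ and retracing the reductions yields $\left\Vert\varphi_{\varepsilon}\ast u-u\right\Vert_{k',\infty,\chi}\to0$. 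The only genuinely substantive point is this splitting — the decay $k'/k\to0$ is exactly what keeps the crude estimate $\bigl\vert\widehat{\varphi}(\varepsilon\xi)-1\bigr\vert\le2$ usable at high frequencies — and the one thing to watch is the order of quantifiers: $R$ must be fixed before letting $\varepsilon\to0$. The support argument for the localization identity and the translation-uniformity of the constants from Lemmas \ref{ks3}, \ref{kh8} and Proposition \ref{ks8} are routine.
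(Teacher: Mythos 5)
Your proof is correct and follows essentially the same route as the paper: the same localization identity $(\varphi_{\varepsilon}\ast u-u)\,\tau_{z}\chi=(\varphi_{\varepsilon}\ast(u\tau_{z}\widetilde{\chi})-u\tau_{z}\widetilde{\chi})\,\tau_{z}\chi$, followed by the same low/high frequency splitting of $\int|\widehat{\varphi}(\varepsilon\xi)-1|^{2}(k'/k)^{2}|k\widehat{w}|^{2}\,\mathtt{d}\xi$ with the ball chosen before $\varepsilon$. The only differences are presentational (you spell out the support argument and the uniformity in $z$ a bit more explicitly).
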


\begin{proof}
First let us note that the hypothesis and Lemma \ref{kh5} imply that $%
k^{\prime }/k$ is a bounded function and this implies that $\mathcal{B}%
_{k}^{\infty }\left( \mathbb{R}^{n}\right) \subset \mathcal{B}_{k^{\prime
}}^{\infty }\left( \mathbb{R}^{n}\right) $. Let $\chi ,\chi _{0}\in \mathcal{%
C}_{0}^{\infty }\left( \mathbb{R}^{n}\right) \smallsetminus 0$ be such that $%
\chi _{0}=1$ on \texttt{supp}$\chi +B\left( 0;1\right) $ and let $u\in 
\mathcal{B}_{k}^{\infty }\left( \mathbb{R}^{n}\right) $. Then for $%
0<\varepsilon \leq 1$ we have%
\begin{equation*}
\left( \varphi _{\varepsilon }\ast u-u\right) \tau _{y}\chi =\left( \varphi
_{\varepsilon }\ast \left( u\tau _{y}\chi _{0}\right) -u\tau _{y}\chi
_{0}\right) \tau _{y}\chi
\end{equation*}%
and by Proposition \ref{ks4}%
\begin{equation*}
\left\Vert \left( \varphi _{\varepsilon }\ast u-u\right) \tau _{y}\chi
\right\Vert _{\mathcal{B}_{k^{\prime }}}\leq C\left( k^{\prime },\chi
\right) \left\Vert \varphi _{\varepsilon }\ast \left( u\tau _{y}\chi
_{0}\right) -u\tau _{y}\chi _{0}\right\Vert _{\mathcal{B}_{k^{\prime }}}.
\end{equation*}%
We have 
\begin{eqnarray*}
\left\Vert \varphi _{\varepsilon }\ast \left( u\tau _{y}\chi _{0}\right)
-u\tau _{y}\chi _{0}\right\Vert _{\mathcal{B}_{k^{\prime }}}^{2}
&=&\left\Vert k^{\prime }\mathcal{F}\left( \varphi _{\varepsilon }\ast
\left( u\tau _{y}\chi _{0}\right) -u\tau _{y}\chi _{0}\right) \right\Vert
_{L^{2}}^{2} \\
&=&\int \left\vert \widehat{\varphi }\left( \varepsilon \xi \right)
-1\right\vert ^{2}\left( \frac{k^{\prime }\left( \xi \right) }{k\left( \xi
\right) }\right) ^{2}\left\vert k\left( \xi \right) \widehat{u\tau _{y}\chi
_{0}}\left( \xi \right) \right\vert ^{2}\mathtt{d}\xi .
\end{eqnarray*}%
Given any $\delta >0$ we now choose a ball $S=S_{\delta }$ so large that 
\begin{equation*}
\left\vert \widehat{\varphi }\left( \varepsilon \xi \right) -1\right\vert 
\frac{k^{\prime }\left( \xi \right) }{k\left( \xi \right) }\leq 2\frac{%
k^{\prime }\left( \xi \right) }{k\left( \xi \right) }<\delta ,\quad \xi \in 
\mathbb{R}^{n}\smallsetminus S,\text{ }\varepsilon \in \left( 0,1\right] .
\end{equation*}%
So for $S=S_{\delta }$ we can choose $\varepsilon _{\delta }$ so small that $%
\varepsilon \in \left( 0,\varepsilon _{\delta }\right] $ implies%
\begin{equation*}
\sup_{\xi \in S}\left\vert \widehat{\varphi }\left( \varepsilon \xi \right)
-1\right\vert \left( \frac{k^{\prime }\left( \xi \right) }{k\left( \xi
\right) }\right) <\delta .
\end{equation*}%
By writing $\int =\int_{S}+\int_{\mathbb{R}^{n}\smallsetminus S}$ we obtain 
\begin{multline*}
\left\Vert \varphi _{\varepsilon }\ast \left( u\tau _{y}\chi _{0}\right)
-u\tau _{y}\chi _{0}\right\Vert _{\mathcal{B}_{k^{\prime }}}^{2}\leq \delta
^{2}\int_{S}\left\vert k\left( \xi \right) \widehat{u\tau _{y}\chi _{0}}%
\left( \xi \right) \right\vert ^{2}\mathtt{d}\xi \\
+\delta ^{2}\int_{\mathbb{R}^{n}\smallsetminus S}\left\vert k\left( \xi
\right) \widehat{u\tau _{y}\chi _{0}}\left( \xi \right) \right\vert ^{2}%
\mathtt{d}\xi ,\quad \varepsilon \in \left( 0,\varepsilon _{\delta }\right] ,
\end{multline*}%
i.e.%
\begin{equation*}
\left\Vert \varphi _{\varepsilon }\ast \left( u\tau _{y}\chi _{0}\right)
-u\tau _{y}\chi _{0}\right\Vert _{\mathcal{B}_{k^{\prime }}}\leq \delta
\left\Vert k\widehat{u\tau _{y}\chi _{0}}\right\Vert _{L^{2}}=\delta
\left\Vert u\tau _{y}\chi _{0}\right\Vert _{\mathcal{B}_{k}},\quad
\varepsilon \in \left( 0,\varepsilon _{\delta }\right] .
\end{equation*}%
It follows that%
\begin{equation*}
\left\Vert \varphi _{\varepsilon }\ast u-u\right\Vert _{k^{\prime },\infty
,\chi }\leq \delta C\left( k^{\prime },\chi \right) \left\Vert u\right\Vert
_{k,\infty ,\chi _{0}}.
\end{equation*}%
The proof is complete.
\end{proof}

\begin{definition}
Let $k,k^{\prime }\in \mathcal{K}_{pol}\left( \mathbb{R}^{n}\right) $ be
such that 
\begin{equation*}
k^{\prime }\left( \xi \right) \leq Ck\left( \xi \right) ,\quad \xi \in 
\mathbb{R}^{n}.
\end{equation*}%
We set $\mathcal{B}_{k\left( k^{\prime }\right) }^{\infty }\left( \mathbb{R}%
^{n}\right) \equiv \left( \mathcal{B}_{k}^{\infty }\left( \mathbb{R}%
^{n}\right) ,\left\Vert \cdot \right\Vert _{k^{\prime },\infty }\right) .$
\end{definition}

\begin{corollary}
\label{ks10}Let $k,k^{\prime }\in \mathcal{K}_{pol}\left( \mathbb{R}%
^{n}\right) $ be such that 
\begin{equation*}
\frac{k^{\prime }\left( \xi \right) }{k\left( \xi \right) }\rightarrow
0,\quad \xi \rightarrow \infty .
\end{equation*}%
Then

$\left( \mathtt{a}\right) $ $\mathcal{B}_{k}^{\infty }\left( \mathbb{R}%
^{n}\right) \cap \mathcal{C}^{\infty }\left( \mathbb{R}^{n}\right) $ is
dense in $\mathcal{B}_{k\left( k^{\prime }\right) }^{\infty }\left( \mathbb{R%
}^{n}\right) $.

$\left( \mathtt{b}\right) $ If $1/k\in L^{2}\left( \mathbb{R}^{n}\right) $,
then $\mathcal{BC}^{\infty }\left( \mathbb{R}^{n}\right) $ is dense in $%
\mathcal{B}_{k\left( k^{\prime }\right) }^{\infty }\left( \mathbb{R}%
^{n}\right) $.
\end{corollary}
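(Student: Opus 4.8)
The plan is to mollify, using the family $\varphi_\varepsilon$ introduced just before the previous lemma, and to quote two facts already in hand: convolution with a function in $\mathcal{C}_0^\infty(\mathbb{R}^n)$ maps $\mathcal{B}_k^\infty(\mathbb{R}^n)$ boundedly into itself (Lemma \ref{kh8}), and for every $u\in\mathcal{B}_k^\infty(\mathbb{R}^n)$ one has $\varphi_\varepsilon\ast u\to u$ in $\mathcal{B}_{k'}^\infty(\mathbb{R}^n)$ as $\varepsilon\to 0$ (the preceding lemma). Recall that $\mathcal{B}_{k(k')}^\infty(\mathbb{R}^n)$ is, by definition, the space $\mathcal{B}_k^\infty(\mathbb{R}^n)$ equipped with the norm $\left\Vert\cdot\right\Vert_{k',\infty}$, so convergence there is precisely $\left\Vert\cdot\right\Vert_{k',\infty}$-convergence.

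For part $(\mathtt{a})$, fix $u\in\mathcal{B}_k^\infty(\mathbb{R}^n)$. Since $\varphi_\varepsilon\in\mathcal{C}_0^\infty(\mathbb{R}^n)$ and $u\in\mathcal{S}'(\mathbb{R}^n)$, the convolution $\varphi_\varepsilon\ast u$ is a $\mathcal{C}^\infty$ function; by Lemma \ref{kh8} it also belongs to $\mathcal{B}_k^\infty(\mathbb{R}^n)$, hence to $\mathcal{B}_k^\infty(\mathbb{R}^n)\cap\mathcal{C}^\infty(\mathbb{R}^n)$. By the preceding lemma, $\varphi_\varepsilon\ast u\to u$ in $\mathcal{B}_{k'}^\infty(\mathbb{R}^n)$, i.e. in $\mathcal{B}_{k(k')}^\infty(\mathbb{R}^n)$, which gives the density asserted in $(\mathtt{a})$.

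For part $(\mathtt{b})$, assume in addition $1/k\in L^2(\mathbb{R}^n)$, and keep $u$ and $\varphi_\varepsilon\ast u$ as above. The extra observation is that now $\varphi_\varepsilon\ast u\in\mathcal{BC}^\infty(\mathbb{R}^n)$: for each multi-index $\alpha$ we have $\partial^\alpha(\varphi_\varepsilon\ast u)=(\partial^\alpha\varphi_\varepsilon)\ast u$ with $\partial^\alpha\varphi_\varepsilon\in\mathcal{C}_0^\infty(\mathbb{R}^n)$, so Lemma \ref{kh8} gives $\partial^\alpha(\varphi_\varepsilon\ast u)\in\mathcal{B}_k^\infty(\mathbb{R}^n)$, and then Proposition \ref{ks15}$(\mathtt{g})$ yields $\partial^\alpha(\varphi_\varepsilon\ast u)\in\mathcal{BC}(\mathbb{R}^n)$; since $\alpha$ was arbitrary, $\varphi_\varepsilon\ast u\in\mathcal{BC}^\infty(\mathbb{R}^n)$. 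Again $\varphi_\varepsilon\ast u\to u$ in $\mathcal{B}_{k(k')}^\infty(\mathbb{R}^n)$ by the preceding lemma, so $\mathcal{BC}^\infty(\mathbb{R}^n)$ is dense.

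I do not expect a real obstacle here: all the analytic content has already been absorbed into Lemma \ref{kh8}, the mollification lemma, and Proposition \ref{ks15}$(\mathtt{g})$. The only points requiring (routine) care are the standard facts that the convolution of a tempered distribution with a test function is smooth and that differentiation commutes with convolution; for $(\mathtt{b})$ the single additional remark is that every derivative of $\varphi_\varepsilon\ast u$ again lies in $\mathcal{B}_k^\infty(\mathbb{R}^n)$, which is immediate from Lemma \ref{kh8}.
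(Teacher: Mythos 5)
Your proposal is correct and follows essentially the same route as the paper: mollify with $\varphi _{\varepsilon }$, use Lemma \ref{kh8} for membership in $\mathcal{B}_{k}^{\infty }$ and the preceding lemma for convergence in the $\left\Vert \cdot \right\Vert _{k^{\prime },\infty }$-norm. The only (cosmetic) difference is in part $\left( \mathtt{b}\right) $, where the paper first notes $\mathcal{B}_{k}^{\infty }\subset \mathcal{BC}$ and then uses $\varphi _{\varepsilon }\ast \mathcal{BC}\subset \mathcal{BC}^{\infty }$ directly, while you apply Lemma \ref{kh8} and Proposition \ref{ks15} $\left( \mathtt{g}\right) $ derivative by derivative; both verifications are equally valid.
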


\begin{proof}
$\left( \mathtt{b}\right) $ If $1/k\in L^{2}$, then $\mathcal{B}_{k}^{\infty
}\left( \mathbb{R}^{n}\right) \subset \mathcal{BC}\left( \mathbb{R}%
^{n}\right) $. Therefore, $\varphi _{\varepsilon }\ast \mathcal{B}%
_{k}^{\infty }\left( \mathbb{R}^{n}\right) \subset \varphi _{\varepsilon
}\ast \mathcal{BC}\left( \mathbb{R}^{n}\right) \subset \mathcal{BC}^{\infty
}\left( \mathbb{R}^{n}\right) $.
\end{proof}

\begin{theorem}[Wiener-L\'{e}vy for $\mathcal{B}_{k}^{\infty }$, weak form]
\label{ks11}Let $\mathit{\Omega =\mathring{\Omega}}\subset \mathbb{C}^{d}$
and $\mathit{\Phi }:\mathit{\Omega }\rightarrow \mathbb{C}$ a holomorphic
function. Let $\left( k,k^{\prime }\right) \in \mathcal{K}_{pol}\left( 
\mathbb{R}^{n}\right) \times \mathcal{K}_{a}\left( \mathbb{R}^{n}\right) $
be such that 
\begin{equation*}
\frac{k^{\prime }\left( \xi \right) }{k\left( \xi \right) }\rightarrow
0,\quad \xi \rightarrow \infty
\end{equation*}%
and $1/k^{\prime }\in L^{2}\left( \mathbb{R}^{n}\right) $.

$\left( \mathtt{a}\right) $ If $u=\left( u_{1},...,u_{d}\right) \in \mathcal{%
B}_{k}^{\infty }\left( \mathbb{R}^{n}\right) ^{d}$ satisfies the condition $%
\overline{u\left( \mathbb{R}^{n}\right) }\subset \mathit{\Omega }$, then 
\begin{equation*}
\mathit{\Phi }\circ u\equiv \mathit{\Phi }\left( u\right) \in \mathcal{B}%
_{k^{\prime }}^{\infty }\left( \mathbb{R}^{n}\right) .
\end{equation*}

$\left( \mathtt{b}\right) $ If $u$, $u_{\varepsilon }\in \mathcal{B}%
_{k}^{\infty }\left( \mathbb{R}^{n}\right) ^{d}$, $0<\varepsilon \leq 1$, $%
\overline{u\left( \mathbb{R}^{n}\right) }\subset \mathit{\Omega }$ and $%
u_{\varepsilon }\rightarrow u$ in $\mathcal{B}_{k^{\prime }}^{\infty }\left( 
\mathbb{R}^{n}\right) ^{d}$ as $\varepsilon \rightarrow 0$, then there is $%
\varepsilon _{0}\in \left( 0,1\right] $ such that $\overline{u_{\varepsilon
}\left( \mathbb{R}^{n}\right) }\subset \mathit{\Omega }$ for every $%
0<\varepsilon \leq \varepsilon _{0}$ and $\mathit{\Phi }\left(
u_{\varepsilon }\right) \rightarrow \mathit{\Phi }\left( u\right) $ in $%
\mathcal{B}_{k^{\prime }}^{\infty }\left( \mathbb{R}^{n}\right) $ as $%
\varepsilon \rightarrow 0$.
\end{theorem}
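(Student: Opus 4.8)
The strategy is to reduce the multi-variable holomorphic functional calculus to the one already available through the Calderón-type integral representation, which I expect to be introduced between this statement and its proof. The key structural fact I would exploit is that $\mathcal{B}_k^\infty(\mathbb{R}^n)$ is a Banach algebra (by the Corollary preceding the definition of $\mathcal{K}_a$), that it admits an approximate identity via mollification $\varphi_\varepsilon\ast u\to u$ in the weaker norm $\left\Vert\cdot\right\Vert_{k',\infty}$ (the Lemma just before Corollary~\ref{ks10}), and that on the dense subalgebra $\mathcal{B}_k^\infty\cap\mathcal{C}^\infty$ the composition $\mathit{\Phi}\circ u$ can be computed pointwise in the classical sense and its smoothness controlled. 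Since $1/k'\in L^2$, Proposition~\ref{ks15}$(\mathtt{g})$ gives $\mathcal{B}_{k'}^\infty\subset\mathcal{BC}$, so all elements involved are genuinely bounded continuous functions and the hypothesis $\overline{u(\mathbb{R}^n)}\subset\mathit{\Omega}$ makes sense and is preserved under small perturbations in the sup-norm.

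For part $(\mathtt{a})$, I would first treat the case where each $u_j$ is smooth, say $u_j\in\mathcal{B}_k^\infty\cap\mathcal{C}^\infty$. Writing $\mathit{\Phi}(u)$ and using a Cauchy-type integral formula for holomorphic functions of several variables (or the Calderón reproducing formula the authors announce), one expresses $\mathit{\Phi}(u)-\mathit{\Phi}(c)$ for a base point $c$ as an absolutely convergent integral (or series) of terms built from products of the $u_j-c_j$ inside $\mathcal{B}_k^\infty$, using the Banach-algebra estimate $\left\Vert v_1 v_2\right\Vert_{k,\infty,\chi}\leq Cst\left\Vert v_1\right\Vert_{k,\infty,\chi}\left\Vert v_2\right\Vert_{k,\infty,\chi}$ repeatedly and summing a geometric-type bound that converges because the relevant quantities stay in a compact subset of $\mathit{\Omega}$. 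The output of this representation lands a priori in $\mathcal{B}_k^\infty$ but, crucially, the $L^2$-integrability of $1/k'$ together with the decay $k'/k\to 0$ lets one upgrade the convergence to the $\mathcal{B}_{k'}^\infty$ topology, where the remainder terms are summable even without smoothness of $u$. Then for general $u\in(\mathcal{B}_k^\infty)^d$, I would approximate by $u^{(\varepsilon)}=\varphi_\varepsilon\ast u$, note $\overline{u^{(\varepsilon)}(\mathbb{R}^n)}$ stays inside $\mathit{\Omega}$ for small $\varepsilon$ since $u^{(\varepsilon)}\to u$ uniformly, apply the smooth case, and pass to the limit using continuity of the functional calculus in the $\mathcal{B}_{k'}^\infty$ norm — which is exactly part $(\mathtt{b})$ applied to the particular sequence $u^{(\varepsilon)}$.

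For part $(\mathtt{b})$: since $u_\varepsilon\to u$ in $\mathcal{B}_{k'}^\infty\subset\mathcal{BC}$, the ranges $\overline{u_\varepsilon(\mathbb{R}^n)}$ are eventually contained in any fixed neighborhood of $\overline{u(\mathbb{R}^n)}$ in $\mathbb{C}^d$; choosing a compact $L$ with $\overline{u(\mathbb{R}^n)}\subset\mathring{L}\subset L\subset\mathit{\Omega}$ yields $\varepsilon_0$. On $L$, $\mathit{\Phi}$ and all its derivatives are bounded, and a first-order Taylor expansion gives $\mathit{\Phi}(u_\varepsilon)-\mathit{\Phi}(u)=\sum_{j=1}^d(u_{\varepsilon,j}-u_j)\,G_{j,\varepsilon}$, where $G_{j,\varepsilon}$ is an integral over $[0,1]$ of $\partial_j\mathit{\Phi}$ evaluated along the segment joining $u$ to $u_\varepsilon$; part $(\mathtt{a})$ (or its proof) shows each $G_{j,\varepsilon}\in\mathcal{B}_{k'}^\infty$ with norm bounded uniformly in $\varepsilon$, and then the algebra/ideal property forces $\left\Vert\mathit{\Phi}(u_\varepsilon)-\mathit{\Phi}(u)\right\Vert_{k',\infty}\leq Cst\sum_j\left\Vert u_{\varepsilon,j}-u_j\right\Vert_{k',\infty}\to 0$. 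The main obstacle I anticipate is making the holomorphic functional calculus for the \emph{non-smooth} general element well-defined and landing in the correct space: one cannot directly compose a distribution with $\mathit{\Phi}$, so the whole argument must route through the smooth dense subalgebra and the Calderón representation, and the delicate point is verifying that the representation's remainder series converges in the $\mathcal{B}_{k'}^\infty$ norm uniformly, which is precisely where the hypotheses $k'/k\to 0$ and $1/k'\in L^2$ enter and must be used with care.
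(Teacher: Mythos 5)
Your overall orientation is right (smooth approximation, a Cauchy/Calder\'on integral, the Banach algebra structure of $\mathcal{B}_{k'}^{\infty}$, and $1/k'\in L^{2}$ giving $\mathcal{B}_{k'}^{\infty}\subset\mathcal{BC}$), but the mechanism you describe for part $(\mathtt{a})$ does not work as stated, and the missing piece is precisely the point of Calder\'on's formula. Expanding $\mathit{\Phi}(u)-\mathit{\Phi}(c)$ around a \emph{fixed} base point $c$ as a geometric-type series of products of the $u_j-c_j$ converges only if $\overline{u(\mathbb{R}^n)}$ lies in a polydisc centered at $c$ and contained in $\mathit{\Omega}$; for a general compact range this fails, and no single base point will do. The paper's resolution is to take $r=\mathtt{dist}(\overline{u(\mathbb{R}^n)},\mathbb{C}^d\smallsetminus\mathit{\Omega})/8$, use the density of $\mathcal{BC}^{\infty}$ in $\mathcal{B}_{k(k')}^{\infty}$ (Corollary \ref{ks10}, which is where $k'/k\to0$ and $1/k'\in L^{2}$ enter) to pick $v\in\mathcal{BC}^{\infty}(\mathbb{R}^n)^d$ with $\left\vert\left\vert\left\vert u-v\right\vert\right\vert\right\vert_{k',\infty}<r/C$, and then write
\begin{equation*}
\mathit{\Phi}(u)=\frac{1}{(2\pi\mathtt{i})^{d}}\int_{\Gamma(r)}\frac{\mathit{\Phi}(\zeta+v)}{(\zeta_{1}+v_{1}-u_{1})\cdots(\zeta_{d}+v_{d}-u_{d})}\,\mathtt{d}\zeta ,
\end{equation*}
where the smooth approximant $v$ plays the role of a base point that varies with $x$: the numerator $\mathit{\Phi}(\zeta+v)$ lies in $\mathcal{BC}^{m_{k'}}\subset\mathcal{B}_{k'}^{\infty}$ because $v$ is smooth and bounded with range well inside $\mathit{\Omega}$, while each factor $(\zeta_j+v_j-u_j)^{-1}$ exists in the Banach algebra $\mathcal{B}_{k'}^{\infty}$ by a Neumann series, since $\Vert u_j-v_j\Vert_{k',\infty}<r$ and $\vert\zeta_j\vert=3r$. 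The identity $h(x)=\mathit{\Phi}(u(x))$ then follows by applying the evaluation functional $\delta_x$ and the scalar Cauchy formula. Without this "moving base point'' device your series has no reason to converge, and this is the idea your sketch does not supply.

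There is also a circularity in your logical structure: you prove $(\mathtt{a})$ for general $u$ by invoking $(\mathtt{b})$ for the mollified family $\varphi_{\varepsilon}\ast u$, but your proof of $(\mathtt{b})$ bounds the Taylor remainders $G_{j,\varepsilon}=\int_{0}^{1}\partial_{j}\mathit{\Phi}(u+t(u_{\varepsilon}-u))\,\mathtt{d}t$ by applying $(\mathtt{a})$ to the non-smooth element $u$ itself. This could be repaired (e.g.\ by running the Taylor argument between two smooth approximants to get a Cauchy sequence, then identifying the limit with $\mathit{\Phi}\circ u$ via uniform convergence), but the paper avoids the issue entirely: the Calder\'on formula handles general $u$ in a single step, and part $(\mathtt{b})$ follows by substituting $u_{\varepsilon}$ for $u$ in the same integral (with the same $v$, after noting $\left\vert\left\vert\left\vert v-u_{\varepsilon}\right\vert\right\vert\right\vert_{k',\infty}<2r<3r$) and passing to the limit in $\mathcal{B}_{k'}^{\infty}$ under the integral sign.
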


\begin{proof}
On $\mathbb{C}^{d}$ we shall consider the distance given by the norm 
\begin{equation*}
\left\vert z\right\vert _{\infty }=\max \left\{ \left\vert z_{1}\right\vert
,...,\left\vert z_{d}\right\vert \right\} ,\quad z\in \mathbb{C}^{d}.
\end{equation*}%
Let $r=\mathtt{dist}\left( \overline{u\left( \mathbb{R}^{n}\right) },\mathbb{%
C}^{d}\smallsetminus \mathit{\Omega }\right) /8$. Since $\overline{u\left( 
\mathbb{R}^{n}\right) }\subset \mathit{\Omega }$ it follows that $r>0$ and 
\begin{equation*}
\bigcup_{y\in \overline{u\left( \mathbb{R}^{n}\right) }}\overline{B\left(
y;4r\right) }\subset \mathit{\Omega }.
\end{equation*}%
On $\mathcal{B}_{k^{\prime }}^{\infty }\left( \mathbb{R}^{n}\right) ^{d}$ we
shall consider the norm 
\begin{equation*}
\left\vert \left\vert \left\vert u\right\vert \right\vert \right\vert
_{k^{\prime },\infty }=\max \left\{ \left\Vert u_{1}\right\Vert _{k^{\prime
},\infty },...,\left\Vert u_{d}\right\Vert _{k^{\prime },\infty }\right\}
,\quad u\in \mathcal{B}_{k^{\prime }}^{\infty }\left( \mathbb{R}^{n}\right)
^{d},
\end{equation*}%
where $\left\Vert \cdot \right\Vert _{k^{\prime },\infty }$ is a fixed
Banach algebra norm on $\mathcal{B}_{k^{\prime }}^{\infty }\left( \mathbb{R}%
^{n}\right) $, and on $\mathcal{BC}\left( \mathbb{R}^{n}\right) ^{d}$ we
shall consider the norm 
\begin{equation*}
\left\vert \left\vert \left\vert u\right\vert \right\vert \right\vert
_{\infty }=\max \left\{ \left\Vert u_{1}\right\Vert _{\infty
},...,\left\Vert u_{d}\right\Vert _{\infty }\right\} ,\quad u\in \mathcal{BC}%
\left( \mathbb{R}^{n}\right) ^{d}.
\end{equation*}%
Since $\mathcal{B}_{k^{\prime }}^{\infty }\left( \mathbb{R}^{n}\right)
\subset \mathcal{BC}\left( \mathbb{R}^{n}\right) $ there is $C\geq 1$ so
that 
\begin{equation*}
\left\Vert \cdot \right\Vert _{\infty }\leq C\left\Vert \cdot \right\Vert
_{k^{\prime },\infty }.
\end{equation*}%
According to Corollary \ref{ks10} $\mathcal{BC}^{\infty }\left( \mathbb{R}%
^{n}\right) $ is dense in $\mathcal{B}_{k\left( k^{\prime }\right) }^{\infty
}\left( \mathbb{R}^{n}\right) $. Therefore we find $v=\left(
v_{1},...,v_{d}\right) \in \mathcal{BC}^{\infty }\left( \mathbb{R}%
^{n}\right) ^{d}$ so that $\left\vert \left\vert \left\vert u-v\right\vert
\right\vert \right\vert _{k^{\prime },\infty }<r/C$. Then 
\begin{equation*}
\left\vert \left\vert \left\vert u-v\right\vert \right\vert \right\vert
_{\infty }\leq C\left\vert \left\vert \left\vert u-v\right\vert \right\vert
\right\vert _{k^{\prime },\infty }<r.
\end{equation*}%
Using the last estimate we show that $\overline{v\left( \mathbb{R}%
^{n}\right) }\subset \bigcup_{x\in \mathbb{R}^{n}}B\left( u\left( x\right)
;r\right) $. Indeed, if $z\in \overline{v\left( \mathbb{R}^{n}\right) }$,
then there is $x\in \mathbb{R}^{n}$ such that 
\begin{equation*}
\left\vert z-v\left( x\right) \right\vert _{\infty }<r-\left\vert \left\vert
\left\vert v-u\right\vert \right\vert \right\vert _{\infty }.
\end{equation*}%
It follows that 
\begin{eqnarray*}
\left\vert z-u\left( x\right) \right\vert _{\infty } &\leq &\left\vert
z-v\left( x\right) \right\vert _{\infty }+\left\vert v\left( x\right)
-u\left( x\right) \right\vert _{\infty } \\
&\leq &\left\vert z-v\left( x\right) \right\vert _{\infty }+\left\vert
\left\vert \left\vert v-u\right\vert \right\vert \right\vert _{\infty } \\
&<&r-\left\vert \left\vert \left\vert v-u\right\vert \right\vert \right\vert
_{\infty }+\left\vert \left\vert \left\vert v-u\right\vert \right\vert
\right\vert _{\infty }=r
\end{eqnarray*}%
so $z\in B\left( u\left( x\right) ;r\right) $.

From $\overline{v\left( \mathbb{R}^{n}\right) }\subset \bigcup_{x\in \mathbb{%
R}^{n}}B\left( u\left( x\right) ;r\right) $ we get 
\begin{equation*}
\overline{v\left( \mathbb{R}^{n}\right) }+\overline{B\left( 0;3r\right) }%
\subset \bigcup_{x\in \mathbb{R}^{n}}B\left( u\left( x\right) ;4r\right)
\subset \mathit{\Omega },
\end{equation*}%
hence the map%
\begin{equation*}
\mathbb{R}^{n}\times \overline{B\left( 0;3r\right) }\ni \left( x,\zeta
\right) \rightarrow \mathit{\Phi }\left( v\left( x\right) +\zeta \right) \in 
\mathbb{C}
\end{equation*}%
is well defined. Let $\Gamma \left( r\right) $ denote the polydisc $\left(
\partial \mathbb{D}\left( 0,3r\right) \right) ^{d}$. Since $\overline{%
v\left( \mathbb{R}^{n}\right) }+\Gamma \left( r\right) \subset \mathit{%
\Omega }$ is a compact subset, the map 
\begin{equation*}
\Gamma \left( r\right) \ni \zeta \rightarrow \mathit{\Phi }\left( \zeta
+v\right) \in \mathcal{BC}^{m_{k^{\prime }}}\left( \mathbb{R}^{n}\right)
\subset \mathcal{B}_{k^{\prime }}^{\infty }\left( \mathbb{R}^{n}\right)
\end{equation*}%
is continuous. On the other hand we have 
\begin{equation*}
\left( \zeta _{1}+v_{1}-u_{1}\right) ^{-1},...,\left( \zeta
_{d}+v_{d}-u_{d}\right) ^{-1}\in \mathcal{B}_{k^{\prime }}^{\infty }\left( 
\mathbb{R}^{n}\right)
\end{equation*}%
because $\left\Vert u_{1}-v_{1}\right\Vert _{k^{\prime },\infty
},...,\left\Vert u_{d}-v_{d}\right\Vert _{k^{\prime },\infty }<r/C\leq r$
and $\left\vert \zeta _{1}\right\vert =...=\left\vert \zeta _{d}\right\vert
=3r$. It follows that the integral 
\begin{equation}
h=\frac{1}{\left( 2\pi \mathtt{i}\right) ^{d}}\int_{\Gamma \left( r\right) }%
\frac{\mathit{\Phi }\left( \zeta +v\right) }{\left( \zeta
_{1}+v_{1}-u_{1}\right) ...\left( \zeta _{d}+v_{d}-u_{d}\right) }\mathtt{d}%
\zeta  \label{ksc}
\end{equation}%
defines an element $h\in \mathcal{B}_{k^{\prime }}^{\infty }\left( \mathbb{R}%
^{n}\right) $.

Let 
\begin{equation*}
\delta _{x}:\mathcal{B}_{k^{\prime }}^{\infty }\left( \mathbb{R}^{n}\right)
\subset \mathcal{BC}\left( \mathbb{R}^{n}\right) \rightarrow \mathbb{C}%
,\quad w\rightarrow w\left( x\right) ,
\end{equation*}%
be the evaluation functional at $x\in \mathbb{R}^{n}$. Then 
\begin{eqnarray*}
h\left( x\right) &=&\frac{1}{\left( 2\pi \mathtt{i}\right) ^{d}}\int_{\Gamma
\left( r\right) }\frac{\mathit{\Phi }\left( \zeta +v\left( x\right) \right) 
}{\left( \zeta _{1}-\left( u_{1}\left( x\right) -v_{1}\left( x\right)
\right) \right) ...\left( \zeta _{d}-\left( u_{d}\left( x\right)
-v_{d}\left( x\right) \right) \right) }\mathtt{d}\zeta \\
&=&\mathit{\Phi }\left( \zeta +v\left( x\right) \right) |_{\zeta =u\left(
x\right) -v\left( x\right) }=\mathit{\Phi }\left( u\left( x\right) \right)
\end{eqnarray*}%
because $\left\vert u\left( x\right) -v\left( x\right) \right\vert _{\infty
}\leq \left\vert \left\vert \left\vert u-v\right\vert \right\vert
\right\vert _{\infty }<r$, so $u\left( x\right) -v\left( x\right) $ is
within polydisc $\Gamma \left( r\right) $. Hence $h=\mathit{\Phi }\circ
u\equiv \mathit{\Phi }\left( u\right) \in \mathcal{B}_{k^{\prime }}^{\infty
}\left( \mathbb{R}^{n}\right) $.

$\left( \mathtt{b}\right) $ Let $\varepsilon _{0}\in \left( 0,1\right] $ be
such that for any $0<\varepsilon \leq \varepsilon _{0}$ we have%
\begin{equation*}
\left\vert \left\vert \left\vert u-u_{\varepsilon }\right\vert \right\vert
\right\vert _{k^{\prime },\infty }<r/C.
\end{equation*}%
Then $\left\vert \left\vert \left\vert u-u_{\varepsilon }\right\vert
\right\vert \right\vert _{\infty }\leq C\left\vert \left\vert \left\vert
u-u_{\varepsilon }\right\vert \right\vert \right\vert _{k^{\prime },\infty
}<r$ and $\overline{u_{\varepsilon }\left( \mathbb{R}^{n}\right) }\subset
\bigcup_{x\in \mathbb{R}^{n}}B\left( u\left( x\right) ;r\right) \subset 
\mathit{\Omega }$ for every $0<\varepsilon \leq \varepsilon _{0}$. On the
other hand we have $\left\vert \left\vert \left\vert v-u_{\varepsilon
}\right\vert \right\vert \right\vert _{k^{\prime },\infty }\leq \left\vert
\left\vert \left\vert v-u\right\vert \right\vert \right\vert _{k^{\prime
},\infty }+\left\vert \left\vert \left\vert u-u_{\varepsilon }\right\vert
\right\vert \right\vert _{k^{\prime },\infty }<r/C+r/C\leq 2r$. It follows
that 
\begin{equation*}
\left( \zeta _{1}+v_{1}-u_{\varepsilon 1}\right) ^{-1},...,\left( \zeta
_{d}+v_{d}-u_{\varepsilon d}\right) ^{-1}\in \mathcal{B}_{k^{\prime
}}^{\infty }\left( \mathbb{R}^{n}\right)
\end{equation*}%
because $\left\Vert u_{\varepsilon 1}-v_{1}\right\Vert _{k^{\prime },\infty
},...,\left\Vert u_{\varepsilon d}-v_{d}\right\Vert _{k^{\prime },\infty
}<2r $ and $\left\vert \zeta _{1}\right\vert =...=\left\vert \zeta
_{d}\right\vert =3r$. We obtain that 
\begin{eqnarray*}
\mathit{\Phi }\left( u_{\varepsilon }\right) &=&\frac{1}{\left( 2\pi \mathtt{%
i}\right) ^{d}}\int_{\Gamma \left( r\right) }\frac{\mathit{\Phi }\left(
\zeta +v\right) }{\left( \zeta _{1}+v_{1}-u_{\varepsilon 1}\right) ...\left(
\zeta _{d}+v_{d}-u_{\varepsilon d}\right) }\mathtt{d}\zeta \\
&\rightarrow &\frac{1}{\left( 2\pi \mathtt{i}\right) ^{d}}\int_{\Gamma
\left( r\right) }\frac{\mathit{\Phi }\left( \zeta +v\right) }{\left( \zeta
_{1}+v_{1}-u_{1}\right) ...\left( \zeta _{d}+v_{d}-u_{d}\right) }\mathtt{d}%
\zeta =\mathit{\Phi }\left( u\right) ,\quad \text{\textit{as} }\varepsilon
\rightarrow 0.
\end{eqnarray*}
\end{proof}

\begin{remark}
According to Coquand and Stolzenberg \cite{Coquand}, this type of
representation formula, $($\ref{ksc}$)$, was introduced more than 60 years
ago by A. P. Calder\'{o}n.
\end{remark}

In order to reduce the number of weight functions occurring in Theorem \ref%
{ks11} from two to one, $k$ should verify some additional conditions. To
find them we need some auxiliary results that we shall present below.

For $k\in \mathcal{K}_{pol}\left( \mathbb{R}^{n}\right) $ we shall denote by 
$\underline{k}$ the weight function $\left\langle \cdot \right\rangle ^{-1}k$%
, i.e. $\underline{k}=\left\langle \cdot \right\rangle ^{-1}k\in \mathcal{K}%
_{pol}\left( \mathbb{R}^{n}\right) $. Since 
\begin{equation*}
k^{2}\left( \xi \right) =\left\langle \xi \right\rangle ^{2}\underline{k}%
^{2}\left( \xi \right) =\left( 1+\sum_{j=1}^{n}\xi _{j}^{2}\right) 
\underline{k}^{2}\left( \xi \right)
\end{equation*}%
we have 
\begin{equation*}
\left\Vert u\right\Vert _{\mathcal{B}_{k}}^{2}=\left\Vert u\right\Vert _{%
\mathcal{B}_{\underline{k}}}^{2}+\sum_{j=1}^{n}\left\Vert \partial
_{j}u\right\Vert _{\mathcal{B}_{\underline{k}}}^{2}
\end{equation*}%
hence $u\in \mathcal{B}_{k}\left( \mathbb{R}^{n}\right) $ if and only if $%
u,\partial _{1}u,...,\partial _{n}u\in \mathcal{B}_{\underline{k}}\left( 
\mathbb{R}^{n}\right) $.

The same is true for the spaces $\mathcal{B}_{k}^{p}\left( \mathbb{R}%
^{n}\right) $. Assume that $u\in \mathcal{B}_{k}^{p}\left( \mathbb{R}%
^{n}\right) $. Let $\chi ,\chi _{0}\in \mathcal{C}_{0}^{\infty }\left( 
\mathbb{R}^{n}\right) \smallsetminus 0$ be such that $\chi _{0}=1$ on a
neighborhood of \texttt{supp}$\chi $. Then 
\begin{equation*}
\left( \tau _{y}\chi \right) \partial _{j}u=\left( \tau _{y}\chi \right)
\partial _{j}\left( \left( \tau _{y}\chi _{0}\right) u\right)
\end{equation*}%
and%
\begin{eqnarray*}
\left\Vert \left( \tau _{y}\chi \right) \partial _{j}u\right\Vert _{\mathcal{%
B}_{\underline{k}}} &=&\left\Vert \left( \tau _{y}\chi \right) \partial
_{j}\left( \left( \tau _{y}\chi _{0}\right) u\right) \right\Vert _{\mathcal{B%
}_{\underline{k}}} \\
&\leq &Cst\left\Vert \chi \right\Vert _{\mathcal{BC}^{m_{k}+1}}\left\Vert
\partial _{j}\left( \left( \tau _{y}\chi _{0}\right) u\right) \right\Vert _{%
\mathcal{B}_{\underline{k}}} \\
&\leq &Cst\left\Vert \chi \right\Vert _{\mathcal{BC}^{m_{k}+1}}\left\Vert
\left( \tau _{y}\chi _{0}\right) u\right\Vert _{\mathcal{B}_{k}}
\end{eqnarray*}%
since $m_{\underline{k}}=m_{k}+1$. It follows that 
\begin{equation*}
\left\Vert \partial _{j}u\right\Vert _{\underline{k},p,\chi }\leq
Cst\left\Vert \chi \right\Vert _{\mathcal{BC}^{m_{k}+1}}\left\Vert
u\right\Vert _{k,p,\chi _{0}}.
\end{equation*}%
Suppose now that $u,\partial _{1}u,...,\partial _{n}u\in \mathcal{B}_{%
\underline{k}}^{p}\left( \mathbb{R}^{n}\right) $. Then 
\begin{eqnarray*}
\left\Vert \left( \tau _{y}\chi \right) u\right\Vert _{\mathcal{B}_{k}}^{2}
&=&\left\Vert \left( \tau _{y}\chi \right) u\right\Vert _{\mathcal{B}_{%
\underline{k}}}^{2}+\sum_{j=1}^{n}\left\Vert \partial _{j}\left( \left( \tau
_{y}\chi \right) u\right) \right\Vert _{\mathcal{B}_{\underline{k}}}^{2} \\
&=&\left\Vert \left( \tau _{y}\chi \right) u\right\Vert _{\mathcal{B}_{%
\underline{k}}}^{2}+\sum_{j=1}^{n}\left\Vert \left( \tau _{y}\chi \right)
\partial _{j}u+\left( \tau _{y}\partial _{j}\chi \right) u\right\Vert _{%
\mathcal{B}_{\underline{k}}}^{2} \\
&\leq &\left\Vert \left( \tau _{y}\chi \right) u\right\Vert _{\mathcal{B}_{%
\underline{k}}}^{2}+2\sum_{j=1}^{n}\left( \left\Vert \left( \tau _{y}\chi
\right) \partial _{j}u\right\Vert _{\mathcal{B}_{\underline{k}%
}}^{2}+\left\Vert \left( \tau _{y}\partial _{j}\chi \right) u\right\Vert _{%
\mathcal{B}_{\underline{k}}}^{2}\right)
\end{eqnarray*}%
and 
\begin{equation*}
\left\Vert \left( \tau _{y}\chi \right) u\right\Vert _{\mathcal{B}_{k}}\leq
\left\Vert \left( \tau _{y}\chi \right) u\right\Vert _{\mathcal{B}_{%
\underline{k}}}+\sqrt{2}\sum_{j=1}^{n}\left( \left\Vert \left( \tau _{y}\chi
\right) \partial _{j}u\right\Vert _{\mathcal{B}_{\underline{k}}}+\left\Vert
\left( \tau _{y}\partial _{j}\chi \right) u\right\Vert _{\mathcal{B}_{%
\underline{k}}}\right) .
\end{equation*}%
It follows that 
\begin{equation*}
\left\Vert u\right\Vert _{k,p,\chi }\leq \left\Vert u\right\Vert _{%
\underline{k},p,\chi }+\sqrt{2}\sum_{j=1}^{n}\left( \left\Vert \partial
_{j}u\right\Vert _{\underline{k},p,\chi }+\left\Vert u\right\Vert _{%
\underline{k},p,\partial _{j}\chi }\right) .
\end{equation*}

\noindent Thus we have proved the following result.

\begin{lemma}
\label{kh9}Let $k\in \mathcal{K}_{pol}\left( \mathbb{R}^{n}\right) $ and $%
\underline{k}=\left\langle \cdot \right\rangle ^{-1}k\in \mathcal{K}%
_{pol}\left( \mathbb{R}^{n}\right) $. Let $u\in \mathcal{D}^{\prime }\left( 
\mathbb{R}^{n}\right) $. Then $u\in \mathcal{B}_{k}^{p}\left( \mathbb{R}%
^{n}\right) $ if and only if $u,\partial _{1}u,...,\partial _{n}u\in 
\mathcal{B}_{\underline{k}}^{p}\left( \mathbb{R}^{n}\right) $. Moreover, if $%
\chi \in \mathcal{C}_{0}^{\infty }\left( \mathbb{R}^{n}\right)
\smallsetminus 0$, then there is $C\geq 1$ such that 
\begin{equation*}
C^{-1}\left\Vert u\right\Vert _{k,p,\chi }\leq \left\Vert u\right\Vert _{%
\underline{k},p,\chi }+\sum_{j=1}^{n}\left\Vert \partial _{j}u\right\Vert _{%
\underline{k},p,\chi }\leq C\left\Vert u\right\Vert _{k,p,\chi }.
\end{equation*}
\end{lemma}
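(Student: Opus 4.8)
The argument will rest on the elementary pointwise identity
\begin{equation*}
k^{2}\left( \xi \right) =\left\langle \xi \right\rangle ^{2}\underline{k}^{2}\left( \xi \right) =\left( 1+\left\vert \xi \right\vert ^{2}\right) \underline{k}^{2}\left( \xi \right) ,
\end{equation*}
which, since $\widehat{\partial _{j}w}\left( \xi \right) =\mathtt{i}\xi _{j}\widehat{w}\left( \xi \right) $, yields for every $w\in \mathcal{S}^{\prime }\left( \mathbb{R}^{n}\right) $ with $\widehat{w}$ a function the equality (in $\left[ 0,\infty \right] $)
\begin{equation*}
\left\Vert w\right\Vert _{\mathcal{B}_{k}}^{2}=\left\Vert w\right\Vert _{\mathcal{B}_{\underline{k}}}^{2}+\sum_{j=1}^{n}\left\Vert \partial _{j}w\right\Vert _{\mathcal{B}_{\underline{k}}}^{2}.
\end{equation*}
The plan is to use this together with the two consequences I read off from it, namely $\left\Vert \partial _{j}w\right\Vert _{\mathcal{B}_{\underline{k}}}\leq \left\Vert w\right\Vert _{\mathcal{B}_{k}}$ and $\left\Vert w\right\Vert _{\mathcal{B}_{\underline{k}}}\leq \left\Vert w\right\Vert _{\mathcal{B}_{k}}$ (the latter because $\left\langle \xi \right\rangle \geq 1$). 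The specialization $w=u$ is already the statement for $\mathcal{B}_{k}=\mathcal{B}_{k}^{2}$. I would also record at the outset that, multiplying (\ref{kh2}) by $\left\langle \xi +\eta \right\rangle ^{-1}\leq \sqrt{2}\left\langle \xi \right\rangle \left\langle \eta \right\rangle ^{-1}$, the weight $\underline{k}$ admits the defining exponent $N+1$, so that $m_{\underline{k}}=m_{k}+1$; this is the only numerical fact used below.

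To prove necessity, fix $\chi \in \mathcal{C}_{0}^{\infty }\left( \mathbb{R}^{n}\right) \smallsetminus 0$ and pick $\chi _{0}\in \mathcal{C}_{0}^{\infty }\left( \mathbb{R}^{n}\right) \smallsetminus 0$ equal to $1$ on a neighbourhood of $\mathtt{supp}\,\chi $. Then $\left( \tau _{y}\chi \right) \partial _{j}u=\left( \tau _{y}\chi \right) \partial _{j}\left( \left( \tau _{y}\chi _{0}\right) u\right) $, because the Leibniz remainder carries the factor $\tau _{y}\partial _{j}\chi _{0}$, which vanishes near $\mathtt{supp}\left( \tau _{y}\chi \right) $. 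I would apply the module property of Proposition \ref{ks4} for the weight $\underline{k}$ (order $m_{\underline{k}}=m_{k}+1$) to the multiplier $\tau _{y}\chi $, and then the bound $\left\Vert \partial _{j}w\right\Vert _{\mathcal{B}_{\underline{k}}}\leq \left\Vert w\right\Vert _{\mathcal{B}_{k}}$ with $w=\left( \tau _{y}\chi _{0}\right) u$, to get $\left\Vert \left( \tau _{y}\chi \right) \partial _{j}u\right\Vert _{\mathcal{B}_{\underline{k}}}\leq Cst\left\Vert \chi \right\Vert _{\mathcal{BC}^{m_{k}+1}}\left\Vert \left( \tau _{y}\chi _{0}\right) u\right\Vert _{\mathcal{B}_{k}}$. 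Taking the $L^{p}$-norm in $y$ (the functions of $y$ occurring here are measurable, as recorded earlier) and then exchanging the cut-off $\chi _{0}$ for $\chi $ by Proposition \ref{ks8}\,$\left( \mathtt{a}\right) $ produces $\left\Vert \partial _{j}u\right\Vert _{\underline{k},p,\chi }\leq Cst\left\Vert u\right\Vert _{k,p,\chi }$; combined with $\left\Vert u\right\Vert _{\underline{k},p,\chi }\leq \left\Vert u\right\Vert _{k,p,\chi }$ this is the right-hand inequality, and in particular $u,\partial _{1}u,\dots ,\partial _{n}u\in \mathcal{B}_{\underline{k}}^{p}\left( \mathbb{R}^{n}\right) $.

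For sufficiency, I would apply the identity above to $w=\left( \tau _{y}\chi \right) u\in \mathcal{E}^{\prime }\left( \mathbb{R}^{n}\right) $, expand $\partial _{j}\left( \left( \tau _{y}\chi \right) u\right) =\left( \tau _{y}\chi \right) \partial _{j}u+\left( \tau _{y}\partial _{j}\chi \right) u$, use $\sqrt{a^{2}+\sum_{j}b_{j}^{2}}\leq a+\sum_{j}\left\vert b_{j}\right\vert $, and then the subadditivity of $\left\Vert \cdot \right\Vert _{L^{p}}$ (with the evident $\sup $ reading when $p=\infty $) to reach
\begin{equation*}
\left\Vert u\right\Vert _{k,p,\chi }\leq \left\Vert u\right\Vert _{\underline{k},p,\chi }+\sqrt{2}\sum_{j=1}^{n}\left( \left\Vert \partial _{j}u\right\Vert _{\underline{k},p,\chi }+\left\Vert u\right\Vert _{\underline{k},p,\partial _{j}\chi }\right) .
\end{equation*}
Since Proposition \ref{ks8}\,$\left( \mathtt{a}\right) $ gives $\left\Vert u\right\Vert _{\underline{k},p,\partial _{j}\chi }\leq Cst\left\Vert u\right\Vert _{\underline{k},p,\chi }$ (trivially so when $\partial _{j}\chi \equiv 0$), the right-hand side is finite and is bounded by $Cst\left( \left\Vert u\right\Vert _{\underline{k},p,\chi }+\sum_{j=1}^{n}\left\Vert \partial _{j}u\right\Vert _{\underline{k},p,\chi }\right) $; this gives both $u\in \mathcal{B}_{k}^{p}\left( \mathbb{R}^{n}\right) $ and the left-hand inequality.

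I do not expect a genuinely hard step: once the displayed identity is available, the rest is bookkeeping with the already-established module property (Proposition \ref{ks4}) and cut-off-change estimates (Proposition \ref{ks8}). The two places I would be careful are the numerical fact $m_{\underline{k}}=m_{k}+1$, since getting the order of smoothness wrong would make Proposition \ref{ks4} inapplicable, and the localization identity $\left( \tau _{y}\chi \right) \partial _{j}u=\left( \tau _{y}\chi \right) \partial _{j}\left( \left( \tau _{y}\chi _{0}\right) u\right) $, i.e.\ checking that the whole derivative may be transferred onto the compactly supported piece $\left( \tau _{y}\chi _{0}\right) u$.
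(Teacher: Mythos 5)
Your proposal is correct and follows essentially the same route as the paper: the Plancherel identity $\left\Vert w\right\Vert _{\mathcal{B}_{k}}^{2}=\left\Vert w\right\Vert _{\mathcal{B}_{\underline{k}}}^{2}+\sum_{j}\left\Vert \partial _{j}w\right\Vert _{\mathcal{B}_{\underline{k}}}^{2}$, the localization $\left( \tau _{y}\chi \right) \partial _{j}u=\left( \tau _{y}\chi \right) \partial _{j}\left( \left( \tau _{y}\chi _{0}\right) u\right) $ with the module property at order $m_{\underline{k}}=m_{k}+1$ for necessity, and the Leibniz expansion of $\partial _{j}\left( \left( \tau _{y}\chi \right) u\right) $ plus the cut-off-change estimate of Proposition \ref{ks8} for sufficiency. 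Your explicit use of Proposition \ref{ks8}\,$\left( \mathtt{a}\right)$ to absorb $\left\Vert u\right\Vert _{\underline{k},p,\partial _{j}\chi }$ into $\left\Vert u\right\Vert _{\underline{k},p,\chi }$ is a small but welcome tidying of a step the paper leaves implicit.
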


\begin{lemma}
If $k\in \mathcal{K}_{pol}\left( \mathbb{R}^{n}\right) \cap L^{p}\left( 
\mathbb{R}^{n}\right) $, then $\lim_{\left\vert \xi \right\vert \rightarrow
\infty }k\left( \xi \right) =0$.
\end{lemma}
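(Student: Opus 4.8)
The plan is to use the \emph{slow variation} of a polynomial‑growth weight: by Remark (b), applied with a shift of length $\le 1$, $k$ is comparable to $k(\xi)$ on the whole unit ball centred at $\xi$, with a constant that does not depend on $\xi$. Explicitly, for $|h|\le 1$ one has $M_k(h)=C\langle h\rangle^N\le C\,2^{N/2}=:A$, so the two‑sided estimate of Remark (b) gives
\[
A^{-1}k(\xi)\ \le\ k(\xi+h)\ \le\ A\,k(\xi),\qquad \xi\in\mathbb{R}^n,\ |h|\le 1 .
\]
(The statement is understood for $1\le p<\infty$; for $p=\infty$ it fails, e.g. $k\equiv 1$.)

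Next I would integrate the lower bound over the unit ball. For every $\xi$,
\[
\int_{|x-\xi|\le 1}k(x)^p\,\mathtt{d}x=\int_{|h|\le 1}k(\xi+h)^p\,\mathtt{d}h\ \ge\ A^{-p}\,\bigl|B(0,1)\bigr|\,k(\xi)^p .
\]
Since $k\in L^p(\mathbb{R}^n)$ with $p<\infty$, the tail mass $\varepsilon(R):=\int_{|x|\ge R}k(x)^p\,\mathtt{d}x$ tends to $0$ as $R\to\infty$. If $|\xi|\ge R+1$, then $\{|x-\xi|\le 1\}\subset\{|x|\ge R\}$, so the left‑hand side above is $\le\varepsilon(R)$; hence
\[
k(\xi)^p\ \le\ A^{p}\,\bigl|B(0,1)\bigr|^{-1}\,\varepsilon(R),\qquad |\xi|\ge R+1 .
\]
Thus $\limsup_{|\xi|\to\infty}k(\xi)^p\le A^{p}|B(0,1)|^{-1}\varepsilon(R)$ for every $R$, and letting $R\to\infty$ yields $\lim_{|\xi|\to\infty}k(\xi)=0$.

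I do not anticipate a real obstacle. The only point that deserves attention is that the comparison constant $A$ in the slow‑variation estimate is \emph{uniform in $\xi$} --- which is exactly what membership of $k$ in $\mathcal{K}_{pol}(\mathbb{R}^n)$ provides via Remark (b) --- after which the conclusion is just the familiar fact that an $L^p$ function with $p<\infty$ has vanishing local $L^p$ mass near infinity.
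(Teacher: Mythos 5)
Your proof is correct and rests on exactly the same key estimate as the paper's, namely the lower bound $k(\xi+h)\geq C^{-1}\langle h\rangle^{-N}k(\xi)$ from the moderateness of $k$, which makes the $L^p$ mass of a unit ball around $\xi$ control $k(\xi)^p$; the paper runs the argument by contradiction (disjoint balls around a sequence where $k\geq\varepsilon_0$ force $\|k\|_{L^p}=\infty$), while you argue directly via the vanishing tail of $\int k^p$, which is a purely presentational difference and even yields a quantitative rate. Your remark that the statement requires $1\leq p<\infty$ is also accurate and implicit in the paper's proof.
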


\begin{proof}
We shall prove the result by contradiction. Suppose the statement is not
true. Then one can find a number $\varepsilon _{0}>0$ such that for each $%
R>0 $ there is a vector $\xi _{R}\in \mathbb{R}^{n}$ such that $\left\vert
\xi _{R}\right\vert \geq R$ and $k\left( \xi _{R}\right) \geq \varepsilon
_{0}$. Then, by recurrence, one can find a sequence $\left\{ \xi
_{m}\right\} $ in $\mathbb{R}^{n}$ such that $k\left( \xi _{m}\right) \geq
\varepsilon _{0}$ and $\left\vert \xi _{m}\right\vert \geq \left\vert \xi
_{m-1}\right\vert +3$. We have 
\begin{equation*}
k\left( \xi +\xi _{m}\right) \geq k\left( \xi _{m}\right) C^{-1}\left\langle
\xi \right\rangle ^{-N}\geq \varepsilon _{0}C^{-1}\left\langle \xi
\right\rangle ^{-N}
\end{equation*}%
The inequality $\left\vert \xi _{m}\right\vert \geq \left\vert \xi
_{m-1}\right\vert +3$ implies that $\overline{B\left( \xi _{m};1\right) }%
\cap $ $\overline{B\left( \xi _{m-1};1\right) }=\emptyset $. It follows that 
\begin{eqnarray*}
\int_{\left\vert \xi -\xi _{m}\right\vert \leq 1}k^{p}\left( \xi \right) 
\mathtt{d}\xi &=&\int_{\left\vert \xi \right\vert \leq 1}k^{p}\left( \xi
+\xi _{m}\right) \mathtt{d}\xi \geq \left( \varepsilon _{0}/C\right)
^{p}\int_{\left\vert \xi \right\vert \leq 1}\left\langle \xi \right\rangle
^{-pN}\mathtt{d}\xi \\
&=&\left( \varepsilon _{0}/C\right) ^{p}\left\Vert \left\langle \xi
\right\rangle ^{-N}\right\Vert _{L^{p}\left( B\right) }^{p}.
\end{eqnarray*}%
Then for each $l=1,2,...$we have 
\begin{equation*}
\left\Vert k\right\Vert _{L^{p}}^{p}\geq \int_{\bigcup_{1}^{l}B\left( \xi
_{m};1\right) }k^{p}\left( \xi \right) \mathtt{d}\xi \geq l\left(
\varepsilon _{0}/C\right) ^{p}\left\Vert \left\langle \xi \right\rangle
^{-N}\right\Vert _{L^{p}\left( B\right) }^{p}.
\end{equation*}%
This contradicts the assumption that $k\in L^{p}\left( \mathbb{R}^{n}\right) 
$.
\end{proof}

\begin{corollary}
\label{kh10}If $k\in \mathcal{K}_{pol}\left( \mathbb{R}^{n}\right) $ and $%
k^{-1}\in L^{p}\left( \mathbb{R}^{n}\right) $, then $\lim_{\left\vert \xi
\right\vert \rightarrow \infty }k\left( \xi \right) =\infty $.
\end{corollary}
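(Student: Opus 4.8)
The plan is to reduce this immediately to the previous lemma by passing from $k$ to its reciprocal. First I would invoke Lemma \ref{kh5} $\left( \mathtt{b}\right) $ with $s=-1$: since $k\in \mathcal{K}_{pol}\left( \mathbb{R}^{n}\right) $, we get $1/k=k^{-1}\in \mathcal{K}_{pol}\left( \mathbb{R}^{n}\right) $ as well. This is the only structural point that needs checking, and it is already recorded in the excerpt, so there is really no obstacle here.

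Next, by hypothesis $1/k\in L^{p}\left( \mathbb{R}^{n}\right) $, so the weight function $1/k$ lies in $\mathcal{K}_{pol}\left( \mathbb{R}^{n}\right) \cap L^{p}\left( \mathbb{R}^{n}\right) $. Applying the previous lemma to $1/k$ in place of $k$ yields
\begin{equation*}
\lim_{\left\vert \xi \right\vert \rightarrow \infty }\frac{1}{k\left( \xi \right) }=0 .
\end{equation*}

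Finally, since $k$ is a positive function, the statement $1/k\left( \xi \right) \rightarrow 0$ as $\left\vert \xi \right\vert \rightarrow \infty$ is equivalent to $k\left( \xi \right) \rightarrow \infty$ as $\left\vert \xi \right\vert \rightarrow \infty$, which is the desired conclusion. The proof requires no further computation; the whole content is the observation that $\mathcal{K}_{pol}$ is stable under $k\mapsto k^{-1}$, which lets the decay result for weights in $L^{p}$ be transported to a growth result.
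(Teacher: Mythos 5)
Your proposal is correct and follows exactly the route the paper intends: the corollary is stated without proof precisely because one applies the preceding lemma to $1/k$, which lies in $\mathcal{K}_{pol}\left( \mathbb{R}^{n}\right) \cap L^{p}\left( \mathbb{R}^{n}\right) $ by Lemma \ref{kh5} $\left( \mathtt{b}\right) $ and the hypothesis. The final step, converting $1/k\left( \xi \right) \rightarrow 0$ into $k\left( \xi \right) \rightarrow \infty $ using positivity of $k$, is also exactly right.
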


\begin{lemma}
\label{kh7}Let $k,k_{1},k_{2}\in \mathcal{K}_{pol}\left( \mathbb{R}%
^{n}\right) $ and $t_{0}<t_{1}$. Suppose that there are $C_{0},C_{1}>0$ such
that%
\begin{equation*}
k_{1}^{t_{j}}\ast k_{2}^{t_{j}}\leq C_{j}k^{t_{j}},\quad j=0,1.
\end{equation*}%
Then, for each $t\in \left[ t_{0},t_{1}\right] $, we have%
\begin{equation*}
k_{1}^{t}\ast k_{2}^{t}\leq C_{t}k^{t}
\end{equation*}%
where $C_{t}=C_{0}^{\frac{t_{1}-t}{t_{1}-t_{0}}},C_{1}^{\frac{t-t_{0}}{%
t_{1}-t_{0}}}\leq \max \left\{ C_{0},C_{1}\right\} $.
\end{lemma}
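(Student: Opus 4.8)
The plan is to recognise Lemma~\ref{kh7} as a log-convexity (interpolation) statement for the family $t\mapsto k_{1}^{t}\ast k_{2}^{t}$, and to prove it by a single application of H\"{o}lder's inequality inside the convolution integral. First I would dispose of the endpoints $t=t_{0}$ and $t=t_{1}$, where the assertion is exactly the hypothesis and the claimed constant $C_{t}$ reduces to $C_{0}$, resp.\ $C_{1}$. So fix $t\in\left( t_{0},t_{1}\right) $ and write $t=\left( 1-\theta\right) t_{0}+\theta t_{1}$ with $\theta=\frac{t-t_{0}}{t_{1}-t_{0}}\in\left( 0,1\right) $, so that $1-\theta=\frac{t_{1}-t}{t_{1}-t_{0}}$. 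Since $k_{1},k_{2}$ are positive, we have the pointwise identities $k_{1}^{t}=\left( k_{1}^{t_{0}}\right) ^{1-\theta}\left( k_{1}^{t_{1}}\right) ^{\theta}$ and similarly for $k_{2}$.

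The key step is to fix $\xi\in\mathbb{R}^{n}$, substitute these identities into
\[
\left( k_{1}^{t}\ast k_{2}^{t}\right) \left( \xi\right) =\int_{\mathbb{R}^{n}}k_{1}^{t}\left( \eta\right) k_{2}^{t}\left( \xi-\eta\right) \mathtt{d}\eta ,
\]
regroup the integrand as $\left( k_{1}^{t_{0}}\left( \eta\right) k_{2}^{t_{0}}\left( \xi-\eta\right) \right) ^{1-\theta}\left( k_{1}^{t_{1}}\left( \eta\right) k_{2}^{t_{1}}\left( \xi-\eta\right) \right) ^{\theta}$, and apply H\"{o}lder's inequality with the conjugate exponents $p=\frac{1}{1-\theta}$ and $q=\frac{1}{\theta}$. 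This gives
\[
\left( k_{1}^{t}\ast k_{2}^{t}\right) \left( \xi\right) \leq\left( \left( k_{1}^{t_{0}}\ast k_{2}^{t_{0}}\right) \left( \xi\right) \right) ^{1-\theta}\left( \left( k_{1}^{t_{1}}\ast k_{2}^{t_{1}}\right) \left( \xi\right) \right) ^{\theta}.
\]
Here one should note that the hypotheses $k_{1}^{t_{j}}\ast k_{2}^{t_{j}}\leq C_{j}k^{t_{j}}$ already guarantee that the two convolutions on the right are finite (recall that $k\in\mathcal{K}_{pol}\left( \mathbb{R}^{n}\right) $ is everywhere finite), so that H\"{o}lder's inequality applies legitimately and the manipulation above makes sense.

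Finally I would insert the hypotheses into the last display, bounding it by
\[
\left( C_{0}k^{t_{0}}\left( \xi\right) \right) ^{1-\theta}\left( C_{1}k^{t_{1}}\left( \xi\right) \right) ^{\theta}=C_{0}^{1-\theta}C_{1}^{\theta}\,k\left( \xi\right) ^{\left( 1-\theta\right) t_{0}+\theta t_{1}}=C_{t}\,k^{t}\left( \xi\right) ,
\]
with $C_{t}=C_{0}^{\left( t_{1}-t\right) /\left( t_{1}-t_{0}\right) }C_{1}^{\left( t-t_{0}\right) /\left( t_{1}-t_{0}\right) }$; being a weighted geometric mean of $C_{0}$ and $C_{1}$, this is at most $\max\left\{ C_{0},C_{1}\right\} $. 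There is no genuine obstacle in the argument: the only points requiring a little care are the bookkeeping of the exponents $\theta$ and $1-\theta$ and the verification that the right-hand convolutions are finite so that H\"{o}lder's inequality is valid, both of which are handled above. The proof is the exact analogue of the standard demonstration that convolution with a fixed kernel interpolates, in the log-convex sense, between two estimates.
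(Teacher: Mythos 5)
Your proof is correct and is essentially identical to the paper's: both decompose $k_i^{t}=\left(k_i^{t_0}\right)^{1-\theta}\left(k_i^{t_1}\right)^{\theta}$ with $\theta=\frac{t-t_0}{t_1-t_0}$ and apply H\"older's inequality inside the convolution integral with conjugate exponents $\frac{1}{1-\theta}$ and $\frac{1}{\theta}$ (the paper's $p_0,p_1$). Your extra remarks on the endpoint cases and on finiteness of the convolutions are harmless additions to the same argument.
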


\begin{proof}
Let us note that 
\begin{equation*}
t=\frac{t_{1}-t}{t_{1}-t_{0}}t_{0}+\frac{t-t_{0}}{t_{1}-t_{0}}t_{1}=\frac{%
t_{0}}{p_{0}}+\frac{t_{1}}{p_{1}}
\end{equation*}%
where 
\begin{equation*}
p_{0}=\frac{t_{1}-t_{0}}{t_{1}-t},\quad p_{1}=\frac{t_{1}-t_{0}}{t-t_{0}}%
,\quad \frac{1}{p_{0}}+\frac{1}{p_{1}}=1.
\end{equation*}%
Now, by applying the Holder inequality one obtains the result%
\begin{eqnarray*}
k_{1}^{t}\ast k_{2}^{t} &=&\left( k_{1}^{\frac{t_{0}}{p_{0}}}\cdot k_{1}^{%
\frac{t_{1}}{p_{1}}}\right) \ast \left( k_{2}^{\frac{t_{0}}{p_{0}}}\cdot
k_{2}^{\frac{t_{1}}{p_{1}}}\right) \\
&\leq &\left( k_{1}^{t_{0}}\ast k_{2}^{t_{0}}\right) ^{\frac{1}{p_{0}}}\cdot
\left( k_{1}^{t_{1}}\ast k_{2}^{t_{1}}\right) ^{\frac{1}{p_{1}}} \\
&\leq &C_{0}^{\frac{1}{p_{0}}}C_{1}^{\frac{1}{p_{1}}}k^{\frac{t_{0}}{p_{0}}+%
\frac{t_{1}}{p_{1}}}=C_{t}k^{t}.
\end{eqnarray*}
\end{proof}

\begin{definition}
\label{kh6}Let $k\in \mathcal{K}_{a}\left( \mathbb{R}^{n}\right) $. We say
that $k$ satisfies the WL-condition if there is $\delta =\delta _{k}\in
\left( 0,1\right) $ such that $k^{\delta }\in \mathcal{K}_{a}\left( \mathbb{R%
}^{n}\right) $, $k^{-\delta }\in L^{2}\left( \mathbb{R}^{n}\right) $ and 
\begin{equation*}
\frac{1}{k^{2\delta }}\ast \frac{1}{\underline{k}^{2}}\leq \frac{C_{k,\delta
}^{2}}{\underline{k}^{2}},
\end{equation*}%
where $\underline{k}=\left\langle \cdot \right\rangle ^{-1}k\in \mathcal{K}%
_{pol}\left( \mathbb{R}^{n}\right) $. The set of all such functions $k$ will
be denoted by $\mathcal{K}_{WL}\left( \mathbb{R}^{n}\right) $.
\end{definition}

\begin{lemma}
$\left( \mathtt{a}\right) $ If $k\in \mathcal{K}_{pol}\left( \mathbb{R}%
^{n}\right) $ satisfies $\lim_{\left\vert \xi \right\vert \rightarrow \infty
}k\left( \xi \right) =\infty $, then $c_{k}=\inf_{\xi }k\left( \xi \right)
>0 $ and 
\begin{equation*}
t\geq s\Longrightarrow k^{-t}\leq c_{k}^{s-t}k^{-s}.
\end{equation*}

$\left( \mathtt{b}\right) $ If $k\in \mathcal{K}_{WL}\left( \mathbb{R}%
^{n}\right) $ and $\delta =\delta _{k}\in \left( 0,1\right) $ is as in
Definition \ref{kh6}, then for each $t\in \left[ \delta ,1\right] $ we have $%
k^{t}\in \mathcal{K}_{a}\left( \mathbb{R}^{n}\right) $, $k^{-t}\in
L^{2}\left( \mathbb{R}^{n}\right) $ and%
\begin{equation*}
\frac{1}{k^{2t}}\ast \frac{1}{\underline{k}^{2}}\leq c_{k}^{2\left( \delta
-t\right) }\frac{C_{k,\delta }^{2}}{\underline{k}^{2}}.
\end{equation*}

$\left( \mathtt{c}\right) $ If $\delta $ is sufficiently close to $1$, then
there is $C>0$ such that%
\begin{equation*}
\underline{k}=\left\langle \cdot \right\rangle ^{-1}k\leq Ck^{\delta }.
\end{equation*}
\end{lemma}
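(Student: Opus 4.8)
The plan is to dispatch the three assertions in turn, each by reduction to earlier material in the paper.

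For $(\mathtt{a})$ I would first observe that $k\in\mathcal{K}_{pol}\left(\mathbb{R}^{n}\right)$ already gives, via the two-sided bound $C^{-1}k\left(0\right)\langle\xi\rangle^{-N}\le k\left(\xi\right)\le Ck\left(0\right)\langle\xi\rangle^{N}$ recorded in the Remark in Section~2, that $k$ is bounded away from zero on every ball; together with the hypothesis $k\left(\xi\right)\to\infty$ as $\left|\xi\right|\to\infty$ (which makes $k\ge1$ outside some ball) this yields $c_{k}=\inf_{\xi}k\left(\xi\right)>0$. The monotonicity is then immediate: for $t\ge s$ write $k^{-t}=k^{-s}k^{-(t-s)}$ and use $k\ge c_{k}$ together with $t-s\ge0$ to get $k^{-(t-s)}\le c_{k}^{-(t-s)}=c_{k}^{s-t}$, hence $k^{-t}\le c_{k}^{s-t}k^{-s}$.

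For $(\mathtt{b})$ the first move is to arrange the hypothesis of $(\mathtt{a})$. Since $k^{-\delta}\in L^{2}\left(\mathbb{R}^{n}\right)$ and $k^{\delta}\in\mathcal{K}_{pol}\left(\mathbb{R}^{n}\right)$ (Lemma~\ref{kh5}$(\mathtt{b})$), Corollary~\ref{kh10} applied to $k^{\delta}$ gives $k^{\delta}\left(\xi\right)\to\infty$, hence $k\left(\xi\right)\to\infty$ and, by $(\mathtt{a})$, $c_{k}>0$. Now fix $t\in\left[\delta,1\right]$. Then $k^{-t}\le c_{k}^{\delta-t}k^{-\delta}$ by $(\mathtt{a})$, so $k^{-t}\in L^{2}\left(\mathbb{R}^{n}\right)$. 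For $k^{t}\in\mathcal{K}_{a}\left(\mathbb{R}^{n}\right)$ I would apply Lemma~\ref{kh7} with $k_{1}=k_{2}$ and the target weight all taken to be $k^{-2}\in\mathcal{K}_{pol}\left(\mathbb{R}^{n}\right)$ and with endpoints $t_{0}=\delta<t_{1}=1$: the required endpoint inequalities $k^{-2\delta}\ast k^{-2\delta}\le C_{0}k^{-2\delta}$ and $k^{-2}\ast k^{-2}\le C_{1}k^{-2}$ are precisely the statements $k^{\delta}\in\mathcal{K}_{a}\left(\mathbb{R}^{n}\right)$ and $k\in\mathcal{K}_{a}\left(\mathbb{R}^{n}\right)$, the second holding since $\mathcal{K}_{WL}\left(\mathbb{R}^{n}\right)\subset\mathcal{K}_{a}\left(\mathbb{R}^{n}\right)$ by Definition~\ref{kh6}. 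Lemma~\ref{kh7} then yields $k^{-2t}\ast k^{-2t}\le C_{t}k^{-2t}$ for every $t\in\left[\delta,1\right]$, i.e. $k^{t}\in\mathcal{K}_{a}\left(\mathbb{R}^{n}\right)$ (using once more Lemma~\ref{kh5}$(\mathtt{b})$ for $k^{t}\in\mathcal{K}_{pol}\left(\mathbb{R}^{n}\right)$). Finally, from $k^{-2t}\le c_{k}^{2(\delta-t)}k^{-2\delta}$ (part $(\mathtt{a})$ with exponents $2t\ge2\delta$) and the fact that convolution against the nonnegative function $1/\underline{k}^{2}$ preserves pointwise inequalities, $\frac{1}{k^{2t}}\ast\frac{1}{\underline{k}^{2}}\le c_{k}^{2(\delta-t)}\left(\frac{1}{k^{2\delta}}\ast\frac{1}{\underline{k}^{2}}\right)\le c_{k}^{2(\delta-t)}\frac{C_{k,\delta}^{2}}{\underline{k}^{2}}$, which is the claimed bound.

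For $(\mathtt{c})$, multiplying by $\langle\xi\rangle$ and dividing by $k\left(\xi\right)^{\delta}$ shows that $\underline{k}\le Ck^{\delta}$ is equivalent to $k\left(\xi\right)^{1-\delta}\le C\langle\xi\rangle$. Using $k\left(\xi\right)\le Ck\left(0\right)\langle\xi\rangle^{N}$ from the Remark one gets $k\left(\xi\right)^{1-\delta}\le\left(Ck\left(0\right)\right)^{1-\delta}\langle\xi\rangle^{N(1-\delta)}$, and since $\langle\xi\rangle\ge1$ this is $\le\left(Ck\left(0\right)\right)^{1-\delta}\langle\xi\rangle$ provided $N\left(1-\delta\right)\le1$, i.e. $\delta\ge1-1/N$; one may then take the constant to be $\max\left\{Ck\left(0\right),1\right\}$. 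So ``sufficiently close to $1$'' means quantitatively $\delta\ge1-1/N$, and such a $\delta$ is available because, by $(\mathtt{b})$, replacing the original $\delta_{k}$ by any larger $\delta<1$ preserves all the defining properties of $\mathcal{K}_{WL}\left(\mathbb{R}^{n}\right)$.

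I do not expect a genuine obstacle here; the only points that demand attention are the order of inference in $(\mathtt{b})$ — one must first extract $k\left(\xi\right)\to\infty$ from the $L^{2}$-integrability of $k^{-\delta}$ before $(\mathtt{a})$ becomes available — and the bookkeeping when specializing Lemma~\ref{kh7} (correctly matching $k_{1},k_{2}$, the target weight, and the endpoints $t_{0},t_{1}$ to the two known convolution bounds). I would write the endpoint identifications out explicitly, since that is the place most prone to a slip.
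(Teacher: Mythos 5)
Your proof is correct and follows essentially the same route as the paper: the elementary exponent manipulation for $(\mathtt{a})$, Lemma \ref{kh7} with endpoints $t_{0}=\delta$, $t_{1}=1$ plus the monotonicity $k^{-2t}\leq c_{k}^{2(\delta-t)}k^{-2\delta}$ for $(\mathtt{b})$, and the bound $k\leq Ck(0)\langle\xi\rangle^{N}$ with the threshold $\delta>\frac{N-1}{N}$ for $(\mathtt{c})$. The only additions are welcome bookkeeping the paper leaves implicit, namely deriving $k(\xi)\to\infty$ from $k^{-\delta}\in L^{2}$ via Corollary \ref{kh10} before invoking part $(\mathtt{a})$, and spelling out the specialization of Lemma \ref{kh7}.
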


\begin{proof}
$\left( \mathtt{a}\right) $ The first part is an easy consequence of the
hypothesis and Lemma \ref{kh5}. Then 
\begin{equation*}
k\geq c_{k}\Longrightarrow k^{t-s}\geq c_{k}^{t-s}\Longrightarrow
c_{k}^{s-t}k^{-s}\geq k^{-t}.
\end{equation*}

$\left( \mathtt{b}\right) $ $k^{t}\in \mathcal{K}_{a}\left( \mathbb{R}%
^{n}\right) $ is an immediate consequence of Lemma \ref{kh7}, $k^{-t}\leq
c_{k}^{\delta -t}k^{-\delta }\in L^{2}\left( \mathbb{R}^{n}\right) $ implies
that $k^{-t}\in L^{2}\left( \mathbb{R}^{n}\right) $ and%
\begin{equation*}
\frac{1}{k^{2t}}\ast \frac{1}{\underline{k}^{2}}\leq c_{k}^{2\left( \delta
-t\right) }\frac{1}{k^{2\delta }}\ast \frac{1}{\underline{k}^{2}}\leq
c_{k}^{2\left( \delta -t\right) }\frac{C_{k,\delta }^{2}}{\underline{k}^{2}}.
\end{equation*}

$\left( \mathtt{c}\right) $ We know that 
\begin{equation*}
C^{-1}k\left( 0\right) \left\langle \xi \right\rangle ^{-N}\leq k\left( \xi
\right) \leq Ck\left( 0\right) \left\langle \xi \right\rangle ^{N},\quad \xi
\in \mathbb{R}^{n}.
\end{equation*}%
Now if we take $\delta >\frac{N-1}{N}$, then $N<\frac{1}{1-\delta }$ and 
\begin{equation*}
k\left( \xi \right) \leq Ck\left( 0\right) \left\langle \xi \right\rangle
^{N}\leq Ck\left( 0\right) \left\langle \xi \right\rangle ^{\frac{1}{%
1-\delta }},\quad \xi \in \mathbb{R}^{n}
\end{equation*}%
which implies 
\begin{equation*}
\underline{k}\left( \xi \right) =\left\langle \xi \right\rangle ^{-1}k\left(
\xi \right) \leq \left( Ck\left( 0\right) \right) ^{1-\delta }k^{\delta
}\left( \xi \right) ,\quad \xi \in \mathbb{R}^{n}.
\end{equation*}
\end{proof}

\begin{theorem}[Wiener-L\'{e}vy for $\mathcal{B}_{k}^{\infty }$]
Let $k\in \mathcal{K}_{WL}\left( \mathbb{R}^{n}\right) $, $\Omega =\mathring{%
\Omega}\subset \mathbb{C}^{d}$ and $\Phi :\Omega \rightarrow \mathbb{C}$ a
holomorphic function. If $u=\left( u_{1},...,u_{d}\right) \in \mathcal{B}%
_{k}^{\infty }\left( \mathbb{R}^{n}\right) ^{d}$ satisfies the condition $%
\overline{u\left( \mathbb{R}^{n}\right) }\subset \Omega $, then 
\begin{equation*}
\Phi \circ u\equiv \Phi \left( u\right) \in \mathcal{B}_{k}^{\infty }\left( 
\mathbb{R}^{n}\right) .
\end{equation*}
\end{theorem}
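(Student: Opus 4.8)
The plan is to deduce the statement from the weak form, Theorem~\ref{ks11}, using as auxiliary weight a suitable power $k^{\delta}$ of $k$, and then to pass from $\mathcal{B}_{k^{\delta}}^{\infty}$ up to $\mathcal{B}_{k}^{\infty}$ by means of Lemma~\ref{kh9}. First I would fix the exponent. Starting from $\delta=\delta_{k}\in(0,1)$ as in Definition~\ref{kh6}, part~$(\mathtt{b})$ of the preceding lemma lets me enlarge $\delta$ while keeping $k^{\delta}\in\mathcal{K}_{a}$, $k^{-\delta}\in L^{2}$ and $\tfrac{1}{k^{2\delta}}\ast\tfrac{1}{\underline{k}^{2}}\le\tfrac{C^{2}}{\underline{k}^{2}}$, so that also $\delta>\tfrac{N-1}{N}$; part~$(\mathtt{c})$ then yields $\underline{k}\le C\,k^{\delta}$. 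Since $k^{-\delta}\in\mathcal{K}_{pol}\cap L^{2}$, the lemma that a polynomially growing weight in $L^{p}$ vanishes at infinity gives $k^{-\delta}(\xi)\to0$, hence $k\to\infty$, and then (Corollary~\ref{kh10} and part~$(\mathtt{a})$ of the preceding lemma) $c_{k}:=\inf k>0$ and $k^{\delta}/k=k^{\delta-1}\to0$. Thus $(k,k^{\delta})\in\mathcal{K}_{pol}\times\mathcal{K}_{a}$ satisfies the hypotheses of Theorem~\ref{ks11}; applying it to $\Phi$ and to each (again holomorphic) $\partial_{l}\Phi$ on $\Omega$ gives $\Phi(u),(\partial_{1}\Phi)(u),\dots,(\partial_{d}\Phi)(u)\in\mathcal{B}_{k^{\delta}}^{\infty}\subset\mathcal{B}_{\underline{k}}^{\infty}$, the last inclusion by $\underline{k}\le C\,k^{\delta}$ and Proposition~\ref{ks15}$(\mathtt{e})$. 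Also, $u_{l}\in\mathcal{B}_{k}^{\infty}$ forces $\partial_{j}u_{l}\in\mathcal{B}_{\underline{k}}^{\infty}$ ($j=1,\dots,n$) by Lemma~\ref{kh9}.

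Everything reduces to the chain rule $\partial_{j}\bigl(\Phi(u)\bigr)=\sum_{l=1}^{d}(\partial_{l}\Phi)(u)\,\partial_{j}u_{l}$ in $\mathcal{D}'(\mathbb{R}^{n})$. To prove it I would mollify, setting $u_{\varepsilon}=\varphi_{\varepsilon}\ast u$. By the mollification lemma for the weight gap $k^{\delta}/k\to0$, $u_{\varepsilon}\to u$ in $\mathcal{B}_{k^{\delta}}^{\infty}$, hence uniformly (Proposition~\ref{ks15}$(\mathtt{g})$, since $1/k^{\delta}\in L^{2}$); as $u$ is bounded with $\overline{u(\mathbb{R}^{n})}\subset\Omega$, for small $\varepsilon$ we get $u_{\varepsilon}\in\mathcal{BC}^{\infty}(\mathbb{R}^{n})^{d}$ with $\overline{u_{\varepsilon}(\mathbb{R}^{n})}\subset\Omega$, so the classical chain rule $\partial_{j}\Phi(u_{\varepsilon})=\sum_{l}(\partial_{l}\Phi)(u_{\varepsilon})\,\partial_{j}u_{\varepsilon,l}$ holds. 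Letting $\varepsilon\to0$: Theorem~\ref{ks11}$(\mathtt{b})$ gives $\Phi(u_{\varepsilon})\to\Phi(u)$ and $(\partial_{l}\Phi)(u_{\varepsilon})\to(\partial_{l}\Phi)(u)$ in $\mathcal{B}_{k^{\delta}}^{\infty}$, in particular $\partial_{j}\Phi(u_{\varepsilon})\to\partial_{j}\Phi(u)$ in $\mathcal{D}'$; and since $\{\varphi_{\varepsilon}\ast\partial_{j}u_{l}\}_{\varepsilon}$ is bounded in $\mathcal{B}_{\underline{k}}^{\infty}$ (Lemma~\ref{kh8}, $\|\varphi_{\varepsilon}\|_{L^{1}}=1$), the product corollary of Lemma~\ref{kh11} (case $p_{1}=p_{2}=\infty$, with the $WL$-inequality $\tfrac{1}{k^{2\delta}}\ast\tfrac{1}{\underline{k}^{2}}\le\tfrac{C^{2}}{\underline{k}^{2}}$) shows $\bigl[(\partial_{l}\Phi)(u_{\varepsilon})-(\partial_{l}\Phi)(u)\bigr]\partial_{j}u_{\varepsilon,l}\to0$ in $\mathcal{B}_{\underline{k}}^{\infty}$.

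The delicate point, and the main obstacle, is the remaining term $(\partial_{l}\Phi)(u)\bigl(\varphi_{\varepsilon}\ast\partial_{j}u_{l}-\partial_{j}u_{l}\bigr)$: the factor $\varphi_{\varepsilon}\ast\partial_{j}u_{l}-\partial_{j}u_{l}$ does \emph{not} tend to $0$ in the amalgam space $\mathcal{B}_{\underline{k}}^{\infty}$ (mollification is discontinuous there, as in $L^{\infty}$), so I would localize before passing to the limit. Given $\psi\in\mathcal{C}_{0}^{\infty}$, pick $\varphi_{0},\widetilde{\varphi}_{0}\in\mathcal{C}_{0}^{\infty}$ with $\varphi_{0}=1$ on \texttt{supp}$\psi$ and $\widetilde{\varphi}_{0}=1$ on \texttt{supp}$\varphi_{0}+\overline{B(0,1)}$; then $\varphi_{0}(\varphi_{\varepsilon}\ast\partial_{j}u_{l})=\varphi_{0}\bigl(\varphi_{\varepsilon}\ast(\widetilde{\varphi}_{0}\,\partial_{j}u_{l})\bigr)$ for $\varepsilon\le1$. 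Since $\widetilde{\varphi}_{0}\,\partial_{j}u_{l}\in\mathcal{B}_{\underline{k}}\cap\mathcal{E}'$ (Lemma~\ref{ks3}) and mollification \emph{does} converge in the Hilbert space $\mathcal{B}_{\underline{k}}=B_{2,\underline{k}}$ (dominated convergence on the Fourier side), one gets $\varphi_{0}(\varphi_{\varepsilon}\ast\partial_{j}u_{l}-\partial_{j}u_{l})\to0$ in $\mathcal{B}_{\underline{k}}$; multiplying by $\varphi_{0}(\partial_{l}\Phi)(u)\in\mathcal{B}_{k^{\delta}}\cap\mathcal{E}'$ and invoking Lemma~\ref{kh11} with the $WL$-inequality gives $\varphi_{0}^{2}(\partial_{l}\Phi)(u)\bigl(\varphi_{\varepsilon}\ast\partial_{j}u_{l}-\partial_{j}u_{l}\bigr)\to0$ in $\mathcal{B}_{\underline{k}}\hookrightarrow\mathcal{D}'$, and pairing with $\psi$, on whose support $\varphi_{0}^{2}=1$, shows the term tends to $0$ in $\mathcal{D}'$. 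Collecting the three limits proves the chain rule; since $(\partial_{l}\Phi)(u)\in\mathcal{B}_{k^{\delta}}^{\infty}$ and $\partial_{j}u_{l}\in\mathcal{B}_{\underline{k}}^{\infty}$, the product corollary of Lemma~\ref{kh11} gives $\partial_{j}\Phi(u)=\sum_{l}(\partial_{l}\Phi)(u)\,\partial_{j}u_{l}\in\mathcal{B}_{\underline{k}}^{\infty}$ for every $j$, and together with $\Phi(u)\in\mathcal{B}_{\underline{k}}^{\infty}$, Lemma~\ref{kh9} gives $\Phi(u)\in\mathcal{B}_{k}^{\infty}$.
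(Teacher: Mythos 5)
Your proposal is correct, and its skeleton --- enlarge $\delta $ so that $\delta >\frac{N-1}{N}$, check that the pair $\left( k^{\delta },k\right) $ meets the hypotheses of Theorem \ref{ks11}, mollify $u$, and reduce membership of $\Phi \left( u\right) $ in $\mathcal{B}_{k}^{\infty }$ to membership of $\Phi \left( u\right) $ and its first derivatives in $\mathcal{B}_{\underline{k}}^{\infty }$ via Lemma \ref{kh9} --- is exactly the paper's. Where you diverge is in the final limiting step. The paper never establishes the chain rule for $\Phi \left( u\right) $ itself: it uses the chain rule only for the smooth mollifications $u_{\varepsilon }$, combines Lemma \ref{kh11} with the uniform bounds $\left\Vert \partial _{j}u_{\varepsilon k}\right\Vert _{\underline{k},\infty ,\chi }\leq \left\Vert \partial _{j}u_{k}\right\Vert _{\underline{k},\infty ,\chi }$ and $\left\Vert \frac{\partial \Phi }{\partial z_{k}}\left( u_{\varepsilon }\right) \right\Vert _{k^{\delta },\infty ,\chi }\leq 2\left\Vert \frac{\partial \Phi }{\partial z_{k}}\left( u\right) \right\Vert _{k^{\delta },\infty ,\chi }$ to get an estimate $\left\vert \left\langle \left( \tau _{y}\chi ^{2}\right) \partial _{j}\Phi \left( u_{\varepsilon }\right) ,\psi \right\rangle \right\vert \leq C\left\Vert \psi \right\Vert _{\mathcal{B}_{1/\underline{\check{k}}}}$ uniform in $\varepsilon $ and $y$, and then simply lets $\varepsilon \rightarrow 0$ in the pairing, concluding $\left( \tau _{y}\chi ^{2}\right) \partial _{j}\Phi \left( u\right) \in \mathcal{B}_{\underline{k}}$ from the duality statement of Lemma \ref{ks18} $\left( \mathtt{d}\right) $. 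You instead identify the limit of the products, i.e.\ you prove the distributional chain rule $\partial _{j}\Phi \left( u\right) =\sum_{l}\left( \partial _{l}\Phi \right) \left( u\right) \partial _{j}u_{l}$, which forces you to confront the fact that $\varphi _{\varepsilon }\ast \partial _{j}u_{l}$ does not converge to $\partial _{j}u_{l}$ in the amalgam space $\mathcal{B}_{\underline{k}}^{\infty }$; your localization ($\varphi _{0}\left( \varphi _{\varepsilon }\ast v\right) =\varphi _{0}\left( \varphi _{\varepsilon }\ast \left( \widetilde{\varphi }_{0}v\right) \right) $ for small $\varepsilon $, plus dominated convergence on the Fourier side in the Hilbert space $\mathcal{B}_{\underline{k}}$) resolves this correctly. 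Both routes work: the paper's is a little shorter because the uniform duality bound makes identification of the limit unnecessary, while yours costs an extra cutoff but yields the chain rule for $\Phi \left( u\right) $ as a by-product, which is of independent interest.
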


\begin{proof}
Let $\delta \in \left( \frac{N-1}{N},1\right) $ be such that $k^{\delta }\in 
\mathcal{K}_{a}\left( \mathbb{R}^{n}\right) $, $k^{-\delta }\in L^{2}\left( 
\mathbb{R}^{n}\right) $ and $k^{-2\delta }\ast \underline{k}^{-2}\leq
C_{k,\delta }^{2}\underline{k}^{-2}$. Since $\delta >\frac{N-1}{N}$ we have
also $\underline{k}=\left\langle \cdot \right\rangle ^{-1}k\leq Ck^{\delta }$
which implies $\mathcal{B}_{k^{\delta }}^{\infty }\left( \mathbb{R}%
^{n}\right) \subset \mathcal{B}_{\underline{k}}^{\infty }\left( \mathbb{R}%
^{n}\right) $.

We consider the family $\left\{ u_{\varepsilon }\right\} _{0<\varepsilon
\leq 1}$ 
\begin{equation*}
u_{\varepsilon }=\varphi _{\varepsilon }\ast u=\left( \varphi _{\varepsilon
}\ast u_{1},...,\varphi _{\varepsilon }\ast u_{d}\right) \in \mathcal{B}%
_{k}^{\infty }\left( \mathbb{R}^{n}\right) ^{d}.
\end{equation*}%
We have%
\begin{equation*}
\partial _{j}\mathit{\Phi }\left( u_{\varepsilon }\right) =\sum_{k=1}^{d}%
\frac{\partial \mathit{\Phi }}{\partial z_{k}}\left( u_{\varepsilon }\right)
\cdot \partial _{j}u_{\varepsilon k},\quad \text{\textit{in} }\mathcal{C}%
^{\infty }\left( 
\mathbb{R}
^{n}\right) ,\quad j=1,...,n.
\end{equation*}%
Let $\chi \in \mathcal{C}_{0}^{\infty }\left( \mathbb{R}^{n}\right)
\smallsetminus 0$. Then Lemma \ref{kh8} and Lemma \ref{kh9} imply that $%
\partial _{j}u_{\varepsilon }=\varphi _{\varepsilon }\ast \partial _{j}u\in 
\mathcal{B}_{\underline{k}}^{\infty }\left( \mathbb{R}^{n}\right) ^{d}$ and 
\begin{equation*}
\left\Vert \partial _{j}u_{\varepsilon k}\right\Vert _{\underline{k},\infty
,\chi }\leq \left\Vert \partial _{j}u_{k}\right\Vert _{\underline{k},\infty
,\chi },\quad j=1,...,n,\quad k=1,...,d.
\end{equation*}%
Now we observe that the pair of weight functions $\left( k^{\delta
},k\right) \in \mathcal{K}_{pol}\left( \mathbb{R}^{n}\right) \times \mathcal{%
K}_{a}\left( \mathbb{R}^{n}\right) $ satisfies the condition 
\begin{equation*}
\frac{k^{\delta }\left( \xi \right) }{k\left( \xi \right) }\rightarrow
0,\quad \xi \rightarrow \infty
\end{equation*}%
(Corollary \ref{kh10}). So we may apply Theorem \ref{ks11} $\left( \mathtt{b}%
\right) $ to obtain that%
\begin{eqnarray*}
\mathit{\Phi }\left( u_{\varepsilon }\right) &\rightarrow &\mathit{\Phi }%
\left( u\right) \quad in\text{ }\mathcal{B}_{k^{\delta }}^{\infty }\left( 
\mathbb{R}^{n}\right) \subset \mathcal{BC}\left( \mathbb{R}^{n}\right)
\subset \mathcal{D}^{\prime }\left( 
\mathbb{R}
^{n}\right) \text{ }as\text{ }\varepsilon \rightarrow 0, \\
\frac{\partial \mathit{\Phi }}{\partial z_{k}}\left( u_{\varepsilon }\right)
&\rightarrow &\frac{\partial \mathit{\Phi }}{\partial z_{k}}\left( u\right)
\quad in\text{ }\mathcal{B}_{k^{\delta }}^{\infty }\left( \mathbb{R}%
^{n}\right) \text{ }as\text{ }\varepsilon \rightarrow 0.
\end{eqnarray*}%
Hence $\partial _{j}\mathit{\Phi }\left( u_{\varepsilon }\right) \rightarrow
\partial _{j}\mathit{\Phi }\left( u\right) $ in $\mathcal{D}^{\prime }\left( 
\mathbb{R}
^{n}\right) $ as $\varepsilon \rightarrow 0$ and there is $\varepsilon
_{0}>0 $ such that for any $\varepsilon \in \left( 0,\varepsilon _{0}\right] 
$ we have%
\begin{equation*}
\left\Vert \frac{\partial \mathit{\Phi }}{\partial z_{k}}\left(
u_{\varepsilon }\right) \right\Vert _{k^{\delta },\infty ,\chi }\leq
2\left\Vert \frac{\partial \mathit{\Phi }}{\partial z_{k}}\left( u\right)
\right\Vert _{k^{\delta },\infty ,\chi },\quad k=1,...,d.
\end{equation*}

Let $\psi \in \mathcal{C}_{0}^{\infty }\left( \mathbb{R}^{n}\right) $. Then
using the above estimates and Lemma \ref{kh11} we can evaluate $\left\vert
\left\langle \left( \tau _{y}\chi ^{2}\right) \partial _{j}\mathit{\Phi }%
\left( u_{\varepsilon }\right) ,\psi \right\rangle \right\vert $ uniformly
with respect to $\varepsilon \in \left( 0,\varepsilon _{0}\right] $, then by
using Lemma \ref{ks18} $\left( \mathtt{d}\right) $ we evaluate $\left\Vert
\partial _{j}\mathit{\Phi }\left( u\right) \right\Vert _{\underline{k}%
,\infty ,\chi ^{2}}$.%
\begin{multline*}
\left\vert \left\langle \left( \tau _{y}\chi ^{2}\right) \partial _{j}%
\mathit{\Phi }\left( u_{\varepsilon }\right) ,\psi \right\rangle \right\vert
\leq \sum_{k=1}^{d}\left\vert \left\langle \left( \tau _{y}\chi \right) 
\frac{\partial \mathit{\Phi }}{\partial z_{k}}\left( u_{\varepsilon }\right)
\cdot \left( \tau _{y}\chi \right) \partial _{j}u_{\varepsilon k},\psi
\right\rangle \right\vert \\
\leq \left( 2\pi \right) ^{-n}\sum_{k=1}^{d}\left\Vert \left( \tau _{y}\chi
\right) \frac{\partial \mathit{\Phi }}{\partial z_{k}}\left( u_{\varepsilon
}\right) \cdot \left( \tau _{y}\chi \right) \partial _{j}u_{\varepsilon
k}\right\Vert _{\mathcal{B}_{\underline{k}}}\left\Vert \psi \right\Vert _{%
\mathcal{B}_{1/\underline{\check{k}}}} \\
\leq \left( 2\pi \right) ^{-2n}C_{k,\delta }\sum_{k=1}^{d}\left\Vert \left(
\tau _{y}\chi \right) \frac{\partial \mathit{\Phi }}{\partial z_{k}}\left(
u_{\varepsilon }\right) \right\Vert _{\mathcal{B}_{k^{\delta }}}\left\Vert
\left( \tau _{y}\chi \right) \partial _{j}u_{\varepsilon k}\right\Vert _{%
\mathcal{B}_{\underline{k}}}\left\Vert \psi \right\Vert _{\mathcal{B}_{1/%
\underline{\check{k}}}} \\
\leq \left( 2\pi \right) ^{-2n}C_{k,\delta }\sum_{k=1}^{d}\left\Vert \frac{%
\partial \mathit{\Phi }}{\partial z_{k}}\left( u_{\varepsilon }\right)
\right\Vert _{k^{\delta },\infty ,\chi }\left\Vert \partial
_{j}u_{\varepsilon k}\right\Vert _{\underline{k},\infty ,\chi }\left\Vert
\psi \right\Vert _{\mathcal{B}_{1/\underline{\check{k}}}} \\
\leq 2\left( 2\pi \right) ^{-2n}C_{k,\delta }\left( \sum_{k=1}^{d}\left\Vert 
\frac{\partial \mathit{\Phi }}{\partial z_{k}}\left( u\right) \right\Vert
_{k^{\delta },\infty ,\chi }\left\Vert \partial _{j}u_{k}\right\Vert _{%
\underline{k},\infty ,\chi }\right) \left\Vert \psi \right\Vert _{\mathcal{B}%
_{1/\underline{\check{k}}}}.
\end{multline*}%
Since $\partial _{j}\mathit{\Phi }\left( u_{\varepsilon }\right) \rightarrow
\partial _{j}\mathit{\Phi }\left( u\right) $ in $\mathcal{D}^{\prime }\left( 
\mathbb{R}
^{n}\right) $ as $\varepsilon \rightarrow 0$ it follows that 
\begin{multline*}
\left\vert \left\langle \left( \tau _{y}\chi ^{2}\right) \partial _{j}%
\mathit{\Phi }\left( u\right) ,\psi \right\rangle \right\vert \\
\leq 2\left( 2\pi \right) ^{-2n}C_{k,\delta }\left( \sum_{k=1}^{d}\left\Vert 
\frac{\partial \mathit{\Phi }}{\partial z_{k}}\left( u\right) \right\Vert
_{k^{\delta },\infty ,\chi }\left\Vert \partial _{j}u_{k}\right\Vert _{%
\underline{k},\infty ,\chi }\right) \left\Vert \psi \right\Vert _{\mathcal{B}%
_{1/\underline{\check{k}}}}.
\end{multline*}%
Hence $\left( \tau _{y}\chi ^{2}\right) \partial _{j}\mathit{\Phi }\left(
u\right) \in \mathcal{B}_{\underline{k}}\left( \mathbb{R}^{n}\right) $ for
every $y\in \mathbb{R}^{n}$ and 
\begin{equation*}
\left\Vert \left( \tau _{y}\chi ^{2}\right) \partial _{j}\mathit{\Phi }%
\left( u\right) \right\Vert _{\mathcal{B}_{\underline{k}}}\leq 2\left( 2\pi
\right) ^{-2n}C_{k,\delta }\left( \sum_{k=1}^{d}\left\Vert \frac{\partial 
\mathit{\Phi }}{\partial z_{k}}\left( u\right) \right\Vert _{k^{\delta
},\infty ,\chi }\left\Vert \partial _{j}u_{k}\right\Vert _{\underline{k}%
,\infty ,\chi }\right)
\end{equation*}%
which gives%
\begin{equation*}
\left\Vert \partial _{j}\mathit{\Phi }\left( u\right) \right\Vert _{%
\underline{k},\infty ,\chi ^{2}}\leq 2\left( 2\pi \right) ^{-2n}C_{k,\delta
}\left( \sum_{k=1}^{d}\left\Vert \frac{\partial \mathit{\Phi }}{\partial
z_{k}}\left( u\right) \right\Vert _{k^{\delta },\infty ,\chi }\left\Vert
\partial _{j}u_{k}\right\Vert _{\underline{k},\infty ,\chi }\right) ,\
j=1,...,n.
\end{equation*}%
Since $\mathit{\Phi }\left( u\right) \in \mathcal{B}_{k^{\delta }}^{\infty
}\left( \mathbb{R}^{n}\right) \subset \mathcal{B}_{\underline{k}}^{\infty
}\left( \mathbb{R}^{n}\right) $ and $\partial _{j}\mathit{\Phi }\left(
u\right) \in \mathcal{B}_{\underline{k}}^{\infty }\left( \mathbb{R}%
^{n}\right) $, $j=1,...,n$ it follows from Lemma \ref{kh9} that $\mathit{%
\Phi }\left( u\right) \in \mathcal{B}_{k}^{\infty }\left( \mathbb{R}%
^{n}\right) $ which ends the proof.
\end{proof}

\begin{corollary}[Kato]
Suppose that $k\in \mathcal{K}_{a}\left( \mathbb{R}^{n}\right) $ satisfies
the WL-condition.

$\left( \mathtt{a}\right) $ If $u\in \mathcal{B}_{k}^{\infty }\left( \mathbb{%
R}^{n}\right) $ satisfies the condition 
\begin{equation*}
\left\vert u\left( x\right) \right\vert \geq c>0,\quad x\in 
\mathbb{R}
^{n},
\end{equation*}%
then 
\begin{equation*}
\frac{1}{u}\in \mathcal{B}_{k}^{\infty }\left( \mathbb{R}^{n}\right) .
\end{equation*}

$\left( \mathtt{b}\right) $ If $u\in \mathcal{B}_{k}^{\infty }\left( \mathbb{%
R}^{n}\right) $, then $\overline{u\left( 
\mathbb{R}
^{n}\right) }$ is the spectrum of the element $u$.
\end{corollary}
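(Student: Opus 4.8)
The plan is to read off both assertions from the Wiener--L\'evy theorem for $\mathcal{B}_{k}^{\infty }$ proved immediately above, once two preliminary facts are in place: that $\mathcal{B}_{k}^{\infty }\left( \mathbb{R}^{n}\right) $ is a \emph{unital} Banach algebra, and that $\mathcal{B}_{k}^{\infty }\left( \mathbb{R}^{n}\right) \subset \mathcal{BC}\left( \mathbb{R}^{n}\right) $. For the first, $k\in \mathcal{K}_{WL}\left( \mathbb{R}^{n}\right) \subset \mathcal{K}_{a}\left( \mathbb{R}^{n}\right) $ makes $\mathcal{B}_{k}^{\infty }$ a Banach algebra, and $1\in \mathcal{BC}^{m_{k}}\left( \mathbb{R}^{n}\right) \subset \mathcal{B}_{k}^{\infty }\left( \mathbb{R}^{n}\right) $ supplies the unit. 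For the second, I would observe that the WL-condition forces $1/k\in L^{2}\left( \mathbb{R}^{n}\right) $: from $k^{-\delta }\in L^{2}$ one gets $k^{-\delta }\to 0$ at infinity, hence $k\to \infty $ and $c_{k}=\inf_{\xi }k\left( \xi \right) >0$, and then $k^{-1}\leq c_{k}^{\delta -1}k^{-\delta }\in L^{2}$; Proposition~\ref{ks15}~$\left( \mathtt{g}\right) $ then gives the inclusion into $\mathcal{BC}$.

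For part $\left( \mathtt{a}\right) $, I would apply the Wiener--L\'evy theorem with $d=1$, $\Omega =\mathbb{C}\smallsetminus \left\{ 0\right\} $, and $\Phi \left( z\right) =1/z$, which is holomorphic on $\Omega $. Since $u\in \mathcal{B}_{k}^{\infty }\subset \mathcal{BC}$ is bounded and satisfies $\left\vert u\left( x\right) \right\vert \geq c>0$, the set $\overline{u\left( \mathbb{R}^{n}\right) }$ is compact and contained in $\left\{ \left\vert z\right\vert \geq c\right\} \subset \Omega $; hence $1/u=\Phi \circ u\in \mathcal{B}_{k}^{\infty }\left( \mathbb{R}^{n}\right) $.

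For part $\left( \mathtt{b}\right) $, I would establish $\sigma \left( u\right) =\overline{u\left( \mathbb{R}^{n}\right) }$ by two inclusions, $\sigma \left( u\right) $ denoting the spectrum of $u$ in the unital Banach algebra $\mathcal{B}_{k}^{\infty }\left( \mathbb{R}^{n}\right) $. If $\lambda \notin \overline{u\left( \mathbb{R}^{n}\right) }$, then $\inf_{x}\left\vert u\left( x\right) -\lambda \right\vert =\mathtt{dist}\left( \lambda ,\overline{u\left( \mathbb{R}^{n}\right) }\right) >0$, so part $\left( \mathtt{a}\right) $ applied to $u-\lambda \in \mathcal{B}_{k}^{\infty }$ produces $\left( u-\lambda \right) ^{-1}\in \mathcal{B}_{k}^{\infty }$ and $\lambda \notin \sigma \left( u\right) $; thus $\sigma \left( u\right) \subset \overline{u\left( \mathbb{R}^{n}\right) }$. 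Conversely, if $\lambda \in \overline{u\left( \mathbb{R}^{n}\right) }$, pick $x_{m}$ with $u\left( x_{m}\right) \to \lambda $; were $u-\lambda $ invertible with inverse $v\in \mathcal{B}_{k}^{\infty }\subset \mathcal{BC}$, then evaluating $\left( u-\lambda \right) v=1$ at $x_{m}$ would give $\left\vert u\left( x_{m}\right) -\lambda \right\vert \left\vert v\left( x_{m}\right) \right\vert =1$, impossible since the left-hand side tends to $0$ while $v$ is bounded. Hence $\lambda \in \sigma \left( u\right) $ and $\overline{u\left( \mathbb{R}^{n}\right) }\subset \sigma \left( u\right) $.

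Since the Wiener--L\'evy theorem does all the analytic work, the argument is essentially formal; the only points requiring care are the derivation of $1/k\in L^{2}$ from the WL-condition (which underlies both the inclusion $\mathcal{B}_{k}^{\infty }\subset \mathcal{BC}$ and the pointwise contradiction in part $\left( \mathtt{b}\right) $) and the elementary remark that $\overline{u\left( \mathbb{R}^{n}\right) }$ is compact because $u$ is bounded. I do not anticipate any genuine obstacle beyond this bookkeeping.
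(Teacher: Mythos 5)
Your proof is correct and follows exactly the route the paper intends: the paper states this corollary without proof as an immediate consequence of the Wiener--L\'evy theorem (with $\Phi(z)=1/z$ on $\mathbb{C}\smallsetminus\{0\}$), and your spectrum argument via the evaluation functionals $\delta_x$ is the same mechanism used in the paper's proof of the subsequent joint-spectrum corollary. Your preliminary observations (that the WL-condition forces $1/k\in L^{2}$, hence $\mathcal{B}_{k}^{\infty}\subset\mathcal{BC}$, and that $1\in\mathcal{B}_{k}^{\infty}$ makes the algebra unital) are exactly the bookkeeping the paper leaves implicit.
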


\begin{corollary}
Suppose that $k\in \mathcal{K}_{a}\left( \mathbb{R}^{n}\right) $ satisfies
the WL-condition. If $u=\left( u_{1},...,u_{d}\right) \in \mathcal{B}%
_{k}^{\infty }\left( \mathbb{R}^{n}\right) ^{d}$, then 
\begin{equation*}
\sigma _{\mathcal{B}_{k}^{\infty }}\left( u_{1},...,u_{d}\right) =\overline{%
u\left( 
\mathbb{R}
^{n}\right) },
\end{equation*}%
where $\sigma _{\mathcal{B}_{k}^{\infty }}\left( u_{1},...,u_{d}\right) $ is
the joint spectrum of the elements.
\end{corollary}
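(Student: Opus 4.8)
First I would record the standing facts. Since $k\in\mathcal{K}_{a}\left( \mathbb{R}^{n}\right) $ satisfies the WL-condition, i.e. $k\in\mathcal{K}_{WL}\left( \mathbb{R}^{n}\right) $, the space $\mathcal{B}_{k}^{\infty }\left( \mathbb{R}^{n}\right) $ is a commutative Banach algebra with unit $1$. The WL-condition supplies $\delta \in \left( 0,1\right) $ with $k^{-\delta }\in L^{2}\left( \mathbb{R}^{n}\right) $; by Corollary \ref{kh10} we have $k\left( \xi \right) \rightarrow \infty $ as $\left\vert \xi \right\vert \rightarrow \infty $, so $c_{k}=\inf_{\xi }k\left( \xi \right) >0$ and $k^{-1}=k^{-\delta }k^{-\left( 1-\delta \right) }\leq c_{k}^{-\left( 1-\delta \right) }k^{-\delta }\in L^{2}\left( \mathbb{R}^{n}\right) $. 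Hence by Proposition \ref{ks15}\,$\left( \mathtt{g}\right) $ the inclusion $\mathcal{B}_{k}^{\infty }\left( \mathbb{R}^{n}\right) \subset \mathcal{BC}\left( \mathbb{R}^{n}\right) $ holds and is continuous, so each $u_{j}$ is a bounded continuous function and $K:=\overline{u\left( \mathbb{R}^{n}\right) }$ is a compact subset of $\mathbb{C}^{d}$. I will use the standard description: $\lambda =\left( \lambda _{1},\dots ,\lambda _{d}\right) \notin \sigma _{\mathcal{B}_{k}^{\infty }}\left( u_{1},\dots ,u_{d}\right) $ iff the ideal generated by $u_{1}-\lambda _{1},\dots ,u_{d}-\lambda _{d}$ is all of $\mathcal{B}_{k}^{\infty }\left( \mathbb{R}^{n}\right) $, equivalently iff no character of $\mathcal{B}_{k}^{\infty }\left( \mathbb{R}^{n}\right) $ sends each $u_{j}$ to $\lambda _{j}$.

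The inclusion $K\subset \sigma _{\mathcal{B}_{k}^{\infty }}\left( u\right) $ is immediate: for $x\in \mathbb{R}^{n}$ the evaluation $\delta _{x}:w\mapsto w\left( x\right) $ is, because $\mathcal{B}_{k}^{\infty }\subset \mathcal{BC}$, a bounded character with $\left( \delta _{x}\left( u_{1}\right) ,\dots ,\delta _{x}\left( u_{d}\right) \right) =u\left( x\right) $, and the joint spectrum is closed. (Alternatively: if $\sum _{j}\left( u_{j}-\lambda _{j}\right) a_{j}=1$ with $a_{j}\in \mathcal{B}_{k}^{\infty }\subset \mathcal{BC}$ and $\lambda \in K$, pick $x_{m}$ with $u\left( x_{m}\right) \rightarrow \lambda $ and evaluate at $x_{m}$ to get $0=1$.)

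For the reverse inclusion, fix $\lambda \notin K$ and set $r=\tfrac{1}{3}\mathtt{dist}\left( \lambda ,K\right) >0$. The plan is to manufacture $a_{1},\dots ,a_{d}\in \mathcal{B}_{k}^{\infty }\left( \mathbb{R}^{n}\right) $ with $\sum _{j}\left( u_{j}-\lambda _{j}\right) a_{j}=1$, which by the description above gives $\lambda \notin \sigma _{\mathcal{B}_{k}^{\infty }}\left( u\right) $ at once. First choose an open Stein set $\Omega $ with $K\subset \Omega \subset \mathbb{C}^{d}\smallsetminus \overline{B\left( \lambda ;r\right) }$; on $\Omega $ the holomorphic functions $z_{1}-\lambda _{1},\dots ,z_{d}-\lambda _{d}$ have no common zero, so Cartan's Theorem~B makes the associated Koszul complex exact and yields $h_{1},\dots ,h_{d}\in \mathcal{O}\left( \Omega \right) $ with $\sum _{j}\left( z_{j}-\lambda _{j}\right) h_{j}\left( z\right) =1$ on $\Omega $. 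Since $\overline{u\left( \mathbb{R}^{n}\right) }=K\subset \Omega $ and $k\in \mathcal{K}_{WL}\left( \mathbb{R}^{n}\right) $, the strong form of the Wiener--L\'{e}vy theorem for $\mathcal{B}_{k}^{\infty }$ proved above applies to each $h_{j}$ and gives $a_{j}:=h_{j}\circ u\in \mathcal{B}_{k}^{\infty }\left( \mathbb{R}^{n}\right) $ (the construction going through the Calder\'{o}n formula \eqref{ksc}); as a composition this function has value $h_{j}\left( u\left( x\right) \right) $ at $x$. Evaluating the B\'{e}zout identity at $z=u\left( x\right) \in K\subset \Omega $ then gives $\sum _{j}\left( u_{j}\left( x\right) -\lambda _{j}\right) a_{j}\left( x\right) =1$ pointwise, i.e. $\sum _{j}\left( u_{j}-\lambda _{j}\right) a_{j}=1$ in $\mathcal{B}_{k}^{\infty }\left( \mathbb{R}^{n}\right) $.

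I expect the delicate point to be the choice of $\Omega $: a Stein neighborhood of $K$ avoiding $\lambda $ exists precisely when $\lambda $ lies outside the polynomial hull of $K=\overline{u\left( \mathbb{R}^{n}\right) }$. For $d=1$ there is nothing to check, since $\mathbb{C}\smallsetminus \left\{ \lambda \right\} $ is Stein and $h\left( z\right) =\left( z-\lambda \right) ^{-1}$ works directly — this recovers Kato's corollary above. For $d\geq 2$ one should either verify that $\overline{u\left( \mathbb{R}^{n}\right) }$ is polynomially convex, or bypass the holomorphic B\'{e}zout solution by taking the explicit $a_{j}=\overline{\left( u_{j}-\lambda _{j}\right) }\big/\sum _{l}\left\vert u_{l}-\lambda _{l}\right\vert ^{2}$ (legitimate, as the denominator is bounded away from $0$) and proving $a_{j}\in \mathcal{B}_{k}^{\infty }\left( \mathbb{R}^{n}\right) $ by means of the product estimate following Lemma \ref{kh11}; this requires $\overline{u_{j}-\lambda _{j}}\in \mathcal{B}_{k}^{\infty }\left( \mathbb{R}^{n}\right) $ — automatic when the $u_{j}$ are smooth since $\mathcal{BC}^{\infty }\subset \mathcal{B}_{k}^{\infty }$, and otherwise needing a symmetry of $k$ such as $\check{k}\approx k$ — together with a convolution bound controlling $\frac{1}{k^{2}}\ast \frac{1}{\check{k}^{2}}$. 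Once the $a_{j}$ are in hand the conclusion follows as above, and together with the first inclusion this proves $\sigma _{\mathcal{B}_{k}^{\infty }}\left( u_{1},\dots ,u_{d}\right) =\overline{u\left( \mathbb{R}^{n}\right) }$.
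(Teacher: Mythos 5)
Your first inclusion is the paper's: evaluations $\delta_x$ are characters, so $\overline{u(\mathbb{R}^n)}\subset\sigma_{\mathcal{B}_k^\infty}(u)$. For the reverse inclusion, however, your primary route (Stein neighborhood $\Omega$ of $K=\overline{u(\mathbb{R}^n)}$ avoiding $\lambda$, then Cartan B/Koszul to solve the B\'ezout identity holomorphically) has a genuine obstruction for $d\geq 2$ that your own caveat only half-acknowledges: such an $\Omega$ need not exist at all. For example, if $K$ is the unit sphere in $\mathbb{C}^2$ and $\lambda=0$, any connected neighborhood of $K$ omitting $0$ has a compact "hole" in its complement, hence by Hartogs is not holomorphically convex and cannot be Stein; and $\overline{u(\mathbb{R}^n)}$ has no reason to be polynomially convex. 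So that route cannot be completed in general. What saves you is that your declared fallback is \emph{exactly} the paper's proof: the paper takes $u_\lambda=\sum_j\overline{(u_j-\lambda_j)}(u_j-\lambda_j)\geq c>0$, invokes the preceding Kato corollary to get $1/u_\lambda\in\mathcal{B}_k^\infty$, and sets $v_j=\overline{(u_j-\lambda_j)}/u_\lambda$, which solves $\sum_j v_j(u_j-\lambda_j)=1$ and hence excludes $\lambda$ from the joint spectrum. Your observation that this needs $\overline{(u_j-\lambda_j)}\in\mathcal{B}_k^\infty$ — i.e.\ stability of $\mathcal{B}_k^\infty$ under conjugation, which amounts to $\check{k}\leq Cst\cdot k$ — is a legitimate point; the paper uses this implicitly without comment (it is automatic for even weights such as $\langle\cdot\rangle^s$, the Kato--Sobolev case). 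In short: drop the Stein/Cartan detour, promote the explicit $a_j=\overline{(u_j-\lambda_j)}/u_\lambda$ to the main argument, and you have the paper's proof.
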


\begin{proof}
Since 
\begin{equation*}
\delta _{x}:\mathcal{B}_{k}^{\infty }\left( \mathbb{R}^{n}\right) \subset 
\mathcal{BC}\left( 
\mathbb{R}
^{n}\right) \rightarrow 
\mathbb{C}
,\quad w\rightarrow w\left( x\right) ,
\end{equation*}%
is a multiplicative linear functional, Theorem 3.1.14 of \cite{Hormander 2}
implies the inclusion $\overline{u\left( 
\mathbb{R}
^{n}\right) }\subset \sigma _{\mathcal{B}_{k}^{\infty }}\left(
u_{1},...,u_{d}\right) $. On the other hand, if $\lambda =\left( \lambda
_{1},...,\lambda _{d}\right) \notin \overline{u\left( 
\mathbb{R}
^{n}\right) }$, then 
\begin{equation*}
u_{\lambda }=\overline{\left( u_{1}-\lambda _{1}\right) }\left(
u_{1}-\lambda _{1}\right) +...+\overline{\left( u_{d}-\lambda _{d}\right) }%
\left( u_{d}-\lambda _{d}\right) \in \mathcal{B}_{k}^{\infty }\left( \mathbb{%
R}^{n}\right)
\end{equation*}%
satisfies the condition 
\begin{equation*}
u_{\lambda }\left( x\right) \geq c>0,\quad x\in 
\mathbb{R}
^{n}.
\end{equation*}%
It follows that 
\begin{equation*}
\frac{1}{u_{\lambda }}\in \mathcal{B}_{k}^{\infty }\left( \mathbb{R}%
^{n}\right)
\end{equation*}%
and%
\begin{equation*}
v_{1}\left( u_{1}-\lambda _{1}\right) +...+v_{d}\left( u_{d}-\lambda
_{d}\right) =1
\end{equation*}%
with $v_{1}=\overline{\left( u_{1}-\lambda _{1}\right) }/u_{\lambda
},...,v_{d}=\overline{\left( u_{d}-\lambda _{d}\right) }/u_{\lambda }\in 
\mathcal{B}_{k}^{\infty }\left( \mathbb{R}^{n}\right) $. The last equality
expresses precisely that $\lambda \notin \sigma _{\mathcal{B}_{k}^{\infty
}}\left( u_{1},...,u_{d}\right) $.
\end{proof}

\begin{corollary}
Suppose that $k\in \mathcal{K}_{a}\left( \mathbb{R}^{n}\right) $ satisfies
the WL-condition. Let $\Omega =\mathring{\Omega}\subset 
\mathbb{C}
^{d}$ and $\Phi :\Omega \rightarrow 
\mathbb{C}
$ a holomorphic function.

$\left( \mathtt{a}\right) $ Let $1\leq p<\infty $. If $u=\left(
u_{1},...,u_{d}\right) \in \mathcal{B}_{k}^{p}\left( 
\mathbb{R}
^{n}\right) ^{d}$ satisfies the condition $\overline{u_{1}\left( 
\mathbb{R}
^{n}\right) }\times ...\times \overline{u_{d}\left( 
\mathbb{R}
^{n}\right) }\subset \Omega $ and if $\Phi \left( 0\right) =0$, then $\Phi
\left( u\right) \in \mathcal{B}_{k}^{p}\left( 
\mathbb{R}
^{n}\right) $.

$\left( \mathtt{b}\right) $ If $u=\left( u_{1},...,u_{d}\right) \in
B_{2,k}\left( \mathbb{R}^{n}\right) ^{d}$ satisfies the condition $\overline{%
u_{1}\left( 
\mathbb{R}
^{n}\right) }\times ...\times \overline{u_{d}\left( 
\mathbb{R}
^{n}\right) }\subset \Omega $ and if $\Phi \left( 0\right) =0$, then $\Phi
\left( u\right) \in \mathcal{B}_{k}\left( \mathbb{R}^{n}\right)
=B_{2,k}\left( \mathbb{R}^{n}\right) $.
\end{corollary}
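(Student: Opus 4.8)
The plan is to reduce everything to the Wiener--L\'{e}vy theorem for $\mathcal{B}_{k}^{\infty}$ together with the fact that, since $k\in\mathcal{K}_{a}\left(\mathbb{R}^{n}\right)$, the space $\mathcal{B}_{k}^{p}\left(\mathbb{R}^{n}\right)$ is an ideal in the Banach algebra $\mathcal{B}_{k}^{\infty}\left(\mathbb{R}^{n}\right)$ (the corollary to Lemma \ref{kh11} asserting that $\mathcal{B}_{k}^{\infty}\left(\mathbb{R}^{n}\right)$ is a Banach algebra and $\mathcal{B}_{k}^{p}\left(\mathbb{R}^{n}\right)$ an ideal in it). Part $\left(\mathtt{b}\right)$ will then be the case $p=2$ of part $\left(\mathtt{a}\right)$, because the localization principle gives $\mathcal{B}_{k}\left(\mathbb{R}^{n}\right)=\mathcal{B}_{k}^{2}\left(\mathbb{R}^{n}\right)=B_{2,k}\left(\mathbb{R}^{n}\right)$; so only $\left(\mathtt{a}\right)$ needs a proof.

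For $\left(\mathtt{a}\right)$ I would note first that $u\in\mathcal{B}_{k}^{p}\left(\mathbb{R}^{n}\right)^{d}\subset\mathcal{B}_{k}^{\infty}\left(\mathbb{R}^{n}\right)^{d}$ (Proposition \ref{ks15}), and that $k\in\mathcal{K}_{WL}\left(\mathbb{R}^{n}\right)$ forces $1/k\in L^{2}\left(\mathbb{R}^{n}\right)$ (from $k^{-\delta}\in L^{2}$ and, by Corollary \ref{kh10}, $k$ bounded below), so by Proposition \ref{ks15}$\left(\mathtt{g}\right)$ each $u_{j}$ is bounded and continuous; hence $K:=\overline{u_{1}\left(\mathbb{R}^{n}\right)}\times\cdots\times\overline{u_{d}\left(\mathbb{R}^{n}\right)}$ is a compact subset of $\Omega$ containing $\overline{u\left(\mathbb{R}^{n}\right)}$. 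The key step is the claim that there exist an open set $\Omega^{\prime}$ with $K\subset\Omega^{\prime}\subset\Omega$ and holomorphic functions $\Psi_{1},\dots,\Psi_{d}$ on $\Omega^{\prime}$ with $\Phi=\sum_{j=1}^{d}z_{j}\Psi_{j}$ on $\Omega^{\prime}$. To get it I would take $\Omega^{\prime}=U_{1}\times\cdots\times U_{d}$ with $U_{j}\subset\mathbb{C}$ a small open neighborhood of $\overline{u_{j}\left(\mathbb{R}^{n}\right)}$, chosen (by compactness of $K$) so small that $\Omega^{\prime}\subset\Omega$. If $0\notin\overline{u_{j_{0}}\left(\mathbb{R}^{n}\right)}$ for some $j_{0}$, I would shrink $U_{j_{0}}$ so that $0\notin U_{j_{0}}$; then $z_{j_{0}}$ is non-vanishing on $\Omega^{\prime}$ and $\Psi_{j_{0}}:=\Phi/z_{j_{0}}$, $\Psi_{j}:=0$ $\left(j\neq j_{0}\right)$ works. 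If $0\in\overline{u_{j}\left(\mathbb{R}^{n}\right)}$ for every $j$, then $0\in\Omega^{\prime}$ and, by hypothesis, $\Phi\left(0\right)=0$; in this case I would peel off one coordinate at a time: with $z=\left(z_{1},z^{\prime}\right)$, the function $g\left(z\right)=\Phi\left(z_{1},z^{\prime}\right)-\Phi\left(0,z^{\prime}\right)$ is holomorphic on $\Omega^{\prime}$ and vanishes on $\left\{z_{1}=0\right\}$, so $\Psi_{1}:=g/z_{1}$ is holomorphic on $\Omega^{\prime}$ (removable singularity in $z_{1}$, then Hartogs) and $\Phi=z_{1}\Psi_{1}+\Phi\left(0,z^{\prime}\right)$; since $z^{\prime}\mapsto\Phi\left(0,z^{\prime}\right)$ again vanishes at the origin, I iterate until all $d$ coordinates are extracted.

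Granting the claim, I would finish as follows. Because $\overline{u\left(\mathbb{R}^{n}\right)}\subset\Omega^{\prime}$ and each $\Psi_{j}$ is holomorphic on $\Omega^{\prime}$, the Wiener--L\'{e}vy theorem for $\mathcal{B}_{k}^{\infty}$ (applicable since $k\in\mathcal{K}_{WL}\left(\mathbb{R}^{n}\right)$) gives $\Psi_{j}\left(u\right)\in\mathcal{B}_{k}^{\infty}\left(\mathbb{R}^{n}\right)$. Evaluating $\Phi=\sum_{j}z_{j}\Psi_{j}$ at $u\left(x\right)$ shows $\Phi\left(u\right)\left(x\right)=\sum_{j}u_{j}\left(x\right)\Psi_{j}\left(u\right)\left(x\right)$ for every $x$; both members being continuous and the right-hand side lying in $\mathcal{B}_{k}^{\infty}$, this is the identity $\Phi\left(u\right)=\sum_{j}u_{j}\Psi_{j}\left(u\right)$ in $\mathcal{B}_{k}^{\infty}\left(\mathbb{R}^{n}\right)$. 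Since $u_{j}\in\mathcal{B}_{k}^{p}\left(\mathbb{R}^{n}\right)$, $\Psi_{j}\left(u\right)\in\mathcal{B}_{k}^{\infty}\left(\mathbb{R}^{n}\right)$ and $\mathcal{B}_{k}^{p}\left(\mathbb{R}^{n}\right)$ is an ideal of $\mathcal{B}_{k}^{\infty}\left(\mathbb{R}^{n}\right)$ closed under finite sums, I conclude $\Phi\left(u\right)\in\mathcal{B}_{k}^{p}\left(\mathbb{R}^{n}\right)$.

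The main obstacle is the claim: exhibiting the holomorphic division $\Phi=\sum_{j}z_{j}\Psi_{j}$ near $\overline{u\left(\mathbb{R}^{n}\right)}$ when $\Omega$ is merely open in $\mathbb{C}^{d}$. What makes it elementary is that the hypothesis $\overline{u_{1}\left(\mathbb{R}^{n}\right)}\times\cdots\times\overline{u_{d}\left(\mathbb{R}^{n}\right)}\subset\Omega$ allows one to work inside a \emph{product} of plane domains contained in $\Omega$, on which divisibility of a holomorphic function by a coordinate whose zero set it contains is just the one--variable removable--singularity theorem; the only delicate point is the dichotomy according to whether the origin lies in $\overline{u_{1}\left(\mathbb{R}^{n}\right)}\times\cdots\times\overline{u_{d}\left(\mathbb{R}^{n}\right)}$, which is precisely where the hypothesis $\Phi\left(0\right)=0$ enters.
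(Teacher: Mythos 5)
Your proof is correct and follows essentially the same route as the paper: shrink $\Omega$ to a product of plane domains around the $\overline{u_{j}\left(\mathbb{R}^{n}\right)}$, divide $\Phi$ by the coordinates one at a time using $\Phi\left(0\right)=0$ and the one-variable removable-singularity theorem, apply the Wiener--L\'{e}vy theorem to the quotients, and conclude via the ideal property of $\mathcal{B}_{k}^{p}$ in $\mathcal{B}_{k}^{\infty}$, with part $\left(\mathtt{b}\right)$ reduced to $p=2$ by the localization principle. The only cosmetic difference is that the paper first observes that $0\in\overline{u_{j}\left(\mathbb{R}^{n}\right)}$ for every $j$ automatically (since $\mathcal{B}_{k}^{p}$ is a proper ideal, $0$ lies in the spectrum of each $u_{j}$), so your first case is vacuous, and it organizes the coordinate-peeling as an induction on $d$ rather than writing the full decomposition $\Phi=\sum_{j}z_{j}\Psi_{j}$ up front.
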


\begin{proof}
$\left( \mathtt{a}\right) $ Since $\mathcal{B}_{k}^{p}\left( \mathbb{R}%
^{n}\right) $ is an ideal in the algebra $\mathcal{B}_{k}^{\infty }\left( 
\mathbb{R}^{n}\right) $, it follows that $0$ belongs to the spectrum of any
element of $\mathcal{B}_{k}^{p}\left( 
\mathbb{R}
^{n}\right) $. Hence $0\in \overline{u_{1}\left( 
\mathbb{R}
^{n}\right) }\times ...\times \overline{u_{d}\left( 
\mathbb{R}
^{n}\right) }\subset \mathit{\Omega }$. Shrinking $\mathit{\Omega }$ if
necessary, we can assume that $\mathit{\Omega }=\mathit{\Omega }_{1}\times
...\times \mathit{\Omega }_{d}$ with $\overline{u_{k}\left( 
\mathbb{R}
^{n}\right) }\subset \mathit{\Omega }_{k}$, $k=1,...,d$. Now we continue by
induction on $d$.

Let $F:\mathit{\Omega }\rightarrow 
\mathbb{C}
$ be the holomorphic function defined by 
\begin{equation*}
F\left( z_{1},...,z_{d}\right) =\left\{ 
\begin{array}{ccc}
\frac{\mathit{\Phi }\left( z_{1},...,z_{d}\right) -\mathit{\Phi }\left(
0,...,z_{d}\right) }{z_{1}} & if & z_{1}\neq 0, \\ 
\frac{\partial \mathit{\Phi }}{\partial z_{1}}\left( 0,...,z_{d}\right) & if
& z_{1}=0.%
\end{array}%
\right.
\end{equation*}%
Then $\mathit{\Phi }\left( z_{1},...,z_{d}\right) =z_{1}F\left(
z_{1},...,z_{d}\right) +\mathit{\Phi }\left( 0,...,z_{d}\right) $, so 
\begin{equation*}
\mathit{\Phi }\left( u\right) =u_{1}F\left( u\right) +\mathit{\Phi }\left(
0,...,u_{d}\right) \in \mathcal{B}_{k}^{p}\left( 
\mathbb{R}
^{n}\right)
\end{equation*}%
because $u_{1}F\left( u\right) \in \mathcal{B}_{k}^{p}\left( 
\mathbb{R}
^{n}\right) \cdot \mathcal{B}_{k}^{\infty }\left( \mathbb{R}^{n}\right)
\subset \mathcal{B}_{k}^{p}\left( 
\mathbb{R}
^{n}\right) $ and $\mathit{\Phi }\left( 0,...,u_{d}\right) \in \mathcal{B}%
_{k}^{p}\left( 
\mathbb{R}
^{n}\right) $ by inductive hypothesis.

$\left( \mathtt{b}\right) $ is a consequence of $\left( \mathtt{a}\right) $
and of localization principle.
\end{proof}

\section{The spaces $\mathcal{B}_{k}^{p}$ and Schatten-von Neumann class
properties for pseudo-differential operators}

We begin this section with some interpolation results of $\mathcal{B}%
_{k}^{p} $ spaces.

For $k\in \mathcal{K}_{pol}\left( \mathbb{R}^{n}\right) $, if 
\begin{gather*}
F_{k}=\left\{ v\in \mathcal{S}^{\prime }\left( \mathbb{R}^{n}\right) :kv\in
L^{2}\left( \mathbb{R}^{n}\right) \right\} , \\
\left\Vert v\right\Vert _{F_{k}}=\left\Vert kv\right\Vert _{L^{2}},\quad
v\in F_{k},
\end{gather*}%
then the Fourier transform $\mathcal{F}$ is an isometry (up to a constant
factor) from $\mathcal{B}_{k}\left( \mathbb{R}^{n}\right) $ onto $F_{k}$ and
the inverse Fourier transform $\mathcal{F}^{-1}$ is an isometry (up to a
constant factor) from $F_{k}$ onto $\mathcal{B}_{k}\left( \mathbb{R}%
^{n}\right) $. The interpolation property implies then that $\mathcal{F}$
maps continuously $\left[ \mathcal{B}_{k_{0}}\left( \mathbb{R}^{n}\right) ,%
\mathcal{B}_{k_{1}}\left( \mathbb{R}^{n}\right) \right] _{\theta }$ into $%
\left[ F_{k_{0}},F_{k_{1}}\right] _{\theta }$ and $\mathcal{F}^{-1}$ maps
continuously $\left[ F_{k_{0}},F_{k_{1}}\right] _{\theta }$ into $\left[ 
\mathcal{B}_{k_{0}}\left( \mathbb{R}^{n}\right) ,\mathcal{B}_{k_{1}}\left( 
\mathbb{R}^{n}\right) \right] _{\theta }$, so that $\left[ \mathcal{B}%
_{k_{0}}\left( \mathbb{R}^{n}\right) ,\mathcal{B}_{k_{1}}\left( \mathbb{R}%
^{n}\right) \right] _{\theta }$ coincides with the tempered distributions
whose Fourier transform belongs to $\left[ F_{k_{0}},F_{k_{1}}\right]
_{\theta }$ (and one deduces in the same way that it is an isometry if one
uses the corresponding norms). Identifying interpolation spaces between
spaces $\mathcal{B}_{k}\left( \mathbb{R}^{n}\right) $ is then the same
question as interpolating between some $L^{2}$ spaces with weights. The
following lemma is a consequence of this remark and Theorem 1.18.5 in \cite%
{Triebel}.

\begin{lemma}
If $k_{0}$, $k_{1}\in \mathcal{K}_{pol}\left( \mathbb{R}^{n}\right) $, $%
0<\theta <1$ and $k=k_{0}^{1-\theta }\cdot k_{1}^{\theta }\in \mathcal{K}%
_{pol}\left( \mathbb{R}^{n}\right) $, then 
\begin{equation*}
\left[ \mathcal{B}_{k_{0}}\left( \mathbb{R}^{n}\right) ,\mathcal{B}%
_{k_{1}}\left( \mathbb{R}^{n}\right) \right] _{\theta }=\mathcal{B}%
_{k}\left( \mathbb{R}^{n}\right) .
\end{equation*}
\end{lemma}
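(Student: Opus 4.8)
The plan is to transport the entire computation through the Fourier transform, as the remark preceding the statement already indicates. First I would record that for each $i\in\{0,1\}$ the Fourier transform $\mathcal{F}$ is, up to a fixed positive constant coming from the chosen normalization, an isometric isomorphism of $\mathcal{B}_{k_i}\left(\mathbb{R}^n\right)$ onto $F_{k_i}$, with inverse given (up to the reciprocal constant) by $\mathcal{F}^{-1}$. Since one and the same pair of mutually inverse topological isomorphisms intertwines the Banach couple $\left(\mathcal{B}_{k_0}\left(\mathbb{R}^n\right),\mathcal{B}_{k_1}\left(\mathbb{R}^n\right)\right)$ with the Banach couple $\left(F_{k_0},F_{k_1}\right)$, functoriality of the complex interpolation method yields that $\mathcal{F}$ restricts to a topological isomorphism
\begin{equation*}
\left[\mathcal{B}_{k_0}\left(\mathbb{R}^n\right),\mathcal{B}_{k_1}\left(\mathbb{R}^n\right)\right]_\theta \;\longrightarrow\; \left[F_{k_0},F_{k_1}\right]_\theta ,
\end{equation*}
which is an isometry when the corresponding norms are used. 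Here one should note that $\left(\mathcal{B}_{k_0},\mathcal{B}_{k_1}\right)$ is an admissible interpolation couple, which is immediate because both spaces embed continuously into the Hausdorff space $\mathcal{S}^{\prime}\left(\mathbb{R}^n\right)$ by Lemma \ref{ks18}, and likewise for $\left(F_{k_0},F_{k_1}\right)$.

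Next I would identify $F_{k_i}$ with the weighted Lebesgue space $L^2\left(\mathbb{R}^n,k_i^{2}\,\mathtt{d}\xi\right)$ via the isometric isomorphism $v\mapsto k_i v$, and invoke Theorem 1.18.5 of \cite{Triebel} on complex interpolation of weighted $L^p$-spaces. With $p_0=p_1=2$ and the strictly positive measurable weights $w_j=k_j^{2}$ this theorem gives, with coincidence of norms,
\begin{equation*}
\left[L^2\left(k_0^{2}\,\mathtt{d}\xi\right),L^2\left(k_1^{2}\,\mathtt{d}\xi\right)\right]_\theta = L^2\left(\left(k_0^{2}\right)^{1-\theta}\left(k_1^{2}\right)^{\theta}\,\mathtt{d}\xi\right)=L^2\left(k^{2}\,\mathtt{d}\xi\right),
\end{equation*}
since $k^{2}=k_0^{2(1-\theta)}k_1^{2\theta}$. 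Undoing the identification, $\left[F_{k_0},F_{k_1}\right]_\theta=F_k$. Note that $k\in\mathcal{K}_{pol}\left(\mathbb{R}^n\right)$ already by Lemma \ref{kh5}, so that $\mathcal{B}_k\left(\mathbb{R}^n\right)$ and $F_k$ are well defined, and that the hypotheses of Triebel's theorem ($\sigma$-finite measure space, strictly positive measurable weights) are plainly met.

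Finally I would compose the two steps: applying the inverse isometric isomorphism $\mathcal{F}^{-1}:F_k\to\mathcal{B}_k\left(\mathbb{R}^n\right)$ to the identity $\left[F_{k_0},F_{k_1}\right]_\theta=F_k$ just obtained, together with the first step, gives
\begin{equation*}
\left[\mathcal{B}_{k_0}\left(\mathbb{R}^n\right),\mathcal{B}_{k_1}\left(\mathbb{R}^n\right)\right]_\theta=\mathcal{B}_k\left(\mathbb{R}^n\right)
\end{equation*}
with coincidence of norms. I do not expect a genuine obstacle here: the only points requiring care are bookkeeping ones, namely tracking the Fourier normalization constants so that the stated identifications are literal isometries rather than mere norm equivalences, and citing Triebel's weighted interpolation theorem in the precise form that applies to the couple $\left(L^2(k_0^{2}\,\mathtt{d}\xi),L^2(k_1^{2}\,\mathtt{d}\xi)\right)$. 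As a minor additional remark, the very same argument, with $\left[\cdot,\cdot\right]_\theta$ replaced throughout by the real method $\left(\cdot,\cdot\right)_{\theta,q}$, would identify the real interpolation spaces as well, should they be needed later.
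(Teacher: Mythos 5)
Your proof is correct and follows exactly the route the paper takes: transporting the couple through the Fourier transform to the weighted $L^{2}$ spaces $F_{k_{i}}$ and invoking Theorem 1.18.5 of \cite{Triebel}, as the remark preceding the lemma already outlines. The extra care you take with the admissibility of the interpolation couples and the normalization constants is sound but not a departure from the paper's argument.
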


Using the results of \cite{Triebel} Subsection 1.18.1 we obtain the
following corollary.

\begin{corollary}
Let $k_{0}$, $k_{1}\in \mathcal{K}_{pol}\left( \mathbb{R}^{n}\right) $, $%
1\leq p_{0}<\infty $, $1\leq p_{1}\leq \infty $, $0<\theta <1$ and 
\begin{equation*}
\frac{1}{p}=\frac{1-\theta }{p_{0}}+\frac{\theta }{p_{1}},\quad
k=k_{0}^{1-\theta }\cdot k_{1}^{\theta }\in \mathcal{K}\left( \mathbb{R}%
^{n}\right) .
\end{equation*}%
Then 
\begin{equation*}
\left[ l^{p_{0}}\left( \mathbb{Z}^{n},\mathcal{B}_{k_{0}}\left( \mathbb{R}%
^{n}\right) \right) ,l^{p_{1}}\left( \mathbb{Z}^{n},\mathcal{B}%
_{k_{1}}\left( \mathbb{R}^{n}\right) \right) \right] _{\theta }=l^{p}\left( 
\mathbb{Z}^{n},\mathcal{B}_{k}\left( \mathbb{R}^{n}\right) \right) .
\end{equation*}
\end{corollary}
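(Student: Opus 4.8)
The plan is to deduce the identity by combining the scalar interpolation result just proved, $[\mathcal{B}_{k_0}(\mathbb{R}^n),\mathcal{B}_{k_1}(\mathbb{R}^n)]_\theta=\mathcal{B}_k(\mathbb{R}^n)$, with the general theorem on complex interpolation of vector-valued $L^p$-spaces from \cite{Triebel}, Subsection 1.18.1. That theorem asserts that for an interpolation couple $\{A_0,A_1\}$ of Banach spaces, a $\sigma$-finite measure space $(M,\mu)$, a number $0<\theta<1$, and exponents $1\le p_0,p_1\le\infty$ with $\min(p_0,p_1)<\infty$, one has $\bigl[L^{p_0}(M,\mu;A_0),L^{p_1}(M,\mu;A_1)\bigr]_\theta=L^{p}(M,\mu;[A_0,A_1]_\theta)$ with $1/p=(1-\theta)/p_0+\theta/p_1$, isometrically. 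The condition $\min(p_0,p_1)<\infty$ is guaranteed here by the standing hypothesis $p_0<\infty$.

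First I would check that $\{\mathcal{B}_{k_0}(\mathbb{R}^n),\mathcal{B}_{k_1}(\mathbb{R}^n)\}$ is an admissible interpolation couple: by Lemma \ref{ks18} $(\mathtt{a})$ both spaces embed continuously into $\mathcal{S}^{\prime}(\mathbb{R}^n)$, which serves as the ambient Hausdorff locally convex space, so the complex functor $[\,\cdot\,,\,\cdot\,]_\theta$ is applicable. Then I would specialize the quoted theorem to $A_j=\mathcal{B}_{k_j}(\mathbb{R}^n)$ and to $\mathbb{Z}^n$ equipped with the counting measure; since in that case $L^{p_j}$ is exactly $\ell^{p_j}(\mathbb{Z}^n,\mathcal{B}_{k_j}(\mathbb{R}^n))$, this produces
\[
\bigl[\ell^{p_0}(\mathbb{Z}^n,\mathcal{B}_{k_0}(\mathbb{R}^n)),\ell^{p_1}(\mathbb{Z}^n,\mathcal{B}_{k_1}(\mathbb{R}^n))\bigr]_\theta=\ell^{p}\bigl(\mathbb{Z}^n,[\mathcal{B}_{k_0}(\mathbb{R}^n),\mathcal{B}_{k_1}(\mathbb{R}^n)]_\theta\bigr).
\]
Finally I would insert $[\mathcal{B}_{k_0}(\mathbb{R}^n),\mathcal{B}_{k_1}(\mathbb{R}^n)]_\theta=\mathcal{B}_k(\mathbb{R}^n)$ from the preceding lemma, whose hypotheses $k_0,k_1,k=k_0^{1-\theta}k_1^{\theta}\in\mathcal{K}_{pol}(\mathbb{R}^n)$ coincide with those assumed here (and recall from Lemma \ref{kh5} that such a $k$ automatically lies in $\mathcal{K}_{pol}(\mathbb{R}^n)$), thereby obtaining the claimed equality.

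The argument so described is essentially formal: no new estimate on the spaces $\mathcal{B}_k$ is needed beyond those already established. The only point that requires a little care is the admission of $p_1=\infty$ in the statement. One must then quote the variant of the $L^p$-interpolation theorem of \cite{Triebel} in which one exponent may be infinite — legitimate precisely because $p_0<\infty$ — and remember that for the complex method the identification of $[L^{p_0}(A_0),L^{\infty}(A_1)]_\theta$ with $L^{p}([A_0,A_1]_\theta)$ still holds in that range. I expect this to be the only place where one cannot simply invoke the statement of Subsection 1.18.1 verbatim.
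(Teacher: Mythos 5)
Your argument is exactly the paper's: the corollary is stated there as an immediate consequence of the preceding lemma $[\mathcal{B}_{k_{0}},\mathcal{B}_{k_{1}}]_{\theta }=\mathcal{B}_{k}$ together with the vector-valued sequence-space interpolation theorem of Subsection 1.18.1 of \cite{Triebel}, which is precisely what you invoke (and that theorem does cover $p_{1}=\infty $ when $p_{0}<\infty $, as you note). The proposal is correct and adds only the explicit bookkeeping the paper leaves implicit.
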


We pass now to the Kato-H\"{o}rmander spaces $\mathcal{B}_{k}^{p}$. We
choose $\chi _{\mathbb{Z}^{n}}\in \mathcal{C}_{0}^{\infty }\left( \mathbb{R}%
^{n}\right) $ so that $\sum_{\gamma \in \mathbb{Z}^{n}}\chi _{\mathbb{Z}%
^{n}}\left( \cdot -\gamma \right) =1$. For $\gamma \in \mathbb{Z}^{n}$ we
define the operator%
\begin{equation*}
S_{\gamma }:\mathcal{D}^{\prime }\left( \mathbb{R}^{n}\right) \rightarrow 
\mathcal{D}^{\prime }\left( \mathbb{R}^{n}\right) ,\quad S_{\gamma }u=\left(
\tau _{\gamma }\chi _{\mathbb{Z}^{n}}\right) u.
\end{equation*}%
Now from the definition of $\mathcal{B}_{k}^{p}$ it follows that the linear
operator 
\begin{equation*}
S:\mathcal{B}_{k}^{p}\left( \mathbb{R}^{n}\right) \rightarrow l^{p}\left( 
\mathbb{Z}^{n},\mathcal{B}_{k}\left( \mathbb{R}^{n}\right) \right) ,\quad
Su=\left( S_{\gamma }u\right) _{\gamma \in \mathbb{Z}^{n}}
\end{equation*}%
is well defined and continuous.

On the other hand, for any $\chi \in \mathcal{C}_{0}^{\infty }$ the operator 
\begin{gather*}
R_{\chi }:l^{p}\left( \mathbb{Z}^{n},\mathcal{B}_{k}\left( \mathbb{R}%
^{n}\right) \right) \rightarrow \mathcal{B}_{k}^{p}\left( \mathbb{R}%
^{n}\right) , \\
R_{\chi }\left( \left( u_{\gamma }\right) _{\gamma \in \mathbb{Z}%
^{n}}\right) =\sum_{\gamma \in \mathbb{Z}^{n}}\left( \tau _{\gamma }\chi
\right) u_{\gamma }
\end{gather*}%
is well defined and continuous.

Let $\underline{\mathbf{u}}=\left( u_{\gamma }\right) _{\gamma \in \mathbb{Z}%
^{n}}\in l^{p}\left( \mathbb{Z}^{n},\mathcal{B}_{k}\left( \mathbb{R}%
^{n}\right) \right) $. Using Proposition \ref{ks4} we get 
\begin{equation*}
\left\Vert \left( \tau _{\gamma ^{\prime }}\chi _{\mathbb{Z}^{n}}\right)
\left( \tau _{\gamma }\chi \right) u_{\gamma }\right\Vert _{\mathcal{B}%
_{k}}\leq Cst\sup_{\left\vert \alpha +\beta \right\vert \leq
m_{k}}\left\vert \left( \left( \tau _{\gamma ^{\prime }}\partial ^{\alpha
}\chi _{\mathbb{Z}^{n}}\right) \left( \tau _{\gamma }\partial ^{\beta }\chi
\right) \right) \right\vert \left\Vert u_{\gamma }\right\Vert _{\mathcal{B}%
_{k}}.
\end{equation*}%
where $m_{k}=\left[ N+\frac{n+1}{2}\right] +1$. Now for some continuous
seminorm $p=p_{n,k}$ on $\mathcal{S}\left( \mathbb{R}^{n}\right) $ we have 
\begin{eqnarray*}
\left\vert \left( \left( \tau _{\gamma ^{\prime }}\partial ^{\alpha }\chi _{%
\mathbb{Z}^{n}}\right) \left( \tau _{\gamma }\partial ^{\beta }\chi \right)
\right) \left( x\right) \right\vert &\leq &p\left( \chi _{\mathbb{Z}%
^{n}}\right) p\left( \chi \right) \left\langle x-\gamma ^{\prime
}\right\rangle ^{-2\left( n+1\right) }\left\langle x-\gamma \right\rangle
^{-2\left( n+1\right) } \\
&\leq &2^{n+1}p\left( \chi _{\mathbb{Z}^{n}}\right) p\left( \chi \right)
\left\langle 2x-\gamma ^{\prime }-\gamma \right\rangle ^{-n-1}\left\langle
\gamma ^{\prime }-\gamma \right\rangle ^{-n-1} \\
&\leq &2^{n+1}p\left( \chi _{\mathbb{Z}^{n}}\right) p\left( \chi \right)
\left\langle \gamma ^{\prime }-\gamma \right\rangle ^{-n-1},\quad \left\vert
\alpha +\beta \right\vert \leq m_{k}.
\end{eqnarray*}%
Hence 
\begin{gather*}
\sup_{\left\vert \alpha +\beta \right\vert \leq m_{k}}\left\vert \left(
\left( \tau _{\gamma ^{\prime }}\partial ^{\alpha }\chi _{\mathbb{Z}%
^{n}}\right) \left( \tau _{\gamma }\partial ^{\beta }\chi \right) \right)
\right\vert \leq 2^{n+1}p\left( \chi _{\mathbb{Z}^{n}}\right) p\left( \chi
\right) \left\langle \gamma ^{\prime }-\gamma \right\rangle ^{-n-1}, \\
\left\Vert \left( \tau _{\gamma ^{\prime }}\chi _{\mathbb{Z}^{n}}\right)
\left( \tau _{\gamma }\chi \right) u_{\gamma }\right\Vert _{\mathcal{B}%
_{k}}\leq C\left( n,k\mathbf{,}\chi _{\mathbb{Z}^{n}},\chi \right)
\left\langle \gamma ^{\prime }-\gamma \right\rangle ^{-n-1}\left\Vert
u_{\gamma }\right\Vert _{\mathcal{B}_{k}}.
\end{gather*}%
The last estimate implies that 
\begin{equation*}
\left\Vert \left( \tau _{\gamma ^{\prime }}\chi _{\mathbb{Z}^{n}}\right)
R_{\chi }\left( \underline{\mathbf{u}}\right) \right\Vert _{\mathcal{B}%
_{k}}\leq C\left( n,k\mathbf{,}\chi _{\mathbb{Z}^{n}},\chi \right)
\sum_{\gamma \in \mathbb{Z}^{n}}\left\langle \gamma ^{\prime }-\gamma
\right\rangle ^{-n-1}\left\Vert u_{\gamma }\right\Vert _{\mathcal{B}_{k}}.
\end{equation*}%
Now Schur's lemma implies the result%
\begin{equation*}
\left( \sum_{\gamma ^{\prime }\in \mathbb{Z}^{n}}\left\Vert \left( \tau
_{\gamma ^{\prime }}\chi _{\mathbb{Z}^{n}}\right) R_{\chi }\left( \underline{%
\mathbf{u}}\right) \right\Vert _{\mathcal{B}_{k}}^{p}\right) ^{\frac{1}{p}%
}\leq C^{\prime }\left( n,k\mathbf{,}\chi _{\mathbb{Z}^{n}},\chi \right)
\left\Vert \left\langle \cdot \right\rangle ^{-n-1}\right\Vert
_{L^{1}}\left( \sum_{\gamma \in \mathbb{Z}^{n}}\left\Vert u_{\gamma
}\right\Vert _{\mathcal{B}_{k}}^{p}\right) ^{\frac{1}{p}}.
\end{equation*}

If $\chi =1$ on a neighborhood of $\mathtt{supp}\chi _{\mathbb{Z}^{n}}$,
then $\chi \chi _{\mathbb{Z}^{n}}=\chi _{\mathbb{Z}^{n}}$ and as a
consequence $R_{\chi }S=\mathtt{Id}_{\mathcal{B}_{k}^{p}\left( \mathbb{R}%
^{n}\right) }$:%
\begin{eqnarray*}
R_{\chi }Su &=&\sum_{\gamma \in \mathbb{Z}^{n}}\left( \tau _{\gamma }\chi
\right) S_{\gamma }u=\sum_{\gamma \in \mathbb{Z}^{n}}\left( \tau _{\gamma
}\chi \right) \left( \tau _{\gamma }\chi _{\mathbb{Z}^{n}}\right) u \\
&=&\sum_{\gamma \in \mathbb{Z}^{n}}\left( \tau _{\gamma }\chi _{\mathbb{Z}%
^{n}}\right) u=u.
\end{eqnarray*}%
Thus we proved the following result.

\begin{proposition}
Under the above conditions, the operator $R_{\chi }:l^{p}\left( \mathbb{Z}%
^{n},\mathcal{B}_{k}\left( \mathbb{R}^{n}\right) \right) \rightarrow 
\mathcal{B}_{k}^{p}\left( \mathbb{R}^{n}\right) $ is a retraction and the
operator $S:\mathcal{B}_{k}^{p}\left( \mathbb{R}^{n}\right) \rightarrow
l^{p}\left( \mathbb{Z}^{n},\mathcal{B}_{k}\left( \mathbb{R}^{n}\right)
\right) $ is a coretraction.
\end{proposition}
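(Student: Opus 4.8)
The plan is to assemble the three ingredients already prepared in the discussion above: recall that $R_{\chi }$ being a retraction (with coretraction $S$) means precisely that $R_{\chi }$ and $S$ are bounded and $R_{\chi }S=\mathtt{Id}_{\mathcal{B}_{k}^{p}\left( \mathbb{R}^{n}\right) }$, so it suffices to check (i) $S$ is bounded, (ii) $R_{\chi }$ is bounded, and (iii) $R_{\chi }S=\mathtt{Id}$. First I would record the boundedness of $S$. Since $\sum_{\gamma \in \mathbb{Z}^{n}}\tau _{\gamma }\chi _{\mathbb{Z}^{n}}=1$, for every $x\in \mathbb{R}^{n}$ there is at least one $\gamma \in \mathbb{Z}^{n}$ with $\chi _{\mathbb{Z}^{n}}\left( x-\gamma \right) \neq 0$, hence $\Psi _{\mathbb{Z}^{n},\chi _{\mathbb{Z}^{n}}}=\sum_{\gamma \in \mathbb{Z}^{n}}\left\vert \tau _{\gamma }\chi _{\mathbb{Z}^{n}}\right\vert ^{2}>0$; thus Proposition \ref{ks15} $\left( \mathtt{c}\right) $ applies with $\Gamma =\mathbb{Z}^{n}$ and the function $\chi _{\mathbb{Z}^{n}}$, and gives that
\begin{equation*}
\left\Vert Su\right\Vert _{l^{p}\left( \mathbb{Z}^{n},\mathcal{B}_{k}\left( \mathbb{R}^{n}\right) \right) }=\left( \sum_{\gamma \in \mathbb{Z}^{n}}\left\Vert \left( \tau _{\gamma }\chi _{\mathbb{Z}^{n}}\right) u\right\Vert _{\mathcal{B}_{k}}^{p}\right) ^{\frac{1}{p}}=\left\Vert u\right\Vert _{k,p,\mathbb{Z}^{n},\chi _{\mathbb{Z}^{n}}}
\end{equation*}
(with the obvious modification for $p=\infty $) is a norm on $\mathcal{B}_{k}^{p}\left( \mathbb{R}^{n}\right) $ equivalent to $\left\Vert \cdot \right\Vert _{k,p,\chi }$; in particular $S$ is bounded, and is in fact an isomorphism onto its range.

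Next, the boundedness of $R_{\chi }$ is precisely the Schur-type estimate carried out just before the statement: the pointwise bound $\sup_{\left\vert \alpha +\beta \right\vert \leq m_{k}}\left\vert \left( \tau _{\gamma ^{\prime }}\partial ^{\alpha }\chi _{\mathbb{Z}^{n}}\right) \left( \tau _{\gamma }\partial ^{\beta }\chi \right) \right\vert \leq Cst\left\langle \gamma ^{\prime }-\gamma \right\rangle ^{-n-1}$ combined with Proposition \ref{ks4} gives $\left\Vert \left( \tau _{\gamma ^{\prime }}\chi _{\mathbb{Z}^{n}}\right) R_{\chi }\left( \underline{\mathbf{u}}\right) \right\Vert _{\mathcal{B}_{k}}\leq Cst\sum_{\gamma \in \mathbb{Z}^{n}}\left\langle \gamma ^{\prime }-\gamma \right\rangle ^{-n-1}\left\Vert u_{\gamma }\right\Vert _{\mathcal{B}_{k}}$, and Schur's lemma on $\mathbb{Z}^{n}$ then yields $\left\Vert R_{\chi }\left( \underline{\mathbf{u}}\right) \right\Vert _{k,p,\mathbb{Z}^{n},\chi _{\mathbb{Z}^{n}}}\leq Cst\left\Vert \underline{\mathbf{u}}\right\Vert _{l^{p}\left( \mathbb{Z}^{n},\mathcal{B}_{k}\left( \mathbb{R}^{n}\right) \right) }$. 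By the norm equivalence of step (i) this shows $R_{\chi }$ is bounded into $\mathcal{B}_{k}^{p}\left( \mathbb{R}^{n}\right) $.

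Finally, the identity $R_{\chi }S=\mathtt{Id}$ follows from the standing hypothesis that $\chi =1$ on a neighborhood of $\mathtt{supp}\,\chi _{\mathbb{Z}^{n}}$, which forces $\chi \chi _{\mathbb{Z}^{n}}=\chi _{\mathbb{Z}^{n}}$, so that for any $u\in \mathcal{B}_{k}^{p}\left( \mathbb{R}^{n}\right) $
\begin{equation*}
R_{\chi }Su=\sum_{\gamma \in \mathbb{Z}^{n}}\left( \tau _{\gamma }\chi \right) \left( \tau _{\gamma }\chi _{\mathbb{Z}^{n}}\right) u=\sum_{\gamma \in \mathbb{Z}^{n}}\left( \tau _{\gamma }\chi _{\mathbb{Z}^{n}}\right) u=u.
\end{equation*}
Putting (i)--(iii) together shows that $R_{\chi }$ is a retraction and $S$ a coretraction. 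I do not anticipate a real obstacle here: essentially all the analytic content was already carried out in the paragraphs preceding the statement, and the only point deserving an explicit remark is the verification that $\Psi _{\mathbb{Z}^{n},\chi _{\mathbb{Z}^{n}}}>0$, which must be in place before Proposition \ref{ks15} $\left( \mathtt{c}\right) $ can be invoked.
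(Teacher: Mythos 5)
Your proof is correct and follows essentially the same route as the paper: the Schur-type estimate for $R_{\chi }$, the identity $R_{\chi }S=\mathtt{Id}$ from $\chi \chi _{\mathbb{Z}^{n}}=\chi _{\mathbb{Z}^{n}}$, and the boundedness of $S$ from the equivalence of norms on $\mathcal{B}_{k}^{p}$. Your explicit remark that $\sum_{\gamma }\tau _{\gamma }\chi _{\mathbb{Z}^{n}}=1$ forces $\Psi _{\mathbb{Z}^{n},\chi _{\mathbb{Z}^{n}}}>0$, so that Proposition \ref{ks15} $(\mathtt{c})$ applies, is a welcome clarification of a point the paper leaves implicit.
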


\begin{corollary}
\label{ks17}Let $k_{0},k_{1}\in \mathcal{K}_{pol}\left( \mathbb{R}%
^{n}\right) $, $1\leq p_{0}<\infty $, $1\leq p_{1}\leq \infty $, $0<\theta
<1 $ and 
\begin{equation*}
\frac{1}{p}=\frac{1-\theta }{p_{0}}+\frac{\theta }{p_{1}},\quad
k=k_{0}^{1-\theta }\cdot k_{1}^{\theta }\in \mathcal{K}\left( \mathbb{R}%
^{n}\right) .
\end{equation*}%
Then 
\begin{equation*}
\left[ \mathcal{B}_{k_{0}}^{p_{0}}\left( \mathbb{R}^{n}\right) ,\mathcal{B}%
_{k_{1}}^{p_{1}}\left( \mathbb{R}^{n}\right) \right] _{\theta }=\mathcal{B}%
_{k}^{p}\left( \mathbb{R}^{n}\right) .
\end{equation*}
\end{corollary}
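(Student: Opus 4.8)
The plan is to combine the retraction/coretraction machinery from the proposition preceding this statement with the interpolation identity for the $\ell^{p}$-valued spaces $l^{p}(\mathbb{Z}^{n},\mathcal{B}_{k}(\mathbb{R}^{n}))$. The whole argument is an application of the abstract ``interpolation of retracts'' theorem, so essentially no genuinely new computation is needed.

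First I would fix once and for all a function $\chi_{\mathbb{Z}^{n}}\in\mathcal{C}_{0}^{\infty}(\mathbb{R}^{n})$ with $\sum_{\gamma\in\mathbb{Z}^{n}}\tau_{\gamma}\chi_{\mathbb{Z}^{n}}=1$ and a function $\chi\in\mathcal{C}_{0}^{\infty}(\mathbb{R}^{n})$ equal to $1$ on a neighbourhood of $\operatorname{supp}\chi_{\mathbb{Z}^{n}}$, and consider the operators $S$ and $R_{\chi}$ built from them as in the previous subsection. Since these cutoffs do not depend on the weight, the previous proposition applies simultaneously to $k_{0}$ and to $k_{1}$: for $j=0,1$ the maps $S:\mathcal{B}_{k_{j}}^{p_{j}}(\mathbb{R}^{n})\to l^{p_{j}}(\mathbb{Z}^{n},\mathcal{B}_{k_{j}}(\mathbb{R}^{n}))$ and $R_{\chi}:l^{p_{j}}(\mathbb{Z}^{n},\mathcal{B}_{k_{j}}(\mathbb{R}^{n}))\to\mathcal{B}_{k_{j}}^{p_{j}}(\mathbb{R}^{n})$ are bounded and $R_{\chi}S=\mathtt{Id}$. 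Moreover $S$ is the restriction of a fixed linear operator on $\mathcal{D}^{\prime}(\mathbb{R}^{n})$ (multiplication by the locally finite family $\{\tau_{\gamma}\chi_{\mathbb{Z}^{n}}\}_{\gamma}$), and $R_{\chi}$ likewise is the restriction of a fixed operator (summation after multiplication by $\{\tau_{\gamma}\chi\}_{\gamma}$); since all the spaces involved are continuously embedded in $\mathcal{S}^{\prime}(\mathbb{R}^{n})$, respectively in sequences valued in $\mathcal{S}^{\prime}(\mathbb{R}^{n})$, the maps $S$ and $R_{\chi}$ are morphisms of the interpolation couples $(\mathcal{B}_{k_{0}}^{p_{0}}(\mathbb{R}^{n}),\mathcal{B}_{k_{1}}^{p_{1}}(\mathbb{R}^{n}))$ and $(l^{p_{0}}(\mathbb{Z}^{n},\mathcal{B}_{k_{0}}(\mathbb{R}^{n})),l^{p_{1}}(\mathbb{Z}^{n},\mathcal{B}_{k_{1}}(\mathbb{R}^{n})))$. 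In short, $\mathcal{B}_{k}^{p}(\mathbb{R}^{n})$ is a retract of $l^{p}(\mathbb{Z}^{n},\mathcal{B}_{k}(\mathbb{R}^{n}))$, uniformly in the two endpoints.

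Next I would invoke the theorem on retracts in \cite{Triebel}, Theorem 1.2.4: if $A$ is a retract of $B$ via $(R,S)$, with $R:B_{j}\to A_{j}$ and $S:A_{j}\to B_{j}$ bounded and $RS=\mathtt{Id}$, then $[A_{0},A_{1}]_{\theta}=R([B_{0},B_{1}]_{\theta})$ with equivalent norms (indeed $SR$ is a projection of $[B_{0},B_{1}]_{\theta}$ onto a complemented subspace isomorphic to $[A_{0},A_{1}]_{\theta}$, and $\|a\|_{[A_{0},A_{1}]_{\theta}}\approx\|Sa\|_{[B_{0},B_{1}]_{\theta}}$). Taking $A_{j}=\mathcal{B}_{k_{j}}^{p_{j}}(\mathbb{R}^{n})$ and $B_{j}=l^{p_{j}}(\mathbb{Z}^{n},\mathcal{B}_{k_{j}}(\mathbb{R}^{n}))$ gives
\[
\left[\mathcal{B}_{k_{0}}^{p_{0}}(\mathbb{R}^{n}),\mathcal{B}_{k_{1}}^{p_{1}}(\mathbb{R}^{n})\right]_{\theta}
=R_{\chi}\!\left(\left[l^{p_{0}}(\mathbb{Z}^{n},\mathcal{B}_{k_{0}}(\mathbb{R}^{n})),\,l^{p_{1}}(\mathbb{Z}^{n},\mathcal{B}_{k_{1}}(\mathbb{R}^{n}))\right]_{\theta}\right).
\]
Finally, by the Corollary preceding this statement (the interpolation identity for the weighted $\ell^{p}$-valued spaces), the bracket on the right equals $l^{p}(\mathbb{Z}^{n},\mathcal{B}_{k}(\mathbb{R}^{n}))$, with $k=k_{0}^{1-\theta}\cdot k_{1}^{\theta}$ and $\tfrac{1}{p}=\tfrac{1-\theta}{p_{0}}+\tfrac{\theta}{p_{1}}$; and since $R_{\chi}S=\mathtt{Id}_{\mathcal{B}_{k}^{p}}$ forces $R_{\chi}\big(l^{p}(\mathbb{Z}^{n},\mathcal{B}_{k}(\mathbb{R}^{n}))\big)=\mathcal{B}_{k}^{p}(\mathbb{R}^{n})$, the two computations combine to yield $\left[\mathcal{B}_{k_{0}}^{p_{0}}(\mathbb{R}^{n}),\mathcal{B}_{k_{1}}^{p_{1}}(\mathbb{R}^{n})\right]_{\theta}=\mathcal{B}_{k}^{p}(\mathbb{R}^{n})$ with equivalent norms.

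The argument has no real obstacle; the only point deserving a line of care is the verification that the \emph{same} pair $(S,R_{\chi})$ is a morphism of couples at both endpoints, which is exactly why the cutoffs must be chosen independently of the weight. If one wanted the identification of norms to be exact rather than up to equivalence, the residual bookkeeping would be to track the normalising factors through the (up-to-a-constant) isometries $\mathcal{F},\mathcal{F}^{-1}$ exactly as in the remark preceding the interpolation lemma for $\mathcal{B}_{k}$; for the purpose of identifying the spaces this is irrelevant.
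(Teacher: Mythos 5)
Your argument is correct and is essentially the paper's own proof: the author likewise observes that the couple embeds in $\mathcal{S}^{\prime}\left( \mathbb{R}^{n}\right) $, applies Theorem 1.2.4 of Triebel to the fixed retraction--coretraction pair $\left( R_{\chi },S\right) $, and then specializes the resulting norm equivalence to the complex interpolation functor using the preceding corollary on $l^{p}\left( \mathbb{Z}^{n},\mathcal{B}_{k}\left( \mathbb{R}^{n}\right) \right) $. No substantive difference.
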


\begin{proof}
The last part of Theorem \ref{ks15} $\left( \mathtt{d}\right) $ shows that $%
\left\{ \mathcal{B}_{k_{0}}^{p_{0}}\left( \mathbb{R}^{n}\right) ,\mathcal{B}%
_{k_{1}}^{p_{1}}\left( \mathbb{R}^{n}\right) \right\} $ is an interpolation
couple (in the sense of the notations of Subsection 1.2.1 of \cite{Triebel}
one can choose $\mathcal{A}=\mathcal{S}^{\prime }\left( \mathbb{R}%
^{n}\right) $). If $F$ is an interpolation functor, then one obtains by
Theorem 1.2.4 of \cite{Triebel} that%
\begin{equation*}
\left\Vert u\right\Vert _{F\left( \left\{ \mathcal{B}_{k_{0}}^{p_{0}}\left( 
\mathbb{R}^{n}\right) ,\mathcal{B}_{k_{1}}^{p_{1}}\left( \mathbb{R}%
^{n}\right) \right\} \right) }\approx \left\Vert \left( S_{\gamma }u\right)
_{\gamma \in \mathbb{Z}^{n}}\right\Vert _{F\left( \left\{ l^{p_{0}}\left( 
\mathbb{Z}^{n},\mathcal{B}_{k_{0}}\right) ,l^{p_{1}}\left( \mathbb{Z}^{n},%
\mathcal{B}_{k_{1}}\right) \right\} \right) }.
\end{equation*}%
By specialization we obtain 
\begin{eqnarray*}
\left\Vert u\right\Vert _{\left[ \mathcal{B}_{k_{0}}^{p_{0}}\left( \mathbb{R}%
^{n}\right) ,\mathcal{B}_{k_{1}}^{p_{1}}\left( \mathbb{R}^{n}\right) \right]
_{\theta }} &\approx &\left\Vert \left( S_{\gamma }u\right) _{\gamma \in 
\mathbb{Z}^{n}}\right\Vert _{\left[ l^{p_{0}}\left( \mathbb{Z}^{n},\mathcal{B%
}_{k_{0}}\left( \mathbb{R}^{n}\right) \right) ,l^{p_{1}}\left( \mathbb{Z}%
^{n},\mathcal{B}_{k_{1}}\left( \mathbb{R}^{n}\right) \right) \right]
_{\theta }} \\
&\approx &\left\Vert \left( S_{\gamma }u\right) _{\gamma \in \mathbb{Z}%
^{n}}\right\Vert _{l^{p}\left( \mathbb{Z}^{n},\mathcal{B}_{k}\left( \mathbb{R%
}^{n}\right) \right) } \\
&\approx &\left\Vert u\right\Vert _{\mathcal{B}_{k}^{p}\left( \mathbb{R}%
^{n}\right) }.
\end{eqnarray*}
\end{proof}

In addition to the above interpolation results we need an embedding theorem
which we shall prove below. First we shall recall the definition of spaces
that appear in this theorem.

\begin{definition}
Let $1\leq p\leq \infty $. We say that a distribution $u\in \mathcal{D}%
^{\prime }\left( \mathbb{R}^{n}\right) $ belongs to $S_{w}^{p}\left( \mathbb{%
R}^{n}\right) $ if there is $\chi \in \mathcal{C}_{0}^{\infty }\left( 
\mathbb{R}^{n}\right) \smallsetminus 0$ such that the measurable function%
\begin{gather*}
U_{\chi ,p}:\mathbb{R}^{n}\rightarrow \left[ 0,+\infty \right) , \\
U_{\chi ,p}\left( \xi \right) =\left\{ 
\begin{array}{ccc}
\sup_{y\in \mathbb{R}^{n}}\left\vert \widehat{u\tau _{y}\chi }\left( \xi
\right) \right\vert & \text{\textit{if}} & p=\infty \\ 
\left( \int \left\vert \widehat{u\tau _{y}\chi }\left( \xi \right)
\right\vert ^{p}\mathtt{d}y\right) ^{1/p} & \text{\textit{if}} & 1\leq
p<\infty%
\end{array}%
\right. , \\
\widehat{u\tau _{y}\chi }\left( \xi \right) =\left\langle u,\mathtt{e}^{-%
\mathtt{i}\left\langle \cdot ,\xi \right\rangle }\chi \left( \cdot -y\right)
\right\rangle ,
\end{gather*}%
belongs to $L^{1}\left( \mathbb{R}^{n}\right) $.
\end{definition}

These spaces are special cases of modulation spaces which were introduced by
Hans Georg Feichtinger in 1983. They were used by many authors (Boulkhemair,
Gr\"{o}chenig, Heil, Sj\"{o}strand, Toft ...) in the analysis of
pseudo-differential operators defined by symbols more general than usual.

Now we give some properties of these spaces.

\begin{proposition}
$(\mathtt{a})$ Let $u\in S_{w}^{p}\left( \mathbb{R}^{n}\right) $ and let $%
\chi \in \mathcal{C}_{0}^{\infty }\left( \mathbb{R}^{n}\right) $. Then the
measurable function%
\begin{gather*}
U_{\chi ,p}:\mathbb{R}^{n}\rightarrow \left[ 0,+\infty \right) , \\
U_{\chi ,p}\left( \xi \right) =\left\{ 
\begin{array}{ccc}
\sup_{y\in \mathbb{R}^{n}}\left\vert \widehat{u\tau _{y}\chi }\left( \xi
\right) \right\vert & \text{\textit{if}} & p=\infty \\ 
\left( \int \left\vert \widehat{u\tau _{y}\chi }\left( \xi \right)
\right\vert ^{p}\mathtt{d}y\right) ^{1/p} & \text{\textit{if}} & 1\leq
p<\infty%
\end{array}%
\right. , \\
\widehat{u\tau _{y}\chi }\left( \xi \right) =\left\langle u,\mathtt{e}^{-%
\mathtt{i}\left\langle \cdot ,\xi \right\rangle }\chi \left( \cdot -y\right)
\right\rangle ,
\end{gather*}%
belongs to $L^{1}\left( \mathbb{R}^{n}\right) $.

$(\mathtt{b})$ If we fix $\chi \in \mathcal{C}_{0}^{\infty }\left( \mathbb{R}%
^{n}\right) \smallsetminus 0$ and if we put%
\begin{equation*}
\left\Vert u\right\Vert _{S_{w}^{p},\chi }=\int U_{\chi ,p}\left( \xi
\right) \mathtt{d}\xi =\left\Vert U_{\chi ,p}\right\Vert _{L^{1}},\quad u\in
S_{w}\left( \mathbb{R}^{n}\right) ,
\end{equation*}%
then $\left\Vert \cdot \right\Vert _{S_{w}^{p},\chi }$ is a norm on $%
S_{w}^{p}\left( \mathbb{R}^{n}\right) $ and the topology that defines does
not depend on the choice of the function $\chi \in \mathcal{C}_{0}^{\infty
}\left( \mathbb{R}^{n}\right) \smallsetminus 0$.

$(\mathtt{c})$ Let $1\leq p\leq q\leq \infty $. Then 
\begin{equation*}
S_{w}^{1}\left( \mathbb{R}^{n}\right) \subset S_{w}^{p}\left( \mathbb{R}%
^{n}\right) \subset S_{w}^{q}\left( \mathbb{R}^{n}\right) \subset
S_{w}^{\infty }\left( \mathbb{R}^{n}\right) =S_{w}\left( \mathbb{R}%
^{n}\right) \subset \mathcal{BC}\left( \mathbb{R}^{n}\right) \subset 
\mathcal{S}^{\prime }\left( \mathbb{R}^{n}\right) .
\end{equation*}
\end{proposition}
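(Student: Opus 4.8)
The plan is to derive all three assertions from a single \emph{change-of-window} inequality, obtained from a reproducing formula, together with an elementary Fourier-inversion argument for the case $p=\infty$. Fix a reference window $\chi_{0}\in\mathcal{C}_{0}^{\infty}\left(\mathbb{R}^{n}\right)\smallsetminus 0$ with respect to which $u\in S_{w}^{p}\left(\mathbb{R}^{n}\right)$ is witnessed, and let $\chi\in\mathcal{C}_{0}^{\infty}\left(\mathbb{R}^{n}\right)$ be arbitrary. The key step is the pointwise estimate
\begin{equation*}
\bigl\vert\widehat{u\tau_{z}\chi}\left(\xi\right)\bigr\vert\leq C\left(n,\chi,\chi_{0}\right)\int\int\bigl\vert\widehat{u\tau_{y}\chi_{0}}\left(\eta\right)\bigr\vert\left\langle z-y\right\rangle^{-n-1}\left\langle \xi-\eta\right\rangle^{-n-1}\mathtt{d}\eta\,\mathtt{d}y,\qquad z,\xi\in\mathbb{R}^{n}.
\end{equation*}
Once this is available, parts $\left(\mathtt{a}\right)$ and $\left(\mathtt{b}\right)$ and the inclusions of $\left(\mathtt{c}\right)$ all come out of standard mixed-norm convolution inequalities, while the remaining inclusions $S_{w}^{\infty}=S_{w}\subset\mathcal{BC}\subset\mathcal{S}^{\prime}$ are treated separately.

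To prove the key estimate I would apply $(\ref{ks5})$ with $\psi=\left\vert\chi_{0}\right\vert^{2}\in\mathcal{C}_{0}^{\infty}$, so that $\int\psi=\left\Vert\chi_{0}\right\Vert_{L^{2}}^{2}$, and with $\varphi=\mathtt{e}^{-\mathtt{i}\left\langle\cdot,\xi\right\rangle}\tau_{z}\chi\in\mathcal{C}_{0}^{\infty}$; this is legitimate because at this stage we only know $u\in\mathcal{D}^{\prime}$. It gives
\begin{equation*}
\left\Vert\chi_{0}\right\Vert_{L^{2}}^{2}\widehat{u\tau_{z}\chi}\left(\xi\right)=\int\mathcal{F}\left[\left(u\tau_{y}\chi_{0}\right)\left(\tau_{z}\chi\right)\left(\tau_{y}\overline{\chi_{0}}\right)\right]\left(\xi\right)\mathtt{d}y=\left(2\pi\right)^{-n}\int\left(\widehat{u\tau_{y}\chi_{0}}\ast\mathcal{F}\left[\left(\tau_{z}\chi\right)\left(\tau_{y}\overline{\chi_{0}}\right)\right]\right)\left(\xi\right)\mathtt{d}y.
\end{equation*}
So it remains to bound $\mathcal{F}\left[\left(\tau_{z}\chi\right)\left(\tau_{y}\overline{\chi_{0}}\right)\right]$. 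Since every derivative of $\left(\tau_{z}\chi\right)\left(\tau_{y}\overline{\chi_{0}}\right)\left(x\right)=\chi\left(x-z\right)\overline{\chi_{0}}\left(x-y\right)$ is dominated by $Cst\left\langle x-z\right\rangle^{-N}\left\langle x-y\right\rangle^{-N}$ for every $N$, the inequality $\left\langle X\right\rangle^{-2\left(n+1\right)}\left\langle Y\right\rangle^{-2\left(n+1\right)}\leq 2^{n+1}\left\langle X+Y\right\rangle^{-n-1}\left\langle X-Y\right\rangle^{-n-1}$ used with $X=x-z$, $Y=x-y$, followed by integration by parts in $x$, yields $\bigl\vert\mathcal{F}\left[\left(\tau_{z}\chi\right)\left(\tau_{y}\overline{\chi_{0}}\right)\right]\left(\zeta\right)\bigr\vert\leq C\left(n,\chi,\chi_{0}\right)\left\langle z-y\right\rangle^{-n-1}\left\langle\zeta\right\rangle^{-n-1}$, and inserting this proves the key estimate. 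This decay bound is the only genuine computation, and the main thing to be careful about; it is of the same type as the estimates already carried out in the paper, e.g. those culminating in $(\ref{kh13})$ and in the proof of Proposition $\ref{ks15}$.

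For $\left(\mathtt{a}\right)$ and $\left(\mathtt{b}\right)$ I would take the $L^{1}$-norm in $\xi$ of the $L^{p}$-norm in $y$ (the supremum in $y$ when $p=\infty$) on both sides of the key estimate: Minkowski's integral inequality brings the $L^{p}$-norm in $y$ inside the integrals, and two applications of Young's inequality (convolution with $\left\langle\cdot\right\rangle^{-n-1}\in L^{1}$, once in the $z$-variable and once in the $\xi$-variable) give $\left\Vert U_{\chi,p}\right\Vert_{L^{1}}\leq C\left(n,\chi,\chi_{0}\right)\left\Vert U_{\chi_{0},p}\right\Vert_{L^{1}}$. Exchanging the roles of $\chi$ and $\chi_{0}$ yields the opposite bound. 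This shows that membership in $S_{w}^{p}\left(\mathbb{R}^{n}\right)$ is independent of the window (part $\left(\mathtt{a}\right)$) and that the norms $\left\Vert\cdot\right\Vert_{S_{w}^{p},\chi}$, $\chi\in\mathcal{C}_{0}^{\infty}\smallsetminus 0$, are pairwise equivalent, hence define one and the same topology. That each $\left\Vert\cdot\right\Vert_{S_{w}^{p},\chi}$ is a seminorm is immediate from $\widehat{\left(u+v\right)\tau_{y}\chi}=\widehat{u\tau_{y}\chi}+\widehat{v\tau_{y}\chi}$ and the triangle inequality for $\left\Vert\cdot\right\Vert_{L^{p}}$; for definiteness, $\left\Vert u\right\Vert_{S_{w}^{p},\chi}=0$ forces $\widehat{u\tau_{y}\chi}\left(\xi\right)=0$ for almost every $\left(y,\xi\right)$, hence identically since $\left(y,\xi\right)\mapsto\widehat{u\tau_{y}\chi}\left(\xi\right)$ is a $\mathcal{C}^{\infty}$ function, so $u\tau_{y}\chi=0$ for every $y$, and then $u=0$ by the reconstruction identity $\left\Vert\chi\right\Vert_{L^{2}}^{2}\left\langle u,\varphi\right\rangle=\int\left\langle u\tau_{y}\chi,\left(\tau_{y}\overline{\chi}\right)\varphi\right\rangle\mathtt{d}y$, a special case of $(\ref{ks5})$.

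For $\left(\mathtt{c}\right)$, the inclusions $S_{w}^{1}\subset S_{w}^{p}\subset S_{w}^{q}\subset S_{w}^{\infty}$ follow from the key estimate by exactly the same computation, except that in the $z$-variable one uses Young's inequality $\left\Vert\left\langle\cdot\right\rangle^{-n-1}\ast g\right\Vert_{L^{q}}\leq\left\Vert\left\langle\cdot\right\rangle^{-n-1}\right\Vert_{L^{r}}\left\Vert g\right\Vert_{L^{p}}$ with $\frac{1}{r}=1-\frac{1}{p}+\frac{1}{q}$, which is legitimate because $\frac{1}{r}\in\left[0,1\right]$ when $p\leq q$ and $\left\langle\cdot\right\rangle^{-n-1}\in L^{r}\left(\mathbb{R}^{n}\right)$ for every $r\in\left[1,\infty\right]$; this yields $\left\Vert U_{\chi,q}\right\Vert_{L^{1}}\leq C\left\Vert U_{\chi_{0},p}\right\Vert_{L^{1}}$ whenever $p\leq q$. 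The equality $S_{w}^{\infty}=S_{w}$ is the notational convention of the definition. For $S_{w}^{\infty}\subset\mathcal{BC}$: if $u\in S_{w}^{\infty}$ then $\widehat{u\tau_{y}\chi}\in L^{1}$ for every $y$, so $u\tau_{y}\chi\in\mathcal{F}^{-1}L^{1}\subset\mathcal{C}_{\infty}\left(\mathbb{R}^{n}\right)$ with $\left\Vert u\tau_{y}\chi\right\Vert_{L^{\infty}}\leq\left(2\pi\right)^{-n}\left\Vert\widehat{u\tau_{y}\chi}\right\Vert_{L^{1}}\leq\left(2\pi\right)^{-n}\left\Vert U_{\chi,\infty}\right\Vert_{L^{1}}$ uniformly in $y$; choosing, as $\left(\mathtt{a}\right)$ permits, a window $\chi\in\mathcal{C}_{0}^{\infty}$ with $\chi\equiv 1$ on a neighbourhood of $0$ gives $u=u\tau_{x}\chi$ on a neighbourhood of each $x$, whence $u$ is continuous, and $\left\vert u\left(x\right)\right\vert=\bigl\vert\left(u\tau_{x}\chi\right)\left(x\right)\bigr\vert\leq\left(2\pi\right)^{-n}\left\Vert U_{\chi,\infty}\right\Vert_{L^{1}}$, so $u\in\mathcal{BC}\left(\mathbb{R}^{n}\right)$. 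Finally $\mathcal{BC}\left(\mathbb{R}^{n}\right)\subset\mathcal{S}^{\prime}\left(\mathbb{R}^{n}\right)$ is trivial.
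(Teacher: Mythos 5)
Your proof is correct, but there is nothing in the paper to compare it against: the author does not prove this proposition, remarking only that ``a proof \ldots can be found for instance in \cite{Arsu} or \cite{Toft1}.'' What you have supplied is therefore a self-contained argument, and it is built precisely out of the toolkit the paper develops in Section 3. Your change-of-window inequality comes from (\ref{ks5}) with $\psi =\left\vert \chi _{0}\right\vert ^{2}$ in exactly the way the paper derives (\ref{kh13}), and the decay bound $\bigl\vert \mathcal{F}\bigl[\left( \tau _{z}\chi \right) \left( \tau _{y}\overline{\chi _{0}}\right) \bigr]\left( \zeta \right) \bigr\vert \leq C\left\langle \zeta \right\rangle ^{-n-1}\left\langle z-y\right\rangle ^{-n-1}$ is the same Peetre-type computation that appears in the proofs of Proposition \ref{ks8} and Proposition \ref{ks15}; the one genuinely new ingredient is the extra convolution in the frequency variable $\xi $, which is what converts the $\mathcal{B}_{k}$-norm estimates of Section 3 into the pointwise-in-$\xi $ bounds needed for the $L_{\xi }^{1}$-based spaces $S_{w}^{p}$. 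The individual steps all check out: Minkowski in $\eta $ followed by Young in $z$ and in $\xi $ gives the window independence and norm equivalence of (\texttt{a})--(\texttt{b}); the mixed Young inequality with $1/r=1+1/q-1/p\in \left[ 0,1\right] $ gives the monotonicity in $p$ for (\texttt{c}); definiteness of the norm follows from the smoothness of $\left( y,\xi \right) \rightarrow \widehat{u\tau _{y}\chi }\left( \xi \right) $ together with the reconstruction identity; and the argument for $S_{w}^{\infty }\subset \mathcal{BC}$ via $\widehat{u\tau _{y}\chi }\in L^{1}$ uniformly in $y$ and a window equal to $1$ near the origin is the standard one. Two small points would deserve a sentence each in a write-up: the product formula $\widehat{vw}=\left( 2\pi \right) ^{-n}\widehat{v}\ast \widehat{w}$ is being applied with $v=u\tau _{y}\chi _{0}\in \mathcal{E}^{\prime }$ and $w\in \mathcal{C}_{0}^{\infty }$, where it is justified because $\widehat{v}\in \mathcal{C}_{pol}^{\infty }$ and $\widehat{w}\in \mathcal{S}$; and the measurability of $U_{\chi ,p}$ asserted in (\texttt{a}) follows from the continuity of $\left( y,\xi \right) \rightarrow \widehat{u\tau _{y}\chi }\left( \xi \right) $, as in the Corollary following that lemma in Section 3.
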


A proof of this proposition can be found for instance in \cite{Arsu} or \cite%
{Toft1}.

\begin{theorem}
Let $k\in \mathcal{K}_{pol}\left( \mathbb{R}^{n}\right) $ and $1\leq p\leq
\infty $. If $1/k\in L^{1}\left( \mathbb{R}^{n}\right) $, then $\mathcal{B}%
_{k}^{p}\left( \mathbb{R}^{n}\right) \hookrightarrow S_{w}^{p}\left( \mathbb{%
R}^{n}\right) $.
\end{theorem}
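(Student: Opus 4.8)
The plan is to fix one convenient pair of windows and reduce everything to a single pointwise estimate. Choose $\chi\in\mathcal{C}_0^\infty(\mathbb{R}^n)\smallsetminus 0$ and an auxiliary $\widetilde\chi\in\mathcal{C}_0^\infty(\mathbb{R}^n)$ with $\widetilde\chi=1$ on $\mathtt{supp}\,\chi$, so that $\widetilde\chi\chi=\chi$ and $\widetilde\chi\neq0$. Since $\mathcal{B}_k^p(\mathbb{R}^n)\subset\mathcal{S}'(\mathbb{R}^n)$ and since neither the norm of $\mathcal{B}_k^p$ (Proposition \ref{ks15}) nor the norm of $S_w^p$ (by the proposition on $S_w^p$ stated above) depends, up to equivalence, on the chosen window, it is enough to prove that for every $u\in\mathcal{B}_k^p(\mathbb{R}^n)$ one has $U_{\chi,p}\in L^1(\mathbb{R}^n)$ together with a bound $\Vert U_{\chi,p}\Vert_{L^1}\le Cst\,\Vert u\Vert_{k,p,\widetilde\chi}$ (with the usual reading for $p=\infty$); the equivalence of norms then promotes this to a continuous embedding with arbitrary reference windows. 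Note also that $u\tau_y\widetilde\chi\in\mathcal{B}_k(\mathbb{R}^n)$ for all $y$ and $y\mapsto\Vert u\tau_y\widetilde\chi\Vert_{\mathcal{B}_k}\in L^p$, again by the window-independence of $\mathcal{B}_k^p$.

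The key step is the pointwise estimate
\begin{equation*}
\bigl|\widehat{u\tau_y\chi}(\xi)\bigr|\le(2\pi)^{-n}\Vert M_k\widehat\chi\Vert_{L^2}\,\frac{\Vert u\tau_y\widetilde\chi\Vert_{\mathcal{B}_k}}{k(\xi)},\qquad y,\xi\in\mathbb{R}^n.
\end{equation*}
To obtain it, write $u\tau_y\chi=(u\tau_y\widetilde\chi)(\tau_y\chi)$ and take Fourier transforms: since $u\tau_y\widetilde\chi$ has compact support and $\tau_y\chi\in\mathcal{C}_0^\infty$, $\widehat{u\tau_y\chi}$ equals $(2\pi)^{-n}$ times the (pointwise absolutely convergent) convolution of the function $\widehat{u\tau_y\widetilde\chi}$ with $\widehat{\tau_y\chi}$, and $|\widehat{\tau_y\chi}|=|\widehat\chi|$. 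Estimating the convolution at $\xi$ by Cauchy--Schwarz,
\begin{equation*}
\int\bigl|\widehat{u\tau_y\widetilde\chi}(\eta)\bigr|\,\bigl|\widehat\chi(\xi-\eta)\bigr|\,\mathtt{d}\eta\le\bigl\Vert k\,\widehat{u\tau_y\widetilde\chi}\bigr\Vert_{L^2}\Bigl(\int\frac{|\widehat\chi(\xi-\eta)|^2}{k(\eta)^2}\,\mathtt{d}\eta\Bigr)^{1/2},
\end{equation*}
and using that $k$ is $M_k$-moderate, i.e. $k(\xi)\le M_k(\xi-\eta)k(\eta)$, the last integral is bounded by $k(\xi)^{-2}\int|\widehat\chi(\zeta)|^2M_k(\zeta)^2\mathtt{d}\zeta=k(\xi)^{-2}\Vert M_k\widehat\chi\Vert_{L^2}^2$, which is finite because $\widehat\chi\in\mathcal{S}$ and $M_k$ has polynomial growth. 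The first factor is exactly $\Vert u\tau_y\widetilde\chi\Vert_{\mathcal{B}_k}$, and the estimate follows.

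Granting the pointwise bound, the conclusion is immediate, and this is where the hypothesis $1/k\in L^1$ is used. For $1\le p<\infty$, taking the $L^p$-norm in $y$ gives $U_{\chi,p}(\xi)\le(2\pi)^{-n}\Vert M_k\widehat\chi\Vert_{L^2}\,k(\xi)^{-1}\Vert u\Vert_{k,p,\widetilde\chi}$, and for $p=\infty$ the same holds with the supremum in $y$ replacing the $L^p$-norm. Integrating in $\xi$,
\begin{equation*}
\Vert U_{\chi,p}\Vert_{L^1}\le(2\pi)^{-n}\Vert M_k\widehat\chi\Vert_{L^2}\,\Vert 1/k\Vert_{L^1}\,\Vert u\Vert_{k,p,\widetilde\chi}<\infty,
\end{equation*}
so $u\in S_w^p(\mathbb{R}^n)$ with the desired norm control. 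I do not expect a genuine obstacle here: the only thing requiring foresight is to introduce the second window $\widetilde\chi$ and reduce to a pointwise inequality — a direct comparison of the mixed-norm structures of $\Vert\cdot\Vert_{\mathcal{B}_k^p}$ and $\Vert\cdot\Vert_{S_w^p}$ does not go through Minkowski's inequality alone when $p>2$ — and then to notice that $1/k\in L^1$ is needed precisely at the final integration in $\xi$.
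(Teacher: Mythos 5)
Your argument is correct and is essentially the paper's own proof: the same factorization $u\tau_y\chi=(u\tau_y\widetilde\chi)(\tau_y\chi)$, the same convolution--Cauchy--Schwarz--moderateness chain yielding the pointwise bound $|\widehat{u\tau_y\chi}(\xi)|\leq Cst\,\Vert u\tau_y\widetilde\chi\Vert_{\mathcal{B}_k}/k(\xi)$, followed by the $L^p$-norm in $y$ and the integration in $\xi$ where $1/k\in L^1$ enters. The only cosmetic difference is that you apply the moderateness inequality after Cauchy--Schwarz rather than before, so your constant appears as $\Vert M_k\widehat\chi\Vert_{L^2}$ instead of the paper's $C\Vert\chi\Vert_{H^N}$; these coincide up to the Plancherel normalization.
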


\begin{proof}
Let $u\in \mathcal{B}_{k}^{p}\left( \mathbb{R}^{n}\right) $. Let $\chi ,%
\widetilde{\chi }\in \mathcal{C}_{0}^{\infty }\left( \mathbb{R}^{n}\right)
\smallsetminus 0$ be such that $\widetilde{\chi }=1$ on $\mathtt{supp}\chi $%
. For $y\in \mathbb{R}^{n}$ we have 
\begin{equation*}
u\tau _{y}\chi =\left( u\tau _{y}\widetilde{\chi }\right) \left( \tau
_{y}\chi \right) \text{ }\Rightarrow \text{ }\widehat{u\tau _{y}\chi }%
=\left( 2\pi \right) ^{-n}\widehat{u\tau _{y}\widetilde{\chi }}\ast \widehat{%
\tau _{y}\chi }.
\end{equation*}%
Multiplying by $k\left( \xi \right) $ and noting the inequality $k\left( \xi
\right) \leq C\left\langle \xi -\eta \right\rangle ^{N}k\left( \eta \right) $%
, we obtain 
\begin{eqnarray*}
k\left( \xi \right) \left\vert \widehat{u\tau _{y}\chi }\left( \xi \right)
\right\vert &\leq &\left( 2\pi \right) ^{-n}C\int k\left( \eta \right)
\left\vert \widehat{u\tau _{y}\widetilde{\chi }}\left( \eta \right)
\right\vert \left\langle \xi -\eta \right\rangle ^{N}\left\vert \widehat{%
\tau _{y}\chi }\left( \xi -\eta \right) \right\vert \mathtt{d}\xi \\
&\leq &\left( 2\pi \right) ^{-n}C\left\Vert k\widehat{u\tau _{y}\widetilde{%
\chi }}\right\Vert _{L^{2}}\left\Vert \left\langle \cdot \right\rangle ^{N}%
\widehat{\tau _{y}\chi }\right\Vert _{L^{2}} \\
&=&\left( 2\pi \right) ^{-n}C\left\Vert u\tau _{y}\widetilde{\chi }%
\right\Vert _{\mathcal{B}_{k}}\left\Vert \chi \right\Vert _{H^{N}},
\end{eqnarray*}%
hence 
\begin{equation*}
\left\vert \widehat{u\tau _{y}\chi }\left( \xi \right) \right\vert \leq
\left( 2\pi \right) ^{-n}C\left\Vert u\tau _{y}\widetilde{\chi }\right\Vert
_{\mathcal{B}_{k}}\left\Vert \chi \right\Vert _{H^{N}}\frac{1}{k\left( \xi
\right) }.
\end{equation*}%
It follows that%
\begin{gather*}
U_{\chi ,p}\left( \xi \right) \leq \left( 2\pi \right) ^{-n}C\left\Vert \chi
\right\Vert _{H^{N}}\frac{1}{k\left( \xi \right) }\left\{ 
\begin{array}{ccc}
\left\Vert u\right\Vert _{k,\infty ,\widetilde{\chi }} & \text{\textit{if}}
& p=\infty \\ 
\left\Vert u\right\Vert _{k,p,\widetilde{\chi }} & \text{\textit{if}} & 
1\leq p<\infty%
\end{array}%
\right. \\
\leq \left( 2\pi \right) ^{-n}C\left\Vert \chi \right\Vert
_{H^{N}}\left\Vert u\right\Vert _{k,p,\widetilde{\chi }}\frac{1}{k\left( \xi
\right) },
\end{gather*}%
which implies that%
\begin{equation*}
\left\Vert u\right\Vert _{S_{w}^{p},\chi }=\left\Vert U_{\chi ,p}\right\Vert
_{L^{1}}\leq \left( 2\pi \right) ^{-n}C\left\Vert \chi \right\Vert
_{H^{N}}\left\Vert 1/k\right\Vert _{L^{1}}\left\Vert u\right\Vert _{\mathbf{s%
},p,\widetilde{\chi }},\quad u\in \mathcal{B}_{k}^{p}.
\end{equation*}
\end{proof}

This embedding theorem allows us to deal with Schatten-von Neumann class
properties of pseudo-differential operators.

Let $\tau \in \mathtt{End}_{\mathbb{R}}\left( \mathbb{R}^{n}\right) \equiv
M_{n\times n}\left( \mathbb{R}\right) $, $a\in \mathcal{S}\left( \mathbb{R}%
^{n}\times \mathbb{R}^{n}\right) $, $v\in \mathcal{S}\left( \mathbb{R}%
^{n}\right) $. We define 
\begin{eqnarray*}
\mathtt{Op}_{\tau }\left( a\right) v\left( x\right) &=&a^{\tau }\left(
X,D\right) v\left( x\right) \\
&=&\left( 2\pi \right) ^{-n}\iint \mathtt{e}^{\mathtt{i}\left\langle
x-y,\eta \right\rangle }a\left( \left( 1-\tau \right) x+\tau y,\eta \right)
v\left( y\right) \mathtt{d}y\mathtt{d}\eta .
\end{eqnarray*}%
If $u,v\in \mathcal{S}\left( \mathbb{R}^{n}\right) $, then%
\begin{eqnarray*}
\left\langle \mathtt{Op}_{\tau }\left( a\right) v,u\right\rangle &=&\left(
2\pi \right) ^{-n}\iiint \mathtt{e}^{\mathtt{i}\left\langle x-y,\eta
\right\rangle }a\left( \left( 1-\tau \right) x+\tau y,\eta \right) u\left(
x\right) v\left( y\right) \mathtt{d}x\mathtt{d}y\mathtt{d}\eta \\
&=&\left\langle \left( \left( 1\otimes \mathcal{F}^{-1}\right) a\right)
\circ \mathtt{C}_{\tau },u\otimes v\right\rangle ,
\end{eqnarray*}%
where 
\begin{equation*}
\mathtt{C}_{\tau }:\mathbb{R}^{n}\times \mathbb{R}^{n}\rightarrow \mathbb{R}%
^{n}\times \mathbb{R}^{n},\quad \mathtt{C}_{\tau }\left( x,y\right) =\left(
\left( 1-\tau \right) x+\tau y,x-y\right) .
\end{equation*}%
We can define $\mathtt{Op}_{\tau }\left( a\right) $ as an operator in $%
\mathcal{B}\left( \mathcal{S}\left( \mathbb{R}^{n}\right) ,\mathcal{S}%
^{\prime }\left( \mathbb{R}^{n}\right) \right) $ for any $a\in \mathcal{S}%
^{\prime }\left( \mathbb{R}^{n}\times \mathbb{R}^{n}\right) $%
\begin{eqnarray*}
\left\langle \mathtt{Op}_{\tau }\left( a\right) v,u\right\rangle _{\mathcal{S%
},\mathcal{S}^{\prime }} &=&\left\langle \mathcal{K}_{\mathtt{Op}_{\tau
}\left( a\right) },u\otimes v\right\rangle , \\
\mathcal{K}_{\mathtt{Op}_{\tau }\left( a\right) } &=&\left( \left( 1\otimes 
\mathcal{F}^{-1}\right) a\right) \circ \mathtt{C}_{\tau }.
\end{eqnarray*}

\begin{theorem}
Let $k\in \mathcal{K}_{pol}\left( \mathbb{R}^{n}\times \mathbb{R}^{n}\right) 
$ be such that $1/k\in L^{1}\left( \mathbb{R}^{n}\times \mathbb{R}%
^{n}\right) $.

$(\mathtt{a})$ Let $1\leq p<\infty $, $\tau \in \mathtt{End}_{\mathbb{R}%
}\left( \mathbb{R}^{n}\right) \equiv M_{n\times n}\left( \mathbb{R}\right) $
and $a\in \mathcal{B}_{k}^{p}\left( \mathbb{R}^{n}\times \mathbb{R}%
^{n}\right) $. Then 
\begin{equation*}
\mathtt{Op}_{\tau }\left( a\right) =a^{\tau }\left( X,D\right) \in \mathcal{B%
}_{p}\left( L^{2}\left( \mathbb{R}^{n}\right) \right) ,
\end{equation*}%
where $\mathcal{B}_{p}\left( L^{2}\left( \mathbb{R}^{n}\right) \right) $
denote the Schatten ideal of compact operators whose singular values lie in $%
l^{p}$. We have 
\begin{equation*}
\left\Vert \mathtt{Op}_{\tau }\left( a\right) \right\Vert _{\mathcal{B}%
_{p}\left( L^{2}\left( \mathbb{R}^{n}\right) \right) }\leq Cst\left\Vert
a\right\Vert _{\mathcal{B}_{k}^{p}}.
\end{equation*}%
Moreover, the mapping%
\begin{equation*}
\mathtt{End}_{\mathbb{R}}\left( \mathbb{R}^{n}\right) \ni \tau \rightarrow 
\mathtt{Op}_{\tau }\left( a\right) =a^{\tau }\left( X,D\right) \in \mathcal{B%
}_{p}\left( L^{2}\left( \mathbb{R}^{n}\right) \right)
\end{equation*}%
is continuous.

$(\mathtt{b})$ Let $\tau \in \mathtt{End}_{\mathbb{R}}\left( \mathbb{R}%
^{n}\right) \equiv M_{n\times n}\left( \mathbb{R}\right) $ and $a\in 
\mathcal{B}_{k}^{\infty }\left( \mathbb{R}^{n}\times \mathbb{R}^{n}\right) $%
. Then 
\begin{equation*}
\mathtt{Op}_{\tau }\left( a\right) =a^{\tau }\left( X,D\right) \in \mathcal{B%
}\left( L^{2}\left( \mathbb{R}^{n}\right) \right) .
\end{equation*}%
We have 
\begin{equation*}
\left\Vert \mathtt{Op}_{\tau }\left( a\right) \right\Vert _{\mathcal{B}%
\left( L^{2}\left( \mathbb{R}^{n}\right) \right) }\leq Cst\left\Vert
a\right\Vert _{\mathcal{B}_{k}^{\infty }}.
\end{equation*}%
Moreover, the mapping%
\begin{equation*}
\mathtt{End}_{\mathbb{R}}\left( \mathbb{R}^{n}\right) \ni \tau \rightarrow 
\mathtt{Op}_{\tau }\left( a\right) =a^{\tau }\left( X,D\right) \in \mathcal{B%
}\left( L^{2}\left( \mathbb{R}^{n}\right) \right)
\end{equation*}%
is continuous.
\end{theorem}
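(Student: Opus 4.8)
The plan is to reduce the statement to the mapping properties of $\tau$-quantizations of symbols in the modulation spaces $S_w^p$, and then to assemble it from three ingredients: the embedding $\mathcal{B}_k^p(\mathbb{R}^n\times\mathbb{R}^n)\hookrightarrow S_w^p(\mathbb{R}^n\times\mathbb{R}^n)$ proved just above (it applies because, by hypothesis, $1/k\in L^1(\mathbb{R}^n\times\mathbb{R}^n)$), the known Schatten-von Neumann estimates for $\mathtt{Op}_\tau$ on $S_w^p$ symbols, and complex interpolation. First I would record the two endpoint facts, valid for every $\tau\in\mathtt{End}_{\mathbb{R}}\left(\mathbb{R}^{n}\right)$: if $b\in S_w^1(\mathbb{R}^n\times\mathbb{R}^n)$ then $\mathtt{Op}_\tau(b)\in\mathcal{B}_1\left(L^2\left(\mathbb{R}^n\right)\right)$ with $\left\Vert\mathtt{Op}_\tau(b)\right\Vert_{\mathcal{B}_1}\le Cst\left\Vert b\right\Vert_{S_w^1,\chi}$, and if $b\in S_w^\infty(\mathbb{R}^n\times\mathbb{R}^n)$ then $\mathtt{Op}_\tau(b)\in\mathcal{B}\left(L^2\left(\mathbb{R}^n\right)\right)$ with $\left\Vert\mathtt{Op}_\tau(b)\right\Vert_{\mathcal{B}\left(L^2\right)}\le Cst\left\Vert b\right\Vert_{S_w^\infty,\chi}$; moreover the constants stay bounded when $\tau$ ranges over a bounded subset of $\mathtt{End}_{\mathbb{R}}\left(\mathbb{R}^{n}\right)$. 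References for these are \cite{Arsu} and \cite{Toft1}. For a self-contained argument one expands the symbol by its short-time Fourier transform as a superposition of time-frequency shifts of a fixed window $\chi\in\mathcal{S}(\mathbb{R}^n\times\mathbb{R}^n)$, $\mathtt{Op}_\tau(b)=\left\Vert\chi\right\Vert_{L^2}^{-2}\int V_\chi b(w)\,\mathtt{Op}_\tau(\pi(w)\chi)\,\mathtt{d}w$; each $\mathtt{Op}_\tau(\pi(w)\chi)$ is a conjugate of the fixed trace-class operator $\mathtt{Op}_\tau(\chi)$ by unitary translations and modulations on $L^2(\mathbb{R}^n)$, so the triangle inequality in $\mathcal{B}_1$ gives the first bound with $\left\Vert V_\chi b\right\Vert_{L^1}=\left\Vert b\right\Vert_{S_w^1,\chi}$ on the right, while a Schur-type estimate on the Gabor matrix of $\mathtt{Op}_\tau(b)$ gives the second with $\left\Vert b\right\Vert_{S_w^\infty,\chi}$; in both cases the $\tau$-dependence is through $\mathtt{Op}_\tau(\chi)$, which depends continuously on $\tau$ because its Schwartz kernel $\big((1\otimes\mathcal{F}^{-1})\chi\big)\circ\mathtt{C}_\tau$ does.

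Combining these endpoint facts with the embedding $\mathcal{B}_k^{p}\hookrightarrow S_w^{p}$ (applied in dimension $2n$) shows that $\mathtt{Op}_\tau$ maps $\mathcal{B}_k^{1}(\mathbb{R}^n\times\mathbb{R}^n)$ boundedly into $\mathcal{B}_1(L^2(\mathbb{R}^n))$ and $\mathcal{B}_k^{\infty}(\mathbb{R}^n\times\mathbb{R}^n)$ boundedly into $\mathcal{B}(L^2(\mathbb{R}^n))$, with norms uniform on bounded sets of $\tau$. Since $\mathtt{Op}_\tau$ is defined on all of $\mathcal{S}^{\prime}(\mathbb{R}^n\times\mathbb{R}^n)$ with values in $\mathcal{B}(\mathcal{S}(\mathbb{R}^n),\mathcal{S}^{\prime}(\mathbb{R}^n))$, it acts compatibly on the interpolation couple $\{\mathcal{B}_k^{1},\mathcal{B}_k^{\infty}\}$ with values in the interpolation couple $\{\mathcal{B}_1(L^2),\mathcal{B}(L^2)\}$; the complex interpolation identities $\left[\mathcal{B}_k^{1},\mathcal{B}_k^{\infty}\right]_\theta=\mathcal{B}_k^{p}$ (Corollary \ref{ks17} with $k_0=k_1=k$, $p_0=1$, $p_1=\infty$, $\theta=1-1/p$) and $\left[\mathcal{B}_1(L^2),\mathcal{B}(L^2)\right]_\theta=\mathcal{B}_p(L^2)$ (a classical fact, cf. \cite{Triebel}) then give $\mathtt{Op}_\tau\colon\mathcal{B}_k^{p}(\mathbb{R}^n\times\mathbb{R}^n)\to\mathcal{B}_p(L^2(\mathbb{R}^n))$ bounded with the asserted estimate for $1\le p<\infty$, while the bound for $p=\infty$ is the second endpoint fact. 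This establishes all the norm estimates in $(\mathtt{a})$ and $(\mathtt{b})$.

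It remains to prove that $\tau\mapsto\mathtt{Op}_\tau(a)$ is continuous. For $a\in\mathcal{S}(\mathbb{R}^n\times\mathbb{R}^n)$ the Schwartz kernel $\mathcal{K}_{\mathtt{Op}_{\tau}(a)}=\big((1\otimes\mathcal{F}^{-1})a\big)\circ\mathtt{C}_{\tau}$ depends continuously on $\tau$ in $\mathcal{S}(\mathbb{R}^n\times\mathbb{R}^n)$, because $\mathtt{C}_{\tau}$ is an affine automorphism of $\mathbb{R}^n\times\mathbb{R}^n$ depending continuously on $\tau$; hence $\tau\mapsto\mathtt{Op}_\tau(a)\in\mathcal{B}_1(L^2)\subset\mathcal{B}_p(L^2)\subset\mathcal{B}(L^2)$ is continuous. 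For general $a\in\mathcal{B}_k^{p}(\mathbb{R}^n\times\mathbb{R}^n)$ with $1\le p<\infty$, I would fix a bounded neighbourhood $T$ of a point $\tau_0$, use the uniform bound $\sup_{\tau\in T}\left\Vert\mathtt{Op}_\tau(a)-\mathtt{Op}_\tau(a^{\prime})\right\Vert_{\mathcal{B}_p}\le Cst\left\Vert a-a^{\prime}\right\Vert_{\mathcal{B}_k^{p}}$, and choose $a^{\prime}\in\mathcal{S}(\mathbb{R}^n\times\mathbb{R}^n)$ with $\left\Vert a-a^{\prime}\right\Vert_{\mathcal{B}_k^{p}}$ arbitrarily small (density of $\mathcal{S}$ in $\mathcal{B}_k^{p}$ for $p<\infty$, established earlier); then on $T$ the map $\tau\mapsto\mathtt{Op}_\tau(a)$ is a uniform limit of the continuous maps $\tau\mapsto\mathtt{Op}_\tau(a^{\prime})$, hence continuous. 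The case $p=\infty$ is the main obstacle, since $\mathcal{S}$ is not dense in $\mathcal{B}_k^{\infty}$ (nor in $S_w^\infty$). Here I would instead use the change-of-quantization relation $\mathtt{Op}_{\tau_1}(a)=\mathtt{Op}_{\tau_0}(\Theta_{\tau_1-\tau_0}a)$, where $\Theta_s$ is the Fourier multiplier on $\mathbb{R}^n\times\mathbb{R}^n$ with symbol $\mathtt{e}^{\mathtt{i}\langle s\zeta_{1},\zeta_{2}\rangle}$, together with the uniform $S_w^\infty\to\mathcal{B}(L^2)$ bound; it then suffices to show that the family $\{\Theta_s\}_{|s|\le 1}$ is uniformly bounded on $S_w^\infty(\mathbb{R}^n\times\mathbb{R}^n)$ and strongly continuous there, i.e.\ $\Theta_s a\to a$ in $S_w^\infty$ as $s\to 0$ for each fixed $a$. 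Both facts follow from the Gabor-frame description of $S_w^\infty=M^{\infty,1}$ --- the $\Theta_s$ being metaplectic shear operators --- the strong continuity being obtained by a dominated-convergence argument on the Gabor coefficients; see \cite{Arsu}, \cite{Toft1}. Feeding this back through $\mathcal{B}_k^{\infty}\hookrightarrow S_w^\infty$ yields the continuity in $(\mathtt{b})$, which together with the case $1\le p<\infty$ above completes the proof.
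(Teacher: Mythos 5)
Your reduction is the same as the paper's: the entire published proof is the single observation that the embedding $\mathcal{B}_{k}^{p}\left( \mathbb{R}^{n}\times \mathbb{R}^{n}\right) \hookrightarrow S_{w}^{p}\left( \mathbb{R}^{n}\times \mathbb{R}^{n}\right) $ from the preceding theorem combines with the known Schatten--von Neumann results for $\tau $-quantizations of modulation-space symbols, citing \cite{Arsu} and \cite{Toft1} for $1\leq p<\infty $ and \cite{Boulkhemair 2} for $p=\infty $. Where you genuinely diverge is that you only take the two endpoint facts ($S_{w}^{1}\rightarrow \mathcal{B}_{1}$, $S_{w}^{\infty }\rightarrow \mathcal{B}\left( L^{2}\right) $) from the literature and recover the intermediate $p$ by complex interpolation of the couple $\left\{ \mathcal{B}_{k}^{1},\mathcal{B}_{k}^{\infty }\right\} $ against $\left\{ \mathcal{B}_{1}\left( L^{2}\right) ,\mathcal{B}\left( L^{2}\right) \right\} $ via Corollary \ref{ks17}; this is legitimate, consistent with how the paper itself argues in the theorem that follows (where it interpolates from the $L^{2}$--Hilbert--Schmidt endpoint), and buys a proof that leans on fewer external results. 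Your density argument for the continuity of $\tau \mapsto \mathtt{Op}_{\tau }\left( a\right) $ when $p<\infty $ is also fine and is more than the paper supplies.

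The one soft spot is the $p=\infty $ continuity. After reducing via $\mathtt{Op}_{\tau _{1}}\left( a\right) =\mathtt{Op}_{\tau _{0}}\left( \Theta _{\tau _{1}-\tau _{0}}a\right) $ you assert that $\Theta _{s}a\rightarrow a$ in $S_{w}^{\infty }=M^{\infty ,1}$ ``by dominated convergence on the Gabor coefficients.'' That step is not innocuous: the $S_{w}^{\infty }$ norm is $\int \sup_{y}\left\vert V_{\chi }a\left( y,\xi \right) \right\vert \mathtt{d}\xi $, and dominated convergence in $\xi $ requires the inner supremum over $y$ to converge, i.e.\ uniform convergence in the non-compact variable --- exactly the kind of statement that fails for, say, translations on $L^{\infty }$, and exactly the reason $\mathcal{S}$ is not dense in $S_{w}^{\infty }$. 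Indeed, strong continuity of the shear group on $M^{\infty,1}$ is not clearly true, and in any case it is not a corollary of a formal dominated-convergence argument; what \emph{is} true, and is the actual content of \cite{Boulkhemair 2} that the paper invokes, is the norm continuity of $\tau \rightarrow \mathtt{Op}_{\tau }\left( a\right) $ in $\mathcal{B}\left( L^{2}\right) $ for $a\in S_{w}^{\infty }$, proved there by a direct estimate rather than by strong continuity on the symbol space. You should either cite that result for the target statement itself (as the paper does) or supply a genuine proof of the convergence $\Theta _{s}a\rightarrow a$ in the relevant topology; as written this step is a gap, though a repairable one since the needed fact is in the literature you are already allowed to use.
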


\begin{proof}
This theorem is a consequence of the previous theorem and the fact that it
is true for pseudo-differential operators with symbols in $S_{w}^{p}\left( 
\mathbb{R}^{n}\times \mathbb{R}^{n}\right) $ (see for instance \cite{Arsu}
or \cite{Toft1} for $1\leq p<\infty $ and \cite{Boulkhemair 2} for $p=\infty 
$).
\end{proof}

\begin{theorem}
Let $k\in \mathcal{K}_{pol}\left( \mathbb{R}^{n}\times \mathbb{R}^{n}\right) 
$ be such that $1/k\in L^{1}\left( \mathbb{R}^{n}\times \mathbb{R}%
^{n}\right) $ and $1\leq p<\infty $. If $\tau \in \mathtt{End}_{\mathbb{R}%
}\left( \mathbb{R}^{n}\right) \equiv M_{n\times n}\left( \mathbb{R}\right) $
and $a\in \mathcal{B}_{k^{\left\vert 1-2/p\right\vert }}^{p}\left( \mathbb{R}%
^{n}\times \mathbb{R}^{n}\right) $ then 
\begin{equation*}
\mathtt{Op}_{\tau }\left( a\right) =a^{\tau }\left( X,D\right) \in \mathcal{B%
}_{p}\left( L^{2}\left( \mathbb{R}^{n}\right) \right) .
\end{equation*}%
Moreover, the mapping%
\begin{equation*}
\mathtt{End}_{\mathbb{R}}\left( \mathbb{R}^{n}\right) \ni \tau \rightarrow 
\mathtt{Op}_{\tau }\left( a\right) =a^{\tau }\left( X,D\right) \in \mathcal{B%
}_{p}\left( L^{2}\left( \mathbb{R}^{n}\right) \right)
\end{equation*}%
is continuous.
\end{theorem}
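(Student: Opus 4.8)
The plan is to deduce the theorem by complex interpolation from three ``endpoint'' facts: the case $p=1$ with weight $k$ and the case $p=\infty$ with weight $k$, both of which are contained in the preceding theorem (using $1/k\in L^{1}(\mathbb{R}^{n}\times\mathbb{R}^{n})$), and the Hilbert--Schmidt case $p=2$ with weight $1$, which has to be handled by hand. Once these are in place, the general case follows by interpolating $(p,\text{weight})$ along the segments $(1,k)$--$(2,1)$ for $1\leq p\leq 2$ and $(2,1)$--$(\infty,k)$ for $2\leq p<\infty$, using Corollary \ref{ks17} on the symbol side and the classical complex interpolation of Schatten ideals on the operator side. Throughout we use that $k^{|1-2/p|}\in\mathcal{K}_{pol}(\mathbb{R}^{n}\times\mathbb{R}^{n})$ by Lemma \ref{kh5}$(\mathtt{b})$, so all spaces in sight make sense.

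First I would settle the case $p=2$ directly. Here $k^{|1-2/p|}=k^{0}=1$, and by the localization principle together with Plancherel one has $\mathcal{B}_{1}^{2}(\mathbb{R}^{n}\times\mathbb{R}^{n})=\mathcal{B}_{1}(\mathbb{R}^{n}\times\mathbb{R}^{n})=B_{2,1}(\mathbb{R}^{n}\times\mathbb{R}^{n})=L^{2}(\mathbb{R}^{n}\times\mathbb{R}^{n})$ with equivalent norms. The distributional kernel of $\mathtt{Op}_{\tau}(a)$ is $\mathcal{K}_{\mathtt{Op}_{\tau}(a)}=\bigl((1\otimes\mathcal{F}^{-1})a\bigr)\circ\mathtt{C}_{\tau}$. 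The linear map $\mathtt{C}_{\tau}(x,y)=((1-\tau)x+\tau y,\,x-y)$ has block matrix with lower-right block $-I$, so its determinant is $(-1)^{n}\det\bigl((I-\tau)+\tau\bigr)=(-1)^{n}$ for every $\tau$, in particular of modulus one; hence the $L^{2}$-norm of the kernel equals $\|(1\otimes\mathcal{F}^{-1})a\|_{L^{2}}$, which by Plancherel in the second variable is a fixed multiple of $\|a\|_{L^{2}}$. Therefore $\mathtt{Op}_{\tau}(a)$ is Hilbert--Schmidt with $\|\mathtt{Op}_{\tau}(a)\|_{\mathcal{B}_{2}(L^{2})}\leq Cst\,\|a\|_{\mathcal{B}_{1}^{2}}$, the constant being independent of $\tau$, and $\tau\mapsto\mathtt{Op}_{\tau}(a)\in\mathcal{B}_{2}(L^{2})$ is continuous since $\tau\mapsto\mathtt{C}_{\tau}$ is.

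For $1\leq p\leq 2$ I would choose $\theta$ by $1/p=(1-\theta)/1+\theta/2$, so that $1-\theta=2/p-1=|1-2/p|$. Corollary \ref{ks17} gives $\bigl[\mathcal{B}_{k}^{1},\mathcal{B}_{1}^{2}\bigr]_{\theta}=\mathcal{B}_{k^{1-\theta}}^{p}=\mathcal{B}_{k^{|1-2/p|}}^{p}$, while the classical identity for Schatten ideals gives $\bigl[\mathcal{B}_{1}(L^{2}),\mathcal{B}_{2}(L^{2})\bigr]_{\theta}=\mathcal{B}_{p}(L^{2})$. Since $\mathtt{Op}_{\tau}$ is linear and bounded at both endpoints (the endpoint $p=1$ from the preceding theorem, the endpoint $p=2$ from the previous paragraph), the interpolation property of the complex method yields $\mathtt{Op}_{\tau}\colon\mathcal{B}_{k^{|1-2/p|}}^{p}\to\mathcal{B}_{p}(L^{2})$ bounded, with operator norm controlled by a weighted geometric mean of the two endpoint norms, hence bounded locally uniformly in $\tau$. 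For $2\leq p<\infty$ I would repeat the argument with $\theta$ given by $1/p=(1-\theta)/2$, so $\theta=1-2/p=|1-2/p|$: Corollary \ref{ks17} gives $\bigl[\mathcal{B}_{1}^{2},\mathcal{B}_{k}^{\infty}\bigr]_{\theta}=\mathcal{B}_{k^{\theta}}^{p}=\mathcal{B}_{k^{|1-2/p|}}^{p}$, and $\bigl[\mathcal{B}_{2}(L^{2}),\mathcal{B}(L^{2})\bigr]_{\theta}=\mathcal{B}_{p}(L^{2})$, with the endpoint $p=\infty$ supplied by part $(\mathtt{b})$ of the preceding theorem. This establishes the membership $\mathtt{Op}_{\tau}(a)\in\mathcal{B}_{p}(L^{2})$ together with the norm estimate, locally uniformly in $\tau$, for every $1\leq p<\infty$.

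Finally I would obtain continuity of $\tau\mapsto\mathtt{Op}_{\tau}(a)\in\mathcal{B}_{p}(L^{2})$ by a three-$\varepsilon$ argument: $\mathcal{S}(\mathbb{R}^{n}\times\mathbb{R}^{n})$ is dense in $\mathcal{B}_{k^{|1-2/p|}}^{p}$ because $p<\infty$; for $b\in\mathcal{S}$ the kernel $\bigl((1\otimes\mathcal{F}^{-1})b\bigr)\circ\mathtt{C}_{\tau}$ is Schwartz and depends continuously on $\tau$ in $\mathcal{S}(\mathbb{R}^{n}\times\mathbb{R}^{n})$, and a Schwartz kernel defines an operator in every Schatten class with norm dominated by a Schwartz seminorm, so $\tau\mapsto\mathtt{Op}_{\tau}(b)$ is continuous; and $\|\mathtt{Op}_{\tau}(a-b)\|_{\mathcal{B}_{p}(L^{2})}$ is controlled uniformly on compact sets of $\tau$ by the bound of the previous paragraph. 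The main obstacle I anticipate is not conceptual but one of careful bookkeeping: (i) verifying that the weight exponent produced by Corollary \ref{ks17} is exactly $|1-2/p|$ on each of the two ranges; (ii) invoking correctly the complex interpolation of Schatten ideals $\bigl[\mathcal{B}_{p_{0}}(L^{2}),\mathcal{B}_{p_{1}}(L^{2})\bigr]_{\theta}=\mathcal{B}_{p}(L^{2})$, including the case $p_{1}=\infty$ with $\theta<1$; and (iii) ensuring that the endpoint constants for $p=1$ and $p=\infty$ coming from the preceding theorem are locally uniform in $\tau$, which is already implicit in the continuity statement proved there.
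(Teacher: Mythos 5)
Your proposal is correct and follows essentially the same route as the paper: the Hilbert--Schmidt endpoint $\mathcal{B}_{1}^{2}\equiv L^{2}$ via the kernel $\left( \left( 1\otimes \mathcal{F}^{-1}\right) a\right) \circ \mathtt{C}_{\tau }$, combined with complex interpolation between $\left( p,k\right) =\left( 2,1\right) $ and the endpoints $\left( 1,k\right) $, $\left( \infty ,k\right) $ of the preceding theorem, using Corollary \ref{ks17} on the symbol side and Schatten-ideal interpolation on the operator side. The determinant computation for $\mathtt{C}_{\tau }$ and the density argument for continuity in $\tau $ are details the paper leaves implicit, and you supply them correctly.
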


\begin{proof}
The Schwartz kernel of the operator $\mathtt{Op}_{\tau }\left( a\right) $ is 
$\left( \left( 1\otimes \mathcal{F}^{-1}\right) a\right) \circ \mathtt{C}%
_{\tau }$. Therefore, $a\in \mathcal{B}_{1}^{2}\left( \mathbb{R}^{n}\times 
\mathbb{R}^{n}\right) \equiv L^{2}\left( \mathbb{R}^{n}\times \mathbb{R}%
^{n}\right) $ implies that $\mathtt{Op}_{\tau }\left( a\right) \in \mathcal{B%
}_{2}\left( L^{2}\left( \mathbb{R}^{n}\right) \right) $. Next we use the
interpolation properties of Kato-H\"{o}rmander spaces $\mathcal{B}_{k}^{p}$
and of the Schatten ideals $\mathcal{B}_{p}\left( L^{2}\left( \mathbb{R}%
^{n}\right) \right) $ to finish the theorem.%
\begin{equation*}
\begin{array}{c}
\left[ \mathcal{B}_{1}^{2}\left( \mathbb{R}^{n}\times \mathbb{R}^{n}\right) ,%
\mathcal{B}_{k}^{1}\left( \mathbb{R}^{n}\times \mathbb{R}^{n}\right) \right]
_{\frac{2}{p}-1}=\mathcal{B}_{k^{2/p-1}}^{p}\left( \mathbb{R}^{n}\times 
\mathbb{R}^{n}\right) \\ 
\left[ \mathcal{B}_{2}\left( L^{2}\left( \mathbb{R}^{n}\right) \right) ,%
\mathcal{B}_{1}\left( L^{2}\left( \mathbb{R}^{n}\right) \right) \right] _{%
\frac{2}{p}-1}=\mathcal{B}_{p}\left( L^{2}\left( \mathbb{R}^{n}\right)
\right)%
\end{array}%
,\quad 1\leq p\leq 2,
\end{equation*}%
\begin{equation*}
\begin{array}{c}
\left[ \mathcal{B}_{1}^{2}\left( \mathbb{R}^{n}\times \mathbb{R}^{n}\right) ,%
\mathcal{B}_{k}^{\infty }\left( \mathbb{R}^{n}\times \mathbb{R}^{n}\right) %
\right] _{1-\frac{2}{p}}=\mathcal{B}_{k^{1-2/p}}^{p}\left( \mathbb{R}%
^{n}\times \mathbb{R}^{n}\right) \\ 
\left[ \mathcal{B}_{2}\left( L^{2}\left( \mathbb{R}^{n}\right) \right) ,%
\mathcal{B}\left( L^{2}\left( \mathbb{R}^{n}\right) \right) \right] _{1-%
\frac{2}{p}}=\mathcal{B}_{p}\left( L^{2}\left( \mathbb{R}^{n}\right) \right)%
\end{array}%
,\quad 2\leq p<\infty .
\end{equation*}
\end{proof}

We shall end this section by considering pseudo-differential operators
defined by 
\begin{equation}
\mathtt{Op}\left( a\right) v\left( x\right) =\iint \mathtt{e}^{\mathtt{i}%
\left\langle x-y,\theta \right\rangle }a\left( x,y,\theta \right) v\left(
y\right) \mathtt{d}y\mathtt{d}\theta ,\quad v\in \mathcal{S}\left( \mathbb{R}%
^{n}\right) ,  \label{kh12}
\end{equation}%
with $a\in \mathcal{B}_{k}^{p}\left( 
\mathbb{R}
^{3n}\right) $, $k\in \mathcal{K}_{pol}\left( 
\mathbb{R}
^{3n}\right) $, $1/k\in L^{1}\left( 
\mathbb{R}
^{3n}\right) $ and $1\leq p\leq \infty $. For $a\in S_{w}^{\infty }\left( 
\mathbb{R}
^{3n}\right) $ such operators and Fourier integral operators were studied by
A. Boulkhemair in \cite{Boulkhemair 2}. In \cite{Boulkhemair 2}, the author
give a meaning to the above integral and proves $L^{2}$-boundedness of
global non-degenerate Fourier integral operators related to the Sj\"{o}%
strand class $S_{w}=S_{w}^{\infty }$. Therefore, by taking into account the
embedding theorem it follows that the above integral defines a bounded
operator in $L^{2}\left( \mathbb{R}^{n}\right) $. Now if we use Proposition
4.6. in \cite{Toft1} or Proposition 5.4. in \cite{Arsu} we obtain the
following result.

\begin{proposition}
Let $k\in \mathcal{K}_{pol}\left( 
\mathbb{R}
^{3n}\right) $ be such that $1/k\in L^{1}\left( 
\mathbb{R}
^{3n}\right) $, $1\leq p<\infty $ and $a\in \mathcal{B}_{k}^{p}\left( 
\mathbb{R}
^{3n}\right) $. If $\mathtt{Op}\left( a\right) $ is the operator defined by $%
(\ref{kh12})$, then $\mathtt{Op}\left( a\right) \in \mathcal{B}_{p}\left(
L^{2}\left( 
\mathbb{R}
^{n}\right) \right) $ and 
\begin{equation*}
\left\Vert \mathtt{Op}\left( a\right) \right\Vert _{\mathcal{B}_{p}\left(
L^{2}\left( \mathbb{R}^{n}\right) \right) }\leq Cst\left\Vert a\right\Vert _{%
\mathcal{B}_{k}^{p}}.
\end{equation*}
\end{proposition}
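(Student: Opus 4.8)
The plan is to reduce the statement to the corresponding fact for amplitudes in the modulation space $S_{w}^{p}$, exactly along the lines sketched in the paragraph preceding the statement. First I would invoke the embedding theorem established above: since $1/k\in L^{1}(\mathbb{R}^{3n})$, the inclusion $\mathcal{B}_{k}^{p}(\mathbb{R}^{3n})\hookrightarrow S_{w}^{p}(\mathbb{R}^{3n})$ is continuous, so $a\in S_{w}^{p}(\mathbb{R}^{3n})$ and, fixing a window $\chi\in\mathcal{C}_{0}^{\infty}(\mathbb{R}^{3n})\smallsetminus 0$, one has $\left\Vert a\right\Vert _{S_{w}^{p},\chi}\leq Cst\left\Vert a\right\Vert _{\mathcal{B}_{k}^{p}}$.

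Next I would appeal to the known Schatten--von Neumann mapping properties of operators of the form (\ref{kh12}) with amplitude in $S_{w}^{p}$. For $p=\infty$, Boulkhemair \cite{Boulkhemair 2} has assigned a meaning to the oscillatory integral (\ref{kh12}) when $a\in S_{w}^{\infty}(\mathbb{R}^{3n})$ and proved that the resulting operator lies in $\mathcal{B}(L^{2}(\mathbb{R}^{n}))$, with operator norm dominated by a seminorm equivalent to $\left\Vert a\right\Vert _{S_{w}^{\infty}}$; since $S_{w}^{p}\subset S_{w}^{\infty}$, this already makes $\mathtt{Op}(a)$ well defined. For $1\leq p<\infty$, Proposition 4.6 of \cite{Toft1} (equivalently Proposition 5.4 of \cite{Arsu}) gives $\mathtt{Op}(a)\in\mathcal{B}_{p}(L^{2}(\mathbb{R}^{n}))$ for $a\in S_{w}^{p}(\mathbb{R}^{3n})$, together with $\left\Vert \mathtt{Op}(a)\right\Vert _{\mathcal{B}_{p}(L^{2}(\mathbb{R}^{n}))}\leq Cst\left\Vert a\right\Vert _{S_{w}^{p}}$. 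Composing the two estimates,
\[
\left\Vert \mathtt{Op}(a)\right\Vert _{\mathcal{B}_{p}(L^{2}(\mathbb{R}^{n}))}\leq Cst\left\Vert a\right\Vert _{S_{w}^{p}}\leq Cst\left\Vert a\right\Vert _{\mathcal{B}_{k}^{p}},
\]
which is the assertion.

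The only point requiring genuine care --- and hence the main (if modest) obstacle --- is the consistency of the operator $\mathtt{Op}(a)$ defined here through the regularization of (\ref{kh12}) for $a\in\mathcal{B}_{k}^{p}$ with the operator considered in the references for $a\in S_{w}^{p}$: one checks that both constructions agree on the dense subspace $\mathcal{S}(\mathbb{R}^{3n})$ and that each depends continuously on the amplitude with respect to the $S_{w}^{p}$-topology, so that by density they coincide on all of $\mathcal{B}_{k}^{p}(\mathbb{R}^{3n})\subset S_{w}^{p}(\mathbb{R}^{3n})$. Everything else is a direct concatenation of facts already proved or cited, so no further computation is needed.
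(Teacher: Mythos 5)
Your proposal is correct and follows exactly the paper's route: apply the embedding $\mathcal{B}_{k}^{p}\left( \mathbb{R}^{3n}\right) \hookrightarrow S_{w}^{p}\left( \mathbb{R}^{3n}\right) $ (valid since $1/k\in L^{1}$) and then quote Boulkhemair for the meaning and $L^{2}$-boundedness of the operator together with Proposition 4.6 of Toft (or Proposition 5.4 of Arsu) for the $\mathcal{B}_{p}$ membership. Your extra remark on the consistency of the two definitions of $\mathtt{Op}\left( a\right) $ is a sensible point of care that the paper leaves implicit.
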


\end{document}